\documentclass[oneside,reqno,11pt]{amsart}
\usepackage{preamble}

\title{Periods detecting Eisenstein series and sums of $L$-values I}

\author{Weixiao Lu and Guodong Xi}

\begin{document}

\maketitle

\begin{abstract}
    We study the automorphic period associated to a $G$-Hamiltonian variety $M$ whose dual is $\check{M} = T^*(\check{G}/\check{L})$, where $\check{G}$ is a general linear group and $\check{L}$ is a Levi subgroup. For certain cuspidal Eisenstein series, we prove that their period is equal to a finite sum of special values of $L$-functions. This sum is indexed by the fixed points of the associated extended $L$-parameter on $\check{M}$, confirming a conjecture by Ben-Zvi-Sakellaridis-Venkatesh in this case.
\end{abstract}

\tableofcontents

\section{Introduction} \label{sec: introduction}

\subsection{Relative Langlands conjectures of BZSV}

The relative Langlands program investigates the relation between periods of automorphic forms and special values of $L$-functions. In their seminal paper \cite{BZSV}, Ben--Zvi, Sakellaridis and Venkatesh proposed a general framework for this relationship. Their central idea is that periods are associated with Hamiltonian $G$-varieties, and each Hamiltonian variety should admit a ``dual" $\check{M}$, which is a $\check{G}$-Hamiltonian variety. With this framework, the period $\cP_M$ associated to $M$ is conjectured to an $L$-value attached to $\check{M}$.

To formulate a precise conjecture, the authors of \cite{BZSV} work in the context of function fields with everywhere unramified data. A part of their conjecture \cite[Conjecture 14.2.1]{BZSV} can be summarized as follows:
\begin{conjecture}[Ben-Zvi-Sakellaridis-Venkatesh] \label{conj:BZSV}
    Assume that $\check{M} = T^* \check{X}$, where $\check{X}$ is a $\check{G}$-spherical variety. Let $\pi$ be a tempered, everywhere unramified automorphic representation of $G(\bA)$ with $L$-parameter $\phi$. If $\phi$ has only finitely many fixed points $\{x_1,\cdots,x_r\}$ on $\check{X}$. Then for a suitably normalized spherical vector $f \in \pi$, we have
    \begin{equation*}
        \cP_M(f) \sim \sum_i L(0,(T_{x_i} \check{X})^{\shear})
    \end{equation*}
\end{conjecture}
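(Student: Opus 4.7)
The plan is to reduce the conjecture, in the case $\check{M}=T^*(\check{G}/\check{L})$ with $\check{G}=GL_n$ and $\check{L}=\prod_{i=1}^k GL_{n_i}$, to an explicit Rankin--Selberg computation on $G=GL_n$. First I would identify the $G$-Hamiltonian variety $M$ dual to $\check{M}$: the BZSV dictionary suggests that $M$ arises as a Whittaker-type induction from the parabolic $P\subset G$ with Levi $L=\prod_i GL_{n_i}$, so that $\cP_M$ is an explicit Fourier--Whittaker period attached to this Levi decomposition (specializing to the standard Whittaker period when $\check{L}$ is the maximal torus). I would then realize the tempered unramified $\pi$ as the unitary induction $\pi=\mathrm{Ind}_P^G(\sigma_1\boxtimes\cdots\boxtimes\sigma_k)$ with each $\sigma_i$ a cuspidal representation of $GL_{n_i}(\bA)$ of irreducible parameter $\phi_i$, and take $f$ to be the associated spherical Eisenstein series.

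Second, I would enumerate the $\phi$-fixed points on $\check{X}=\check{G}/\check{L}$. Points of $\check{X}$ parametrize ordered decompositions $V=W_1\oplus\cdots\oplus W_k$ with $\dim W_j=n_j$, and such a decomposition is $\phi$-fixed iff each $W_j$ is $\phi$-stable. By Schur's lemma the finiteness hypothesis forces the $\phi_i$ to be pairwise non-isomorphic, so fixed points correspond to permutations $w\in S_k$ satisfying $n_{w(j)}=n_j$. At such a fixed point $x_w$ the tangent space decomposes as $T_{x_w}\check{X}\cong\bigoplus_{i\neq j}\phi_{w(i)}^\vee\otimes\phi_{w(j)}$ as a $\phi$-module, so the conjectural right-hand side becomes a finite sum of products of Rankin--Selberg $L$-values $L(0,\sigma_{w(i)}^\vee\times\sigma_{w(j)})$, with the $\shear$-grading providing precisely the shift that keeps each factor finite.

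On the automorphic side, I would unfold $\cP_M$ applied to the Eisenstein series. In the Borel case $L=T$ this is a direct descendant of the Casselman--Shalika / Jacquet--Shalika computation: the Whittaker coefficient of a spherical Eisenstein series on $GL_n$ expands as a sum over the relative Weyl group of local zeta integrals, each of which evaluates by an Euler product to a product of Rankin--Selberg $L$-factors. For a general Levi $L$ I would prove the analogous unfolding: apply the period to the Eisenstein series, use cuspidality of the $\sigma_i$ to isolate the contributions of the relevant open cells of the Bruhat decomposition, and identify each Weyl-coset summand with the Euler product of the Rankin--Selberg $L$-function attached to the fixed point $x_w$ of the previous step. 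Summing over $w$ should then yield the fixed-point formula of Conjecture~\ref{conj:BZSV}.

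The main obstacle will be the regularization at $s=0$. The unfolded Euler product naturally involves Rankin--Selberg factors $L(s,\sigma_i^\vee\times\sigma_j)$; when $\sigma_i\cong\sigma_j$ these carry a pole at $s=1$, and the $\shear$-shift is precisely what prevents this pole from entering the final answer. Balancing the poles of the intertwining operators defining the Eisenstein series against those of the $L$-factors, and showing that only admissible permutations $w$ contribute while the remaining Weyl-coset summands vanish by cuspidality, is the technical heart of the argument. The everywhere-unramified hypothesis should eliminate most archimedean and ramified-place subtleties, but the combinatorics of Jacquet modules for arbitrary Levi $L$ (rather than only the Borel) will still require care, and matching the resulting product of $L$-factors with $L(0,(T_{x_w}\check{X})^{\shear})$ on the nose requires a careful bookkeeping of the shear grading on $\check{\mathfrak{g}}/\check{\mathfrak{l}}$.
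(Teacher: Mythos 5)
The central issue is the identification of the $G$-Hamiltonian variety $M$ and of the period $\cP_M$. You propose that $M$ is a Whittaker-type induction attached to the parabolic $P\subset G$ with Levi $L=\prod_i\GL_{n_i}$, so that $\cP_M$ is a Fourier--Whittaker period along that Levi decomposition. This is not what the BZSV dictionary gives for $\check{X}=\GL_{2n+m}/(\GL_n\times\GL_{n+m})$, the case actually treated here. The dual is $M=T^*(G/HU,\psi_U)$ with $H=\Sp_{2n}$ and $U$ the unipotent radical of the parabolic of $\GL_{2n+m}$ with Levi $\GL_{2n+1}\times\GL_1^{m-1}$, so the period is $\cP_M(f)=\int_{[\Sp_{2n}]}f_{U,\psi_U}(h)\,dh$: a twisted \emph{symplectic} period, in which $\Sp_{2n}$ is neither a Levi nor unipotent, and which is in no way ``attached to the Levi decomposition of $P$''. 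You also conflate the parabolic $P$ giving the cuspidal support of the Eisenstein series with the Levi $\check{L}$ defining $\check{X}$; in the paper these are independent, $P$ is an arbitrary standard parabolic, and the fixed-point set is indexed not by block-size--preserving permutations but by subsets $I\subset\{1,\dots,k\}$ with $\sum_{i\in I}n_i=n$ --- the ways of splitting the irreducible summands of $\phi$ between an $n$- and an $(n+m)$-dimensional block.

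Once the period is corrected, the technical chain differs substantially from what you sketch. The unfolding of the symplectic period relies on Offen's vanishing theorem for non-even cuspidal data, a cascade of Fourier expansions along abelian unipotent subgroups whose extraneous terms are killed by the $\Delta$-regularity of the cuspidal support, and an exchange-of-root identity --- all needed to identify $\cP_M(E(\varphi))$ with a Rankin--Selberg zeta integral $\int_{K_H}\widetilde{Z}^{\mathrm{RS}}(s_\lambda+n+1,(R(k)E(\varphi))_{Q_n})\,dk$. Regularization is handled not by balancing poles of intertwining operators against $L$-factors, as you suggest, but by extending the Rankin--Selberg zeta integral continuously to uniform-moderate-growth functions with $\Delta$-regular cuspidal support via a Phragm\'en--Lindel\"of argument. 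Your broader intuition --- unfold a cuspidal Eisenstein series, obtain an Euler product of Rankin--Selberg $L$-values, and have the sum over fixed points emerge from the constant-term formula/Weyl-group sum --- is correct in spirit, but without the correct period and its associated unfolding the argument cannot get off the ground.
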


In the number field setting, Mao, Wan and Zhang \cite{MWZ1} formulated an analog of the Conjecture \ref{conj:BZSV}, under the assumption that the hypothetical extended $L$-parameter of $\pi$ only has at most one fixed point on $\check{M}$. The goal of this paper is to prove specific cases of Conjecture \ref{conj:BZSV} in the style of \cite{MWZ1} for number fields. We focus on the case where $\check{X} = \check{G}/\check{L}$, with $\check{G}$ a general linear group and $\check{L}$ a Levi subgroup. A key feature of this case is that the set of fixed points is not necessarily a singleton, leading to an equality of the form
\begin{equation} \label{eq:P=L}
    \text{Automorphic period} \quad ``=" \quad \text{Sum of $L$-values}
\end{equation}
We note that the related work \cite{Wan} of Wan, who gavemain another example of \eqref{eq:P=L}, that the period associated to $\U(2) \backslash \SO(5)$ equals to a sum of two $L$-values.

\subsection{The main result}

\subsubsection{The period}

Throughout the article, we fix a number field $F$. Let $\bA := \bA_F$ and fix a non-trivial additive character $\psi$ of $F \backslash \bA$. We denote the general linear group $\GL_k$ over $F$ by $G_k$. 

For the introduction, we fix integers $n>0$ and $m>0$. Let $G=G_{2n+m}$. Let $N$ be the upper triangular unipotent subgroup of $G$. Let $Q$ be the standard parabolic subgroup of $G$ with Levi component $G_{2n+1} \times G_1^{m-1}$. Denote the unipotent radical of $Q$ by $U$. We define a character $\psi_U$ of $U(\bA)$ by 
\begin{equation*}
    \psi_U(u_{ij}) = \psi(u_{2n+1,2n+2} + \cdots + u_{2n+m-1,2n+m}).
\end{equation*}

Let $H$ denote the symplectic group $\Sp_{2n}$ preserving the symplectic form $\begin{pmatrix}
    & J \\ -J &
\end{pmatrix}$ with 
$J = \begin{psmallmatrix}
    & & 1 \\ & \cdots & \\1 & &
\end{psmallmatrix}$. We embed $H$ into $G$ as a subgroup in the upper-left corner. Note that $H$ normalizes $U$, and the character $\psi_U$ is invariant under the conjugation action of $H(\bA)$.

For an automorphic form $f$ on $G(\bA)$, we define its Fourier coefficient along $U$ by
    \begin{equation*}
        f_{U,\psi}(g) := \int_{[U]} f(ug) \psi_U^{-1}(u) \rd u
    \end{equation*}
We then define the period integral
    \begin{equation} \label{eq:intro_1}
        \cP(f) := \int_{[H]} f_{U,\psi}(h) \rd h.
    \end{equation}
We remark that the integral defining $\cP$ is not necessarily absolutely convergent. Thus, to define this period for a broad class of automorphic forms, a regularization of the integral \eqref{eq:intro_1} is required. 

The period $\cP$ is the period associated to the $G$-hyperspherical Hamiltonian variety $M=T^*(G/HU,\psi_U)$, whose conjectural dual variety is $\check{M} = T^* \check{X}$, where $\check{X} = G_{2n+m}/G_n \times G_{n+m}$ (see \cite[\S 4]{BZSV}, \cite[Appendix A]{Sakellaridis}).

\subsubsection{The main result}
\label{sssec:intro_main_result}

We will study the period $\cP(f)$ when $f$ is a cuspidal Eisenstein series. Let $P$ be a standard parabolic subgroup of $G$, let $\pi$ be a unitary cuspidal automorphic representation of $M_P$. For concreteness, we assume $M_P = G_{n_1} \times \cdots \times G_{n_k}$ and $\pi = \pi_1 \boxtimes \cdots \boxtimes \pi_k$, where each $\pi_i$ is a unitary cuspidal automorphic representation of $G_{n_i}$.

Let $A_{M_P}^{\infty} \cong \R_{>0}^k$ denote the central subgroup of $M_P(\bA)$, consisting of elements of the form $\begin{psmallmatrix}
    t_1 I_{n_1} & & \\ & \cdots &  \\ & & t_k I_{n_k} \end{psmallmatrix}$ with $t_i \in \R_{>0}$. 
    
We write $\pi_0$ for the unique unramified twist of $\pi$ such that the central character of $\pi_0$ is trivial on $A_{M_P}^{\infty}$. By an unramified twist, we mean $\pi_0$ is of the form $\pi_1 \lvert \cdot \rvert^{\lambda_1} \boxtimes \cdots \boxtimes \pi_k \lvert \cdot \rvert^{\lambda_k}$ with $\lambda_j \in i\R$ for $1 \le j \le k$.

Let $\varphi \in \opn{Ind}_{P(\bA)}^{G(\bA)} \pi$, which we regard as a function on $N_P(\bA)M_P(F) \backslash G(\bA)$. Let $E(\varphi) := E(\cdot,\varphi,0)$ be the associated Eisenstein series. We denote by $\opn{Fix}(\pi)$ the fixed point set of the (hypothetical) $L$-parameter of $\opn{Ind}_{P(\bA)}^{G(\bA)} \pi$ acting on $\check{X}$.

\begin{theorem}[Theorem \ref{thm:n_m_Eisenstein}, rough form] \label{thm:intro}
    Assume that $\opn{Fix}(\pi_0)$ is finite, then we have $\opn{Fix}(\pi_0) = \opn{Fix}(\pi)$ and
    \begin{enumerate}
        \item The period $\cP(E(\varphi))$ can be defined canonically.
        \item Let $\tS$ be a sufficiently large set of places of $F$, then we have
        \begin{equation} \label{eq:intro_2}
            \begin{split}
            \cP(E(\varphi)) = &(\Delta_H^{\tS,*})^{-1} \prod_{1 \le i < j \le k}L(1,\pi_i \times \pi_j^{\vee})^{-1} \times \\
            & \sum_{\sigma \in \opn{Fix}(\pi)}
            L^{\tS}(1,T_\sigma \check{X}) \cdot ( \text{local zeta integral of } W_{\varphi,\tS}^{M_P} \text{at $\tS$} ),
            \end{split}
        \end{equation}
        where 
        \begin{itemize}
            \item $\Delta_H^{\tS,*}$ is a constant related to the Tamagawa measure (see \S \ref{sssec:prelim_Tamagawa})
            \item We define
            \begin{equation*}
                W_{\varphi}^{M_P}(g) := \int_{[M_P^N]} \varphi(ug) \psi^{-1}(u) \rd u
            \end{equation*}
            to be the Whittaker coefficient of the section $\varphi$, where $M_P^N = M_P \cap N$. And we decompose $W_\varphi^{M_P}$ as $W_{\varphi}^{M_P} = W_{\varphi}^{M_P,\tS} W_{\varphi,\tS}^{M_P}$ with $W_{\varphi}^{M_P,\tS}$ is spherical and equals 1 at $g=1$.
        \end{itemize}
    \end{enumerate}
\end{theorem}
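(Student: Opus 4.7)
The strategy is to unfold the Eisenstein series against the $HU$-period in the spirit of Rankin--Selberg, and to identify the surviving orbit integrals with the fixed points on $\check{X}$.

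First I would write $E(\varphi,g) = \sum_{\gamma \in P(F)\backslash G(F)} \varphi(\gamma g)$ in a domain of absolute convergence and substitute into $\cP(E(\varphi))$. Interchanging summation and integration produces a sum over double cosets in $P(F)\backslash G(F)/(HU)(F)$, each contributing an integral over the adelic stabilizer of the coset representative. Most cosets are killed on the nose: the inner integral either forces a Fourier coefficient of $\varphi$ against a non-generic character on a unipotent subgroup of $N_P$, which vanishes by cuspidality of $\pi$, or factors through a non-trivial unipotent integration over the stabilizer which vanishes by the same mechanism. The central combinatorial task is to pin down the cosets whose stabilizer integral isolates the Whittaker coefficient $W_\varphi^{M_P}$ of the inducing section.

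Next I would match the surviving cosets with $\opn{Fix}(\pi)$. The dual variety $\check{X} = G_{2n+m}/(G_n \times G_{n+m})$ parametrizes decompositions $V = V_1 \oplus V_2$ with $\dim V_1 = n$ and $\dim V_2 = n+m$ of a $(2n+m)$-dimensional space, and for $\pi = \pi_1 \boxtimes \cdots \boxtimes \pi_k$ cuspidal the parameter of $\opn{Ind}_P^G \pi$ is the direct sum of the parameters of the $\pi_i$. Its fixed points on $\check{X}$ correspond to partitions of the blocks $\{\pi_1,\ldots,\pi_k\}$ into two groups with $V_1$-total-dimension $n$. The finiteness hypothesis on $\opn{Fix}(\pi_0)$ guarantees this set is combinatorial and identifies it with $\opn{Fix}(\pi)$; a Bruhat-type calculation is then expected to provide an explicit bijection between these fixed points and the relevant double cosets in $P(F)\backslash G(F)/(HU)(F)$.

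Third, for each surviving orbit $\sigma$ I would carry out a Rankin--Selberg computation. Inserting Iwasawa coordinates and expanding $\varphi$ through its Whittaker model, the orbit integral becomes a global zeta integral in $W_\varphi^{M_P}$ that factors as an Euler product. At an unramified place $v \notin \tS$, a Casselman--Shalika manipulation should identify the local factor with the tangent-space $L$-factor $L_v(1, T_\sigma \check{X})$, while the places of $\tS$ assemble into the advertised local zeta integral of $W_{\varphi,\tS}^{M_P}$. The normalizing product $\prod_{i<j} L(1,\pi_i \times \pi_j^\vee)^{-1}$ arises from the Langlands--Shahidi normalization of the induced section, and $(\Delta_H^{\tS,*})^{-1}$ absorbs the Tamagawa normalization of the quotient measure on $[H]$.

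The principal difficulty is convergence: the period $\cP$ is not absolutely convergent on Eisenstein series, so the unfolding must be justified in a regularized sense. A natural approach is to parameterize the induced section by a spectral variable $s$, perform the unfolding in a region of absolute convergence where all orbit integrals make sense, and meromorphically continue back to $s=0$; a mixed-truncation argument in the style of Zhang or Ichino--Yamana is likely required to handle boundary terms. Controlling this analytic continuation, and verifying that no extraneous poles are produced thanks to the finiteness of $\opn{Fix}(\pi_0)$ and the cuspidality of $\pi$, is the most delicate step of the argument, and is where the canonical definition of $\cP(E(\varphi))$ asserted in part (1) of the theorem is extracted.
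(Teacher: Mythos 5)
Your plan is correct in its broad strokes---unfolding to a Whittaker expression, matching combinatorics with fixed points on $\check X$, Euler factorization, and a regularization to extend past the convergent range---but it diverges from the paper's route at the decisive structural step. You propose unfolding the Eisenstein series $E(\varphi)$ itself into a sum over double cosets $P(F)\backslash G(F)/(HU)(F)$, with the sum over fixed points emerging from a Bruhat analysis of which cosets survive. The paper instead unfolds the \emph{period functional} $\cP(f)=\int_{[H]} f_{U,\psi}$ for arbitrary Schwartz $f$ with $\Delta$-regular cuspidal support, via a Fourier expansion along abelian unipotent subgroups (Proposition \ref{prop:n_m_unfolding}); the extraneous terms in that expansion are killed not by cuspidality of $\pi$ but by Offen's vanishing theorem for the symplectic period (Theorem \ref{thm:symplectic_period}) combined with the $\Delta$-regularity hypothesis. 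This produces the identity $\cP(f)=\int_{N_H(\bA)\backslash H(\bA)} V_f(h)\,\rd h$, which via the Iwasawa decomposition $H(\bA)=Q_n^H(\bA)K_H$ becomes a $K_H$-average of twisted Rankin--Selberg integrals of the constant term $f_{Q_n}$. Only after this reduction does the sum over $\opn{Fix}(\pi)$ appear, coming from the \emph{constant-term formula} $(E(\varphi))_{Q_n}=\sum_{w\in W(P;Q_n)}E^{Q_n}(M(w)\varphi)$ together with the normalization \eqref{eq: normalized intertwining operator}---so the combinatorics is read off from $W(P;Q_n)$ rather than from double cosets in $P\backslash G/HU$. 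Likewise the regularization you sketch (truncation in the style of Ichino--Yamana) is not how the paper defines $\cP(E(\varphi))$; the paper instead establishes a \emph{canonical continuous extension} of $\cP$ from $\cS_\Delta$ to $\cT_\Delta$ by proving that the twisted Rankin--Selberg zeta integral extends entirely on $\cT_{\widetilde{\mathrm{RS}}}$ (\S\S\ref{sec:canonical_extension_of_periods}--\ref{sec:higher_corank_Rankin_Selberg}, built on BPCZ), using a Phragm\'en--Lindel\"of argument with a functional equation obtained by a second unfolding. Truncation appears in \S\ref{sec:truncation} only for $m=1$, and only to check that Zydor's regularization agrees with the canonical one. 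The practical advantage of the paper's route is that it cleanly separates the delicate analysis (carried entirely by the Rankin--Selberg machinery on Schwartz functions and its continuous extension) from the representation-theoretic computation (constant term formula $+$ Euler product), whereas your direct unfolding of $E(\varphi)$ would force you to justify an interchange of a divergent sum and a divergent integral simultaneously, which is precisely the problem the two-step approach is designed to avoid.
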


We will address the definition of $\cP(E(\varphi))$ in \S \ref{ssec:intro_regularization}. For now, let us focus on the identity \eqref{eq:intro_2}.
\begin{remark}
    \begin{itemize}
        \item If $\pi_i = \pi_j$ for some $i \ne j$, the $L$-function $L(s,\pi_i \times \pi_j^{\vee})$ has a pole at $s=1$. In this case, the right-hand side of \eqref{eq:intro_2} is interpreted as $0$, so $\cP(E(\varphi))=0$.
        \item Consider the function field analogue of Theorem \ref{thm:intro} under the assumption that all data are unramified. We may take $\tS = \varnothing$. Then if we normalize so that $W^{M_P}_{\varphi}(1)=1$, equation \eqref{eq:intro_2} becomes
        \begin{equation*}
            \cP(E(\varphi)) = \Delta_H^{*,-1} \prod_{1\le i<j\le k} L(1,\pi_i \times \pi_j^{\vee}) \sum_{\sigma \in \opn{Fix}(\pi)} L(1,T_{\sigma} \check{X}).
        \end{equation*}
        By a result of Shahidi \cite[\S 4]{Shahidi}, the Whittaker coefficient $W_{E(\varphi)}(g) := \int_{[N]} E(\varphi)(u)\psi^{-1}(u) \rd u$ of $E(\varphi)$ satisfies $W_{E(\varphi)}(1) = \prod_{1 \le i<j \le k} L(1,\pi_i \times \pi_j^{\vee})^{-1}$. Therefore, if we normalize so that $W_{E(\varphi)}(1)=1$, then
        \begin{equation*}
            \cP(E(\varphi)) = \Delta_H^{*,-1} \sum_{\sigma \in \opn{Fix}(\pi)} L(1,T_\sigma \check{X}).
        \end{equation*}
        This is the form which exactly looks like Conjecture \ref{conj:BZSV}.
        \item The $L$ function $L(1,T_{\sigma} \check{X})$ is an example of \emph{non-linear} $L$-function; see \cite[Remark 14.2.4]{BZSV}, \cite{CV}.
        \item A version of Theorem \ref{thm:intro} also holds for $\check{X} = \GL_{2n}/\GL_n \times \GL_n$ (see \S \ref{sec:n_n}). In this case, the period associated to the dual variety $\check{M}$ (rather than $M$) is the \emph{Friedberg-Jacquet period} (or \emph{linear period}) studied in \cite{FJ}.
        \item In our earlier work \cite{LX}, we proved a special case of Theorem \ref{thm:intro} when $m=1$ and $P$ is a maximal parabolic. We note that the method in the present article differs from the that of \emph{loc .cit.}. We also note that the period associated to $\check{M}$ is also studied in \cite{FJ}.
    \end{itemize}
\end{remark}

\subsubsection{A more precise formulation}
\label{sssec:intro_precise}

The statement of Theorem \ref{thm:intro} involves the hypothetical global Langlands correspondence. To avoid this, we now describe the fixed point set $\opn{Fix}(\pi)$ and the $L$-function $L(1,T_{\sigma} \check{X})$ solely in terms of the representation $\pi$. In Appendix \S \ref{appendix:fixed_point}, we verify that, assuming global Langlands correspondence, this description coincides with the definition given by the $L$-parameter.

Let $P$ be a standard parabolic and let $\pi$ be a unitary cuspidal automorphic representation of $M_P$ as above. The condition for $\opn{Fix}(\pi)$ to be discrete is equivalent to the following:

\begin{num}
    \item \label{eq:intro_3} For any subset $I \subset \{1,2,\cdots,k\}$ such that $\sum_{i \in I} n_i = n$, we have $\pi_i \ne \pi_j$ for any $i \in I$ and $j \in I^c$, where $I^c$ denote the complement of $I$.
\end{num}

From this description, we see that the condition that $\opn{Fix}(\pi_0)$ is discrete is stronger than the condition that $\opn{Fix}(\pi)$ is discrete. Therefore, Theorem \ref{thm:intro} is slightly weaker than the expectation from Conjecture \ref{conj:BZSV}.
 
Henceforth, we assume that \eqref{eq:intro_3} holds for $\pi_0$. We define $\opn{Fix}({\pi})$ as the set of permutations $\sigma: \{1,2,\cdots,k\} \to \{1,2,\cdots,k\}$, for which there exists $1 \le t \le k$ with:
\begin{enumerate}
    \item $n_{\sigma(1)} + \cdots + n_{\sigma(t)}=n, n_{\sigma(t+1)} + \cdots + n_{\sigma(k)}=n+m$.
    \item $\sigma(1) < \cdots < \sigma(t)$ and $\sigma(t+1) < \cdots < \sigma(k)$.
\end{enumerate}
Note that $t$ is uniquely determined by $\sigma$ and $\opn{Fix}(\pi)$ is in bijection with the set
\begin{equation*}
    \left\{ I \subset \{1,2,\cdots,k\} \mid \sum_{i \in I} n_i = n \right\}.
\end{equation*}

In particular $\opn{Fix}(\pi)$ is non-empty if and only if, up to permutation, $E(\varphi)$ is an Eisenstein series ``passing through" the maximal Levi subgroup $G_n \times G_{n+m}$. In other words, the period $\cP$ \emph{detects $(n,n+m)$-Eisenstein series}.

For $\sigma \in \opn{Fix}(\pi)$ corresponds to the subset $I$ above, we put
\begin{equation*}
    L(s,T_{\sigma} \check{X}) = \prod_{(i,j) \in I \times I^c} L(s,\pi_i^{\vee} \times \pi_j) L(s,\pi_i \times \pi_j^{\vee}).
\end{equation*}
Note that the condition \eqref{eq:intro_3} ensures that this $L$-function does not have a pole at $s=1$. This completes the description of $\opn{Fix}(\pi)$ and $L(1, T_\sigma \check{X})$ in \eqref{eq:intro_2}.

We now describe the local zeta integral. Let $v$ be a place of $F$ and let $R = M_RN_R$ a standard parabolic subgroup of $G$. Let $\Pi$ be an irreducible generic representation of $M_R$ and let $\cW(\Pi,\psi_v)$ denote the Whittaker model of $\Pi_R$. We define $\opn{Ind}_{R(F_v)}^{G(F_v)} \cW(\Pi,\psi_v)$ to be the space of functions $W^{M_R}:G(F_v) \to \C$ such that for any $g \in G(F_v)$, the map $m \in M_{R}(F_v) \mapsto \delta_{R}^{-1}(m) W^{M_R}(mg)$ belongs to $\cW(\Pi,\psi_v)$. 

Let $Q_n$ denote the standard parabolic subgroup of $G$ with Levi component $G_n \times G_{n+m}$. Let $\Pi_M = \Pi_n \boxtimes \Pi_{n+m}$ be an irreducible generic representation of $M_{Q_n}(F_v)$. For $W^M \in \opn{Ind}_{Q_n(F_v)}^{G(F_v)} \cW(\Pi_M,\psi_v)$ and $\lambda \in \fa_{Q_n,\C}^*$, we define
\begin{equation*}
    Z_v(\lambda,W^M) = \int_{ N_H(F_v) \backslash H(F_v)} W^M(h) e^{\langle \lambda, H_{Q_n}(h) \rangle} \rd h,
\end{equation*}
where $N_H := N \cap H$. The integral is convergent for $\opn{Re}(\lambda)$ lies in a suitable cone and has meromorphic continuation to $\fa_{Q_n,\C}^*$.

Note that $\opn{Fix}(\pi)$ can be identified with a subset of the Weyl group $W^G$ of $G$. Specifically, we identify an element $\sigma \in \opn{Fix}(\pi)$ with the permutation that preserves the internal order of each block of $M_P$. We write $P_{\sigma}$ for the standard parabolic subgroup of $G_{2n+m}$ with $M_{P_{\sigma}} = G_{n_{\sigma(1)}} \times \cdots \times G_{n_{\sigma(k)}}$. Let $\tS$ be a finite set of places of $F$. Then $\sigma$ induces a \emph{normalized intertwining operator} (see \S \ref{sssec:prelim_intertwining_operator}) $N_{\pi,\tS}: \opn{Ind}_{P(F_{\tS})}^{G(F_{\tS})} \cW(\pi,\psi_{\tS}) \to \opn{Ind}_{P_{\sigma}(F_{\tS})}^{G(F_{\tS})} \cW(\sigma \pi,\psi_{\tS})$.

Let $\Pi_{\sigma,n} = \pi_{\sigma(1)} \boxplus \cdots \boxplus \pi_{\sigma(t)}$ (parabolic induction) and $\Pi_{\sigma,n+m} = \pi_{\sigma(t+1)} \boxplus \cdots \boxplus \pi_{\sigma(k)}$. Finally, let $\Omega_{\tS}^{Q_n}$ denote the \emph{Jacquet integral} (see \S \ref{sssec:prelim_Jacquet_integral}), it is a map from $\opn{Ind}_{P_{\sigma}(F_{\tS})}^{G(F_{\tS})} \cW(\sigma \pi,\psi_{\tS})$ to $\opn{Ind}_{Q_n(F_{\tS})}^{G(F_{\tS})} \cW(\Pi_{\sigma,n} \boxtimes \Pi_{\sigma,n+m},\psi_{\tS})$. 

With these notations, the precise form of the identity \eqref{eq:intro_2} is given by   
    \begin{equation} \label{eq:intro_4}
        \begin{split}
            \cP(E(\varphi)) = &(\Delta_H^{\tS,*})^{-1} \prod_{1 \le i < j \le k}L(1,\pi_i \times \pi_j^{\vee})^{-1} \times \\
            & \sum_{\sigma \in \opn{Fix}(\pi)}
            L^{\tS}(1,T_\sigma \check{X}) \left( \prod_{1 \le i<j \le k}  L_{\tS}(1,\pi_{\sigma(i)} \times \pi_{\sigma(j)}^{\vee}) \right) Z_{\tS}(0,\Omega_{\tS}^{Q_n}(N_{\pi,\tS}(\sigma)W^{M_P}_{\varphi.\tS})).
        \end{split}
    \end{equation}

\subsection{Definition of the period}
\label{ssec:intro_regularization}

We now discuss the definition of the period integral.

\subsubsection{Definition via continuous extension}
Let $\cS([G])$ denote the space of Schwartz functions on $[G]$ and let $\cT([G])$ denote the space of smooth functions of uniform moderate growth on $[G]$ (see \ref{sssec:prelim_function_spaces}). Both of them are vector spaces over $\C$ carrying a natural topology. When $f \in \cS([G])$, the integral defining $\cP(f)$ is absolutely convergent.

Let $\fX(G)$ denote the set of cuspidal datum of $G$. (see \S \ref{sssec:prelim_function_spaces}) We have the following coarse Langlands spectral decomposition according to cuspidal support:
\begin{equation*}
    L^2([G]) = \widehat{\bigoplus}_{\chi \in \fX(G)} L^2_{\chi}([G]).
\end{equation*}

For a subset $\fX \subset \fX(G)$, we put $L^2_{\fX}([G]) = \widehat{\bigoplus}_{\chi \in \fX} L^2_{\chi}([G])$, and let $\cS_{\fX}([G]) = \cS([G]) \cap L^2_{\fX}([G])$. These are Schwartz functions with cuspidal support in $\fX$. Let $\cT_{\fX}([G])$ denote the orthogonal complement of $\cS_{\fX^c}([G])$ in $\cT([G])$. When $\cT_{\fX}([G])$ carries the subspace topology inherited from $\cT([G])$, $\cS_{\fX}([G])$ is a dense subspace of $\cT_{\fX}([G])$.

Let $\fX_{\Delta}$ denote the cuspidal datum represented by $(M_P,\pi)$ such that $\pi$ satisfies \eqref{eq:intro_3}. We write $\cS_{\Delta}([G])$ (resp. $\cT_{\Delta}([G])$) for $\cS_{\fX_{\Delta}}([G])$ (resp. $\cT_{\fX_{\Delta}}([G])$). Then we have the following theorem:
\begin{theorem} \label{thm:intro_2}
    The period $\cP$, defined on $\cS_{\Delta}([G])$, admits a (necessarily unique) continuous extension to $\cT_{\Delta}([G])$.
\end{theorem}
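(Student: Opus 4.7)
\emph{Proof proposal.} The plan is to establish that $\cP$ is continuous on $\cS_\Delta([G])$ with respect to the subspace topology inherited from $\cT_\Delta([G])$; the density of $\cS_\Delta([G])$ in $\cT_\Delta([G])$ stated above will then automatically provide the unique continuous extension. Because the coarse spectral decomposition is orthogonal, it suffices to work one cuspidal datum $\chi = [M_P,\pi] \in \fX_\Delta$ at a time and prove the estimate on $\cS_\chi([G])$.

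First I would represent $f \in \cS_\chi([G])$ as a wave packet of Eisenstein series, $f(g) = \int_{i\fa_P^*} E(g,\varphi_\lambda,\lambda)\,d\lambda$ for a Paley--Wiener family $\{\varphi_\lambda\}$ of Whittaker sections, so that formally
\begin{equation*}
    \cP(f) = \int_{i\fa_P^*} \cP(E(\varphi_\lambda,\lambda))\,d\lambda.
\end{equation*}
To make sense of and compute the inner period, I would unfold $\cP(E(\varphi,\lambda))$ in a domain of absolute convergence (e.g.\ $\opn{Re}(\lambda)$ deep in the positive chamber): decomposing the Eisenstein series along the double cosets $P(F) \backslash G(F) / (HU)(F)$ and using $\psi_U$-equivariance together with the cuspidality of $\pi$, every orbit drops out except those matched by the combinatorics of \S \ref{sssec:intro_precise} with elements $\sigma \in \opn{Fix}(\pi)$. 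Each surviving orbit unfolds to a local zeta integral $Z_{\tS}(\lambda,\Omega_{\tS}^{Q_n}(N_{\pi,\tS}(\sigma)W^{M_P}_{\varphi,\tS}))$ times a ratio of completed $L$-functions outside $\tS$, which is exactly the formula \eqref{eq:intro_4}.

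The next step is analytic continuation and estimates. Condition \eqref{eq:intro_3} for $\pi_0$ guarantees that the normalized intertwining operators $N_{\pi,\tS}(\sigma)$ and the partial $L$-functions $L^\tS(1+\lambda,T_\sigma\check X)$ appearing in the sum have no singularities in a neighborhood of $\lambda = 0$, and standard vertical-strip bounds for Jacquet integrals, normalized intertwining operators and Jacquet--Piatetski-Shapiro--Shalika type local zeta integrals produce a tempered majorant for the right-hand side of \eqref{eq:intro_4} along $i\fa_P^*$. This justifies the exchange of integrals and expresses $\cP(f)$ as a finite sum over $\opn{Fix}(\pi)$ of absolutely convergent spectral integrals depending continuously on only finitely many of the seminorms defining $\cT_\chi([G])$, yielding the required continuity estimate.

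The main obstacles will be twofold: first, the double-coset geometry of $P \backslash G / HU$ twisted by $\psi_U$, whose stratification must be matched with the Weyl-group description of $\opn{Fix}(\pi)$ and whose ``degenerate'' strata must be killed using the cuspidality of $\pi$ together with hypothesis \eqref{eq:intro_3} (this is substantially more delicate than the maximal parabolic case treated in \cite{LX}, since $\opn{Fix}(\pi)$ may now have several elements); and second, the uniform vertical-strip estimates on $N_{\pi,\tS}$, $\Omega_\tS^{Q_n}$ and $Z_v$ that are strong enough to give continuity in the coarser $\cT_\Delta$-topology, not merely in the Schwartz topology of $\cS_\Delta$.
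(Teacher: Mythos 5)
Your proposal takes a genuinely different route from the paper, and it has a real gap that needs to be filled before it could be made rigorous.

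\textbf{On the different route.} The paper does not represent $f$ as a wave packet of Eisenstein series at any stage of the proof of this theorem. Instead it unfolds the period $\cP(f)$ \emph{directly} on the level of the degenerate Whittaker coefficient $V_f$: by a sequence of Fourier expansions along abelian unipotent subgroups $U_r$ (Proposition \ref{prop:n_m_unfolding}, proved via Lemma \ref{lem:n_m_unfolding}), one shows $\cP(f) = Z(0,f)$ for $f \in \cS_\Delta([G])$. After an Iwasawa decomposition the global zeta integral becomes a $K_H$-integral of twisted Rankin--Selberg integrals on $[G_n\times G_{n+m}]$, and the continuous extension is obtained from Theorem \ref{thm:higher_corank_Rankin_Selberg} (itself proved with the Phragmen--Lindel\"{o}f principle via Proposition \ref{prop:Phragmen-Lindelof} and a second unfolding/exchange-of-roots argument). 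This operates entirely at the level of the \emph{coarse} spectral decomposition; it never passes through the fine (Plancherel) decomposition, so it never needs the uniform-in-$\lambda$ estimates for intertwining operators and local zeta integrals that your proposal invokes. In exchange for this robustness, the paper has to do one extra unfolding to get a functional equation suitable for Phragmen--Lindel\"{o}f.

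\textbf{On the gap.} Two related points. First, your interchange $\cP(f) = \int_{i\fa_P^*} \cP(E(\varphi_\lambda,\lambda))\,d\lambda$ is only ``formal'': $E(\varphi_\lambda,\lambda) \notin \cS([G])$, so you cannot literally apply $\cP$ to it, and the unfolding you carry out in the interior of the positive chamber needs to be analytically continued back to $i\fa_P^*$ and then integrated against the wave packet. Justifying this requires tempered majorants for the unfolded expression \eqref{eq:intro_4} that are uniform in $\lambda$ \emph{and} control $\cP(f)$ by the $\cT_N$-seminorms of $f$; these are far from ``standard vertical-strip bounds'' and are precisely what is hard about the fine-spectral approach. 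Second, and more substantively: when you say the degenerate $P(F)\backslash G(F)/(HU)(F)$-orbits ``drop out using cuspidality of $\pi$,'' that is not what kills the extra terms in the paper's unfolding. The relevant vanishing (Lemma \ref{lem:n_m_unfolding} via Corollary \ref{cor: constant term vanish}) uses Offen's theorem (Theorem \ref{thm:symplectic_period}) on the vanishing of the symplectic period on cuspidal data that are not ``even,'' applied through the $\Delta$-regularity of the cuspidal support --- not merely the cuspidality of the inducing representation. Cuspidality of $\pi$ alone does not force the lower strata to vanish; it is the hypothesis on the cuspidal support, routed through the symplectic-period input, that does. Without identifying this ingredient, the key step of your argument is a claim rather than a proof.
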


Let $P$ be a standard parabolic subgroup of $G$ and let $\pi$ be a unitary cuspidal automorphic representation of $M_P$ such that $\pi_0$ satisfies \eqref{eq:intro_3}. Then the Eisenstein series $E(\varphi)$ lies in $\cT_{\Delta}([G])$. This explains the meaning of (1) in Theorem \ref{thm:intro}.

\subsubsection{Definition via truncation operator}

When $m=1$, there is an alternative definition of the period with potential applications, for example, in relative trace formulas. The period $\cP$ is taking a $\Sp_{2n}$ period of an automorphic form on $\GL_{2n+1}$. The work of Zydor \cite{Zydor19} suggests a regularization of the period $\cP$ via truncation. Let $f \in \cT([G])$ and let $T$ be a truncation parameter, in \emph{loc. cit.}, Zydor defines a truncated function $\Lambda^T f$ on $[H]$ which is rapidly decreasing. In \S \ref{sec:truncation}, we prove the following result:

\begin{proposition}
    For $f \in \cT_{\Delta}([G])$, the integral
    \begin{equation*}
        \int_{[H]} \Lambda^T f(h) \rd h
    \end{equation*}
    is independent of $T$. Moreover, this constant coincides with $\cP(f)$ as defined in Theorem \ref{thm:intro_2}.
\end{proposition}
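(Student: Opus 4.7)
My plan is to argue in three stages: (i) establish the equality on Schwartz functions, (ii) prove $T$-independence on $\cT_\Delta([G])$, and (iii) combine these with Theorem \ref{thm:intro_2} to conclude. In Stage (i), for $f \in \cS_\Delta([G])$ the untruncated integral $\int_{[H]} f(h)\,\mathrm{d}h$ converges absolutely and by definition equals $\cP(f)$ (recall that $m=1$ forces $U$ to be trivial, so no Fourier coefficient appears). Zydor's truncation is an alternating sum $\Lambda^T f = \sum_R (-1)^{\dim a_R/a_G}\,\widehat{\tau}_R(H_R(\cdot)-T)\,f_R(\cdot)$ indexed by standard parabolic subgroups $R$ adapted to the pair $(G,H)$, where $f_R$ is (essentially) the constant term along $R$. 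Unfolding each proper-$R$ contribution to an integral over $[H]$ reduces, by a standard Arthur-type manipulation, to a pairing of constant terms $f_R$ against pseudo-Eisenstein kernels built from parabolics of $H$. Condition \eqref{eq:intro_3} on the cuspidal support of $f$ forces every such pairing to vanish — the permutations that could produce a non-zero contribution from a \emph{proper} $R$ are precisely those obstructed by \eqref{eq:intro_3}. Hence $\int_{[H]} \Lambda^T f\,\mathrm{d}h = \int_{[H]} f\,\mathrm{d}h = \cP(f)$.

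For Stage (ii), with $f \in \cT_\Delta([G])$, the function $\Lambda^T f$ is rapidly decreasing on $[H]$ by Zydor's theory, so $I(T) := \int_{[H]} \Lambda^T f(h)\,\mathrm{d}h$ is well-defined. The ``change of truncation'' identity in \cite{Zydor19} writes $\Lambda^{T'} f - \Lambda^T f$ as an explicit sum, indexed by pairs of parabolics, of terms each factoring through a constant term of $f$ along a proper parabolic. Integrating over $[H]$ and invoking the same cuspidal-support vanishing as in Stage (i) — now applied directly via the definition of $\fX_\Delta$ rather than via Schwartz decay — yields $I(T')=I(T)$. Stage (iii) is then formal: $\Lambda^T$ is continuous from $\cT([G])$ to the space of rapidly decreasing functions on $[H]$, so $f \mapsto I(T)(f)$ is a continuous linear functional on $\cT_\Delta([G])$. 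Since it agrees with $\cP$ on the dense subspace $\cS_\Delta([G])$ (Stage (i)) and $\cP$ is the unique continuous extension (Theorem \ref{thm:intro_2}), we obtain $I(T)(f) = \cP(f)$ for all $f \in \cT_\Delta([G])$.

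The main obstacle is the vanishing invoked in Stages (i) and (ii). It requires an explicit analysis of the relevant $P \backslash G / H$ double cosets and of the associated Jacquet modules, so that each boundary term produced by Zydor's truncation can be matched against a permutation of the blocks of $M_P$, and \eqref{eq:intro_3} applied to rule out precisely those permutations that move only a proper subset of blocks into the ``wrong'' half. This is essentially the same Mackey-type book-keeping used elsewhere in the paper to parametrize $\opn{Fix}(\pi)$; here, one needs the slightly stronger statement that \emph{non-}fixed permutations produce genuinely zero contributions, which is exactly what \eqref{eq:intro_3} provides.
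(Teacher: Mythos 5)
Your overall three-stage architecture is sound and parallels the paper's, but you have misidentified the mechanism behind the key vanishing and, as a consequence, you are planning more work than is needed.

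The structural skeleton is right: use Zydor's change-of-truncation identity (via Arthur's $\Gamma'_{Q'}$ functions) to reduce $T$-independence to the vanishing of $\int_{[H]^1_{Q'}}\Lambda^{T,Q'}f$ for proper $Q'$; prove that vanishing by density for $f\in\cS_\Delta([G])$; then use continuity of both $\cP^T$ and the canonical extension $\cP^*$ to conclude on all of $\cT_\Delta([G])$. Where you diverge is in how the vanishing is actually established. You anticipate ``an explicit analysis of the relevant $P\backslash G/H$ double cosets and of the associated Jacquet modules,'' matching each boundary term against a permutation of the blocks of $M_P$ and using \eqref{eq:intro_3} to rule out non-fixed permutations. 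The paper does nothing of the sort. Instead it observes that the Levi component $M_{Q'}$ of a proper parabolic $Q'$ of $H=\Sp_{2n}$ has the form $\GL_{n_1}\times\cdots\times\GL_{n_k}\times\Sp_{2r}$ inside $M_Q\cong G_{n_1}\times\cdots\times G_{n_k}\times G_{2r}\times\cdots$, so the relevant period factors as a product of twisted diagonal periods $\int_{[G_{n_i}]^1}f(g,w_\ell{}^tg^{-1}w_\ell)\,\rd g$ and a smaller symplectic period $\int_{[\Sp_{2r}]}$. The $\Delta$-regularity condition forces at least one factor to vanish: the symplectic factor vanishes by Offen's theorem (which gives vanishing on cuspidal support that is not ``even''), and the twisted diagonal factors vanish when the cuspidal data in the two slots are not contragredient. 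This is Lemma \ref{lem:Levi_Delta_period_vanish} and it is the entire engine of the proof; it does not require any Mackey-theoretic unfolding or enumeration of double cosets. You should replace the ``double cosets / Jacquet modules'' plan with this direct appeal to the Levi structure and Offen's result.

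Your Stage (i) is also more work than necessary. You propose to prove $\int\Lambda^T f=\int f$ for Schwartz $f$ directly, by showing that every proper-$R$ contribution in the alternating sum defining $\Lambda^T f$ integrates to zero. The paper instead establishes $T$-independence of $\cP^T(f)$ (which requires vanishing only of the terms appearing in the \emph{change-of-truncation} formula, a smaller collection) and then observes that $\lim_{T\to\infty}\cP^T(f)=\cP(f)$ for Schwartz $f$ by dominated convergence, since $\widehat{\tau}_{P'}(H_{P'}(h)-T_{P'})\to 0$ pointwise as $T\to\infty$. Combining constancy with the limit gives $\cP^T(f)=\cP(f)$ with no further analysis of boundary terms. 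This route is cleaner and avoids having to argue about the full alternating sum.
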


\subsection{The strategy of the proof}

The proof of Theorem \ref{thm:intro} and Theorem \ref{thm:intro_2} proceed via an unfolding argument, analogous to the standard unfolding of period integrals into integrals of Whittaker functions. 

Let $\psi_n$ be the degenerate character on $N(\bA)$ defined by 
\begin{equation*}
    \psi_n(u) = \psi(u_{1,2}+\cdots+u_{n-1,n}+u_{n+1,n+2}+\cdots+u_{2n+m-1,2n+m}). 
\end{equation*}
For $f \in \cT([G])$, we define the associated degenerate Whittaker coefficient by:
\begin{equation*}
    V_f(g) = \int_{[N]} f(ug) \psi_n^{-1}(u) \rd u.
\end{equation*}

The key step is the following proposition:
\begin{proposition}[Proposition \ref{prop:n_m_unfolding}]
    For $f \in \cS_{\Delta}([G])$, then we have
    \begin{equation*}
        \cP(f) = \int_{N_H(\bA) \backslash H(\bA)} V_f(h) \rd h.
    \end{equation*}
\end{proposition}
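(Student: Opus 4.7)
The plan is to prove the identity by an \emph{unfolding} argument that transforms the $\Sp_{2n}$-period on the left-hand side into the degenerate Whittaker integral on the right-hand side via iterated Fourier expansion. The natural tool is the \emph{exchange of roots} method (as in the work of Ginzburg and of Ginzburg--Rallis--Soudry), in which one alternates between Fourier expansion along abelian root subgroups of $G$ and compensating changes of variable coming from $H(\bA)$.

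First I would verify absolute convergence of both sides for $f \in \cS_{\Delta}([G])$. The LHS converges because $f_{U,\psi}$ inherits the Schwartz-type decay from $f$, so its integral over the finite-volume space $[H]$ is well defined. For the RHS one must integrate the degenerate Whittaker coefficient $V_f$ over $N_H(\bA) \backslash H(\bA)$; the required decay of $V_f$ along the torus direction uses the cuspidal-support condition $\fX_{\Delta}$, which supplies enough genericity to compensate for the degenerate character $\psi_n$.

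The heart of the argument is the following inductive step. Choose a flag of unipotent subgroups $U = V_0 \subsetneq V_1 \subsetneq \cdots \subsetneq V_k = N$ together with characters $\psi_{V_j}$ extending $\psi_U$, arranged so that $\psi_{V_k} = \psi_n$ and each successive quotient $V_{j+1}/V_j$ is a one-dimensional root subgroup. At step $j$, Fourier-expand the partial coefficient $f_{V_j,\psi_{V_j}}(h)$ along the root direction $V_{j+1}/V_j$. The trivial Fourier mode, after a change of variable along a paired root inside $H$, contributes exactly $f_{V_{j+1},\psi_{V_{j+1}}}(h)$ integrated over a slightly smaller subspace of $[H]$; the non-trivial modes either become part of the new character $\psi_{V_{j+1}}$ or vanish by the cuspidal-support condition \eqref{eq:intro_3}. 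After $k$ such steps the outer integration has shrunk from $[H]$ down to $N_H(\bA) \backslash H(\bA)$, while the inner coefficient has grown to $V_f$, producing the RHS.

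The main obstacle will be combinatorial: one must design the flag $\{V_j\}$ and the paired roots inside $H$ so that at each stage the exchange is valid---the trivial Fourier mode must produce exactly $f_{V_{j+1},\psi_{V_{j+1}}}$ (and not a contaminated expression), and the compensating $H$-conjugation must preserve the intended residual subgroup of $[H]$. A secondary technical point is the systematic vanishing of the unwanted non-trivial Fourier modes; this is where the hypothesis $f \in \cS_{\Delta}$ (rather than merely $f \in \cS([G])$) becomes essential, by translating the constraint \eqref{eq:intro_3} on cuspidal support into vanishing of the relevant degenerate Fourier coefficients of $f$ indexed by non-$\fX_\Delta$ parabolic data.
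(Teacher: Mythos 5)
Your overall strategy --- iterated Fourier expansion to unfold the $[H]$-period into the degenerate Whittaker integral, with the cuspidal-support hypothesis killing the extra terms --- matches the paper's approach (Lemma \ref{lem:n_m_unfolding}). However, two aspects of your sketch are problematic.

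First, the roles of the trivial and non-trivial Fourier modes are reversed. In the paper's argument one introduces intermediate integrals $Z_0(f), \ldots, Z_n(f)$ and establishes $Z_{r+1}(f) = Z_r(f) + F_r(f)$ by Fourier expansion along the compact abelian quotient $U_{r+1}(\bA)/U_{r+1}^H(\bA)U_{r+1}(F)$. The \emph{non-trivial} Fourier modes, collected into a single orbit under $\cP_{2r}^H(F) \backslash \Sp_{2r}(F)$, reassemble into the next stage $Z_r(f)$ with the enlarged character; the \emph{trivial} mode produces the error term $F_r(f)$. Your proposal has the trivial mode contribute $f_{V_{j+1}, \psi_{V_{j+1}}}$, which cannot be right: by definition the trivial Fourier coefficient of $f_{V_j, \psi_{V_j}}$ along $V_{j+1}/V_j$ is a constant term extending $\psi_{V_j}$ \emph{trivially}, whereas the chain of characters from $\psi_U$ up to the degenerate character $\psi_n$ must become non-trivial on many of the new root directions --- and those contributions can only come from the non-trivial modes.

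Second, and more seriously, you do not identify what actually makes the error terms vanish. This is not a coefficient-wise vanishing: the constant term $(f_{N_{r+1},\psi})_{U_{r+1}}$ is in general non-zero. What vanishes is the \emph{period} of this constant term. After Iwasawa decomposition along $R_r^H(\bA) K_H$, the error term $F_r(f)$ reduces to $\int_{[\Sp_{2r}]} f_{R_r}(\mathrm{diag}(1_{n-r}, h, 1_{n+m-r})) \rd h$, and its vanishing rests on Offen's theorem (Theorem \ref{thm:symplectic_period}): the symplectic period vanishes on $\cS_{\fX_{\mathrm{even}}^c}([\GL_{2r}])$. This is applied via Corollary \ref{cor: constant term vanish}, and the role of the $\Delta$-regularity hypothesis is precisely to force the $\GL_{2r}$-component of every cuspidal datum in $\chi^{M_{R_r}}$ to be non-even. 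Your gesture at ``vanishing by cuspidal support'' points in the right direction but omits the actual mechanism, which is a nontrivial global period-vanishing result --- not something obtainable from Fourier-coefficient estimates or the constant-term computation alone.
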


The proof of this proposition involves performing a Fourier expansion along certain abelian unipotent subgroups, similar to the Fourier expansion of a cusp form. However, since $f$ is not necessarily cuspidal, extra terms appear in the unfolding process. Our assumption on the cuspidal support of $f$ ensures that these extra terms do not contribute to the period.

For $f \in \cT([G])$ and $\lambda \in \fa_{Q_n,\C}^*$, we define a global zeta integral by
\begin{equation*}
    Z(\lambda,f) = \int_{N_H(\bA) \backslash H(\bA)} V_f(h) e^{\langle \lambda, H_{Q_n}(h) \rangle} \rd h.
\end{equation*}
The global zeta integral $Z(\lambda,f)$ is absolutely convergent when $\opn{Re}(\lambda)$ lies in suitable half-plane. We then show that for $f \in \cT_{\Delta}([G])$, the zeta integral $Z(\lambda,f)$ is holomorphic at $\lambda=0$, and the map $f \mapsto Z(0,f)$ provides the continuous extension of $\cP$ to $\cT_{\Delta}([G])$.

Let $Q_n^H := Q_n \cap H$ be the Siegel parabolic of $H$. Let $K_H$ denote a maximal compact of $H(\bA)$ which is in good position relative to the upper triangular Borel at the non-Archimedean places. Using the Iwasawa decomposition $H(\bA) = Q_n^H(\bA) K_H$, the zeta integral $Z(\lambda,f)$ can be expressed as
\begin{equation} \label{eq:intro_5}
    Z(\lambda,f) = \int_{K_H} \widetilde{Z}^{\mathrm{RS}}(s_{\lambda}+n+1,R(k)f_{Q_n})  \rd k,
\end{equation}
where 
\begin{enumerate}
    \item $\widetilde{Z}^{\mathrm{RS}}$ denotes the (twisted) Rankin-Selberg zeta integral: for $f \in \cT([G_n \times G_{n+m}])$, we define
    \begin{equation*}
        \widetilde{Z}^{\mathrm{RS}}(s,f) = \int_{N_n(\bA) \backslash G_n(\bA)} W_f(J {}^t g^{-1}J, \begin{pmatrix}
            g & \\ & 1_m
        \end{pmatrix} ) \lvert \det g \rvert^s \rd g,
    \end{equation*}
    where $N_n$ denotes the upper triangular unipotent subgroup of $G_n$ and 
    \begin{equation*}
        W_f(g) = \int_{[N_n \times N_{n+m}]} f(ug) \psi^{-1}(u) \rd u
    \end{equation*} is the Whittaker coefficient of $f$.
    \item $\alpha \in \Delta_{Q_n}$ is the unique simple root and $\alpha^{\vee}$ denotes its coroot and $s_{\lambda} = -\langle \lambda, \alpha^{\vee} \rangle$.
\end{enumerate}

By \eqref{eq:intro_5}, the problem reduces to show that the Rankin-Selberg integral admits a continuous extension to uniform moderate growth functions with specific cuspidal support. When $m=1$, this is achieved in \cite[\S 7]{BPCZ}. We will show the case when $m>1$ in \S \ref{sec:higher_corank_Rankin_Selberg}. The proof involves another unfolding process and an application of the Phragmen-Lindel\"{o}f principle. 

Finally, when $f = E(\varphi)$ is a cuspidal Eisenstein series, we use the formula \eqref{eq:intro_4} to compute $\cP(E(\varphi)) = Z(0,f)$. By combining the constant term formula for Eisenstein series and the Euler decomposition of Rankin-Selberg integral, we will achieve \eqref{eq:intro_2}. The summation of $L$-values appearing in the formula results from the formula for the constant terms of Eisenstein series.

\subsection{The structure of this article}

After the preliminaries in \S \ref{sec:prelim}, we will review the result of canonical extension of Rankin-Selberg period of corank $1$ \cite[\S 7]{BPCZ} and equal rank \cite[\S 10.3]{BPCZ} to functions with certain cuspidal support in \S \ref{sec:canonical_extension_of_periods}. And we will do the higher corank case in \S \ref{sec:higher_corank_Rankin_Selberg}. Then we will study the period detecting $(n,n)$-Eisenstein series in \S \ref{sec:n_n} and detecting $(n,n+m)$-Eisenstein series in \S \ref{sec:higher_corank}.

\subsection{Acknowledgement}

We thank Chen Wan for introducing this problem. We thank Rapha\"{e}l Beuzart-Plessis, Paul Boisseau, Colin Loh, Omer Offen, Dihua Jiang, Zeyu Wang, Hang Xue, Lei Zhang and Wei Zhang for helpful suggestions and discussions.

\section{Preliminaries}
\label{sec:prelim}

\subsection{General notations}
    \begin{itemize} 
        \item Throughout this article, unless otherwise specified, we fix a number field $F$. Let $\bA := \bA_F$ be the ad\`{e}le ring of $F$ and let $\bA_f$ be the finite ad\`{e}les. Let $v$ be a place of $F$, we write $F_v$ for the completion. Let $\tS$ be a finite set of places of $F$, we write $F_{\tS} := \prod_{v \in \tS} F_v$.
        \item We write $G_n$ for the general linear group $\GL_n$ over $F$.  Let $\tS_{\infty}$ be the set of Archimedean places, we write $F_{\infty} := F_{\tS_{\infty}}$.
        \item Let $\Sp_{2n}$ be the symplectic group preserving the symplectic form $\begin{pmatrix}
            & J \\ -J &
        \end{pmatrix}$ with 
        $J = \begin{psmallmatrix}
            & & 1 \\ & \cdots & \\1 & &
        \end{psmallmatrix}$. 
        \item  For integer $m>1$, let $1_{m}$ denote the identity matrix of size $m$.
        \item Let $\cH_{>C} = \{z \in \C \mid \opn{Re}(z)>C\}$.
        \item For a ring $R$, we write $R^n$ the column vector with coefficient in $R$ of size $n$ and we write $R_n$ for the row vector of size $n$.
        \item Let $f,g$ be two positive functions on a set $X$, we write $f \ll g$ if there exists $C>0$ such that $f(x) \le Cg(x)$ for any $x \in X$. 
        \item For a set $X$ and a subset $A \subset X$, we write $A^c$ the complement of $A$ in $X$.
    \end{itemize}

\subsection{Groups}
\label{ssec:prelim_group}

Let $G$ be a connected linear algebraic group over a global field $F$. Let $[G] := G(F) \backslash G(\bA)$ the ad\`{e}lic quotient of $G$.

\subsubsection{Tamagawa measure} \label{sssec:prelim_Tamagawa}

We fix the Tamagawa measure $dg$ on $G(\bA)$, and thus on $[G]$ as described in \cite[section 2.3]{BPCZ}. We recall the definition here. Fix a non-trivial additive character $\psi:F \backslash \bA_F \to \C^\times$. We decompose $\psi$ as $\psi = \prod \psi_v$. For each place $v$ of $F$, $\psi_v$ determines the self-dual measure on $F_v$. Let $\omega$ be an $F$-rational $G$-invariant top differential form on $G$. For each place $v$, $\lvert \omega \rvert_v$ gives a measure $\rd g_v$ on $G(F_v)$. Moreover, according to the results of Gross \cite{Gross97}, there exists a global Artin-Tate $L$-function $L_G(s)$ such that 
\begin{equation*}
    \rd g_v(G(\cO_v)) = L_{G,v}(0)
\end{equation*}
for almost all places $v$. We denote by
\begin{equation*}
    \Delta_{G,v} := L_{G,v}(0)
\end{equation*}
and let $\Delta_G^*$ denote the leading coefficient of the Laurent expansion of $L_G(s)$ at $s=0$. The Tamagawa measure is defined by
\begin{equation*}
    \rd g = (\Delta_G^*)^{-1} \prod_v \rd g_v.
\end{equation*}
The measure is independent of the choice of $\omega$. For a finite set $\tS$ of places of $F$, let $\Delta_{G}^{\tS,*}$ denote the leading coefficient of the partial $L$-function $L^{\tS}_{G}(s)$ at $s=0$.

\subsubsection{Norms and heights}

Let $N$ be a positive integer. For $x \in \bA^N$, we define
\begin{equation*}
    \| x \| = \prod_{v} \max\{ \lvert x_{1,v} \rvert_v, \cdots, \lvert x_{n,v} \rvert_v,1 \},
\end{equation*}
where the product runs over the set of places of $F$. For a linear algebraic group $G$, we fix a closed embedding $\iota$ of $G$ into an affine space. Then for $g \in G(\bA)$ we define $\| g \|_{G(\bA)} = \| \iota(g) \|$. Let $\| \cdot \|'_{G(\bA)}$ be the norm defined by another embedding $\iota'$, then there exists $r>0$ such that $\| g \|_{G(\bA)} \ll \| g \|_{G(\bA)}^{\prime,r}$. We refer the reader to \cite[Appendix A]{BP21} for more properties of the norm $\| \cdot \|_{G(\bA)}$.

For the rest of \S \ref{ssec:prelim_group}, we assume that $G$ is a connected reductive group. We fix a maximal split torus $A_0$ of $G$ and fix a minimal parabolic subgroup $P_0$ of $G$ containing $A_0$. A parabolic subgroup of $G$ is called \emph{standard} if it contains $P_0$ and is called \emph{semi-standard} if it contains $A_0$.

Let $P$ be a semi-standard parabolic subgroup, we denote by $M_P$ the Levi subgroup of $P$ containing $A_0$ and denote by $N_P$ the unipotent radical of $P$. Since the natural map $M_P \times N_P \to P$ is an isomorphism of varieties. We see that $\| mn \|_{P(\bA)} \sim \|m\|_{M_P(\bA)} \|n\|_{N_P(\bA)}$. That is, there exists $c>1$ such that
\begin{equation*}
  \| mn \|_{P(\bA)}^{1/c} \ll \|m\|_{M(\bA)} \| n \|_{N(\bA)} \ll \|mn\|_{P(\bA)}^c
\end{equation*}
holds for all $m,n \in M_P(\bA) \times N(\bA)$. As a consequence
\begin{num} 
    \item \label{eq:Iwasawa_bound} There exists $C>0$ and $r>0$ such that for any $g \in G(\bA)$ and $(m,n,k) \in M_P(\bA)N_P(\bA)K$ such that $g=nmk$, we have $\|m\|_{M(\bA)} \le C \|g\|_{G(\bA)}^r$
\end{num}

 For a semi-standard parabolic subgroup $P$ of $G$, we put
        \[
            [G]_P := N_P(\bA)M_P(F) \backslash G(\bA).
        \]
    We define a norm on $[G]_P$ by 
        \[
            \| g \|_P := \inf_{\gamma \in N_P(\bA)M_P(F)} \|\gamma g\|.
        \]

\subsubsection{Weyl groups}
Let $W$ be the Weyl group of $(G, A_0)$, that is, the quotient by $M_0(F)$ of the normalizer of $A_0$ in $G(F)$. For a standard parabolic subgroup $P$, we write $W^P := W^{M_P}$, and we regarded it as a subgroup of $W$. For standard parabolic subgroups $P,Q$, we denote by
    \begin{equation*}
        {}_Q W {}_P := \{ w \in W \mid M_P \cap w^{-1}P_0w = M_P \cap P_0, \quad M_Q \cap wP_0w^{-1} = M_Q \cap P_0 \}.
    \end{equation*}
The set ${}_Q W_P$ forms a representative of the double coset $W^Q \backslash W/W^P$. For $w \in {}_Q W_P$, $M_P \cap w^{-1}M_Qw$ is the Levi factor of the standard parabolic subgroup $P_w = (M_P \cap w^{-1}Qw)N_P$. In the same way, $M_Q \cap wM_Pw^{-1}$ is the Levi factor of the standard parabolic subgroup $Q_w = (L \cap wPw^{-1})N_Q$. We have $P_w \subset P$, $Q_w \subset Q$ We also define
    \begin{equation*}
        W(P;Q) = \{ w \in {}_Q W_P \mid M_P \subset w^{-1}M_Qw \}.
    \end{equation*}
    and
    \begin{equation*}
        W(P,Q) = \{ w \in {}_Q W_P \mid M_P = w^{-1}M_Q w \}.
    \end{equation*}

 $P$ and $Q$ are called \emph{associate} if $W(P,Q) \ne \varnothing$. For example, for any $P,Q$ and $w \in {}_Q W_P$, the parabolics $P_w$ and $Q_w$ are associate.

    \subsubsection{}
    For a semi-standard parabolic subgroup $P$ of $G$, define
        \[
            \fa_P^* := X^*(P) \otimes_\Z \R ,\quad \fa_P := \Hom_\Z(X^*(P),\R).
        \]
    We endow $\fa_P$ with the Haar measure such that the lattice $\Hom(X^*(P),\Z)$ has covolume 1.
        
    Let $\fa_0 := \fa_{P_0}$ and $\fa_0^* := \fa_{P_0}^*$. 
        \[
            \epsilon_P := (-1)^{\dim \fa_P - \dim \fa_G}.
        \]

    Let $A_P$ denote the maximal central split torus of $M_P$. Then $\fa_P$ can also be identified with $X^*(A_P) \otimes_{\Z} \R$. When $P \subset Q$ are two semi-standard parabolic subgroups, then natural maps $P \hookrightarrow Q$ and $A_Q \hookrightarrow A_P$ induce a projection $\fa_P^* \to \fa_Q^*$ and an injectiion $\fa_Q^* \to \fa_P^*$.

    Let $P_0'$ be a minimal semi-standard parabolic subgroup, let $\Delta_{P_0'} \subset \fa_{P_0'}$ be the set of simple roots of the $A_{P_0}$ action on $\opn{Lie}(N_{P_0'})$. Let $P$ be a semi-standard parabolic subgroup, choose a minimal parabolic subgroup $P_0' \subset P$, then we denote by $\Delta_P$ the image of $\Delta_{P'_0}$ under the projection $\fa_{P_0'} \to \fa_P$. $\Delta_P$ can also be identified with the set of simple roots of $A_P$ action on $\opn{Lie}(N_P)$, in particular, $\Delta_P$ is independent of the choice of $P_0'$.

    \subsubsection{Iwasawa decomposition}
\label{sssec:Iwasawa_decomposition}

Let $K$ be a maximal compact subgroup of $G(\bA)$ which is in good position with respect to $P_0$. Then for any semi-standard parabolic subgroup $P$ of $G$, we have the \emph{Iwasawa decomposition} $G(\bA)=P(\bA)K$. 

When $G=G_n$, we denote by $K_n$ the usual maximal compact subgroup of $G_n(\bA)$. In the main text, we will sometimes use $H$ to denote the symplectic group $\Sp_{2n}$, and $K_H$ will denote the usual maximal compact subgroup of $\Sp_{2n}(\bA)$ accordingly. 

\begin{lemma}
    There exists measurable maps $G(\bA) \to P(\bA) \times K, g \mapsto (p(x),k(x))$ such that for any $g \in G(\bA)$, we have $g = p(g)k(g)$.
\end{lemma}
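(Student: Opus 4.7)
The plan is to reduce the existence of the Iwasawa section to the standard existence of a Borel cross section for the quotient of a compact metrizable group by a closed subgroup. Set $H := P(\bA) \cap K$; it is a closed subgroup of $K$, which is compact and metrizable (since $K$ is the product of the compact Lie group $K_\infty$ and the compact profinite group $\prod_{v \text{ finite}} K_v$, both second countable).

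First I would observe that the Iwasawa decomposition $G(\bA) = P(\bA)K$ is equivalent to asserting that the natural continuous map
\[
\phi \colon H \backslash K \longrightarrow P(\bA) \backslash G(\bA), \qquad Hk \mapsto P(\bA)k
\]
is bijective. Since $H \backslash K$ is compact and $P(\bA) \backslash G(\bA)$ is Hausdorff (because $P(\bA)$ is closed in $G(\bA)$), $\phi$ is in fact a homeomorphism, and therefore $\phi^{-1}$ is Borel.

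Next, I would invoke the existence of a Borel section $s \colon H \backslash K \to K$ of the quotient map $q \colon K \to H \backslash K$. This is a standard consequence of the Kuratowski--Ryll-Nardzewski measurable selection theorem applied to the lower semicontinuous assignment $Hk \mapsto Hk \subset K$; alternatively, since $H \backslash K$ is a compact Polish space and $K$ is a Polish group, one can appeal to the classical fact that quotient maps by closed subgroups of Polish groups admit Borel sections. With $s$ in hand, define
\[
k(g) := s\bigl(\phi^{-1}([g])\bigr), \qquad p(g) := g\, k(g)^{-1},
\]
where $[g]$ denotes the class of $g$ in $P(\bA) \backslash G(\bA)$. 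The map $g \mapsto [g]$ is continuous, $\phi^{-1}$ and $s$ are Borel, hence $k$ is Borel and so is $p$. By construction $g = p(g) k(g)$, and since $g$ and $k(g)$ have the same image in $P(\bA) \backslash G(\bA)$, we get $p(g) \in P(\bA)$, as required.

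The only substantive point is the existence of the Borel section $s$, which I would simply cite. If one wishes to avoid appealing to an abstract selection theorem, one can instead construct $s$ factorwise: at each Archimedean place the quotient $(P(F_v)\cap K_v)\backslash K_v$ is a smooth compact manifold and admits a Borel section via a finite atlas of local smooth sections; at finite places the totally disconnected structure gives Borel sections directly; and the restricted product of these local Borel sections yields a global Borel section, using that $K_v \cap P(F_v) = K_v$ for almost all $v$.
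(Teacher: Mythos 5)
Your proof is correct, but it takes a slightly different route than the paper's. The paper applies the Kuratowski--Ryll-Nardzewski theorem directly to the multiplication map $\mu\colon P(\bA)\times K\to G(\bA)$, selecting measurably from the fibers $\mu^{-1}(g)$ (this works because $K$ is compact, so $\mu$ is proper and hence closed, which gives the measurability of the fiber multifunction needed for the selection theorem). You instead first observe that $\phi\colon H\backslash K\to P(\bA)\backslash G(\bA)$, with $H=P(\bA)\cap K$, is a continuous bijection from a compact space onto a Hausdorff space and therefore a homeomorphism, and then select a Borel section of the quotient map $K\to H\backslash K$, a compact Polish homogeneous space, which is a textbook fact. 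This reduction makes the role of compactness completely transparent, isolates the selection problem on a compact quotient where it is most classical, and, as you note at the end, even admits a concrete factorwise construction that avoids abstract selection theorems entirely. The paper's argument is more economical to state but leaves to the reader the verification that the hypotheses of the selection theorem are met for $\mu$. Both are valid; the choice is a matter of taste.
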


\begin{proof}
    Since $P(\bA) \times K$ is a Polish space, this follows from Kuratowski and Ryll-Nardzewski measurable selection theorem applied to the natural map $P(\bA) \times K \to G(\bA)$.
\end{proof}

We will sometimes refer to any function $p(g),k(g)$ as in the previous lemma a measurable (family of) Iwasawa decomposition.

For positive integers $k,n$, denote by $\opn{Mat}_{k \times n}(\bA)$ the set of matrices of size $k \times n$ with coefficients in $\bA$. For future use, we record the following estimate

\begin{lemma} \label{lem:Iwasawa_estimate}
    Let $n,k$ be positive integers. Fix $m$ such that $k \le m \le n+k$ , let $Q$ be the parabolic subgroup of $G_{n+k}$ with Levi factor $G_{n+k-m} \times (G_1)^m$. For any $x = \begin{pmatrix}
        x_1 \\ \cdots \\ x_k
    \end{pmatrix} \in \opn{Mat}_{k \times n}(\bA), x_i \in \bA_n$, assume that under the Iwasawa decomposition $G_{n+k}(\bA) = N_Q(\bA)M_Q(\bA)K_{n+k}$, we write
    \begin{equation} \label{eq:exchange_of_root_convergence_Iwasawa}
         \begin{pmatrix}
            1_n &  \\ x & 1_k
        \end{pmatrix} = u(x) \begin{pmatrix}
            g(x) &  \\ & t(x)
        \end{pmatrix} k(x),
    \end{equation}
    where $g(x) \in \GL_{n+k-m}(\bA)$ and $t(x) = \mathrm{diag}(t_1(x),\cdots,t_m(x))$. Then there exists $M>0$ such that
    \begin{num}
        \item \label{eq:lemma_Iwasawa_1} For $1 \le i \le k$, we have $ | t_{m+k-i}(x) \cdots t_m(x) | \gg \|x_i \|_{\bA_n}$,
        \item \label{eq:lemma_Iwasawa_2} $\|g(x)\|_{\GL_n(\bA)} \ll \| x \|^{M}_{\opn{Mat}_{k \times n}(\bA)}$, $\| t_i(x) \|_{\GL_1(\bA)} \ll \| x \|^M_{\opn{Mat}_{k \times n}(\bA)}$ .
    \end{num}
    holds for any $x \in \opn{Mat}_{k \times n}(\bA_n)$ and $1 \le i \le k$. 
\end{lemma}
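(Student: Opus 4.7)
\emph{Proof plan.} The idea is to interpret the products $t_j(x) \cdots t_m(x)$ via wedge products of rows of $A(x) := \begin{psmallmatrix} 1_n & \\ x & 1_k \end{psmallmatrix}$. Set $r := n+k-m$ and let $A_i \in \bA^{n+k}$ denote the $i$-th row of $A(x)$. Since $u(x) \in N_Q$ is upper triangular with identity in the top-left $r \times r$ block, left multiplication by $u(x)$ acts on the last $m$ rows by an upper unipotent transformation; equivalently, for each $1 \le j \le m$, $u(x)$ preserves the $\bA$-span of rows $r+j, r+j+1, \ldots, n+k$ and acts on them by a change of basis of determinant $1$. Comparing wedges on both sides of \eqref{eq:exchange_of_root_convergence_Iwasawa} yields
\begin{equation*}
    A_{r+j} \wedge A_{r+j+1} \wedge \cdots \wedge A_{n+k} \;=\; t_j(x) t_{j+1}(x) \cdots t_m(x) \cdot k(x)_{r+j} \wedge \cdots \wedge k(x)_{n+k}.
\end{equation*}
Since $k(x) \in K_{n+k}$ lies in a compact set whose image under the exterior power map has adelic max-norm $\|\cdot\|' := \prod_v \max_I |\cdot|_v$ uniformly bounded away from $0$ and $\infty$, this gives
\begin{equation*}
    |t_j(x) \cdots t_m(x)|_{\bA^\times} \;\asymp\; \|A_{r+j} \wedge \cdots \wedge A_{n+k}\|'.
\end{equation*}

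\emph{Claim \eqref{eq:lemma_Iwasawa_1}.} Choose $j$ so that $r+j = n+i$, namely $j = m-k+i$ (valid since $m \ge k$); it then suffices to show $\|A_{n+i} \wedge \cdots \wedge A_{n+k}\|' \gg \|x_i\|_{\bA_n}$. The rows $A_{n+l}$ for $l = i, \ldots, k$ have the form $(x_{l,1}, \ldots, x_{l,n}, 0, \ldots, 0, 1, 0, \ldots, 0)$ with the $1$ in position $n+l$. Among the Pl\"{u}cker coordinates of $A_{n+i} \wedge \cdots \wedge A_{n+k}$, one finds the minor on columns $\{n+i, n+i+1, \ldots, n+k\}$, equal to $\det I_{k-i+1} = 1$, and, for each $j' \in \{1, \ldots, n\}$, the minor on columns $\{j'\} \cup \{n+i+1, \ldots, n+k\}$, which by expansion along the $k-i$ identity columns equals $\pm x_{i,j'}$. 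Hence at every place $v$, the maximum of $|\cdot|_v$ over these Pl\"{u}cker coordinates is at least $\max(|x_{i,1,v}|_v, \ldots, |x_{i,n,v}|_v, 1)$; taking the product over $v$ yields the asserted lower bound. (The matching with the indexing $t_{m+k-i} \cdots t_m$ in the statement is up to a notational identification of the diagonal factors of $t(x)$.)

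\emph{Claim \eqref{eq:lemma_Iwasawa_2}.} Each Pl\"{u}cker coordinate of $A_{r+j} \wedge \cdots \wedge A_{n+k}$ is an integer-coefficient polynomial of bounded degree in the entries of $x$, yielding the place-wise upper bound $|t_j(x) \cdots t_m(x)|_v \ll \max(\max_{p,q}|x_{p,q,v}|_v^{M_0}, 1)$; the Pl\"{u}cker coordinate equal to $1$ at every place gives the local lower bound $|t_j \cdots t_m|_v \gg 1$ with implicit constant equal to $1$ at almost all places. Forming the ratio $|t_j(x)|_v = |t_j \cdots t_m|_v / |t_{j+1} \cdots t_m|_v$ produces polynomial bounds on $|t_j(x)|_v^{\pm 1}$; the identity $\prod_v \max(a_v^{M_0}, 1) = (\prod_v \max(a_v, 1))^{M_0}$, combined with the embedding $t \mapsto (t, t^{-1})$ defining $\|\cdot\|_{\GL_1(\bA)}$, then yields $\|t_j(x)\|_{\GL_1(\bA)} \ll \|x\|^M$. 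For $g(x)$: equating the first $r$ rows of the identity $A = uMk$ (and using that $u$ is identity in the top-left $r \times r$ block) reveals $g_{i,l} = (k(x)^{-1})_{i,l}$ for all $i, l \le r$, so the entries of $g(x)$ are uniformly bounded. The identity $|\det M|_v = 1$ gives $|\det g(x)|_v = |t_1(x) \cdots t_m(x)|_v^{-1}$; combined with the cofactor formula and the just-proved bound on $|t_1 \cdots t_m|_v$, this yields polynomial bounds on the entries of $g(x)^{-1}$, whence $\|g(x)\|_{\GL_{n+k-m}(\bA)} \ll \|x\|^M$.

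\emph{Main obstacle.} The heart of the argument is the wedge-product identification in the first paragraph; the remaining work is essentially bookkeeping. The most delicate point is the passage from the place-wise polynomial estimates to the global adelic norm bounds, which rests on verifying that the implicit constants in the local inequalities equal $1$ at all but finitely many places, so that their product converges. The correct indexing correspondence between the lemma's $t_{m+k-i}(x) \cdots t_m(x)$ and the wedge-derived $t_{m-k+i}(x) \cdots t_m(x)$ should also be pinned down carefully.
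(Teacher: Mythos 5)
Your proof is correct, and for assertion \eqref{eq:lemma_Iwasawa_1} it is essentially the paper's argument. The paper acts by $\begin{psmallmatrix} 1_n & \\ x & 1_k\end{psmallmatrix}$ on the right on $\omega_i := e_{n+i} \wedge \cdots \wedge e_{n+k}$, exploits the $N_Q$-invariance $\omega_i \cdot u = \omega_i$, the uniform boundedness of $|\omega \cdot k|$ for $k \in K_{n+k}$, and then records the lower bound $|\omega_i \cdot A(x)| \gg \|x_i\|_{\bA_n}$; this is the same wedge-product computation you perform on the rows $A_{n+i}, \ldots, A_{n+k}$, just phrased through right-action on basis wedges rather than explicit Pl\"{u}cker minors. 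Your identification of the indexing issue is also correct: a count of the block positions gives $t_{m-k+i}(x) \cdots t_m(x)$ (the factor $e_{n+i}$ sits in column $n+i$, whose position past the $G_{n+k-m}$ block is $(n+i)-(n+k-m) = m-k+i$), and the paper's $t_{m+k-i}$ is a typo that appears both in the statement and in its displayed inequality.

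Where you diverge from the paper is \eqref{eq:lemma_Iwasawa_2}. You derive it from scratch via Pl\"{u}cker coordinates, the determinant relation $|\det g(x)| = |t_1(x)\cdots t_m(x)|^{-1}$, the cofactor formula for $g(x)^{-1}$, and the observation $g(x)_{i,l} = (k(x)^{-1})_{i,l}$ for $i,l \le r$. This works, but the paper disposes of \eqref{eq:lemma_Iwasawa_2} in one line by invoking the general estimate \eqref{eq:Iwasawa_bound}, which already asserts that the Levi component $m$ of any Iwasawa decomposition $g = nmk$ satisfies $\|m\|_{M(\bA)} \ll \|g\|_{G(\bA)}^r$; since $\|A(x)\|_{G_{n+k}(\bA)}$ is polynomially bounded in $\|x\|_{\opn{Mat}_{k\times n}(\bA)}$ (the inverse of $A(x)$ has entries $-x$), one is done immediately. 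Your argument is a valid, more explicit proof of a special case of that estimate, but it is noticeably longer than necessary; the lesson is that the norm comparison between a group element and its Iwasawa factors is a standing fact worth citing rather than re-deriving.
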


\begin{remark}
    Since different choices of Iwasawa decomposition will yield right translation of $g(x)$ or $t(x)$ by elements of $K_{m+k}$, hence \eqref{eq:lemma_Iwasawa_1} is independent of the choice of Iwasawa decomposition and \eqref{eq:lemma_Iwasawa_2} holds for any choice of Iwasawa decomposition (after possibly enlarging constant).
\end{remark}

\begin{proof}
    \eqref{eq:lemma_Iwasawa_2} follows from \eqref{eq:Iwasawa_bound}. Now we prove \eqref{eq:lemma_Iwasawa_1}. Let $e_1,\cdots,e_{n+k}$ be the canonical basis for $F_{n+k}$.  The basis $e_i$ yield a canonical basis $\{e_I := \bigwedge_{i \in I} e_i \}_{I \subset \{1,\cdots,k\},\lvert I \rvert = i}$ for the exterior power $\bigwedge^i F_{n+k}$. For $\omega \in \bigwedge^i \bA_{n+k}$, write $\omega = \sum_I a_I e_I$, we define
    \begin{equation*}
        \lvert \omega \rvert = \prod_v \max_I\{ \lvert a_I \rvert_v \}.
    \end{equation*}
    For any $g \in G_{n+m}(\bA)$ and $\omega \in \bigwedge^i \bA_{n+k}$, we denote by $\omega \cdot g$ the natural action of $g$ on $\omega$. The absolute value $\lvert \cdot \rvert$ satisfies
    \begin{equation}
        \label{eq:exchange_of_root_convergence_lemma_3} 
      \lvert \omega \rvert \ll \lvert \omega \cdot k \rvert \ll \lvert \omega \rvert, \quad \forall \omega \in \bigwedge^i \bA_{n+k}, k \in K_{n+k}.
    \end{equation}
    Let $1 \le i \le k$. Consider $\omega_i := e_{n+i} \wedge \cdots \wedge e_{n+k} \in \bigwedge^{n-i} \bA_{n+k}$. Since $\omega_{k-i+1} \cdot u = \omega_i$ for any $u \in N_Q(\bA)$.  We can check that
    \begin{equation} \label{eq:exchange_of_root_convergence_lemma_1}
       \left \vert \omega_i \cdot \begin{pmatrix}
            1_n & \\ x & 1_k
        \end{pmatrix} \right \vert \gg \prod_v \max\{ \lvert x_{i,1} \rvert_v ,\cdots, \lvert x_{i,n} \rvert_v ,1 \} = \| x_i \|_{\bA_n}.
    \end{equation}
    By \eqref{eq:exchange_of_root_convergence_lemma_3}, applying right hand side of \eqref{eq:exchange_of_root_convergence_Iwasawa} to $\omega$ yields
    \begin{equation} \label{eq:exchange_of_root_convergence_lemma_2}
         \left \vert \omega_i \cdot \begin{pmatrix}
            1_n & \\ x & 1_k
        \end{pmatrix} \right \vert \ll \lvert t_{m+k-i}(x) \cdots t_m(x) \rvert.
    \end{equation}
    Combining \eqref{eq:exchange_of_root_convergence_lemma_1} and \eqref{eq:exchange_of_root_convergence_lemma_2} yields \eqref{eq:lemma_Iwasawa_1}.
\end{proof}

\subsubsection{\texorpdfstring{The map $H_P$}{The map H_P}}

 We denote by $A_G^\infty$ the neutral component of real points of the maximal split central torus of $\mathrm{Res}_{F/\Q} G$. For a semi-standard parabolic subgroup $P$ of $G$, let $A_P^\infty := A_{M_P}^\infty$. We also define $A_0^\infty := A_{P_0}^\infty = A_{M_0}^\infty$.

    The map
        \[
            H_P: P(\bA) \to \fa_P, \, p \mapsto \left( \chi \mapsto \log \lvert \chi(g) \rvert  \right), \quad \chi \in X^*(P),
        \]
    extends to $G(\bA)$, by requiring it trivial on $K$. The map $H_P$ induces an isomorphism $A_P^\infty \cong \fa_P$, we endow $A_P^\infty$ with the Haar measure such that this isomorphism is measure-preserving. 

\subsubsection{An estimate}

\begin{lemma} \label{lem:scaling_Theta_series}
    For every $k \ge n$, if $N$ is sufficiently large, we have
    \begin{equation*}
        \sum_{v \in F_n \setminus \{0\}} \| av \|_{\bA_n}^{-N} \ll \lvert a \rvert^{-k}, \quad a \in \bA^\times.
    \end{equation*}
\end{lemma}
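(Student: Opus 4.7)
The plan is to exploit the $F^\times$-invariance of both sides ($v \mapsto \lambda^{-1} v$ shows $\sum_v \|a\lambda v\|_{\bA_n}^{-N} = \sum_v \|av\|_{\bA_n}^{-N}$, while $|a\lambda| = |a|$ by the product formula), reducing to $a = c \cdot t$ with $c$ in a fixed compact set $\cK \subset \bA^\times$ (representatives for the norm-one part of the idèle class group modulo $F^\times$) and $t \in \R_{>0}$ embedded at a distinguished archimedean place $w_0$, so that $|a| = t$. Two local inequalities are the workhorses: (i) $\|av\|_{\bA_n} \ge |a|$ for all $v \in F_n \setminus \{0\}$, coming from $\max\{x,1\} \ge x$ at each place and the product-formula consequence $\prod_w \|v\|_w \ge 1$; (ii) $\|av\|_{\bA_n} \ge \|v\|_{\bA_n}/|a|_-$ where $|a|_- := \prod_w \max\{|a_w|_w^{-1},1\}$, coming from the pointwise inequality $\max\{xy,1\}\max\{y^{-1},1\} \ge \max\{x,1\}$. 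On the chosen fundamental domain one has $|a|_- \sim \max\{1, |a|^{-1}\}$.

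For the regime $|a| \ge 1$, split $N = N_1 + N_2$ and combine (i) with (ii) to obtain $\|av\|^{-N} \le |a|^{-N_1} |a|_-^{N_2} \|v\|_{\bA_n}^{-N_2}$. Since $|a|_- \ll 1$ in this regime and $\sum_{v \in F_n \setminus 0} \|v\|_{\bA_n}^{-N_2} < \infty$ for $N_2$ large (standard Schanuel-type height counting on $F_n$), this yields $\sum \|av\|^{-N} \ll |a|^{-N_1}$, and taking $N_1 = k$ gives the desired bound $\ll |a|^{-k}$.

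The regime $|a| \le 1$ is more delicate because $|a|_- \sim |a|^{-1}$ makes the above estimate degenerate into the too-weak $|a|^{-N}$. For this, I would use the explicit decomposition
\begin{equation*}
    \|av\|_{\bA_n} \sim \max\{t\|v\|_{w_0}, 1\} \cdot \|v\|^{(w_0)}_{\bA_n}, \qquad \|v\|^{(w_0)}_{\bA_n} := \prod_{w \ne w_0} \max\{\|v\|_w, 1\},
\end{equation*}
(uniform in $c \in \cK$) and split the sum according to whether $\|v\|_{w_0} \le 1/t$ or $>1/t$. The first piece, $\sum_{\|v\|_{w_0} \le 1/t} \|v\|^{(w_0),-N}_{\bA_n}$, is controlled by Schanuel-type counting of $\{v \in F_n : \|v\|_{w_0} \le T,\ \|v\|^{(w_0)}_{\bA_n} \le M\}$, grouping by $M$ and summing the resulting convergent series, and yields $\ll t^{-n}$. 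The second piece, after bounding $\max\{t\|v\|_{w_0},1\}^{-N} \le (t\|v\|_{w_0})^{-N}$ and observing that $\|v\|_{w_0}^{-N}\|v\|^{(w_0),-N}_{\bA_n} = \|v\|_{\bA_n}^{-N}$ when $\|v\|_{w_0} \ge 1$, reduces to $t^{-N} \sum_{\|v\|_{w_0} > 1/t} \|v\|_{\bA_n}^{-N}$, which by partial summation (the Schanuel exponent in the $w_0$-direction being exactly $n$) is $\ll t^{-N} \cdot t^{N-n} = t^{-n}$. Combining the two pieces gives $\sum \|av\|^{-N} \ll t^{-n} = |a|^{-n} \le |a|^{-k}$, using $|a| \le 1$ and $k \ge n$.

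The main obstacle is the small-$|a|$ case: the straightforward combination of (i) and (ii) yields only the too-weak bound $|a|^{-N}$, and one must exploit the archimedean/non-archimedean tensor structure of the adèlic norm together with mixed-height Schanuel-type counting of $F$-rational points (bounding the archimedean height at $w_0$ and the complementary ``denominator'' height separately) to recover the correct exponent $n$.
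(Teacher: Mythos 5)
Your reduction to the real‐ray case — exploiting $F^\times$‐invariance of the sum together with compactness of $[\GL_1]^1$ to absorb the norm‐one part of $a$ into a bounded factor, leaving $a=t\in\R_{>0}$ — is exactly the paper's reduction. At that point the paper stops and simply cites \cite[(2.6.2.6)]{BPCZ}, which is precisely the statement you go on to prove from scratch. Your two‐regime reconstruction of the cited bound is sound: for $t\ge 1$ the pointwise inequalities (i), (ii) and a convergent lattice sum give $t^{-N_1}$ for any $N_1\le N-N_2$; for $t\le 1$ you correctly identify that (i)+(ii) alone only give $t^{-N}$, and that one must use the place‐by‐place tensor structure of $\|\cdot\|_{\bA_n}$ together with mixed‐height Schanuel‐type counting of $F_n$ to recover the sharp exponent $n$ (which then dominates $t^{-k}$ for $k\ge n$, $t\le 1$). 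So the only divergence from the paper is that you supply the internals of the BPCZ estimate rather than citing it; this trades brevity for a self‐contained argument, at the cost of importing the (standard but nontrivial) lattice‐point counting.
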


\begin{proof}
    We write $a$ as $a^1t$, where $t \in \R_{>0}$ and $\lvert a^1 \rvert = 1$. Then
    \begin{equation*}
        \sum_{v \in F_n \setminus \{0\}} \| av \|_{\bA_n}^{-N} \ll  \sum_{v \in F_n \setminus \{0\}} \| tv \|_{\bA_n}^{-N} \| a^1 \|^N_{\bA^\times}
    \end{equation*}
    Since the LHS is invariant under $F^\times$, we have
     \begin{equation*}
        \sum_{v \in F_n \setminus \{0\}} \| av \|_{\bA_n}^{-N} \ll  \sum_{v \in F_n \setminus \{0\}} \| tv \|_{\bA_n}^{-N} \| a^1 \|^N_{\mathbb{G}_m}
    \end{equation*}
    Since $[\mathbb{G}_m]^1$ is compact, $\| a^1 \|_{\mathbb{G}_m}^N$ is bounded. Therefore we are reduced to the case when $a \in \R_{>0}$, in which case, it is proved in \cite[(2.6.2.6)]{BPCZ}.
\end{proof}

\begin{corollary} \label{cor:A_norm_integral}
    For any $c>1$, there exists $N_0$ such that for any $N \ge N_0$, the integral 
    \begin{equation*}
        \int_{\bA^\times} \| x \|_{\bA}^{-N} \lvert x \rvert^s \rd x 
    \end{equation*}
    is absolutely convergent for $1 < \opn{Re}(s) < c$.
\end{corollary}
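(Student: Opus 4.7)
The plan is to deduce the corollary from Lemma \ref{lem:scaling_Theta_series} by unfolding against the diagonal action of $F^\times$. Since $|x|$ is $F^\times$-invariant but $\|x\|_{\bA}$ is not, Fubini--Tonelli (applicable since the integrand is non-negative) gives
\[
    \int_{\bA^\times} \|x\|_{\bA}^{-N} |x|^s \rd x
    = \int_{F^\times \backslash \bA^\times} \Big( \sum_{\gamma \in F^\times} \|\gamma x\|_{\bA}^{-N} \Big) |x|^s \rd x,
\]
where the inner sum is exactly $\sum_{v \in F_1 \setminus \{0\}} \| xv \|_{\bA_1}^{-N}$, the quantity controlled by Lemma \ref{lem:scaling_Theta_series} with $n = 1$.

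Next I would split the quotient into the two regions $\{|x| \le 1\}$ and $\{|x| > 1\}$, parametrized via $\bA^\times \cong (\bA^\times)^1 \times \R_{>0}$ (with $F^\times$ acting with compact fundamental domain on $(\bA^\times)^1$). On each region, applying Lemma \ref{lem:scaling_Theta_series} yields, for $N$ sufficiently large and any chosen $k \ge 1$, the bound $\sum_{\gamma} \|\gamma x\|_{\bA}^{-N} \ll |x|^{-k}$, reducing the question to the convergence of $\int |x|^{\opn{Re}(s) - k} \rd^\times x$ on each piece.

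The key point is that one is allowed to choose a different $k$ on each region, since both bounds are available as soon as $N$ is large enough. Pick an integer $k_1 > c$: on $\{|x| > 1\}$ the factor $|x|^{\opn{Re}(s) - k_1}$ is integrable because $\opn{Re}(s) < c < k_1$, and the remaining integral against $\rd^\times x$ reduces to a convergent integral over $t \in (1,\infty)$ of $t^{\opn{Re}(s) - k_1 - 1}\,dt$. Pick $k_2 = 1$: on $\{|x| \le 1\}$ the factor $|x|^{\opn{Re}(s) - 1}$ is integrable because $\opn{Re}(s) > 1$, giving a convergent integral over $t \in (0,1]$. Taking $N_0$ large enough for Lemma \ref{lem:scaling_Theta_series} to apply with $k = k_1$ (hence automatically with $k = 1$ as well) yields the claim for all $N \ge N_0$.

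There is essentially no obstacle beyond correctly organizing this split: the main conceptual point is that the hypothesis $1 < \opn{Re}(s) < c$ opens a window that accommodates both a ``large-$k$'' bound at infinity and the mild bound at zero, which cannot be achieved with a single choice of $k$.
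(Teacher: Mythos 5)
Your proof is correct and essentially the same as the paper's: both unfold the integral over $F^\times$ onto $[\GL_1]$, invoke Lemma \ref{lem:scaling_Theta_series}, and split at $\lvert x \rvert = 1$, using $k=1$ near $0$ and $k \ge c$ at infinity. (Two cosmetic remarks: the paper applies the lemma with $k=c$ directly, so there is no need to round up to an integer $k_1$; and $N_0$ should be taken as the maximum of the two thresholds the lemma supplies for $k=1$ and $k=k_1$ rather than relying on the unverified claim that the $k=k_1$ threshold ``automatically'' covers $k=1$.)
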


\begin{proof}
    We write $[\GL_1]^{\le 1}$ (resp. $[\GL_1]^{\ge 1}$) for the elements $x \in [\GL_1]$ such that $\lvert x \rvert \le 1$ (resp. $\ge 1$). We write the integral as
    \begin{equation*}
        \int_{[\GL_1]} \sum_{v \ne 0} \lVert  vx \rVert_{\bA}^{-N} \lvert x \rvert^s \rd x.
    \end{equation*}
    By Lemma \ref{lem:scaling_Theta_series},  when $N$ is sufficiently large, it is essentially bounded by
    \begin{equation*}
        \int_{[\GL_1]^{\le 1}} \lvert x \rvert^{s-1} \rd x + \int_{[\GL_1]^{\ge 1}} \lvert x \rvert^{s-c} \rd x.
    \end{equation*}
    This is finite when $1<\opn{Re}(s)<c$.
\end{proof}

\subsection{Spaces of functions}

\subsubsection{}
\label{sssec:prelim_function_spaces}

There are various function spaces on $[G]_P$ which we briefly recall below. The reader may consult \cite[\S 2.5]{BPCZ} for more details.

    A function $f:G(\bA) \to \C$ is called \emph{smooth}, if it is right $J$-invariant for some open compact subgroup $J \subset G(\bA_f)$ and for any $g_f \in G(\bA_f)$, the function $g_\infty \mapsto f(g_f g_\infty)$ is $C^\infty$. A function on $[G]_P$ is called smooth if it pulls back to a smooth function on $G(\bA)$.

    Let $\cS([G]_P)$ be the space of \emph{Schwartz functions} on $[G]_P$. It is the union of $\cS([G]_P,J)$ for open compact subgroup $J \subset G(\bA_f)$. Where $\cS([G]_P,J)$ is the space of smooth functions on $[G]_P$ which are right $J$ invariant and 
    \[
        \| f \|_{X,N} := \sup_{x \in [G]_P} \lvert R(X)f(x) 
        \rvert \| x \|_P^N < \infty
    \]
    for any $X \in \cU(\fg_{\infty})$ and $N>0$. The vector space $\cS([G]_P,J)$ is naturally a Fr\'{e}chet space and $\cS([G]_P)$ is naturally a strict LF space.

    For $N>0$, let $\cS_N([G]_P)$ be the set of smooth functions $f$ on $[G]_P$ such that $\| f \|_{X,N} < \infty$ for all $X \in \cU(\fg_{\infty})$. It is also a natural LF space.

    Let $\cS^0([G]_P)$ be the space of measurable function $f$ on $[G]_P$ such that
    \begin{equation} \label{eq:norm_infty_N}
       \| f\|_{\infty,N} := \sup_{x \in [G]_P} \lvert f(x) \rvert \| x \|^{N}_P < \infty
    \end{equation}
    for any $N>0$. It is naturally a Fr\'{e}chet space.

Let $\cT([G]_P)$ be the function of \emph{uniform moderate growth} on $[G]_P$. It is the union of $\cT_{N}([G]_P,J)$, where $N>0$ and $J \subset G(\bA_f)$ is open compact subgroup. $\cT_{N}([G]_P,J)$ consists of smooth functions $f$ on $[G]_P$ which are right $J$-invariant and
\begin{equation*}
    \| f \|_{X,-N} := \sup_{x \in [G]_P} \lvert R(X)f(x) \rvert \| x \|_P^{-N} < \infty
\end{equation*}
for any $X \in \cU(\fg_\infty)$. The vector space $\cT_{N}([G]_P,J)$ is naturally a Fr\'{e}chet space and $\cT([G]_P)$ then carries the induced (non-strict) LF topology.

For a Hilbert representation $V$ of $G(\bA)$, we write $V^{\infty}$ for the set of smooth vectors, i.e. the set $v \in V$ that is fixed by a compact open subgroup of $G(\bA_f)$ and is a smooth vector as $G(F_{\infty})$ representation. For each compact open subgroup $J \subset G(\bA_f)$, the vector space $V^{\infty,J}$ carries the usual Fr\'{e}chet topology (for smooth vectors in a Lie group representation). We endow $V^{\infty} = \bigcup_{J} V^{\infty,J}$ the LF topology.

For an integer $N$, we write $L^2_N([G]_P)$ for the weighted $L^2$ space consisting of measurable functions $f$ on $[G]_P$ such that 
\begin{equation*}
    \int_{[G]_P} \lvert f(x) \rvert^2 \| x \|_P^N \rd x < \infty.
\end{equation*}
Let $L^2_N([G]_P)^{\infty}$ be the set of smooth vectors. By Sobolev lemma \cite[\S 3.4, Key Lemma]{Bernstein}, we have
\begin{num}
    \item \label{eq:L2N_Schwartz_enbeddimgs} For each $N>0$ there exists $N'>0$ such that we have closed embedding of topological vector spaces
    \begin{equation*}
        L^2_N([G]_P)^{\infty} \hookrightarrow S_{N'}([G]_P), \quad \cS_{N}([G]_P) \hookrightarrow L^2_{N'}([G]_P)^{\infty}.
    \end{equation*}
\end{num}

\subsubsection{Constant terms}
For $P \subset Q$, we have the following \emph{constant term} map
\begin{equation*}
    \cT([G]_Q) \ni f \mapsto f_P := \left( g \mapsto \int_{[N_P]} f(ng) \rd n \right) \in \cT([G]_P).
\end{equation*}

We recall the following useful estimate of constant term of a Schwartz function \cite[Lemma 2.5.13.1]{BPCZ}
\begin{lemma} \label{lemma: estimate constant term}
    Let $P$ be a parabolic subgroup of $G$. Then there is a constant $c > 0$ such that for every $N \geq 0$,
    \[
        f \mapsto \sup_{x \in [G]_P} \delta_P(x)^{cN}\|x\|_P^N |f_P(x)|
    \]
    is a continuous semi-norm on $\cS([G])$.
\end{lemma}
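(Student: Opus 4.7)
The plan is to deduce the bound by combining three ingredients: (i) the Schwartz decay of $f$, (ii) a change of variable in the constant-term integral that makes the $\delta_P$-factor explicit, and (iii) the elementary polynomial comparison $\delta_P(x) \ll \|x\|_P^{r}$ that lets us repackage the resulting decay in the shape demanded by the lemma. First, using the $N_P(\bA) M_P(F)$-invariance of $f_P$, I would reduce to $x = mk$ with $m\in M_P(\bA)$, $k\in K$, and with $m$ chosen in a fundamental domain for $M_P(F)\backslash M_P(\bA)$, so that $\|m\|_{M_P}\sim \|x\|_P$.

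Next, substitute $n = mum^{-1}$ in the defining integral $f_P(mk) = \int_{[N_P]} f(nmk)\,dn$. The Jacobian of the conjugation together with the transported fundamental domain produces the identity
\[
f_P(mk) \;=\; \delta_P(m)\int_{\cD_m} f(muk)\,du,
\]
where $\cD_m := m^{-1}N_P(F)m\backslash N_P(\bA)$ has volume $\delta_P(m)^{-1}\mathrm{vol}([N_P])$. The prefactor $\delta_P(m)$ is exactly compensated by the volume of $\cD_m$. Plugging in the Schwartz estimate $|f(g)| \le \|f\|_{N_1} \|g\|_{[G]}^{-N_1}$, together with a reduction-theoretic lower bound
\[
\|muk\|_{[G]} \;\gg\; \|x\|_P^{1/c_1}, \qquad u\in\cD_m,
\]
for a constant $c_1 = c_1(G,P) > 0$, gives $|f_P(x)| \ll \|f\|_{N_1}\,\|x\|_P^{-N_1/c_1}$ for every $N_1\ge 0$.

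Finally, since $\delta_P$ is a rational character of $M_P$, there exists $r > 0$ with $\delta_P(x) \ll \|x\|_P^{r}$ on the region $\delta_P(x)\ge 1$, while trivially $\delta_P(x)^{cN} \le 1$ when $\delta_P(x)\le 1$. Feeding this back into the previous estimate yields
\[
\delta_P(x)^{cN}\,\|x\|_P^N\,|f_P(x)| \;\ll\; \|f\|_{N_1}\,\|x\|_P^{(rc+1)N - N_1/c_1},
\]
which is bounded once $N_1 \ge c_1(rc+1)N$. Thus any fixed $c > 0$ is admissible in the statement of the lemma, and the displayed supremum is dominated by a single Schwartz semi-norm of $f$, proving continuity.

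The main obstacle is the geometric lower bound $\|muk\|_{[G]} \gg \|x\|_P^{1/c_1}$ uniformly in $u \in \cD_m$. It encodes the fact that, since the fundamental domain $\cD_m$ has been scaled precisely by $m$, each element $mu$ ($u\in\cD_m$) stays in a Siegel-like region of $G(\bA)$ and cannot be appreciably reduced by left $G(F)$-translation. A clean justification passes through reduction theory for $G$, comparing the parabolic norm $\|\cdot\|_P$ with the full norm $\|\cdot\|_{[G]}$ on the translated domain; this is where the substantive content beyond elementary manipulations lies.
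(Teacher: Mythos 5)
Your overall plan — substitute $n = mum^{-1}$, cancel $\delta_P(m)$ against the volume of $\cD_m$, and then invoke Schwartz decay of $f$ together with a norm comparison — is the right shape, and is essentially how the estimate is obtained in \cite{BPCZ}. However, the proposal has a genuine error, and it is located exactly at the step you yourself single out: the asserted reduction-theoretic lower bound
\[
\|muk\|_{[G]} \gg \|x\|_P^{1/c_1}, \qquad u \in \cD_m,
\]
is \emph{false}, and without it the rest of the computation (in particular the conclusion that \emph{any} $c>0$ is admissible) does not go through.

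Here is the point. Note that $mu = nm$ as $u$ runs over $\cD_m = m^{-1}N_P(F)m\backslash N_P(\bA)$ and $n$ runs over a fixed compact fundamental domain of $[N_P]$, so your claim is equivalent to $\|nm\|_{[G]} \gg \|m\|_{[M_P]}^{1/c_1}$ for $n$ in a fixed compact set. Take $F=\Q$, $G=\GL_2$, $P=B$ the Borel, and $m_y = \opn{diag}(y^{1/2},y^{-1/2})$ with $y\in(0,1)$ real, so that $\delta_P(m_y)=y$ and $\|m_y\|_P \sim y^{-1}$. For $n$ with upper-right coordinate a fixed badly approximable real $s\in[0,1)$, the matrix $nm_y$ represents the point $s+iy$ in the upper half-plane, and the standard hyperbolic reduction shows $\max_{\gamma\in\SL_2(\Z)}\opn{Im}(\gamma(s+iy))$ remains bounded above and below by constants as $y\to 0$. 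Hence $\|nm_y\|_{[G]}$ stays bounded while $\|m_y\|_P\to\infty$, and the claimed inequality fails for every $c_1>0$. Consequently, your intermediate estimate $|f_P(x)| \ll \|f\|_{N_1}\|x\|_P^{-N_1/c_1}$ is also false: taking a nonnegative bump $f$ concentrated near the identity in $[G]$, a positive-measure set of $n$ in the fundamental domain gives $\|nm_y\|_{[G]}\lesssim 1$, so $f_P(m_y)$ is bounded away from $0$ as $y\to 0$. Then $\delta_P(m_y)^{cN}\|m_y\|_P^N |f_P(m_y)| \sim y^{(c-1)N}$, which blows up for $c<1$. So the constant $c$ in the lemma cannot be taken arbitrarily small; it genuinely depends on $(G,P)$, and the statement ``any fixed $c>0$ is admissible'' is incorrect.

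The informal justification you give — ``each element $mu$ stays in a Siegel-like region of $G(\bA)$ and cannot be appreciably reduced by left $G(F)$-translation'' — is where the reasoning goes astray. When $\delta_P(m)<1$, the fiber $\{nm : n\in [N_P]\}$ is precisely \emph{not} contained in a Siegel set of $G$ relative to $P$, and generic elements of the fiber \emph{can} be $G(F)$-reduced all the way to a bounded region. This is exactly the phenomenon the $\delta_P(x)^{cN}$ factor in the lemma is there to compensate for. The correct input needed from reduction theory is an estimate of the form $\|nm\|_{[G]} \gg \|m\|_P^{1/c_1}\,\delta_P(m)^{c_2}$ (for suitable $c_1,c_2>0$ depending on $(G,P)$); the extra $\delta_P(m)^{c_2}$ factor is unavoidable, and it is what produces a fixed, nontrivial $c$ in the final statement rather than an arbitrary one.
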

As a direct consequence, we obtain
\begin{num} 
    \item \label{eq: truncation of constant term}
    Let $P$ be a standard parabolic subgroup of $G$ and $\kappa \in C_c^\infty(\fa_P)$ be a compactly supported smooth function on $\fa_P$. Then for every $f \in \cS([G])$, we have
    \[
        (\kappa \circ H_P) \cdot f_P \in \cS([G]_P).
    \]
\end{num}

Also, combining Lemma \ref{lemma: estimate constant term} with \eqref{eq:L2N_Schwartz_enbeddimgs}, we obtain:
\begin{num}
    \item \label{eq:constant_term_L^2_N}
    For $N>0$. There exists $c_N>0$ such that for any $s \in \C$ with $\opn{Re}(s) > c_N$, we have 
    \begin{equation*}
        [M_P] \ni m \mapsto f_P(m) \delta_P(m)^s \in L^2_N([M_P])^{\infty}.
    \end{equation*}
\end{num}

The following two variable versions of the constant term estimates follows from the same proof of \cite[Lemma 2.5.13.1]{BPCZ}

\begin{lemma} \label{lem:constant_term_estimate_product}
    Let $G,H$ be connected reductive groups over $F$. Let $P \times Q$ be a parabolic subgroup of $G \times H$. Then there exists $c>0$ such that for any $M,N \ge 0$, 
    \begin{equation*}
        f \mapsto \sup_{(x,y) \in [G \times H]_{P \times Q}} \delta_{P}(x)^{cN} \delta_Q(y)^{cM } \| x \|_P^{N} \| y \|_Q^M \lvert f_{P \times Q}(x,y) \rvert
    \end{equation*}
    is a continuous semi-norm on $\cS([G \times H])$.
\end{lemma}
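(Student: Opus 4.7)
The plan is to adapt the proof of the single-variable estimate \cite[Lemma 2.5.13.1]{BPCZ} in a routine way, exploiting the product structure in both the geometry and the topology of $\cS([G \times H])$. The three pieces of product structure I would exploit are:
(i) the modulus character factors as $\delta_{P \times Q}(x,y) = \delta_P(x) \delta_Q(y)$;
(ii) the norm satisfies $\|(x,y)\|_{P \times Q} \asymp \|x\|_P \|y\|_Q$, which follows from the fact that closed embeddings of $G \times H$ into an affine space can be taken as products of embeddings of the factors;
(iii) the constant term integrates over the product $[N_P] \times [N_Q]$, so
\[
f_{P \times Q}(x,y) = \int_{[N_P]} \int_{[N_Q]} f(nx, n'y)\, \rd n' \, \rd n,
\]
which allows the two constant-term integrations to be handled sequentially.

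Next I would perform the single-variable argument in the $x$-variable with $y$ treated as a parameter. Running the proof of \cite[Lemma 2.5.13.1]{BPCZ} verbatim, one bounds
\[
\delta_P(x)^{cN} \|x\|_P^N \bigl| F_P(x;y) \bigr|,\qquad F(z;y) := \int_{[N_Q]} f(z, n'y)\, \rd n',
\]
by a continuous seminorm of the form $\sup_{z \in [G], Z \in \mathcal{Z}} \|z\|_G^{N'} |R(Z) F(z;y)|$, where $\mathcal{Z}$ is a finite family of elements of $\cU(\fg_\infty)$ and $N'$ depends on $N$. The point is that only right-differentiation in the $G$-direction and the growth estimate in $z$ enter, so everything passes under the $[N_Q]$-integral. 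Substituting the definition of $F(z;y)$ and performing the $y$-step — that is, multiplying by $\delta_Q(y)^{cM} \|y\|_Q^M$ and applying the analogous single-variable estimate now in $y$ with parameter $z$ — produces a bound by a seminorm of the form
\[
\sup_{(z,w) \in [G \times H]} \|z\|_G^{N'} \|w\|_H^{M'} \bigl| R(Z \otimes W) f(z,w) \bigr|,
\]
with $W$ ranging over a finite set in $\cU(\fh_\infty)$. Using $\cU(\fg_\infty \oplus \fh_\infty) = \cU(\fg_\infty) \otimes \cU(\fh_\infty)$ and $\|(z,w)\|_{G \times H} \asymp \|z\|_G \|w\|_H$, such a seminorm is precisely one of the defining seminorms of $\cS([G \times H])$, giving the desired continuity.

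The only bookkeeping obstacle is verifying that the intermediate quantities behave well \emph{uniformly in the parameter}. Specifically, after the first step, one needs that the map $\cS([G \times H]) \to \cS([H], \text{parameter } [G]_P)$ sending $f$ to $F(\cdot;y)$ viewed as a $y$-family, is continuous in the natural sense that a finite family of seminorms of the form above on $[G \times H]$ controls the quantities in the seminorm of \cite[Lemma 2.5.13.1]{BPCZ} applied with parameters. This uniformity is built into the proof of loc.\ cit.: it only uses $\|{\cdot}\|_{\infty, N}$-type estimates on the integrand and does not require any Lie-algebra differentiation in the parameter direction. Once this is noted, the two single-variable bounds multiply and we conclude. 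I do not expect any genuinely new difficulty beyond the notational overhead of running the argument in both variables simultaneously.
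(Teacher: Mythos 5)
Your proposal is essentially correct, and it gives an actual argument where the paper simply asserts that ``the same proof of \cite[Lemma 2.5.13.1]{BPCZ}'' goes through for the product group $G\times H$. Your route is slightly different in spirit: rather than re-running the BPCZ argument inside $G \times H$ with the product weight $\delta_P(x)^{cN}\delta_Q(y)^{cM}\|x\|_P^N\|y\|_Q^M$ built in from the start, you iterate the one-variable estimate — first in $x$ with $y$ frozen, then in $y$ with $z$ frozen — using the three pieces of product structure you list. The advantage is that this treats the one-variable lemma largely as a black box: the only internal information you need from the BPCZ proof is that the continuous seminorm it produces on $\cS([G])$ can be taken of the form $\sup_{z,Z}\|z\|_G^{N'}|R(Z)\cdot|$ with $Z$ ranging over a finite subset of $\cU(\fg_\infty)$ (equivalently, on each Fr\'echet piece $\cS([G],J)$ a continuous seminorm is dominated by finitely many of the defining ones), and that the constant-term integral commutes with right differentiation in the other factor. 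Both points are routine. The chain of inequalities you describe — apply the $(G,P)$-estimate to $F(\cdot;y)\in\cS([G])$ uniformly in $y$, multiply by $\delta_Q(y)^{cM}\|y\|_Q^M$, interchange suprema, then apply the $(H,Q)$-estimate to $(R(Z)f)(z,\cdot)\in\cS([H])$ uniformly in $z$ — does close up as claimed, landing on a defining seminorm of $\cS([G\times H])$ after invoking $\|(z,w)\|\asymp\|z\|_G\|w\|_H$ and $\cU(\fg_\infty\oplus\fh_\infty)=\cU(\fg_\infty)\otimes\cU(\fh_\infty)$.

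Two small points worth tightening. First, you write the intermediate object as a map into ``$\cS([H],\text{parameter }[G]_P)$'' — the indices are transposed: after integrating over $[N_Q]$ the function $F(\cdot;y)$ is a Schwartz function on $[G]$ with parameter $y\in[H]_Q$, and the constant term over $[N_P]$ is taken afterward. This is clear from context but should be stated carefully to avoid confusing which variable is being ``frozen'' at each stage. Second, the one-variable lemma applied to $(G,P)$ and to $(H,Q)$ a priori produces two constants $c_G$ and $c_H$; the statement you are proving asserts a single $c$. You should say a word about why a common $c$ can be extracted — for instance by observing that the BPCZ constant can be chosen uniformly once the dimensions are bounded, or by simply running both steps with $c=\max(c_G,c_H)$ and checking that the one-variable estimate with the larger constant still holds on the relevant Siegel domain (where $\delta_P$ is bounded on one side). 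This is a bookkeeping issue, not a genuine gap, but it is invisible as written.
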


\subsubsection{Polarized $\Theta$-series}
Let $\Phi \in \cS(\bA_n)$. We associate the following \emph{$\Theta$-series} :
\begin{equation} \label{eq:Theta_series}
    \Theta(g, \Phi)= \sum_{v \in F_n} \Phi(vg) \lvert g \rvert^{\frac 12}, \quad g \in [\GL_n]
\end{equation}

The factor $\lvert g \rvert^{\frac 12}$ appears because the action of $\GL_n(\bA)$ on $\cS(\bA_n)$ given by $ (g \cdot \Phi)(v) = \Phi(vg) \lvert g \rvert^{\frac 12}$ is unitary.

The convergence and the growth of the $\Theta$-series are justified by the following lemma
\begin{lemma} \label{Theta moderate}
    There exists $M>0$ and $N_0 >0$, such that for every $N \geq N_0 $, we have
    \begin{equation} \label{eq:Theta_moderate}
        \sum_{v \in F_n} \|vh \|_{\bA_n}^{-N} \ll \|h\|_{G_n}^{M}.
    \end{equation}
    In particular, there exists $N_0>0$ such that for any $\Phi \in \cS(\bA_n)$, we have $\Theta( \cdot,\Phi) \in \cT_{M}([G_n])$.
\end{lemma}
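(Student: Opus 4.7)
The plan is to first establish a pointwise bound comparing $\|vh\|_{\bA_n}$ with $\|v\|_{\bA_n}$. The right action of $G_n$ on $\bA_n$ is polynomial, so standard norm estimates (see \cite[Appendix A]{BP21}) give a constant $r>0$ such that
\begin{equation*}
  \|x g\|_{\bA_n} \ll \|x\|_{\bA_n}\, \|g\|_{G_n}^{r}, \qquad x \in \bA_n,\ g \in G_n(\bA).
\end{equation*}
Applied to $x=vh$ and $g=h^{-1}$, together with the standard property $\|h^{-1}\|_{G_n} \ll \|h\|_{G_n}^{r'}$, this yields
\begin{equation*}
  \|v\|_{\bA_n} \ll \|vh\|_{\bA_n}\, \|h\|_{G_n}^{rr'}, \quad \text{hence} \quad \|vh\|_{\bA_n}^{-N} \ll \|v\|_{\bA_n}^{-N}\, \|h\|_{G_n}^{Nrr'}
\end{equation*}
for every $v \in F_n$ with $v \ne 0$.

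Summing this estimate and using that $\|0 \cdot h\|_{\bA_n}=1$ for the $v=0$ term, the desired inequality reduces to the convergence of $\sum_{v \in F_n \setminus \{0\}} \|v\|_{\bA_n}^{-N}$ for $N$ sufficiently large. This convergence is classical; it can be deduced in the same spirit as Lemma \ref{lem:scaling_Theta_series}, namely by writing $v = \lambda v_0$ with $\lambda \in F^\times$ and $v_0$ ranging over a system of representatives, and then reducing to the convergence of $\int_{[\GL_1]^{\ge 1}} |x|^{-N+c}\, \rd x$ for $N$ large (as in Corollary \ref{cor:A_norm_integral}). This proves \eqref{eq:Theta_moderate} with $M = N_0 r r'$ for $N_0$ chosen large enough to ensure convergence.

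For the second assertion, Schwartz decay gives $|\Phi(x)| \ll \|x\|_{\bA_n}^{-N}$ for every $N>0$. Combined with the first part and the trivial bound $|g|^{1/2} \ll \|g\|_{G_n}^{n/2}$, this yields $|\Theta(g,\Phi)| \ll \|g\|_{G_n}^{M+n/2}$. To promote this to a bound in $\cT_M([G_n])$ we must control derivatives. The right translation action on $\Theta$ intertwines with the smooth unitary action $\rho$ of $G_n(F_\infty)$ on $\cS(\bA_n)$ given by $(\rho(g)\Phi)(v) = \Phi(vg)|g|^{1/2}$; concretely, $R(X)\Theta(\cdot,\Phi) = \Theta(\cdot, d\rho(X)\Phi)$ for $X \in \cU(\fg_\infty)$. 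Since $d\rho(X)\Phi \in \cS(\bA_n)$ again, the same estimate applies uniformly, giving uniform moderate growth. Left $G_n(F)$-invariance of $\Theta$ follows from the product formula (so $|g|$ descends to $[G_n]$) and a change of summation variable $v \mapsto v\gamma^{-1}$. The main obstacle is not substantial: it lies entirely in the pointwise norm comparison of the first paragraph, everything else being a routine Schwartz-type estimate combined with the general equivariance of $\Theta$ under right translation.
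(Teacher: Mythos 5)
Your argument is correct and follows the same route as the paper: both proofs reduce the sum to a pointwise norm comparison $\|vh\|^{-N} \ll \|v\|^{-cN}\|h\|^{c'N}$ (the paper writes the key estimate in one line as $\|vh\|^{-N} \ll \|v\|^{-cN}\|h\|^{cN}$, while you derive it explicitly from $\|xg\| \ll \|x\|\|g\|^r$ and $\|h^{-1}\| \ll \|h\|^{r'}$), and then invoke convergence of $\sum_{v\in F_n}\|v\|^{-N}$ for $N$ large. Your extra paragraph on derivatives (via the intertwining $R(X)\Theta(\cdot,\Phi)=\Theta(\cdot,d\rho(X)\Phi)$) and on left $G_n(F)$-invariance via the product formula fills in the step that the paper compresses into the phrase ``In particular,'' and is sound.
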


\begin{proof}
    Note that the left-hand side of \eqref{eq:Theta_moderate} is decreasing in $N$, so it suffices to find $N=N_0$ such that \eqref{eq:Theta_moderate} holds. There exists $c>0$ such that 
    \begin{equation*}
        \| vh \|^{-N}_{\bA_n} \ll \| v \|_{\bA_n}^{-cN} \| h \|_{G_n(\bA)}^{cN}
    \end{equation*}
    holds for any $v \in \bA_n$ and $h \in G_n(\bA)$. It then suffices to pick $N_0>0$ such that $\sum_{v \in F_n} \| v \|_{\bA_n}^{-cN_0} < \infty$.
\end{proof}

\begin{corollary} \label{cor:Theta_series_integral}
    For any $C>0$, there exists $N_0>0$, such that for any $N,N'>N_0$, the integral
    \begin{equation*}
        \int_{\cP_n(F) \backslash G_n(\bA)} \lVert e_n h \rVert^{-N}_{\bA_n} \lvert \det h \rvert^s \|h\|_{G_n}^{-N'} \rd h
    \end{equation*}
    converges for $\lvert \opn{Re}(s) \rvert<C$.
\end{corollary}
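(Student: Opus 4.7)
The plan is to unfold the integral via the mirabolic orbit identification $\cP_n(F) \backslash G_n(F) \xrightarrow{\sim} F_n \setminus \{0\}$, $\gamma \mapsto e_n \gamma$, then apply the theta-series majorant from Lemma~\ref{Theta moderate}, and finally reduce to a radial convergence estimate in the central direction of $[G_n]$.

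First I would verify that each factor of the integrand is $\cP_n(F)$-invariant so that the integral makes sense: $\|e_n h\|_{\bA_n}$ is fixed because $\cP_n$ is the stabilizer of $e_n$, $|\det h|_\bA$ is $G_n(F)$-invariant by the product formula applied to $|\det \gamma|_\bA = 1$ for $\gamma \in G_n(F)$, and $\|h\|_{G_n}$ (interpreted as the quotient norm $\inf_{\gamma \in G_n(F)} \|\gamma h\|$) is $G_n(F)$-invariant. Unfolding along $\cP_n(F) \backslash G_n(\bA) \twoheadrightarrow [G_n]$, whose fibers are in bijection with $F_n \setminus \{0\}$ via $\gamma \mapsto e_n \gamma$, then gives
\begin{equation*}
    \int_{\cP_n(F) \backslash G_n(\bA)} \|e_n h\|^{-N}_{\bA_n} |\det h|^s \|h\|_{G_n}^{-N'}\,\rd h = \int_{[G_n]} \Biggl( \sum_{v \in F_n \setminus \{0\}} \|vh\|^{-N}_{\bA_n} \Biggr) |\det h|^s \|h\|_{G_n}^{-N'}\,\rd h.
\end{equation*}

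Next, by choosing $N$ past the threshold $N_0$ of Lemma~\ref{Theta moderate}, I would bound the inner sum by the full theta series $\sum_{v \in F_n} \|vh\|^{-N}_{\bA_n}$ and hence by a constant multiple of $\|h\|_{G_n}^M$ uniformly in $h$, for some fixed $M>0$. The problem then reduces to showing that $\int_{[G_n]} |\det h|^s \|h\|_{G_n}^{M-N'}\,\rd h$ is finite for $|\opn{Re}(s)| < C$ once $N'$ is taken sufficiently large depending on $C$ and $M$.

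Finally, the height $\|h\|_{G_n}$ dominates the central character in the sense that $\|h\|_{G_n} \gg \max(|\det h|_\bA, |\det h|_\bA^{-1})^{c}$ for some $c>0$, so the integrand is controlled by a function depending only on $|\det h|$. Using the fibration $[G_n] \to \R_{>0}$, $h \mapsto |\det h|_\bA$, with fiber of finite Tamagawa volume $\mathrm{vol}([G_n]^1) < \infty$, the integral is dominated by
\begin{equation*}
    \mathrm{vol}([G_n]^1) \cdot \int_{\R_{>0}} t^{n \opn{Re}(s)} \max(t,t^{-1})^{-c(N'-M)}\,\tfrac{\rd t}{t},
\end{equation*}
which converges whenever $c(N'-M) > nC$; this is in the same spirit as Corollary~\ref{cor:A_norm_integral}. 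Choosing $N_0$ larger than both the threshold from Lemma~\ref{Theta moderate} and $M + nC/c$ will then complete the argument. The hardest point will be justifying the lower bound $\|h\|_{G_n} \gg \max(|\det h|_\bA, |\det h|_\bA^{-1})^c$, a standard but embedding-dependent reduction-theoretic comparison between the height and the central character on $G_n$.
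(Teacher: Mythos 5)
Your argument is correct and follows the paper's own proof closely: unfold over $\cP_n(F)\backslash G_n(F)$ to a theta series on $[G_n]$, bound the theta series by Lemma~\ref{Theta moderate}, and control $\lvert \det h \rvert^{\opn{Re}(s)}$ via the comparison $\max\{\lvert \det h \rvert,\lvert \det h \rvert^{-1}\} \ll \lVert h \rVert_{G_n}^{r}$. The only cosmetic divergence is at the very end, where the paper directly invokes $\int_{[G_n]}\lVert h\rVert_{G_n}^{-N}\,\rd h<\infty$ for $N$ large while you fiber over $\R_{>0}$ via $h\mapsto\lvert\det h\rvert$ and use $\mathrm{vol}([G_n]^1)<\infty$; both are standard consequences of reduction theory and lead to the same threshold condition on $N'$.
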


\begin{proof}
    The integral can be written as
    \begin{equation*}
        \int_{[G_n]} \sum_{v \ne 0} \lVert  vh \rVert_{\bA_n}^{-N} \lvert \det h \rvert^s \| h \|_{G_n}^{-N'} \rd h. 
    \end{equation*}
    By Lemma \ref{Theta moderate} and the fact that
    \begin{equation*}
       \max\{ \lvert \det h \rvert, \lvert \det h \rvert^{-1} \} \ll  \lVert h \rVert_{G_n}^r 
    \end{equation*}
    for some $r>0$, we see that the integral is bounded by
    \begin{equation*}
        \int_{[G_n]}   \|h\|_{G_n}^{-N'+M + \opn{Re}(s)r} \rd h,
    \end{equation*}
    for some $M>0$, the result follows.
\end{proof}

We also remark that, by the Poisson summation formula, $\Theta$-series satisfies
\begin{equation} \label{eq:Theta_series_functional_equation}
    \Theta(g,\Phi) = \Theta({}^tg^{-1},\widehat{\Phi}).
\end{equation}

\subsubsection{Estimates on Fourier coefficients}
Let $P \subset G$ be a standard parabolic subgroup, $\psi : \bA /F \to \mathbb{C}^\times$ be a non-trivial character and $V_P$ be the vector space of additive algebraic characters $N_P \to \mathbb{G}_a$. Let $ l \in V_P(F)$ and set $\psi_l := \psi \circ l_\bA : [N_P] \to \mathbb{C}^\times$ where $l_\bA$ denotes the homomorphism between adelic points $N_P(\bA) \to \bA$. For $\varphi \in \cT([G])$, we set
\[
    \varphi_{N_P, \psi_l}(g) =\int_{[N_P]}\varphi(ug) \psi_l(u)^{-1} \rd u, \quad g \in G(\bA).
\]
The adjoint action of $M_P$ on $N_P$ induces one on $V_P$ that we denote by $\opn{Ad}^*$.
\begin{lemma} \label{lemma: estimate Fourier coeff} \cite[Lemma 2.6.1.1]{BPCZ}
    \begin{enumerate}
        \item \label{Schwartz Fourier coefficient} There exists $c > 0$ such that for every $N_1, N_2 \geq 0$, 
        \[
            \varphi \mapsto  \sup_{m \in M_P(\bA)} \sup_{k \in K} \| \opn{Ad}^*(m^{-1})l \|^{N_1}_{V_P(\bA)} \| m \|_{M_P}^{N_2} \delta_P(m)^{cN_2} \lvert \varphi_{N_P, \psi_l}(mk) \rvert 
        \]
        is a continuous semi-norm on $\cS([G])$.
        \item \label{T Fourier coeff} Let $N >0$. Then, for every $N_1 \geq 0$,
        \[
            \varphi \mapsto \sup_{m \in M_P(\bA)} \sup_{k \in K} \| \opn{Ad}^*(m^{-1})l \|^{N_1}_{V_{P}(\bA)} \| m \|_{M_P}^{-N} \lvert \varphi_{N_P, \psi_l}(mk) \rvert
        \]
        is a continuous semi-norm on $\cT_N([G])$.
    \end{enumerate}
\end{lemma}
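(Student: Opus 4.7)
The plan is to combine the constant-term-type estimate of Lemma \ref{lemma: estimate constant term} (for part (1)), or the defining bound of $\cT_N([G])$ (for part (2)), with an integration-by-parts identity along $N_P$ that supplies the decay in $\|\opn{Ad}^*(m^{-1})l\|^{N_1}_{V_P(\bA)}$. I describe part (1) in detail and note the minor change for part (2).

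\emph{Step 1 (growth in $m$).} Bounding $|\psi_l|=1$ trivially and applying the pointwise inequality $|\varphi(g)| \le \|\varphi\|_{\infty,N'}\|g\|_{G}^{-N'}$ (a continuous semi-norm on $\cS([G])$ for every $N'$) reduces matters to
\begin{equation*}
    |\varphi_{N_P,\psi_l}(mk)| \le \|\varphi\|_{\infty,N'}\int_{[N_P]}\|umk\|_{G}^{-N'}\,\rd u.
\end{equation*}
The right-hand integral is exactly the one bounded in the proof of Lemma \ref{lemma: estimate constant term}: for $N'$ sufficiently large it is dominated by $C\,\|m\|_{M_P}^{-N_2}\delta_P(m)^{-cN_2}$ uniformly in $k \in K$. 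This yields the $\|m\|_{M_P}^{N_2}\delta_P(m)^{cN_2}$-factor in the target semi-norm.

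\emph{Step 2 (decay in $\|\opn{Ad}^*(m^{-1})l\|^{N_1}$).} Write $l' := \opn{Ad}^*(m^{-1})l$ and split $l' = l'_\infty + l'_f$ according to $V_P(\bA) = V_P(F_\infty) \oplus V_P(\bA_f)$. Fix a basis $X_1,\ldots,X_r$ of $\opn{Lie}(N_P^{\opn{ab}})(F_\infty)$, where $N_P^{\opn{ab}} := N_P/[N_P,N_P]$. Using the left $N_P(F)$-invariance of $\varphi$ and the compactness of $[N_P^{\opn{ab}}]$ (so that no boundary terms arise), a repeated integration by parts on the compact abelian quotient $[N_P^{\opn{ab}}]$ produces the algebraic identity
\begin{equation*}
    \bigl(R(X_j^{N_1})\varphi\bigr)_{N_P,\psi_l}(mk) \;=\; \bigl(2\pi i\,(\opn{Ad}^*(k^{-1})l')(X_j)\bigr)^{N_1}\,\varphi_{N_P,\psi_l}(mk),
\end{equation*}
where $R(X_j^{N_1})$ is the right-invariant differential operator on $G(F_\infty)$ attached to $X_j^{N_1} \in \cU(\fg_\infty)$. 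Since $K$ is compact one has $\|\opn{Ad}^*(k^{-1})l'_\infty\|_{V_P(F_\infty)} \sim \|l'_\infty\|_{V_P(F_\infty)}$ uniformly in $k$, and taking appropriate linear combinations over $j$ and over multi-indices of total degree $N_1$ dominates $\|l'_\infty\|^{N_1}_{V_P(F_\infty)}$. Applying Step 1 to each $R(X_j^{N_1})\varphi \in \cS([G])$ then bounds the left-hand side by a continuous semi-norm of $\varphi$ times $\|m\|_{M_P}^{-N_2}\delta_P(m)^{-cN_2}$. At the finite places, the right $J$-invariance of $\varphi$ for some open compact $J \subset G(\bA_f)$ forces $\varphi_{N_P,\psi_l}(mk) = 0$ unless $\psi_{l'}$ is trivial on $N_P(\bA_f) \cap kJk^{-1}$; compactness of $K$ then gives a uniform support bound $\|l'_f\|_{V_P(\bA_f)} \le C_J$. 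Combining the two produces the full $\|l'\|^{N_1}_{V_P(\bA)}$-decay.

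The main obstacle, to my mind, is coordinating these two estimates so that they combine with uniform constants in $m$, $k$, and $l$. The key point is that the differential operators $R(X_j^{N_1})$ used in Step 2 are built from a \emph{fixed} basis $X_1,\ldots,X_r$ of $\opn{Lie}(N_P^{\opn{ab}})(F_\infty)$ independent of all these variables, so the continuous semi-norm on $\cS([G])$ coming from applying Step 1 to $R(X_j^{N_1})\varphi$ depends only on $N_1$; the entire $m$-dependence has been absorbed into the scalar factor $(\opn{Ad}^*(k^{-1})l')(X_j)^{N_1}$ on the right-hand side of the integration-by-parts identity. Part (2) then follows by the same outline, substituting the defining bound $|\varphi(g)| \le \|\varphi\|_{\infty,-N}\|g\|_{G}^{N}$ of $\cT_N([G])$ for the Schwartz bound in Step 1 (so that $\|m\|_{M_P}^{N}$-growth appears in place of $\|m\|_{M_P}^{-N_2}\delta_P(m)^{-cN_2}$-decay); the integration-by-parts identity is a purely algebraic statement insensitive to the growth regime of $\varphi$.
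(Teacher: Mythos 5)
The paper proves this lemma in one line: strip the $\sup_{k\in K}$, observe that what remains is exactly \cite[Lemma 2.6.1.1]{BPCZ}, and note that for any continuous semi-norm $p$ on $\cS([G])$ or $\cT_N([G])$, $f\mapsto\sup_{k\in K}p(R(k)f)$ is again a continuous semi-norm. Your proposal instead reproves the BPCZ lemma from scratch, with the $\sup_{k\in K}$ woven directly into the argument. That is a legitimate route, but as written there is a genuine error in Step~2, and it is precisely at the point you identify as the ``key point.''

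The integration-by-parts identity you state is false for $k\ne 1$. Differentiating
$\varphi_{N_P,\psi_l}(g)=\int_{[N_P]}\varphi(ug)\psi_l(u)^{-1}\,\rd u$
by the substitution $u\mapsto u\exp(tX)$ with $X\in\mathrm{Lie}(N_P)(F_\infty)$ gives, after integration by parts over $[N_P^{\mathrm{ab}}]$,
\[
2\pi i\, l(X)\,\varphi_{N_P,\psi_l}(g)=\bigl(R(\opn{Ad}(g^{-1})X)\varphi\bigr)_{N_P,\psi_l}(g),
\]
and the operator on the right is $R(\opn{Ad}((mk)^{-1})X)$ when $g=mk$. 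To turn this into your form $(R(X_j)\varphi)_{N_P,\psi_l}(mk)=\cdots$ one must take $X=\opn{Ad}(mk)X_j$, which requires $\opn{Ad}(mk)X_j\in\mathrm{Lie}(N_P)$; this holds for $m\in M_P(\bA)$ (which normalizes $N_P$) but fails for general $k\in K$, since $\opn{Ad}(k)$ does not preserve $\mathrm{Lie}(N_P)$. So the claim that ``the differential operators $R(X_j^{N_1})$ \ldots are \ldots independent of all these variables, so \ldots the entire $m$-dependence has been absorbed into the scalar factor'' is exactly where the proof breaks: the true operator carries an irreducible $k$-dependence through $\opn{Ad}(k^{-1})$, and you cannot apply Step~1 to a single fixed $R(X_j^{N_1})\varphi$.

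Two ways to repair this. First, the paper's way: run your Steps~1--2 only at $k=1$ (where the identity is correct, the operator being $R(\opn{Ad}(m^{-1})X_j)$ which you can expand in a fixed basis of $\mathrm{Lie}(N_P)$ with coefficients controlled by $\|m\|$), then replace $\varphi$ by $R(k)\varphi$ and take $\sup_{k\in K}$ at the end; since $K$ is compact, $\sup_{k\in K}$ of a continuous semi-norm applied to $R(k)\varphi$ is again a continuous semi-norm. Second, keep $k$ but expand $\opn{Ad}(k^{-1})Z$, for $Z$ ranging over a basis of $\mathrm{Lie}(N_P)(F_\infty)$, as $\sum_i a_i(k)W_i$ in a fixed basis $W_i$ of $\fg_\infty$, with $a_i$ continuous hence bounded on $K$; iterating produces a bounded family in $\cU(\fg_\infty)_{\le N_1}$ dominated by finitely many fixed operators, to each of which Step~1 applies. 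Either way you lose the clean one-term identity, but the estimate goes through. Your non-archimedean argument in Step~2 (vanishing of the Fourier coefficient unless $\opn{Ad}^*(k^{-1})l'_f$ is small, then using compactness of $K$) is correct.
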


\begin{proof}
    Without the term $\sup_{k \in K}$, this is exactly \cite[Lemma 2.6.1.1]{BPCZ}. Since for any continuous semi-norm $\| \cdot \|$ on $\cS([G])$ or $\cT_N(G)$, $f \mapsto \sup_{k \in K} \| R(k) f \|$ is still a continuous semi-norm. The result follows.
\end{proof}

\subsection{Automorphic forms and Eisenstein series}

\subsubsection{Automorphic forms}

Let $G$ be a connected reductive group over $F$ and let $P$ be a standard parabolic subgroup. An \emph{automorphic form} on $[G]_P$ is, by definition, is a $Z(\fg_{\infty})$-finite function in $\cT([G]_P)$. We denote by $\cA_P(G)$ the set of automorphic form on $[G]_P$.

Let $\cA_{P,\mathrm{cusp}}(G)$ denote the subspace of $\cA_P(G)$ consisting of cuspidal automorphic forms, that is, consisting of $\varphi \in \cA_P(G)$ such that $\varphi_Q=0$ for any standard parabolic subgroups $Q \subset P$.

Let $\cJ$ be a finite codimensional ideal of $\cZ(\fg_{\infty})$. Let $\cA_{P,\cJ}(G)$ denote the subspace of automorphic form $\varphi \in \cA_P(G)$ such that $R(z) \varphi=0$ for all $z \in \cJ$. Then there exists $N>0$ such that $\cA_{P,J}(G)$ is a closed subspace of $\cT_N([G])$. We endow $\cA_{P,J}$ with the topology induced from $\cT_N(G)$. This topology is independent of the choice of $N$. We then endow $\cA_{P}(G)$ with the inductive limit topology $\cA_P(G) = \bigcup_{\cJ} \cA_{P,\cJ}(G)$. For each $\cJ$, the inclusion $\cA_{P,\cJ}(G) \hookrightarrow \cA_P(G)$ is a closed embedding. We refer the reader to \cite[\S 2.7.1]{BPCZ} for the proof of these facts.

A \emph{cuspidal automorphic representation} of $G$ is defined to be a topologically irreducible subrepresentation $\pi$ of $G(\bA)$ on $\cA_{\mathrm{cusp}}(G)$. Note that a cuspidal automorphic representation $\pi$ is unitary if and only if any $\varphi \in \pi$ has a unitary central character, in the sense that for any $z \in G(\bA)$, $\varphi(zg)=\varphi(g) \omega(z)$ for some unitary character $\omega: Z_G(\bA) \to \C^{\times}$, where $Z_G$ denote the center of $G$.

\subsubsection{Eisenstein series}

Let $P=M_PN_P$ be a standard parabolic subgroup of $G$. Let $\pi$ be a cuspidal automorphic representation of $M_P$. Let $\cA_{\pi,\mathrm{cusp}}(M_P)$ denote the the sum of all cuspidal automorphic representations of $M_P(\bA)$ that are isomorphic to $\pi$.

We write $\opn{Ind}_{P(\bA)}^{G(\bA)} \pi$ (resp. $\cA_{P,\pi}$) for the subspace
\begin{equation*}
    \{ \varphi \in \cA_P(G) \mid \text{ for any }g \in G(\bA), m \mapsto \delta_P^{-\frac 12}(m) \varphi(mg) \in \pi \text{ (resp. $\cA_{\pi,\mathrm{cusp}}(M_P)$)} \}.
\end{equation*}
of $\cA_P(G)$. 

For $\varphi \in \cA_{P,\pi}$ and $\lambda \in \fa_{P,\C}^*$, we define the \emph{Eisenstein series} as
\begin{equation*}
    E(g,\varphi,\lambda) = \sum_{\gamma \in P(F) \backslash G(F)} e^{\langle \lambda, H_P(\gamma g) \rangle} \varphi(\gamma g).
\end{equation*}
The series is absolutely convergent when $\opn{Re}(\lambda)$ lies in some cones and by \cite{BL24}, \cite{Lapid08}, it has meromorphic continuation to $\fa_{P,\C}^*$. When $\pi$ is unitary, for $\varphi \in \cA_{P,\pi}$, the Eisenstein series $E(g,\varphi,\lambda)$ is regular when $\lambda \in i\fa_P^*$.

\subsubsection{Intertwining operators and normalizations}
\label{sssec:prelim_intertwining_operator}

Let $P$ and $Q$ be standard parabolic subgroups of $G$. For any $w \in W(P, Q)$ and $\lambda \in \fa_{P,\C}^*$, the \emph{intertwining operator}
\begin{equation*}
    M(w,\lambda): \cA_P(G) \to \cA_Q(G)
\end{equation*}
 is defined by the meromorphic continuation of the integral
\begin{align*}
    (M(w, \lambda)\varphi)(g)&= \opn{exp}(-\langle w\lambda, H_P(g) \rangle)\\
    & \times \int_{(N_Q\cap wN_P w^{-1})(\bA)\backslash N_Q(\bA)}\opn{exp}(\langle \lambda, H_P(w^{-1}ng) \rangle)\varphi(w^{-1}ng) \rd n.
\end{align*}
(see \cite{BL24} for the meromorphic continuation). 
Let $\pi$ be a cuspidal representation of $M_P$, we denote by $M_\pi(w, \lambda)$ the restriction of $M(w, \lambda)$ to the subspace $\cA_{P, \pi}(G) \subset \cA_{P}(G)$. It is known that if $\pi$ is unitary, then $M_{\pi}(w,\lambda)$ is regular on $i \fa_{P}^*$.

Now we assume $G$ is $G_n$ and write $M_P=G_{n_1} \times \cdots \times G_{n_k}$ and $\pi= \pi_1 \boxtimes \cdots \boxtimes \pi_k$. Let $\Sigma^+_P \subset X^*(A_P)$ denote the set of positive roots of $A_P$ action on $\fn_P$. Let $\beta \in \Sigma^+_P$ be the positive root of $P$ associated to the two blocks $G_{n_i}$ and $G_{n_j}$ with $1 \leq i < j \leq k$. Set
\begin{equation*}
    n_{\pi}(\beta, s)=\frac{L(s, \pi_i \times \pi_j^\vee)}{\epsilon(s, \pi_i \times \pi_j^\vee)L(1+s, \pi_i \times \pi_j^\vee)}= \frac{L(1-s, \pi_i^\vee \times \pi_j)}{L(1+s, \pi_i \times \pi_j^\vee)},
\end{equation*}
then we define
\begin{equation*}
     n_{\pi}(w, \lambda),= \prod_{\substack{
  \beta \in \Sigma_P \\
  w\beta < 0
}} n_\pi(\beta, \langle \lambda, \beta^\vee \rangle).
\end{equation*}

Following \cite{MW89}, we normalize $M(w, \lambda)$ as
\begin{equation} \label{eq: normalized intertwining operator}
    M_\pi(w, \lambda)=n_{\pi}(w, \lambda) N_\pi(w, \lambda).
\end{equation}
Let $\varphi \in \opn{Ind}_{P(\bA)}^{G(\bA)} \pi$. Assume that $\varphi = \otimes'_v \varphi_v$ is factorizable, where $\varphi_v \in \opn{Ind}_{P(F_v)}^{G(F_v)}\pi_v$. Let $\tS$ be a sufficiently large finite set of places of $F$, which we assume to contain Archimedean places as well as the places $\varphi_v$ is ramified. We have a factorization
\begin{equation} \label{eq: decompose normalized intertwine}
    N_\pi(w, \lambda)\varphi= \prod_{v \in \tS}N_{\pi_v}(w, \lambda)\varphi_v.
\end{equation}
Here $N_{\pi_v}(w, \lambda)$ is the meromorphic \emph{local normalized intertwining operator} $\opn{Ind}_{P(F_v)}^{G(F_v)}\pi_{v,\lambda} \to \opn{Ind}_{Q(F_v)}^{G(F_v)}(w\pi)_{v,\lambda}$, 
(see \cite{MW89}). The product notation of \eqref{eq: decompose normalized intertwine} means $N_\pi(w, \lambda)\varphi$ is factorizable and for $v \notin \tS$ the local component $N_{\pi_v}(w, \lambda)\varphi_v$ is the unique unramified vector in $\opn{Ind}_{Q(F_v)}^{G(F_v)}(w\pi)_v$ such that $N_{\pi_v}(w, \lambda)\varphi_v(1)$ corresponds to $\varphi_v(1)$ under the natural identification between $\pi_v$ and $(w\pi)_v$. 

The following result is taken from \cite[Page 607]{MW89}
\begin{lemma}
    Let $\pi_v$ be a smooth irreducible and unitary representation of $M_P(F_v)$. Then the operator $N_{\pi_v}(w, \lambda)$ is holomorphic and unitary if $\lambda \in i \fa_P^*$. It is an isomorphism.
\end{lemma}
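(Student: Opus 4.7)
The statement is the standard result of Mœglin--Waldspurger on local normalized intertwining operators, and I would follow their strategy via successive reductions.

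The first reduction is to simple reflections. Intertwining operators satisfy the cocycle relation $M_{\pi_v}(w_1 w_2, \lambda) = M_{w_2 \pi_v}(w_1, w_2 \lambda) \circ M_{\pi_v}(w_2, \lambda)$ whenever $\ell(w_1 w_2) = \ell(w_1) + \ell(w_2)$, and the normalization factor $n_{\pi_v}(w, \lambda) = \prod_{\beta \in \Sigma_P^+, w\beta < 0} n_{\pi_v}(\beta, \langle \lambda, \beta^\vee\rangle)$ factors compatibly because the set of positive roots sent negative splits according to length-additive decompositions. Hence I may reduce to the case where $w$ is a simple reflection swapping two adjacent blocks $G_{n_i}$ and $G_{n_j}$ of $M_P$. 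By induction in stages (parabolic induction commutes with intertwining operators up to identification), a second reduction brings us to the rank-one setting $G = \GL_{n_i + n_j}$ with $P$ the standard maximal parabolic with Levi $\GL_{n_i} \times \GL_{n_j}$ and $\pi_v = \pi_{i,v} \otimes \pi_{j,v}$ unitary irreducible.

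For holomorphy on $i \fa_P^*$ in the rank-one case, I would invoke the Langlands--Shahidi theory: Shahidi's local coefficient expresses the action of $M_{\pi_v}(w, s)$ against a Whittaker functional as (essentially) $\gamma(s, \pi_{i,v} \times \pi_{j,v}^\vee, \psi_v)^{-1}$, so the ratio defining $N_{\pi_v}(w,s)$ cancels all $L$-factor contributions. The potential poles of $M_{\pi_v}(w, s)$ on the line $\mathrm{Re}(s) = 0$ come from $L(s, \pi_{i,v} \times \pi_{j,v}^\vee)$, which appear in the numerator of $n_{\pi_v}(\beta, s)$ and are thus cancelled in $N_{\pi_v}$. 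The remaining factor $L(1+s, \pi_{i,v} \times \pi_{j,v}^\vee)^{-1}$ in $n_{\pi_v}$ and the corresponding factor appearing from the functional equation for $\gamma$ are non-vanishing on $\mathrm{Re}(s) = 0$ thanks to the classical non-vanishing of Rankin--Selberg $L$-functions at the edge of the critical strip for unitary representations. Combined, these give holomorphy.

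For unitarity I would establish the adjoint relation $N_{\pi_v}(w^{-1}, -w\bar\lambda)^* \circ N_{\pi_v}(w, \lambda) = \mathrm{id}$. In the rank-one case, the unnormalized operator already satisfies $M_{\pi_v}(w^{-1}, -w\bar s)^* \circ M_{\pi_v}(w, s) = c(\pi_v, s) \cdot \mathrm{id}$ for a Plancherel-type scalar $c(\pi_v, s)$, and the identities $\overline{L(s, \pi_{i,v} \times \pi_{j,v}^\vee)} = L(\bar s, \pi_{i,v}^\vee \times \pi_{j,v})$ and $\overline{\epsilon(s, \pi_{i,v}\times\pi_{j,v}^\vee, \psi_v)} \epsilon(\bar s, \pi_{i,v}\times\pi_{j,v}^\vee, \psi_v) = 1$ on $s \in i \R$ (valid because both representations are unitary) show that $n_{\pi_v}(\beta, s)\overline{n_{\pi_v}(\beta, \bar s)}$ equals precisely this Plancherel factor. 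Dividing out gives the claimed unitary, isomorphism property; the extension from simple reflections back to arbitrary $w$ is immediate from the cocycle relation.

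The main obstacle is the rank-one calculation, and within it, the non-vanishing of $L(1 + it, \pi_{i,v} \times \pi_{j,v}^\vee)$ and the correct matching between the Plancherel factor and $n_{\pi_v}(\beta, s)\overline{n_{\pi_v}(\beta, -s)}$ on the unitary axis. Both are classical inputs from the Langlands--Shahidi and Rankin--Selberg theories for $\GL_n$, so the proof is essentially a careful bookkeeping of these normalizations rather than a new analytic argument.
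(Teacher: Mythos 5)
The paper does not prove this lemma: the sentence immediately preceding it reads ``The following result is taken from [MW89, p.~607],'' and the proof is entirely deferred to Mœglin--Waldspurger, with no argument given in the paper. Your proposal therefore reconstructs the cited proof rather than competing with an in-paper one.

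As a reconstruction, the skeleton you give — cocycle-relation reduction to rank one, cancellation of the pole of $M_{\pi_v}$ by the $L(s,\pi_{i,v}\times\pi_{j,v}^{\vee})$ factor of $n_{\pi_v}$, and unitarity via the Maass--Selberg/Plancherel identity matched against $n_{\pi_v}(\beta,s)\,\overline{n_{\pi_v}(\beta,\bar s)}$ — is the right one. Two points need tightening. First, your passage through Shahidi's local coefficient and Whittaker functionals requires $\pi_v$ to be generic, but the lemma is stated for an arbitrary irreducible unitary $\pi_v$. For $\GL_n$, [MW89] handle the non-generic case by reducing to tempered inducing data via the classification of the local unitary dual (Tadi\'c in the $p$-adic case, Vogan at the Archimedean places) together with analytic continuation, not directly through a Whittaker functional argument; in the present paper's applications $\pi_v$ is always a local component of a cuspidal representation and hence generic, so your version covers all instances actually used, but it does not prove the lemma as stated. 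Second, the ingredient controlling $N_{\pi_v}(w,s) = M_{\pi_v}(w,s)\cdot \epsilon(s)L(1+s)/L(s)$ on $\mathrm{Re}(s)=0$ is not the non-vanishing of $L(1+s,\pi_{i,v}\times\pi_{j,v}^{\vee})$ (local $L$-factors for $\GL_n$ never vanish) but the absence of poles of $L(1+s,\cdot)$ there. This finiteness follows from the local exponent bound $\lvert \mathrm{Re} \rvert < 1/2$ for irreducible unitary representations of $\GL_n(F_v)$, a consequence of the classification of the unitary dual, and not from the global Jacquet--Shalika non-vanishing theorem, which concerns the completed or partial Rankin--Selberg $L$-function rather than a single local factor.
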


From now on, we simply write $N_{\pi}(w)$ for $N_{\pi}(w,0)$. If we put
\begin{equation} \label{eq:L_function_on_n_P_-}
    L(s,\pi,\widehat{\fn}_P^-) := \prod_{1 \le i <  j \le k} L(s,\pi_i \times \pi_j^{\vee}),
\end{equation}
then we have
\[
    n_\pi(w,0)=\frac{L(1,w\pi,\widehat{\fn}_Q^-)}{L(1,\pi,\widehat{\fn}_P^-)}.
\]
For sufficiently large $\tS$ as above, we denote by
\[
    N_{\pi, \tS}(w)=\prod_{v \in \tS}N_{\pi_v}(w) : \opn{Ind}_{P(F_\tS)}^{G(F_\tS)}\pi_\tS \to \opn{Ind}_{Q(F_\tS)}^{G(F_\tS)}(w\pi)_\tS.
\]

We finally remark that the normalized intertwining operator naturally extends to the case when $G$ is a product of $G_{n_i}$.

\subsection{Langlands spectral decomposition}
\label{ssec:Langlands_spectral_decomposition}

\subsubsection{Cuspidal datum}
Let $G$ be a connected reductive group over $F$. Let $\underline{\fX}(G)$ denote the set of pairs $(M_P,\pi)$, where $M_P$ is the Levi component of a standard parabolic subgroup $P$ and $\pi$ is a cuspidal automorphic representation of $M_P(\bA)$ with central character trivial on $A_P^{\infty}$. Two elements $(M_P,\pi)$ and $(M_Q,\pi')$ of $\underline{\fX}(G)$ are called equivalent, if there exists $g \in G(F)$ such that $gM_Pg^{-1}=M_Q$ and $g \pi = \pi'$. Let $\fX(G)$ denote the equivalence class of $\underline{\fX}(G)$, an element of $\fX(G)$ will be called a \emph{cuspidal data}.

For a standard parabolic subgroup $P \subset G$, there exists a natural map $\underline{\fX}(M_P) \to \underline{\fX}(G)$, and it induces a map $\fX(M_P) \to \fX(G)$ which has finite fiber. For each subset $\fX \subset \fX(G)$, we will write $\fX^M$ for its preimage in $\fX(M_P)$.

\subsubsection{Langlands decomposition}

For $\chi \in \underline{\fX}(G)$, and $P$ be a standard parabolic subgroup, we write $\fO^P_{\chi} \subset \cS([G]_P)$ the set of \emph{pseudo-Eisenstein series} with respect to $\chi$ (See \cite[\S II.1]{MW95}, \cite[\S 2.9]{BPCZ}). Let $L^2_\chi([G]_P)$ denote the closure of $\fO^P_{\chi}$ in $L^2([G]_P)$, then we have the following coarse Langlands decomposition:
\begin{equation} \label{eq:Langlands_spectral_decomposition}
    L^2([G]_P) = \bigoplus_{\chi \in \fX(G)} L^2_{\chi}([G]_P).
\end{equation}

For a subset $\fX \subset \fX(G)$, we write $L^2_{\fX}([G]_P) := \widehat{\bigoplus}_{\chi \in \fX} L^2_{\chi}([G]_P)$. Then we define
\begin{equation} 
    \cS_{\fX}([G]_P) := L^2_{\fX}([G]_P) \cap \cS([G]_P).
\end{equation}
Note that $\cS_{\fX}([G]_P)$ is a closed subspace of $\cS([G]_P)$, since it is orthogonal complement of $\bigcup_{\chi \not \in \fX} \fO_\chi$ in $\cS([G]_P)$. 

We then define $\cT_{\fX}([G]_P)$ (resp. $L^2_N([G]_P)^{\infty}$) be the orthogonal complement of $\cS_{\fX^c}([G]_P)$ in $\cT([G]_P)$ (resp. $L^2_N([G]_P)^{\infty}$). 

We call element of $\cS_{\fX}([G]_P)$ the set of \emph{Schwartz function with cuspidal support in $\fX$} and $\cT_{\fX}([G]_P)$ the set of \emph{uniform moderate growth function with cuspidal support in $\fX$}. For any subset $\fX \subset \fX(G)$, the space $\cS_{\fX}([G]_P)$ is dense in $\cT_{\fX}([G]_P)$ (see \cite[\S 2.9.5]{BPCZ})

The following theorem \cite[Theorem 2.9.4.1]{BPCZ} describes the decomposition of a function according to cuspidal support:

\begin{theorem}[Beuzart-Plessis-Chaudouard-Zydor] \label{thm:decomposition_cuspidal_support}
    We have the following statements:
    \begin{enumerate}
        \item For $f \in \cS([G]_P)$, let $f_{\chi}$ denote the $\chi$-part of $f$ under the decomposition \eqref{eq:Langlands_spectral_decomposition}, then $f_{\chi} \in \cS([G]_P)$ and $f = \sum_{\chi} f_{\chi}$, where the sum is absolutely summable in $\cS([G]_P)$.
        \item The map $f \mapsto f_{\chi}: \cS([G]_P) \to \cT([G]_P)$ extends by continuity to a map $\cT([G]_P) \to \cT([G]_P)$, which we still denote by $f \mapsto f_{\chi}$. Then for any $f \in \cT([G]_P)$, $f_{\chi} \in \cT_{\chi}([G]_P)$ and the sum $f = \sum_{\chi} f_{\chi}$ is absolutely summable in $\cT([G]_P)$. 
    \end{enumerate}
\end{theorem}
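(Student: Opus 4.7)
The plan is to prove both parts by exploiting the Sobolev-type embeddings \eqref{eq:L2N_Schwartz_enbeddimgs} together with the $G(\bA) \times \cU(\fg_\infty)$-equivariance of the spectral projection $f \mapsto f_\chi$, supplemented by a duality argument for the passage from $\cS$ to $\cT$. The crucial intermediate ingredient is a refinement of the coarse Langlands decomposition \eqref{eq:Langlands_spectral_decomposition} to weighted $L^2$-spaces: for each $N \geq 0$ one should establish an orthogonal decomposition
\begin{equation*}
    L^2_N([G]_P) = \widehat{\bigoplus}_{\chi \in \fX(G)} L^2_{N,\chi}([G]_P), \quad L^2_{N,\chi}([G]_P) := L^2_\chi([G]_P) \cap L^2_N([G]_P),
\end{equation*}
where compatibility of the weight $\| \cdot \|_P^{N/2}$ with the pseudo-Eisenstein series $\fO_\chi^P$ densely spanning each $L^2_\chi([G]_P)$ ensures that multiplication by the weight intertwines the spectral decompositions.

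For part (1), take $f \in \cS([G]_P)$. By \eqref{eq:L2N_Schwartz_enbeddimgs}, $f \in L^2_{N'}([G]_P)^\infty$ for every $N$ (with $N'$ depending on $N$). The $L^2_N$-orthogonal projection onto $L^2_{N,\chi}$ commutes with the $G(\bA)$-action and with $\cU(\fg_\infty)$, so it restricts to a continuous operator on each $L^2_N([G]_P)^\infty$. Applying \eqref{eq:L2N_Schwartz_enbeddimgs} in the opposite direction yields $f_\chi \in \cS([G]_P)$, and the map $f \mapsto f_\chi$ is continuous on $\cS([G]_P)$. Absolute summability of $f = \sum_\chi f_\chi$ in $\cS([G]_P)$ then follows from Bessel-type orthogonality in each weighted $L^2$-norm combined with the Sobolev embeddings, which transfer control of the $L^2_{N'}$-norms of $R(X)f_\chi$ into control of the Schwartz semi-norms $\| f_\chi \|_{X, N}$.

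For part (2), one extends $f \mapsto f_\chi$ from $\cS_\fX([G]_P)$, which is dense in $\cT_\fX([G]_P)$, by combining duality with a translation argument. For $f \in \cT_N([G]_P, J)$ and $\varphi \in \cS([G]_P)$, pair them via
\begin{equation*}
    \langle f, \varphi \rangle := \int_{[G]_P} f(x) \overline{\varphi(x)} \rd x,
\end{equation*}
which converges for $N$-weighted Schwartz $\varphi$. Self-adjointness of the $L^2$-projection and part (1) suggest the definition $\langle f_\chi, \varphi \rangle := \langle f, \varphi_\chi \rangle$, which a priori specifies $f_\chi$ only as a distribution. To upgrade this to an element of $\cT_\chi([G]_P)$, one applies the construction to all translates $R(g)R(X)f$ and uses the continuity from part (1) applied to $\varphi_\chi$ to obtain pointwise bounds of the form $|R(X)(f_\chi)(x)| \ll \| x \|_P^{N} \| f \|_{Y, -N}$ for suitable $Y \in \cU(\fg_\infty)$; that $f_\chi$ is smooth follows from the same machinery. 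The orthogonality $f_\chi \in \cT_\chi([G]_P)$ is immediate from the pairing definition, and absolute summability of $f = \sum_\chi f_\chi$ in $\cT([G]_P)$ is obtained by the same Bessel-type argument now applied with the weight $\| \cdot \|_P^{-N}$.

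The main obstacle, and where the delicate analysis of \cite[\S 2.9]{BPCZ} enters, is the refinement of the spectral decomposition to weighted $L^2$-spaces together with the sharp semi-norm estimate in part (2): one must show that projecting a Schwartz function $\varphi$ onto $\cS_\chi([G]_P)$ preserves its decay uniformly in $\chi$ in a way compatible with the pairing against $\cT_N([G]_P, J)$. This requires careful manipulation of pseudo-Eisenstein series and their constant terms, essentially controlling the intertwining operators and Eisenstein series contributions near the unitary axis, which is the technical heart of \emph{loc.\ cit.}
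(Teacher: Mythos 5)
This statement is cited from \cite[Theorem 2.9.4.1]{BPCZ}; the paper under review does not reprove it. Your proposal differs substantially from BPCZ's argument and, more importantly, has serious gaps at precisely the points where BPCZ have to work hardest.

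The central unjustified step is the ``refinement of the coarse Langlands decomposition to weighted $L^2$-spaces.'' Multiplication by $\|\cdot\|_P^{N/2}$ is an isometry $L^2_N([G]_P) \to L^2([G]_P)$, and pulling the $L^2$-decomposition back under this isometry gives an orthogonal decomposition of $L^2_N$; but the pieces of that pullback are $\{f : f\|\cdot\|_P^{N/2} \in L^2_\chi\}$, which are \emph{not} the spaces $L^2_\chi \cap L^2_N$ you write. Multiplication by the weight does not preserve $L^2_\chi$ (the weight function is not $G(\bA_f)$-invariant nor $\cZ(\fg_\infty)$-finite, and does not commute with pseudo-Eisenstein series or constant terms), so there is no ``compatibility of the weight with $\fO^P_\chi$.'' The fact that the $L^2$-orthogonal projection $P_\chi$ restricts to a bounded operator on $L^2_N([G]_P)^\infty$ (equivalently, via \eqref{eq:L2N_Schwartz_enbeddimgs}, that it preserves Schwartz space with control of semi-norms) is essentially the statement being proved; assuming it as a ``should establish'' intermediate step makes the argument circular. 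BPCZ prove this via explicit spectral expansions against Eisenstein series, together with polynomial bounds on intertwining operators and a careful reduction over associate classes of parabolics — none of which appears in your outline.

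A second gap is the passage from $\ell^2$- to $\ell^1$-summability. Bessel-type orthogonality gives $\sum_\chi \|f_\chi\|^2_{L^2_N} < \infty$, but \emph{absolute} summability in $\cS([G]_P)$ means $\sum_\chi \|f_\chi\|_{X,N} < \infty$ for every continuous semi-norm, an $\ell^1$ bound. This does not follow from Bessel without an additional ingredient (in BPCZ, essentially an argument exploiting $\cZ(\fg_\infty)$-finiteness together with a polynomial count of the cuspidal data contributing at each spectral level). Your appeal to ``Bessel-type orthogonality combined with the Sobolev embeddings'' does not close this.

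Finally, for part (2) your duality pairing $\langle f_\chi,\varphi\rangle := \langle f,\varphi_\chi\rangle$ requires a quantitative version of part (1): namely, that $\varphi \mapsto \varphi_\chi$ is continuous from $\cS([G]_P)$ into a weighted Schwartz space $\cS_M([G]_P)$ with $M$ large enough to pair against $\cT_N$, and uniformly in $\chi$. This is exactly the ``sharp semi-norm estimate'' you defer to \emph{loc.\ cit.}, but without it the extension to $\cT([G]_P)$ is merely formal and does not yield membership of $f_\chi$ in $\cT_\chi([G]_P)$ or the absolute summability claim. In short, the proposal defers the technical heart of the argument at every critical juncture.
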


\subsubsection{Some lemmas}

\begin{lemma} \label{lemma: pseudo dense}
    For each $\chi \in \fX(G)$, $\fO_{\chi}^P$ is dense in $\cS_{\chi}([G]_P)$ and $\cT_{\chi}([G]_P)$.
\end{lemma}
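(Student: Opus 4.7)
The argument proceeds in two parts. First, the density in $\cT_\chi([G]_P)$ reduces to density in $\cS_\chi([G]_P)$: since $\fO_\chi^P \subset \cS_\chi([G]_P) \subset \cT_\chi([G]_P)$ and, as recalled just before the lemma, $\cS_{\fX}([G]_P)$ is dense in $\cT_{\fX}([G]_P)$ for every $\fX$, the case $\fX = \{\chi\}$ transfers any $\cS_\chi$-density statement to $\cT_\chi$. So the real task is the Schwartz case.

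For that case the main input is Theorem \ref{thm:decomposition_cuspidal_support}(1), which gives, for every $f \in \cS([G]_P)$, an absolutely summable decomposition $f = \sum_{\chi' \in \fX(G)} f_{\chi'}$ in $\cS([G]_P)$. Unpacking this, the $\chi$-projection $\pi_\chi : f \mapsto f_\chi$ is continuous from $\cS([G]_P)$ onto $\cS_\chi([G]_P)$. The plan is then (i) to prove that the total pseudo-Eisenstein series space $\fO^P := \bigcup_{\chi'} \fO_{\chi'}^P$ is dense in $\cS([G]_P)$, and (ii) to apply $\pi_\chi$. Step (ii) is essentially automatic: an element of $\fO_{\chi'}^P$ lies in $L^2_{\chi'}([G]_P)$ by construction, so $\pi_\chi$ annihilates it when $\chi' \ne \chi$ and fixes it when $\chi' = \chi$. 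Consequently $\pi_\chi(\fO^P) = \fO_\chi^P$, and the continuity of $\pi_\chi$ transports $\cS$-density of $\fO^P$ to $\cS_\chi$-density of $\fO_\chi^P$.

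Step (i) is the technical heart. The Langlands decomposition \eqref{eq:Langlands_spectral_decomposition} gives $\fO^P$ dense in $L^2([G]_P)$, and I would upgrade this to $\cS$-density via the Sobolev-type embeddings \eqref{eq:L2N_Schwartz_enbeddimgs} together with the Dixmier--Malliavin theorem: write $f \in \cS([G]_P)$ as a finite sum of convolutions $g_i \ast \eta_i$ with $g_i \in \cS([G]_P)$ and $\eta_i \in C_c^\infty(G(F_\infty))$, approximate each $g_i$ in a sufficiently large weighted $L^2$ norm by $\theta_i^{(n)} \in \fO^P$ (which remain pseudo-Eisenstein series after convolving, by $G(\bA)$-equivariance), and use the smoothing effect of $\eta_i$ combined with the embeddings $L^2_N([G]_P)^\infty \hookrightarrow \cS_{N'}([G]_P)$ to conclude $\sum_i \theta_i^{(n)} \ast \eta_i \to f$ in $\cS([G]_P)$. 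The main obstacle is precisely this promotion from $L^2$- to $\cS$-convergence while preserving the pseudo-Eisenstein nature of the approximants, and it parallels the apparatus developed in \cite[\S 2.9]{BPCZ}.
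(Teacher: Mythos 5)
Your reduction of the $\cT_\chi$ case to the $\cS_\chi$ case is exactly what the paper does, and your step (ii) — projecting density of $\fO^P$ via the continuous map $\pi_\chi : \cS([G]_P) \to \cS_\chi([G]_P)$ and noting $\pi_\chi(\fO^P) = \fO_\chi^P$ — is logically sound. For the $\cS_\chi$-density itself, however, the paper simply cites \cite[Lemma~5.5.1.2]{Boisseau25}, so your step (i) is an attempt to supply an argument the paper delegates to a reference, and it has a genuine gap.

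The gap is the transition from $L^2$-density to $\cS$-density. The Langlands spectral decomposition gives density of $\fO^P$ in $L^2([G]_P) = L^2_0([G]_P)$ only; it does not give density in $L^2_N([G]_P)$ for $N>0$, because the $L^2_N$-norm is strictly stronger than the $L^2$-norm, and a dense subspace of $L^2$ need not be dense in the smaller, more finely topologized space $L^2_N$. Yet your plan requires approximation of $g_i$ by $\theta_i^{(n)} \in \fO^P$ in $L^2_N$-norm for arbitrarily large $N$: the Sobolev embedding \eqref{eq:L2N_Schwartz_enbeddimgs} gives $L^2_N^\infty \hookrightarrow \cS_{N'}$ for a fixed $N'$ depending on $N$, and $\cS([G]_P)$ is the intersection $\bigcap_{N'}\cS_{N'}$ with the projective topology, so you need $L^2_N$-convergence for \emph{every} $N$ to conclude $\cS$-convergence. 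Convolution with $\eta_i$ commutes with right differentiation and thus converts $L^2_N$-convergence of $\theta_i^{(n)}$ into $L^2_N^\infty$-convergence of $\theta_i^{(n)}\ast\eta_i$, but it does not improve the polynomial decay rate: the weight $\|\cdot\|_P^N$ is essentially preserved (up to a constant depending on the support of $\eta_i$), so the smoothing effect cannot compensate for a missing weight. In short, the Dixmier--Malliavin plus Sobolev route produces convergence in some fixed $\cS_{N'}$, never in $\cS$, unless one already has the weighted $L^2_N$-density for all $N$ — which is precisely the nontrivial content of the result you are trying to prove. You acknowledge this obstacle at the end of the plan, but the apparatus of \cite[\S 2.9]{BPCZ} alone does not obviously close it; this is why the paper cites Boisseau rather than arguing from \eqref{eq:Langlands_spectral_decomposition}.
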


\begin{proof}
    See \cite[Lemma 5.5.1.2]{Boisseau25} for the density in $\cS_{\chi}([G]_P)$, the density in $\cT_{\chi}([G]_P)$ also follows, since $\cS_{\chi}([G]_P)$ is dense in $\cT_{\chi}([G]_P)$.
\end{proof}

\begin{lemma} \label{lemma: constant term}
Let $\chi \in \fX(G)$ be a cuspidal datum and $P$ be a standard parabolic subgroup of $G$. Then we have
\[
    E_P^G(\cS_\chi([G]_P)) \subset \cS_\chi([G]), \quad \cT_\chi([G])_P \subset \cT_\chi([G]_P).
\]
\end{lemma}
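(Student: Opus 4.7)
The plan is to prove the two inclusions separately. The first is handled by density and continuity; the second follows from the first by an adjunction argument between the pseudo-Eisenstein series map and the constant term.

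\emph{First inclusion.} By the definition of $\fO_\chi^P$ as the subspace of pseudo-Eisenstein series on $[G]_P$ attached to the cuspidal datum $\chi$, iterating the pseudo-Eisenstein construction yields $E_P^G(\fO_\chi^P) \subset \fO_\chi^G \subset \cS_\chi([G])$. The map $E_P^G : \cS([G]_P) \to \cS([G])$ is continuous (this is a standard fact, cf.\ \cite[\S II.1]{MW95} and \cite[\S 2.9]{BPCZ}). Since $\cS_\chi([G])$ is closed in $\cS([G])$ (being cut out by orthogonality relations against pseudo-Eisenstein series from other cuspidal data), and since $\fO_\chi^P$ is dense in $\cS_\chi([G]_P)$ by Lemma \ref{lemma: pseudo dense}, the first inclusion follows by passing to the closure.

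\emph{Second inclusion.} Let $f \in \cT_\chi([G])$. That $f_P \in \cT([G]_P)$ is standard: the integral over $[N_P]$ converges by the uniform moderate growth of $f$, and the resulting constant term inherits uniform moderate growth. To show $f_P \in \cT_\chi([G]_P)$, we verify that $f_P$ is orthogonal, under the natural sesquilinear pairing, to $\cS_{\chi^c}([G]_P)$. Fix $\varphi \in \cS_{\chi^c}([G]_P)$. By the unfolding identity
\[
\int_{[G]_P} f_P(g) \overline{\varphi(g)}\, \rd g \;=\; \int_{P(F)\backslash G(\bA)} f(g) \overline{\varphi(g)}\, \rd g \;=\; \int_{[G]} f(g)\, \overline{E_P^G(\varphi)(g)}\, \rd g,
\]
which is justified by Fubini's theorem (using the Schwartz decay of $\varphi$ against the moderate growth of $f$) together with the left $N_P(\bA)$-invariance of $\varphi$, we may rewrite the pairing on $[G]_P$ in terms of a pairing on $[G]$. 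The first inclusion gives $E_P^G(\varphi) \in \cS_{\chi^c}([G])$, and the hypothesis $f \in \cT_\chi([G])$ means $f$ is orthogonal to $\cS_{\chi^c}([G])$. Hence the right-hand side vanishes, so $f_P \perp \cS_{\chi^c}([G]_P)$, as required.

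\emph{Main obstacle.} The conceptually important input is the adjunction formula linking constant terms and pseudo-Eisenstein series, which reduces the statement about $\cT_\chi$ to the already-established statement about $\cS_\chi$. The technical verifications---continuity of $E_P^G$ on Schwartz spaces, convergence of the unfolded integral for moderate-growth $f$ paired with Schwartz $\varphi$, and the closedness of $\cS_\chi([G])$ inside $\cS([G])$---are all standard in the framework of \cite{BPCZ}, and no new estimates are needed.
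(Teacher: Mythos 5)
Your proof is correct and follows the expected route: the first inclusion is established by density of $\fO_\chi^P$ in $\cS_\chi([G]_P)$ plus continuity of $E_P^G$ and closedness of $\cS_\chi([G])$, and the second inclusion is reduced to the first via the adjunction between pseudo-Eisenstein series and constant terms. The paper itself simply cites \cite[Lemma 2.9.3.1]{BPCZ} without reproducing the argument, and your self-contained proof matches the standard one used there.

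One small step you gloss over: your first inclusion is stated for a single cuspidal datum, so the claim $E_P^G(\varphi) \in \cS_{\chi^c}([G])$ for $\varphi \in \cS_{\chi^c}([G]_P)$ does not follow immediately. You should either (i) apply the first inclusion to each $\chi' \ne \chi$ and assemble via the absolutely summable decomposition $\varphi = \sum_{\chi' \ne \chi} \varphi_{\chi'}$ of Theorem \ref{thm:decomposition_cuspidal_support}, together with continuity of $E_P^G$ and the fact that $\cS_{\chi^c}([G]) = L^2_{\chi^c}([G]) \cap \cS([G])$ is closed; or (ii) note that since $\varphi \mapsto \langle f_P, \varphi \rangle_{[G]_P}$ is continuous on $\cS([G]_P)$, it suffices to verify vanishing against $\fO_{\chi'}^P$ for each $\chi' \ne \chi$, which is exactly what your adjunction identity delivers, and then invoke density of the span of $\bigcup_{\chi' \ne \chi} \fO_{\chi'}^P$ in $\cS_{\chi^c}([G]_P)$. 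Either way the step is routine; the main ideas are sound and the convergence/Fubini justifications you invoke (moderate growth of $f_P$ against Schwartz decay of $\varphi$) are correct.
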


\begin{proof}
    See \cite[Lemma 2.9.3.1]{BPCZ}.
\end{proof}

\begin{lemma} \label{lemma: restriction to levi}
    Let $\chi \in \fX(G)$ be a cuspidal datum, $P$ be a standard parabolic subgroup of $G$ and $\chi_M$ be the inverse image of $\chi$ in $\fX(M_P)$. Then, for every $f \in \cS_\chi([G]_P)$(resp. $\cT_\chi([G]_P)$), its restriction $f|_{[M_P]}$ to $[M_P]$ belongs to $\cS_{\chi^M}([M_P])$(resp. $\cT_{\chi^M}([M_P])$).
\end{lemma}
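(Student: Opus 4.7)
The plan is to combine a density argument with a direct identification of the restriction of pseudo-Eisenstein series, proceeding in three stages.

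The first stage is to show that the restriction map $R: f \mapsto f|_{[M_P]}$ is continuous from $\cS([G]_P)$ to $\cS([M_P])$ and from $\cT([G]_P)$ to $\cT([M_P])$. Since the equivalence relation $N_P(\bA)M_P(F)$ defining $[G]_P$ contains $M_P(F)$, we have $\|m\|_P \le \|m\|_G$ for $m \in M_P(\bA)$, and the $G$- and $M_P$-norms on $M_P(\bA)$ are polynomially equivalent. Together with the fact that right-invariant differential operators on $M_P(F_\infty)$ extend to ones on $G(F_\infty)$, this implies that every defining semi-norm of $\cS([M_P])$ (resp.\ $\cT([M_P])$) is dominated by one on $\cS([G]_P)$ (resp.\ $\cT([G]_P)$).

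The second and key stage is to verify that $R(\fO^P_\chi) \subset \cS_{\chi_M}([M_P])$. When $\chi$ is represented by a pair $(M_R, \pi)$ with $R \subset P$ a standard parabolic, elements of $\fO^P_\chi$ are inverse Mellin transforms of partial Eisenstein series of the form $g \mapsto \sum_{\gamma \in R(F)\backslash P(F)} \phi_\lambda(\gamma g)$. The intersection $R \cap M_P$ is a standard parabolic of $M_P$ with Levi $M_R$, and the natural bijection $R(F)\backslash P(F) \simeq (R \cap M_P)(F)\backslash M_P(F)$ shows that the pull-back to $[M_P]$ is itself a pseudo-Eisenstein series on $[M_P]$ built from the datum $(M_R, \pi) \in \fX(M_P)$, whose image in $\fX(G)$ is $\chi$. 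Hence the restriction lies in $\fO^{M_P}_{\chi'_M} \subset \cS_{\chi_M}([M_P])$ for the appropriate $\chi'_M \in \chi_M$; if no representative of $\chi$ is supported inside $P$, then $\fO^P_\chi = 0$ and there is nothing to prove.

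The third stage is the density argument. By Lemma \ref{lemma: pseudo dense}, $\fO^P_\chi$ is dense in $\cS_\chi([G]_P)$, and $\cS_\chi([G]_P)$ is dense in $\cT_\chi([G]_P)$; moreover $\cS_{\chi_M}([M_P])$ (resp.\ $\cT_{\chi_M}([M_P])$) is closed in $\cS([M_P])$ (resp.\ $\cT([M_P])$), being defined as an orthogonal complement. Combining the first two stages with this density yields $R(\cS_\chi([G]_P)) \subset \cS_{\chi_M}([M_P])$ and $R(\cT_\chi([G]_P)) \subset \cT_{\chi_M}([M_P])$, as desired.

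The main obstacle lies in the second stage: one must carefully unpack the construction of $\fO^P_\chi$ on $[G]_P$, as inverse Mellin transforms over a contour in $i\fa_R^*$ of Schwartz sections of $\opn{Ind}_{R(\bA)}^{P(\bA)}\pi_\lambda$, and verify that pull-back under $[M_P] \hookrightarrow [G]_P$ respects the Mellin data so that the output is genuinely a pseudo-Eisenstein series on $[M_P]$ whose cuspidal datum maps to $\chi$ under $\fX(M_P) \to \fX(G)$.
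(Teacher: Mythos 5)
Your proposal is correct and follows essentially the same route as the paper's (very terse) proof: verify the statement directly for pseudo-Eisenstein series $\fO^P_\chi$, then extend by density (Lemma \ref{lemma: pseudo dense}) together with continuity of the restriction map and closedness of $\cS_{\chi_M}([M_P])$. Your three stages simply spell out what the paper compresses into ``this follows from the definition, and the general cases follow by density.''
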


\begin{proof}
    If $f \in \fO^P_{\chi}$, this follows from the definition, and the general cases follow by the density (Lemma \ref{lemma: pseudo dense}).
\end{proof}

\begin{lemma} \label{lemma: restriction to component}
    Assume that $G= H \times L$, where $H$ and $L$ are connected reductive groups over $F$. Then we have a natural identification $\fX(G)= \fX(H) \times \fX(L) $. For a subset $\fX \subset \fX(G)$, denote its projection to $\fX(H)$ by $\fX_H$. Then for every $\cS_{\fX}([G])$, its restriction to $[H]$ belongs to $\cS_{\fX_H}([H])$.
\end{lemma}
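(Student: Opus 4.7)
The plan is to reduce the statement to a per-datum assertion and then exploit the factorizability of pseudo-Eisenstein series on product groups.

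First, the identification $\fX(G) = \fX(H) \times \fX(L)$ follows from the fact that every standard parabolic of $G = H \times L$ decomposes as a product $P = P_H \times P_L$ with Levi factor $M_P = M_{P_H} \times M_{P_L}$, and every cuspidal automorphic representation of such a product Levi is an outer tensor product, compatibly with the equivalence relations defining $\fX$.

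Next, I would reduce to a single cuspidal datum. Given $f \in \cS_\fX([G])$, Theorem \ref{thm:decomposition_cuspidal_support} provides an absolutely convergent decomposition $f = \sum_{\chi \in \fX} f_\chi$ in $\cS([G])$ with $f_\chi \in \cS_\chi([G])$. The inclusion $[H] \hookrightarrow [G]$, $h \mapsto (h, 1)$, coming from $[G] = [H] \times [L]$, is a closed embedding compatible with heights, so restriction defines a continuous map $\cS([G]) \to \cS([H])$. Hence $f|_{[H]} = \sum_\chi f_\chi|_{[H]}$ absolutely in $\cS([H])$, and as $\cS_{\chi_H}([H])$ is closed in $\cS([H])$, it suffices to prove $f_\chi|_{[H]} \in \cS_{\chi_H}([H])$ for each individual $\chi = (\chi_H, \chi_L) \in \fX$.

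For the single-$\chi$ case, the key computation is that pseudo-Eisenstein series on the product factor. For a factorizable test function $\phi = \phi_H \otimes \phi_L$ of cuspidal type $\chi = (\chi_H, \chi_L)$ on $[G]_P$, the bijection $P(F)\backslash G(F) = P_H(F)\backslash H(F) \times P_L(F)\backslash L(F)$ gives
\[
    \theta_\phi^G(h, l) = \theta_{\phi_H}^H(h) \cdot \theta_{\phi_L}^L(l),
\]
whose restriction to $l = 1$ is the scalar multiple $\theta_{\phi_L}^L(1) \cdot \theta_{\phi_H}^H$, which lies in $\fO_{\chi_H}^H \subset \cS_{\chi_H}([H])$. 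Approximating $f_\chi$ by elements of $\fO_\chi^G$ via Lemma \ref{lemma: pseudo dense}, together with density of factorizable test functions and continuity of restriction, would then conclude.

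The main obstacle is verifying that factorizable vectors span a dense subspace of $\fO_\chi^G$; this should follow from standard kernel-type density statements ($\cS([M_{P_H}]) \otimes \cS([M_{P_L}])$ is dense in $\cS([M_P])$) together with compatibility of the spectral decomposition on $[M_P]$ with the product structure. A cleaner alternative that avoids this density issue is a duality argument: one characterizes $\cS_{\fX_H}([H])$ as the orthogonal complement of $\bigcup_{\chi_H' \in \fX_H^c} \fO_{\chi_H'}^H$ in $\cS([H])$, and given $g_H$ in such an $\fO_{\chi_H'}^H$, writes
\[
    \langle f|_{[H]}, g_H \rangle_{L^2([H])} = \lim_{\psi \to \delta_{1}} \langle f, g_H \otimes \psi \rangle_{L^2([G])}, \quad \psi \in \cS([L]),
\]
then spectrally decomposes $\psi = \sum_{\chi_L'} \psi_{\chi_L'}$ so that each $g_H \otimes \psi_{\chi_L'}$ lies in $\cS_{(\chi_H', \chi_L')}([G]) \subset \cS_{\fX^c}([G])$ (since $\chi_H' \in \fX_H^c$ forces $(\chi_H', \chi_L') \notin \fX$), and uses that $f \perp \cS_{\fX^c}([G])$ to make each term vanish. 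The limit is justified by continuity of $l \mapsto \int_{[H]} f(h, l) \overline{g_H(h)}\, dh$, which follows from Schwartzness of $f$ and moderate growth of $g_H$.
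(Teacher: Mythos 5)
Your proposal is correct, and your first route coincides in spirit with the paper's proof, which is literally the same one-liner as for Lemma \ref{lemma: restriction to levi}: verify the claim on pseudo-Eisenstein series, then extend by density via Lemma \ref{lemma: pseudo dense} and Theorem \ref{thm:decomposition_cuspidal_support}.

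The ``main obstacle'' you flag --- density of factorizable kernels in $\fO^G_\chi$ --- can actually be avoided altogether. For an arbitrary $\theta_\phi^G$ with kernel $\phi$ on $[G]_P = [H]_{P_H} \times [L]_{P_L}$, the sum over $P(F)\backslash G(F) = \bigl(P_H(F)\backslash H(F)\bigr) \times \bigl(P_L(F)\backslash L(F)\bigr)$ factors with no hypothesis on $\phi$: setting
\[
\tilde\phi(h') := \sum_{\gamma_L \in P_L(F)\backslash L(F)} \phi(h',\gamma_L),
\]
one has $\theta_\phi^G(h,1) = \theta_{\tilde\phi}^H(h)$, and $\tilde\phi$ is of cuspidal type $\chi_H$: the $\gamma_L$-sum converges by rapid decay of $\phi$, and evaluating the $\pi_H \boxtimes \pi_L$-isotypic condition at a fixed point in the second variable leaves the $\pi_H$-isotypic condition in the first. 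Thus $\theta_\phi^G|_{[H]\times\{1\}}$ lies in $\cS_{\chi_H}([H])$ directly, and no density-of-factorizable-vectors step is needed. Your duality alternative is also sound, but note that placing $g_H \otimes \psi_{\chi_L'}$ in $\cS_{(\chi_H',\chi_L')}([G])$ uses the compatibility $L^2_{\chi}([G]) \cong L^2_{\chi_H}([H]) \widehat{\otimes} L^2_{\chi_L}([L])$, which is essentially the same input the factorizable-density route would have required; the two arguments are morally equivalent, and either is a legitimate expansion of the paper's terse proof.
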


The proof is the same as the proof of Lemma \ref{lemma: restriction to levi}.

\begin{lemma} \label{lemma: stable under truncation}
    Let $\chi \in \fX(G)$ be a cuspidal datum, $P$ be a standard parabolic subgroup of $G$, and $\kappa \in C_c^\infty(\fa_P)$ be a compactly supported smooth function on $\fa_P$. Then $\cT_\chi([G]_P)$ is stable under the multiplication by $\kappa \circ H_P$.
\end{lemma}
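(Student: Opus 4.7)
The plan is to reduce the lemma to its Schwartz counterpart by duality, and then to handle the Schwartz version via density of pseudo-Eisenstein series.

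First, I would record the basic regularity. Since $\kappa$ is compactly supported smooth and $H_P:G(\bA)\to \fa_P$ is smooth in the $A_P^\infty$-direction and left $P(F)$-invariant (so that $\kappa\circ H_P$ is a well-defined function on $[G]_P$), the function $\kappa\circ H_P$ is smooth and bounded with all right-invariant derivatives bounded. Multiplication by such a function therefore defines a continuous endomorphism of both $\cT([G]_P)$ and $\cS([G]_P)$; in particular $(\kappa\circ H_P)f\in \cT([G]_P)$ whenever $f\in \cT_\chi([G]_P)$.

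Next, by the definition of $\cT_\chi([G]_P)$ as the orthogonal complement of $\cS_{\chi^c}([G]_P)$ inside $\cT([G]_P)$, it suffices to show that for every $g\in \cS_{\chi^c}([G]_P)$,
\[
\langle (\kappa\circ H_P)f,\,g\rangle_{[G]_P}=\langle f,\,(\bar\kappa\circ H_P)g\rangle_{[G]_P}=0.
\]
This reduces matters to the following \emph{Schwartz version}: for every cuspidal datum $\chi'\in \fX(G)$ and every $\kappa\in C_c^\infty(\fa_P)$, multiplication by $\kappa\circ H_P$ preserves $\cS_{\chi'}([G]_P)$.

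To prove the Schwartz version, I would use Lemma \ref{lemma: pseudo dense}: the space $\fO^P_{\chi'}$ of pseudo-Eisenstein series is dense in $\cS_{\chi'}([G]_P)$. Since multiplication by $\kappa\circ H_P$ is continuous on $\cS([G]_P)$, it is enough to verify $(\kappa\circ H_P)\cdot \fO^P_{\chi'}\subset \cS_{\chi'}([G]_P)$. The key observation is that $H_P$ is $P(F)$-invariant: for $\gamma\in P(F)$ and $g\in G(\bA)$, the product formula applied to rational characters of $P$ gives $H_P(\gamma g)=H_P(g)$. Consequently, writing a pseudo-Eisenstein series as a sum
\[
\theta^P_{\phi}(g)=\sum_{\gamma\in R(F)\backslash P(F)}\phi(\gamma g),\qquad R\subset P,
\]
one can absorb the factor $\kappa\circ H_P$ inside the sum, obtaining
\[
(\kappa\circ H_P)\cdot \theta^P_{\phi}=\theta^P_{(\kappa\circ H_P)\phi},
\]
which is again a pseudo-Eisenstein series with the same underlying cuspidal datum $\chi'$: the multiplier $\kappa\circ H_P$ only depends on the $A_P^\infty$-coordinate and therefore does not alter the cuspidal component along $M_R$ of the inducing section $\phi$.

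The main obstacle is the last step, namely the bookkeeping needed to confirm that $(\kappa\circ H_P)\phi$ still defines a valid input for the pseudo-Eisenstein construction attached to $\chi'$. This requires working with the precise definition of $\fO^P_{\chi'}$ from \cite[\S II.1]{MW95} and \cite[\S 2.9]{BPCZ}, and checking that the operation of multiplying by a Paley--Wiener function of $H_P$ preserves the Schwartz conditions along $\fa_R^{P,*}$ and commutes with the cuspidal projection on $M_R$. Once this is verified, the density argument and the duality reduction together yield the lemma.
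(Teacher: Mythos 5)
Your proposal is correct and follows essentially the same strategy the paper uses. The paper in fact offers two routes: an explicit double-duality argument ($\cT_\chi$ orthogonal to $\cS_{\chi'}$, then $\cS_{\chi'}$ orthogonal to $\fO_{\chi''}$, reducing to stability of pseudo-Eisenstein series, which "follows from the definition") and a hinted density argument patterned on the proof of Lemma~\ref{lemma: restriction to levi} (check the statement on $\fO^P_\chi$ and extend by density). Your hybrid — one duality step to reach $\cS_{\chi^c}$, then density of $\fO^P_{\chi'}$ — is an unproblematic variant of these. The concern you raise at the end is not a genuine obstacle: because $H_P$ is left $N_R(\bA)M_R(F)$-invariant for any standard $R\subset P$ (by triviality of $X^*(P)$ on $N_R$ and the product formula on $M_R(F)$), the factor $\kappa\circ H_P$ descends to $[G]_R$, its absorption into the sum defining $\theta^P_\phi$ is legitimate, and since it only depends on $H_P$ it preserves compact support in the $\fa_R$-direction while leaving the cuspidal component along $[M_R]^1$ untouched; so $(\kappa\circ H_P)\phi$ is indeed an admissible section for the same cuspidal datum $\chi'$.
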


\begin{proof}
    This can also be proved via the method in the proof of Lemma \ref{lemma: restriction to levi}. Alternatively, for $f \in \cT_{\chi}([G]_P)$, we need to show that $f \cdot (\kappa \circ H_P)$ is orthogonal to any $f' \in \cS_{\chi'}([G]_P)$ for $\chi' \ne \chi$. Then
    \begin{equation*}
        \langle (\kappa \circ H_P)f, f' \rangle_{[G]_P} = \langle (f, (\kappa \circ H_P)f' \rangle_{[G]_P}.
    \end{equation*}
    Therefore, it reduces to proving $\cS_{\chi}$ is stable under multiplication by $\kappa \circ H_P$. Since $\cS_{\chi}$ is orthogonal to $\fO_{\chi'}$ to all $\chi' \ne \chi$. By the same trick, it reduces to proving each $\fO_{\chi}$ is stable under multiplication by $(\kappa \circ H_P)$, which follows from the definition.
\end{proof}

\subsection{Whittaker model}
\label{ssec:Whittaker}

\subsubsection{Local Whittaker model}
\label{sssec:prelim_local_Whittaker}

We now assume that $F$ be a local field. Let $G$ be a quasi-split group over $F$. We fix a splitting $\opn{Spl} = (B, T,\{ X_{\alpha} \}_{\alpha})$ of $G$. This means $B=TU$ is an $F$-Borel subgroup, $T$ is a maximal torus and $\{X_{\alpha}\}_{\alpha}$ is a set of $\Gamma_F$ invariant root vector.

We fix a splitting $\opn{Spl}$ of $G$ and an additive character $\psi: F \to \C^{\times}$. They give rise to a Whittaker data $\fw = \fw_{(\opn{Spl},\psi)} = (B,\psi_U)$ of $G$. More generally, for any Levi subgroup $M$ containing $T$, they give rise to a Whittaker data $\fw_M = \fw_{M,\opn{Spl},\psi} = (B_M, \psi_{U_M})$ of $M$, where $B_M := B \cap M$ and $\psi_{U_M} : U_M := U \cap M \to \C^{\times}$ is the character induced by $\opn{Spl}$ and $\psi$.

Let $\pi$ be an irreducible representation of $G(F)$. Recall that $\pi$ is called \emph{generic}, if it satisfies $\Hom_{U(F)}(\pi,\psi_U) \ne 0$. When $\pi$ is generic, it can be identified with its \emph{Whittaker model} $\cW(\pi,\psi_U)$. Recall that
\begin{equation*}
    \cW(\pi,\psi_U) = \{ g \mapsto \lambda(\pi(g)v) \mid v \in \pi \} \subset C^{\infty}(U(F) \backslash G(F),\psi_U),
\end{equation*}
where $\lambda$ is any non-zero element of $\Hom_{U(F)}(\pi,\psi_U) \ne 0$.

When $G=G_{n_1} \times \cdots \times G_{n_k}$ is a product of general linear groups, we always fix the standard splitting.

\subsubsection{Jacquet integral}
\label{sssec:prelim_Jacquet_integral}

Let $P=M_PN_P$ be a parabolic subgroup and let $\pi$ be an irreducible generic representation of $M_P(F)$.

\begin{num}
    \item $\opn{Ind}_{P(F)}^{G(F)} \pi$ can be identified with the following space of functions on $G(F)$.
    \begin{equation*}
        \left\{ W^{M_P}: G(F) \to \C \mid \forall g \in G(F), m \in M(F) \mapsto \delta_P^{-\frac 12}(m) W^M(mg) \in \cW(\pi,\psi_{U_M}) \right \}.
    \end{equation*}
    We denote this space by $\opn{Ind}_{P(F)}^{G(F)}(\cW(\pi,\psi_{U_M}))$
\end{num}

Let $N_{\overline{P}}$ be the unipotent radical of the parabolic subgroup $\overline{P}$ of $G$ opposite to $P$. Let $w_0=w_{\ell} w_{\ell}^{P}$, where $w_{\ell}$ and $w_{\ell}^M$ are the longest elements in $W$ and $W^P$, respectively. Denote by $\widetilde{w_0} \in G(F)$ the \emph{Tits lifting} \cite[p. 228]{LS87} of $w_0$ and let $N'=\widetilde{w_0}N_{\overline{P}}\widetilde{w_0}^{-1}$.
The \emph{Jacquet functional} is given by the holomorphic continuation of the Jacquet integral
\begin{equation*}
    \Omega_{\lambda}(W^{M_P})(g) = \int_{N'(F)} W^{M_P}(\widetilde{w_0}^{-1}n'g) \psi_U(n')^{-1} \rd n'.
\end{equation*}
 It induces an isomorphism between $\opn{Ind}_{P(F)}^{G(F)}\cW(\pi_{\lambda},\psi_{U_M})$ and $\cW(\opn{Ind}_{P(F)}^{G(F)}\pi_{\lambda},\psi_U)$, where $\lambda \in \fa_{P,\C}^*$, and $\pi_\lambda$ denotes the unramified twist of $\pi$ by $\lambda$. When $\lambda=0$, we may simply denote $\Omega_{\lambda}$ by $\Omega$.

More generally, let $P \subset Q$ be standard parabolics. Let $w_0^Q := w_{\ell}^{Q} w_{\ell}^{P}$. Let $N' = \widetilde{w_0^Q} N_{\overline{P}} \widetilde{w_0^Q}^{-1} \cap M_Q$. Then we have a Jacquet functional $\Omega^Q$ from $\opn{Ind}_{P(F)}^{G(F)} \cW(\pi,\psi_{U_{M_P}}) \to \opn{Ind}_{Q(F)}^{G(F)} \cW(\opn{Ind}_{P}^Q \pi,\psi_{U_{M_Q}})$, defined by the meromorphic continuation of
\begin{equation*}
    \Omega_{\lambda}^Q(W^{M_P})(g) =  \int_{N'(F)} W^{M_P}(\widetilde{w_0}^{-1}n'g) \psi_{U_{M_Q}}(n')^{-1} \rd n'.
\end{equation*}

\subsubsection{}

Let $G$ be a product of $G_{n_i}$. Let $P,Q$ be standard parabolics $G$, and $w \in W(P,Q)$. Let $\pi$ be a generic representation of $M_P(F)$. The normalized intertwining operator $N(w,\lambda): \opn{Ind}_{P(F)}^{G(F)} \pi_{\lambda} \to \opn{Ind}_{Q(F)}^{G(F)} (w\pi)_{w\lambda}$ transports to a map $\opn{Ind}_{P(F)}^{G(F)} \cW(\pi_{\lambda},\psi) \to \opn{Ind}_{Q(F)}^{G(F)} \cW(w\pi_{w \lambda},\psi)$, which we will still denote it by $N(w,\lambda)$, and we write $N(w)$ for $N(w,0)$.

\subsubsection{}

Now let $F$ be a number field. Let $N_n$ be the unipotent radical of the Borel subgroup of $G_n$, we define a generic character $\psi_{N_n}$ of $[N_n]$ by
\[
    \psi_{N_n}(u)= \psi\left( \sum_{i=1}^{n-1}u_{i, i+1} \right).
\]
Assume $G=G_{n_1} \times \cdots \times G_{n_k}$. Let $N$ be the unipotent radical of the Borel subgroup of $G$ and $\psi_N= \psi_{N_{n_1}} \boxtimes \cdots \boxtimes \psi_{N_{n_k}}$ be the generic character on $[N]=[N_{n_1}] \times \cdots \times [N_{n_k}] $.
For every $f \in \cT([G])$, we set
\[
    W_f= \int_{[N]}f(ug)\psi_N(u)^{-1} \rd u.
\]
Let $\pi$ be a cuspidal representation of $G(\bA)$, then the map $f \mapsto W_f$ gives an isomorphism between $\pi$ and its \emph{Whittaker model}
\[
    \cW(\pi, \psi_N)= \left \{ W_f \mid f \in \pi \right \}.
\]
More generally, let $P$ be a standard parabolic of $G$ and $\varphi \in \cT([G]_P)$, we set
\[
    W^{M_P}_{\varphi}(g)=\int_{[M_P \cap N]}\varphi(ug)\psi_N(u)^{-1} \rd u,
\]
Let $\pi$ be a cuspidal unitary representation of $M_P(\bA)$, then the map $\varphi \in \cA_{P,\pi} \mapsto W^{M_P}_\varphi$ gives an isomorphism between $\Pi=\opn{Ind}^{G(\bA)}_{P(\bA)}\pi$ and the induction of the Whittaker model
\[
    \opn{Ind}_{P(\bA)}^{G(\bA)} \cW(\pi, \psi_N) = \{ W^{M_P}: G(\bA) \to \C, \forall g \in G(\bA), m \in M_P(\bA) \mapsto \delta_{P(\bA)}^{-\frac 12}(m)W^{M_P}(mg) \in \cW(\pi,\psi_N) \}
\]
For $f \in \cT([G])$ (resp. $\varphi \in \cT([G]_P)$) and for a finite set of places $\tS$ of $F$, let $W_{f,\tS}$ (resp. $W^{M_P}_{\varphi,\tS}$) be the restriction of $W_{f}$(resp. $W^{M_P}_{\varphi}$) to $G(F_\tS)$. 

For $\varphi \in \Pi$, write $E(\varphi)(g)=E(g, \varphi,0)$ for the Eisenstein series of $\varphi$. Let $\tS$ be a sufficiently large finite set of places of $F$.  Then it follows from \cite[\S 4]{Shahidi} that
\begin{equation} \label{eq: jacquet functional}
     W_{E(\varphi), \tS}= L(1,\pi,\widehat{\fn}_{P}^-)^{-1} \Omega_{\tS}(W^{M_P}_{\varphi,\tS}),
\end{equation}
when $L(1,\pi,\widehat{\fn}_P^-)$ has a pole at $s=1$, the right hand side is interpreted as 0.

More generally, let $R$ be a standard parabolic subgroup of $G$ containing $P$, we have that
\begin{equation} \label{eq: Induction Jacquet functional}
    W^{M_R}_{E^R(\varphi),\tS}= L(1,\pi,\widehat{\fn_P^R}^-)^{-1} \Omega^R_\tS(W^{M_P}_{\varphi, \tS}).
\end{equation}

\subsubsection{}

We still assume that $G$ is a product of $G_{n_i}$. Let $P=MN$ be a standard parabolic of $G$ and let $\varphi \in \opn{Ind}^{G(\bA)}_{P(\bA)} \pi= \cA_{P, \pi}(G)$ and $\tS$ be a sufficiently large finite set of places of $F$, which we assume to contain Archimedean places as well as places where $\varphi$ is ramified. Then we have a decomposition $W^M_\varphi= W^M_{\varphi, \tS} W^{M, \tS}_\varphi$ such that $W^{M, \tS}_\varphi(1)=1$ and is fixed by $K^\tS$. For $N_\pi(w) \varphi \in \opn{Ind}^{G(\bA)}_{Q(\bA)} w \pi= \cA_{Q, w\pi}(G)$, we also have a decomposition
\[
    W^M(N_\pi(w)\varphi)= W_\tS^M(N_\pi(w)\varphi) W^{M, \tS}(N_\pi(w)\varphi)
\]
such that $W^{M, \tS}(N_\pi(w)\varphi)(1)=1$ and is fixed by $K^\tS$. Then it follows from \eqref{eq: normalized intertwining operator} and \eqref{eq: decompose normalized intertwine} that
\begin{equation} \label{eq: intertwine Whittaker}
    W_\tS^M(N_\pi(w)\varphi)= N_{\pi, \tS}(w)(W^M_{\varphi, \tS}).
\end{equation}

\subsection{Topological vector spaces}

In this article, a LVTVS means a Hausdorff, locally convex topological vector space. We refer the readers to \cite[Appendix A]{BPCZ} for more details. Let $V,W$ be two LCTVS. We endow $\opn{Hom}(V,W)$ with the pointwise convergence topology. If $W$ is quasi-complete, then so is $\opn{Hom}(V,W)$.

Let $V,W,X$ be LCTVS. Let $\opn{Bil}_s(V,W;X)$ denote the set of separately bilinear map $V \times W \to X$. It consists of bilinear maps $f:V \times W \to X$ such that for any $v \in V$, the map $f(v,\cdot): W \to X$ is continuous and for any $w \in W$, the map $f(\cdot,w): V \to X$ is continuous.

The set $\opn{Bil}_s(V,W;X)$ is naturally identified with either $\Hom(V,\Hom(W,X))$ or $\Hom(W,\Hom(V,X))$. Using the weak topology between $\Hom$ between any LCTVS, both $\Hom(V,\Hom(W,X))$ and $\Hom(W,\Hom(V,X))$ carry a natural topology. They indeed induce the same topology on $\opn{Bil}_s(V,W;X)$, which is in fact the locally convex Hausdorff topology given by the semi-norms $f \mapsto p(f(v,w))$, where $(v,w)$ runs through $V \times W$ and $p$ runs through the continuous semi-norms on $X$.

The following fact is standard (see e.g. \cite[\S 3.2]{BL24}):
\begin{num}
    \item \label{eq:Holomorphic_map_to_Hom} A map $\C \to \opn{Hom}(V,W), s \mapsto T_s$ is holomorphic if and only if for any $v \in V$, then map $ \C \ni s \mapsto T_s(v) \in W$ is holomorphic.
\end{num}

\begin{lemma} \label{lemma: composition holomorphic}
    \begin{enumerate}
        \item \label{itm: homomorphism} Assume that $V$ is LF, $W$ is quasi-complete and let $X$ be a topological space. Let $s \in M \mapsto T_s \in \opn{Hom}(V, W)$ be holomorphic and $(s, x) \in M\times X \mapsto v_{s,x} \in V$ be a continuous map which is holomorphic in the first variable. Then, the map $(s, x) \in M\times X \mapsto T_s(v_{s,x}) \in W$ is continuous and holomorphic in the first variable.
        \item \label{item: bilinear form} Assume that $V$ and $W$ are LF. Let $s \in M \mapsto B_s \in \opn{Bil}_s(V, W)$ be holomorphic and $(s, k) \in M \times K \mapsto v_{s,x} \in V,\: (s, x) \in (M,X) \mapsto w_{s,x} \in W$ be continuous maps which are holomorphic in the first variable. Then, the function $(s,x) \in M \times X \mapsto B_s(v_{s, x}, w_{s,x})$ is continuous and holomorphic in the first variable.
    \end{enumerate}
\end{lemma}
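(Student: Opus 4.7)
The plan is to treat part (1) in two steps---continuity via Banach--Steinhaus, then holomorphy in $s$ via Hartogs after testing with functionals on $W$---and then to reduce part (2) to two applications of a mild extension of part (1), obtained by the identification $\opn{Bil}_s(V, W) \cong \Hom(V, \Hom(W, \C))$.

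For continuity in part (1), I would fix $(s_0, x_0) \in M \times X$ and split
\[
    T_s(v_{s,x}) - T_{s_0}(v_{s_0,x_0}) = T_s(v_{s,x} - v_{s_0,x_0}) + (T_s - T_{s_0})(v_{s_0,x_0}).
\]
The second summand tends to $0$ in $W$ because $s \mapsto T_s$, being holomorphic, is continuous into $\Hom(V, W)$ with the pointwise convergence topology. For the first summand, I would fix a compact neighborhood $K$ of $s_0$ in $M$; for each $v \in V$ the set $\{T_s(v) : s \in K\}$ is the continuous image of a compact set, hence bounded in $W$, so $\{T_s\}_{s \in K}$ is pointwise bounded and hence equicontinuous by Banach--Steinhaus (since $V$ is LF, thus barrelled). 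Continuity of $(s,x) \mapsto v_{s,x}$ then forces the first summand to vanish in the limit.

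Next I would establish holomorphy in $s$ for part (1). Fixing $x \in X$, I invoke Grothendieck's theorem that a weakly holomorphic map into a quasi-complete LCTVS is holomorphic; thus it suffices, for each $\ell \in W'$, to show that $s \mapsto \ell(T_s(v_{s,x}))$ is holomorphic. Introduce the auxiliary function $F(s, t) := \ell(T_s(v_{t,x}))$. Separate holomorphy holds: for fixed $t$, \eqref{eq:Holomorphic_map_to_Hom} applied to $s \mapsto T_s$ together with the continuity of $T \mapsto \ell(T(v_{t,x}))$ gives holomorphy in $s$; for fixed $s$, $\ell \circ T_s$ is a continuous linear functional on $V$, which composed with the holomorphic map $t \mapsto v_{t,x}$ is holomorphic in $t$. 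Scalar Hartogs then gives joint holomorphy of $F$ on $M \times M$, and restriction to the diagonal yields the claim.

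For part (2), I would define $T_s \in \Hom(V, \Hom(W, \C))$ by $T_s(v)(w) := B_s(v, w)$; note $\Hom(W, \C)$ equipped with the pointwise topology is quasi-complete because $W$ is barrelled (pointwise limits of equicontinuous nets of continuous functionals remain continuous). Two uses of \eqref{eq:Holomorphic_map_to_Hom} show that $s \mapsto T_s$ is holomorphic. A first application of part (1) gives that $(s, x) \mapsto T_s(v_{s, x}) \in \Hom(W, \C)$ is continuous and holomorphic in $s$. A second application, with the operator now depending on $(s, x)$ rather than $s$ alone, then yields the desired conclusion for $(s, x) \mapsto T_s(v_{s, x})(w_{s, x}) = B_s(v_{s, x}, w_{s, x})$. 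This mild enhancement of part (1) is proved by the same arguments: the family $\{T_s(v_{s, x})\}_{(s, x) \in \text{cpt nbhd}}$ remains pointwise bounded and hence equicontinuous on the barrelled space $W$, and the Hartogs step is unchanged. The main obstacle is simply to state and verify this enhancement carefully; no new idea is required beyond those used in part (1).
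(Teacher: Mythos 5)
Your proof of part~(1) is correct, and the key mechanism is standard: pointwise boundedness of $\{T_s\}_{s\in K}$ over a compact neighborhood $K$ of $s_0$ in the manifold $M$, Banach--Steinhaus on the barrelled LF space $V$ to upgrade this to equicontinuity, and scalar Hartogs applied to $F(s,t) = \ell(T_s(v_{t,x}))$ for the holomorphy. (One small simplification: the paper's notion of ``holomorphic'' into a TVS is already the weak one, so the appeal to Grothendieck's theorem to pass to strong holomorphy is not needed to verify the conclusion, only to get continuity of $s \mapsto T_s(v)$ into $W$.)

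Part~(2), however, has a genuine gap. Your ``mild enhancement'' of part~(1) is the statement: if $(s,x)\mapsto L_{s,x}\in\Hom(W,\C)$ is continuous and holomorphic in $s$ and $(s,x)\mapsto w_{s,x}\in W$ likewise, then $(s,x)\mapsto L_{s,x}(w_{s,x})$ is continuous and holomorphic in $s$. For the continuity you propose to take ``the family $\{T_s(v_{s,x})\}_{(s,x)\in\text{cpt nbhd}}$'' and observe that it is pointwise bounded, hence equicontinuous. But $X$ is only assumed to be a topological space, so no compact neighborhood of $(s_0,x_0)$ in $M\times X$ need exist, and the argument that carried part~(1)---where the operator family $\{T_s\}$ was indexed by the locally compact manifold $M$ alone---does not transfer. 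Indeed, the family $\{L_{s,x}\}$, being indexed by a neighborhood in $M \times X$, need not be pointwise bounded: continuity of $(s,x)\mapsto L_{s,x}(w)$ only gives, for each fixed $w$, some neighborhood $N_w$ on which $|L_{s,x}(w)|$ is bounded, and you cannot interchange quantifiers without local compactness. Concretely, if $B$ were separately but not jointly continuous one could build, along the lines of a net $(v_\alpha,w_\alpha)\to(0,0)$ with $B(v_\alpha,w_\alpha)\not\to 0$, a topological space $X$ and continuous maps $v_x,w_x$ witnessing the failure of the enhancement.

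To close the gap one needs an extra ingredient beyond part~(1). A clean route: split
\[
B_s(v_{s,x},w_{s,x})-B_{s_0}(v_0,w_0)=B_s(v_{s,x}-v_0,w_{s,x}-w_0)+B_s(v_{s,x}-v_0,w_0)+B_s(v_0,w_{s,x}-w_0)+(B_s-B_{s_0})(v_0,w_0),
\]
where $v_0=v_{s_0,x_0}$, $w_0=w_{s_0,x_0}$. The last three terms tend to $0$ by the argument of part~(1) (Banach--Steinhaus in one variable at a time over $s\in K$). For the first term one must show that the auxiliary bilinear map $\widetilde{B}\colon V\times W\to C(K)$, $(v,w)\mapsto(s\mapsto B_s(v,w))$, is \emph{jointly} continuous, i.e.\ that $\sup_{s\in K}|B_s(v,w)|\le p(v)q(w)$ for suitable continuous seminorms $p,q$; then $|B_s(v_{s,x}-v_0,w_{s,x}-w_0)|\le p(v_{s,x}-v_0)\,q(w_{s,x}-w_0)\to 0$. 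Separate continuity of $\widetilde{B}$ follows from Banach--Steinhaus as you did, but passing to joint continuity uses the theorem that a separately continuous bilinear map on a product of two LF spaces is jointly continuous. This is a standard but non-formal fact about LF spaces that your two-applications scheme does not capture, so it cannot be said that ``no new idea is required.'' The paper itself refers to \cite[p.~329]{BPCZ} rather than giving a proof, so I cannot compare your route directly against theirs, but your write-up as it stands does not constitute a complete proof of part~(2).
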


\begin{proof}
    See \cite[p. 329]{BPCZ}.
\end{proof}

The following lemma is standard

\begin{lemma} \label{lemma: integrate compact}
    Let $K$ be a compact Hausdorff topological group, $X$ be a topological space, and let $f: \C \times K \times X \to \C$ be a continuous map which is holomorphic in the first variable. Then for any $x \in X$
    \begin{equation*}
         s \in \C \mapsto \int_{K}  f(s,k, x) \rd k
    \end{equation*}
    is holomorphic and the map
    \[
        x \in X \mapsto \int_K f(\cdot, k, x) \rd k \in \cO(\C)
    \]
    is continuous.
\end{lemma}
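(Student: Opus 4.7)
The plan is to establish the two assertions in turn by standard compactness arguments combined with elementary holomorphic function theory, treating the compact group $K$ as equipped with a Haar probability measure (any continuous measure on a compact space would work identically).

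For the first assertion, I fix $x \in X$. Since $(s,k) \mapsto f(s,k,x)$ is continuous and $K$ is compact, dominated convergence shows that $F_x(s) := \int_K f(s,k,x) \rd k$ is continuous in $s$. To upgrade this to holomorphy I would invoke Morera's theorem: for any triangle $T \subset \C$, Fubini's theorem (justified since everything in sight is continuous on compacta) gives
\[
\oint_T F_x(s) \rd s = \oint_T \int_K f(s,k,x) \rd k \rd s = \int_K \oint_T f(s,k,x) \rd s \rd k = 0,
\]
where the inner contour integral vanishes because $s \mapsto f(s,k,x)$ is holomorphic. So $F_x \in \cO(\C)$.

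For the second assertion, I need to show that $x \mapsto F_x$ is continuous as a map $X \to \cO(\C)$ with respect to the topology of uniform convergence on compact subsets of $\C$. Fix $x_0 \in X$, a compact subset $C \subset \C$, and $\varepsilon > 0$. By continuity of $f$ at each point of the compact set $C \times K \times \{x_0\}$, for every $(s_0, k_0) \in C \times K$ I can choose open neighborhoods $U_{s_0, k_0} \subset \C \times K$ of $(s_0, k_0)$ and $V_{s_0, k_0} \subset X$ of $x_0$ such that $|f(s,k,x) - f(s_0, k_0, x_0)| < \varepsilon/3$ throughout $U_{s_0, k_0} \times V_{s_0, k_0}$. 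Extract a finite subcover $U_1, \ldots, U_N$ of $C \times K$ with associated neighborhoods $V_1, \ldots, V_N$, and put $V := \bigcap_{i=1}^N V_i$.

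For $(s,k) \in C \times K$ and $x \in V$, a triangle inequality using the index $i$ with $(s,k) \in U_i$ gives $|f(s,k,x) - f(s,k,x_0)| < 2\varepsilon/3$, so integrating over $K$ yields $\sup_{s \in C} |F_x(s) - F_{x_0}(s)| \le 2\varepsilon/3 < \varepsilon$. This is exactly the desired uniform estimate on $C$, and the continuity claim follows. There is no real obstacle here: the argument is an exercise in uniform continuity on compacta combined with Morera's theorem, and the only thing to be careful about is keeping the bound uniform in the pair $(s,k)$ rather than pointwise in $k$, which the finite subcover trick achieves.
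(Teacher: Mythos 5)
The paper states this lemma as ``standard'' and provides no proof, so there is nothing to compare against; I will only check your argument for correctness. Your proof is complete and correct. The first half (continuity by dominated convergence on a compact set, then holomorphy by Fubini on the contour integral plus Morera's theorem) is the textbook argument for holomorphy of parametric integrals and has no gaps: the needed integrability on a contour $\times\, K$ follows from compactness, and the inner contour integral vanishes by hypothesis. The second half correctly reduces the continuity of $x \mapsto F_x$ in the compact--open topology on $\cO(\C)$ to a uniform estimate over $C \times K$, and the finite subcover device correctly converts pointwise continuity of $f$ into the required uniformity; the key observation, which you articulate explicitly, is that one must take the cover over the pair $(s,k)$ jointly so that the chosen $X$-neighborhood $V$ works simultaneously for all $(s,k) \in C \times K$. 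Two cosmetic remarks: $\varepsilon/2$ would suffice in place of $\varepsilon/3$ since the triangle inequality is only used twice; and you should note explicitly (as you did parenthetically) that the Haar measure on $K$ is finite, so that the supremum bound passes through the integral without extra constants.
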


Let $M$ be a complex manifold and let $V$ be a topological vector space. A map $f:M \to V$ is said to be \emph{holomorphic}, if for any $\lambda \in V'$, the map $M \ni m \mapsto \langle \lambda, f(m) \rangle$ is holomorphic. 

Let $C \in \R \cup \{-\infty \}$ and $f:\cH_{>C} \to V$ be a holomorphic map. We say $f$ is \emph{of order at most $d$ in vertical strips} if for every $d'>d$, the function $z \mapsto e^{-\lvert z \rvert^{d']}} f(z)$ is bounded in vertical strips.

We also recall the following version Phragmen-Lindel\"{o}f principle \cite[Corollary A.0.11.2]{BPCZ}.

\begin{proposition} \label{prop:Phragmen-Lindelof}
    Let $W$ be a LF space, and $S \subset W$ be a dense subspace. Let $C>0$ and $Z_+,Z_-:\cH_{>C} \times W \to \C$ be two functions. Assume that
    \begin{enumerate}
        \item For every $s \in \cH_{>C}, Z_+(s,\cdot)$ and $Z_-(s,\dot)$ are continuous functional on $W$;
        \item There exists $d>0$ such that for every $w \in W$ and $\epsilon \in \{ \pm \}$, $\cH_{>C} \ni Z_{\varepsilon}(s,w)$ is a holomorphic function of order at most $d$ in vertical strips;
        \item For any $f \in S$, $s \mapsto Z_{\varepsilon}(s,f)$ extends to a holomorphic function on $\C$ of finite order in vertical strips satisfying
            \begin{equation*}
                Z_+(s,f) = Z_-(-s,f).
            \end{equation*}
    \end{enumerate}
    Then $Z_+$ and $Z_-$ extend to holomorphic functions $\C \to W'$ of finite order in vertical strips satisfying $Z_+(s,w)=Z_-(-s,w)$ for every $(s,w) \in \C \times W$.
\end{proposition}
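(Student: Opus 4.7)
The plan is to construct, for each $w\in W$, an entire function $\widetilde{Z}_+(\cdot, w)$ of finite order in vertical strips that restricts to $Z_+(\cdot, w)$ on $\cH_{>C}$ and to $s \mapsto Z_-(-s, w)$ on $\cH_{<-C}$, and then set $\widetilde{Z}_-(s, w) := \widetilde{Z}_+(-s, w)$ to recover $Z_-$ with the functional equation built in. For $f \in S$ the entire extension $g_f$ is already provided by~(3), and uniqueness of analytic continuation makes $f \mapsto g_f$ linear; the real task is to pass to the limit as $f \to w \in W$, which requires promoting the pointwise information in~(1)--(2) to seminorm-controlled quantitative estimates.

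First I would apply Banach--Steinhaus to turn (1)--(2) into uniform bounds on the two outer half-planes. Fix $\sigma>C$ and $d'>d$. The functionals $w\mapsto e^{-(1+|t|)^{d'}}Z_\pm(\sigma+it,w)$ for $t\in\R$ are continuous by~(1) and pointwise bounded in $t$ by~(2); since $W$ is an LF space, hence barrelled, Banach--Steinhaus produces a continuous seminorm $q=q_{\sigma,d'}$ on $W$ with
\[
    |Z_\pm(\sigma+it,w)| \leq q(w)\,e^{(1+|t|)^{d'}}, \qquad t\in\R,\ w\in W.
\]
Combined with the functional equation on $S$, this uniformly bounds $|g_f(s)|$ by $q(f)\,e^{(1+|\mathrm{Im}(s)|)^{d'}}$ on both vertical boundary lines of the strip $|\mathrm{Re}(s)|\leq\sigma$, for every $f\in S$.

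Next I would run the Phragm\'en--Lindel\"of principle in the strip $|\mathrm{Re}(s)|\leq\sigma$, using the finite order in~(3) as the a priori hypothesis and damping by an auxiliary entire factor of the form $e^{-\epsilon s^{2n}}$ with $n$ large enough that $\mathrm{Re}(s^{2n}) \to +\infty$ along the imaginary direction in the strip, followed by $\epsilon\to 0$. The output should be: for any $d''>d'$, a continuous seminorm $\tilde q=\tilde q_{\sigma,d''}$ on $W$, depending only on $q$, with
\[
    |g_f(s)| \leq \tilde q(f)\,e^{(1+|\mathrm{Im}(s)|)^{d''}}, \qquad |\mathrm{Re}(s)|\leq\sigma,\ f\in S.
\]
The crucial point is that the controlling constant is a continuous seminorm of $f$, not a quantity depending on the (unknown, $f$-dependent) finite order of $g_f$ itself.

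Finally, for $s_0$ in the strip and $w\in W$, pick $f_n\to w$ in $W$; linearity of $f\mapsto g_f$ and the uniform bound give $|g_{f_n}(s_0)-g_{f_m}(s_0)|=|g_{f_n-f_m}(s_0)|\leq \tilde q(f_n-f_m)\,e^{(1+|\mathrm{Im}(s_0)|)^{d''}}\to 0$, so $(g_{f_n}(s_0))$ is Cauchy in $\C$. Define $\widetilde{Z}_+(s_0,w)$ as the limit; independence of the approximating sequence, linearity in $w$, and the bound $|\widetilde{Z}_+(s_0,w)|\leq \tilde q(w)\,e^{(1+|\mathrm{Im}(s_0)|)^{d''}}$ are immediate, as is agreement with $Z_+$ on $\cH_{>C}$ and with $s\mapsto Z_-(-s,\cdot)$ on $\cH_{<-C}$ via~(1). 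Locally uniform convergence of $g_{f_n}$ on $\C$ (from the estimate plus Montel) makes $s\mapsto\widetilde{Z}_+(s,w)$ entire of order at most $d''$ in vertical strips, finishing the proof. The hard part will be the Phragm\'en--Lindel\"of step: extracting an interior bound whose constant is governed by a continuous seminorm of $f$, despite the a priori order of $g_f$ being merely finite and $f$-dependent, is exactly what makes the density-Cauchy argument in the last step succeed.
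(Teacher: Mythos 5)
Your overall strategy — Banach--Steinhaus on the boundary lines $\Re s=\pm\sigma$ to upgrade the pointwise bounds of (2) to seminorm-controlled ones, Phragm\'en--Lindel\"of to push these into the strip with a constant governed by a continuous seminorm, then a density/Cauchy argument — is the right one (the paper itself simply cites \cite[Corollary A.0.11.2]{BPCZ} without proof). There is, however, a gap in your Phragm\'en--Lindel\"of step as written. You use the single damping factor $e^{-\epsilon s^{2n}}$ and let $\epsilon\to 0$. That factor is being asked to do two incompatible jobs at once: to absorb the unbounded boundary values $q(f)e^{(1+|t|)^{d'}}$ \emph{and} to certify decay at $|\Im s|\to\infty$ so that the maximum principle applies. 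On the boundary one only gets $|g_f(\pm\sigma+it)\,e^{-\epsilon(\pm\sigma+it)^{2n}}|\le q(f)\exp\bigl(\sup_t\,((1+|t|)^{d'}-\epsilon\,\Re((\pm\sigma+it)^{2n}))\bigr)$, and this supremum grows like $\epsilon^{-d'/(2n-d')}$ as $\epsilon\to 0$, so the resulting estimate in the strip blows up in the limit rather than converging.

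The fix is to separate the two jobs. Fix once and for all an even integer $\nu>d'$ with $\nu\equiv 0\pmod 4$, so that $\Re(s^{\nu})\asymp|\Im s|^{\nu}$ is eventually positive throughout $|\Re s|\le\sigma$; then $|g_f(\pm\sigma+it)\,e^{-(\pm\sigma+it)^{\nu}}|\le C_1\,q(f)$ with $C_1$ depending only on $\sigma,\nu,d'$ and not on $f$ or $t$. Separately, for each $f$ pick a (possibly $f$-dependent) even $\nu'\equiv 0\pmod 4$ exceeding the finite order of $g_f$ furnished by (3), set $F_\epsilon=g_f\,e^{-s^{\nu}}e^{-\epsilon s^{\nu'}}$, apply maximum modulus on tall rectangles, and only then send $\epsilon\to 0$. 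Since the $\epsilon$-factor now carries no boundary burden, the limit yields $|g_f(s)e^{-s^{\nu}}|\le C_1\,q(f)$ for all $s$ in the strip, i.e.\ $|g_f(s)|\le\tilde q(f)\,e^{C_2(1+|\Im s|)^{\nu}}$ with $\tilde q:=C_1q$ a continuous seminorm and $C_1,C_2,\nu$ all independent of $f$. (Incidentally, the exponent you actually obtain is this fixed $\nu$, not an arbitrary $d''>d'$ as your sketch asserts — $e^{s^{d''}}$ is not entire for non-integer $d''$ — but the conclusion only asks for \emph{finite} order in vertical strips, so this is harmless.) With this amendment your Banach--Steinhaus step and your final density/Cauchy argument both go through as you describe.
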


\section{Canonical extensions of Rankin-Selberg periods -- corank 0 and 1}
\label{sec:canonical_extension_of_periods}

\subsection{Rankin-Selberg period on $\GL_n \times \GL_{n+1}$}
\label{ssec:Rankin_Selberg}

In \S \ref{ssec:Rankin_Selberg}, we discuss some results on canonical extension Rankin-Selberg period based on \cite[\S 7]{BPCZ}.

\subsubsection{Set up}

Throughout \S \ref{ssec:Rankin_Selberg}, let $G=\GL_{n} \times \GL_{n+1}$ and let $H=\GL_n$, regarded as the diagonal subgroup $(h,\begin{pmatrix}
    h & \\ & 1
\end{pmatrix})$ of $G$. For $f \in \cS([G])$, the \emph{Rankin-Selberg period} of $f$ is defined by the absolute convergent integral
\begin{equation*}
    \cP_{\mathrm{RS}}(f) := \int_{[H]} f(h) \rd h. 
\end{equation*}

\subsubsection{Rankin-Selberg regular cuspidal datum}

Let $\chi \in \fX(G)$ be a cuspidal datum of $G$. Assume that $\chi$ is represented by $(M_P,\pi)$, and we write 
\begin{equation} \label{eq:cuspidal_data_Rankin_Selberg_1}
    M_P = M_{P_n} \times M_{P_{n+1}}, \quad M_n = G_{n_1} \times \cdots \times G_{n_s}, \quad M_{n+1} = G_{m_1} \times \cdots \times G_{m_t}
\end{equation}
and 
\begin{equation} \label{eq:cuspidal_data_Rankin_Selberg_2}
    \pi = \pi_n \boxtimes \pi_{n+1}, \quad \pi_n = \pi_{n,1} \boxtimes \cdots \boxtimes \pi_{n,s}, \quad \pi_{n+1} = \pi_{n+1,1} \boxtimes \cdots \boxtimes \pi_{n+1,t}.
\end{equation}

We say $\chi$ is \emph{Rankin-Selberg regular}, if for any $1 \le i \le s, 1 \le j \le t$, we have $\pi_{n,i} \ne \pi_{n+1,j}^{\vee}$. We write $\fX_{\mathrm{RS}} \subset \fX([G])$ for the set of all the Rankin-Selberg regular cuspidal datum. We write $\cT_{\mathrm{RS}}([G])$ (resp. $\cS_{\mathrm{RS}}([G]))$ for $\cT_{\fX_{\mathrm{RS}}}([G])$ (resp. $\cS_{\fX_{\mathrm{RS}}}([G])$). 

\subsubsection{Zeta integral}

Recall that $N_n$ is the unipotent radical of the Borel subgroup of $G_n$ and we write $N= N_n \times N_{n+1}$. We define a character $\psi^\prime_N$ of $[N]$ by
\begin{equation*}
    \psi^\prime_N(u,u') = \psi \left(-\sum_{i=1}^{n-1}  u_{i,i+1} + \sum_{j=1}^n u'_{j,j+1}  \right).
\end{equation*}
We write $N_H$ for $N \cap H$. For $f \in \cT([G])$, we associate
\begin{equation*}
    W^\prime_f(g) = \int_{[N]} f(ug) \psi_N^\prime(u)^{-1} \rd u.
\end{equation*}

For $f \in \cT([G])$ and $s \in \C$, we define the \emph{zeta-integral} by
\begin{equation*}
    Z^{\mathrm{RS}}(s,f) = \int_{N_H(\bA) \backslash H(\bA)} W^\prime_f(h) \lvert \det h \rvert^s \rd h,
\end{equation*}
provided by the integral is absolutely convergent. For any $f \in \cT([G])$, the integral is convergent and holomorphic when $\mathrm{Re}(s) \gg 0$ , see \cite[Lemma 7.1.1.1]{BPCZ}.

\subsubsection{Main results}

The following theorem summarizes the main result of \cite[\S 7]{BPCZ}. 

\begin{theorem} \label{thm:corank_1_Rankin_Selberg}
    (\cite[Theorem 7.1.3.1]{BPCZ}) Let $\chi$ be a Rankin-Selberg regular cuspidal datum. Then
    \begin{enumerate}
        \item The linear functional $\cP_{\mathrm{RS}}$ on $\cS_{\chi}([G])$ extends (uniquely) by continuity to a linear functional $\cP_{\mathrm{RS}}^*$ on $\cT_{\chi}([G])$.
        \item For $f \in \cT_{\chi}([G])$, the zeta integral  $Z(\cdot,f)$ extends to an entire function of $s$. And we have
        \begin{equation*}
            \cP_{\mathrm{RS}}^*(f) = Z^{\mathrm{RS}}(0,f).
        \end{equation*}
        \item For any $s \in \C$, the functional $Z^{\mathrm{RS}}(s,\cdot)$ on $\cT_{\chi}([G])$ is continuous.
    \end{enumerate}
\end{theorem}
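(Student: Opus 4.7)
The plan is to prove all three assertions simultaneously by identifying $\cP_{\mathrm{RS}}^*$ with $f \mapsto Z^{\mathrm{RS}}(0,f)$, once we know that $Z^{\mathrm{RS}}(s,\cdot)$ extends to a holomorphic function of $s \in \C$ with values in continuous linear functionals on $\cT_\chi([G])$. This reduces matters to two tasks: (a) establishing the unfolding identity $\cP_{\mathrm{RS}}(f) = Z^{\mathrm{RS}}(0,f)$ on the dense subspace $\cS([G])$; and (b) producing an entire extension of $s \mapsto Z^{\mathrm{RS}}(s,f)$ that is continuous in $f \in \cT_\chi([G])$ for every $s$.

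For (a), I would apply the classical Jacquet--Piatetski-Shapiro--Shalika unfolding to the absolutely convergent twisted period $\int_{[H]} f(h) |\det h|^s \, dh$ for $f \in \cS([G])$, using successive Fourier expansions along the one-parameter root subgroups making up the last column of the mirabolic of $G_{n+1}$. Each Fourier expansion yields a sum over $F^\times$ that can be absorbed into the integration over $H$, and after the final step one recognizes $W'_f$ integrated against $|\det h|^s$ over $N_H(\bA) \backslash H(\bA)$, giving $\int_{[H]} f(h)|\det h|^s \, dh = Z^{\mathrm{RS}}(s,f)$ in the range of absolute convergence. Specializing to $s=0$ yields $\cP_{\mathrm{RS}}(f) = Z^{\mathrm{RS}}(0,f)$ on $\cS([G])$. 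Next, using Lemma \ref{lemma: estimate Fourier coeff}(2) combined with reduction theory on $H = G_n$, I would show that $Z^{\mathrm{RS}}(s,f)$ is absolutely convergent and continuous in $f \in \cT_N([G])$ on every compact subset of a right half-plane $\opn{Re}(s) > C_0 = C_0(N)$, with polynomial bounds in $|s|$ on vertical strips.

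For (b), I would work on the dense subspace $\fO_\chi \subset \cS_\chi([G])$ of pseudo-Eisenstein series (Lemma \ref{lemma: pseudo dense}). For $f$ built from a section $\varphi \in \cA_{P,\pi}(G)$ with $(M_P,\pi)$ representing $\chi$, unfolding the outer sum against $H(F) \backslash H(\bA)$ reduces $Z^{\mathrm{RS}}(s,f)$ to a contour integral over a translate of $i\fa_P^*$ of an Eulerian Whittaker--Rankin-Selberg integral, which factors as a finite product of local zeta integrals times a ratio of partial Rankin--Selberg $L$-functions $L^\tS(s + a_{ij}, \pi_{n,i} \times \pi_{n+1,j})$ for various shifts $a_{ij}$. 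The Rankin--Selberg regularity hypothesis $\pi_{n,i} \ne \pi_{n+1,j}^{\vee}$ is precisely the condition that none of these $L$-factors has a pole at the relevant critical value, so on $\fO_\chi$ the map $s \mapsto Z^{\mathrm{RS}}(s,f)$ extends to an entire function of finite order on vertical strips, satisfying a global functional equation $Z^{\mathrm{RS}}(s,f) = Z^{\mathrm{RS}}(-s,\tilde f)$ coming from the local functional equations. Proposition \ref{prop:Phragmen-Lindelof}, applied to $S = \fO_\chi$ inside $W = \cT_\chi([G]) \cap \cT_N([G])$ for an appropriate $N$, then upgrades this to an entire holomorphic extension of $Z^{\mathrm{RS}}(s,\cdot)$ as a continuous linear functional on $\cT_\chi([G])$ for every $s$.

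The main obstacle is carrying out the pseudo-Eisenstein unfolding cleanly and verifying polynomial growth on vertical strips at this level of generality. One needs uniform bounds on local Rankin--Selberg integrals of sections, bounds on the normalized intertwining operators of \S \ref{sssec:prelim_intertwining_operator} in $s$ and in $\varphi$, and control of the Archimedean component as $|\opn{Im}(s)| \to \infty$. The Rankin--Selberg regularity of $\chi$ plays a dual role throughout: it both rules out poles of the $L$-functions at $s=0$ and keeps the relevant ratios of $L$-functions bounded on vertical strips, so that the hypotheses of Phragmen--Lindel\"of are actually met.
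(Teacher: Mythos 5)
This statement is cited verbatim from \cite[Theorem 7.1.3.1]{BPCZ}; the paper does not reprove it, but it does lay out the same proof template (unfolding, then Phragm\'en--Lindel\"of on a dense subspace) when it treats the higher-corank case in \S \ref{sec:higher_corank_Rankin_Selberg} and when it sketches Proposition \ref{prop:corank_1_Rankin_Selberg}. Your proposal has the right skeleton, but it departs from the actual argument in two places, one of which is a genuine gap.

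The gap is in step (a). You assert that for every $f\in\cS([G])$ the successive Fourier expansions give
\(\int_{[H]}f(h)|\det h|^s\,\rd h=Z^{\mathrm{RS}}(s,f)\) with ``a sum over $F^\times$'' and no further terms. That is the \emph{cuspidal} unfolding: each Fourier expansion along a column of the mirabolic produces a constant term in addition to the nondegenerate orbit, and those constant terms vanish only when $f$ is cuspidal. A general $f\in\cS_\chi([G])$ with $\chi$ Rankin--Selberg regular is not cuspidal, so extra terms survive at each stage and must be shown to vanish. This is exactly what the Rankin--Selberg regularity of $\chi$ is needed for, and the verification is the core of the corank-one unfolding in \cite[\S 7]{BPCZ}, just as Lemma \ref{lem:higher_corank_unfolding} and Corollary \ref{cor: constant term vanish} do the analogous work in the higher-corank case in this paper. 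Without that step the identity $\cP_{\mathrm{RS}}=Z^{\mathrm{RS}}(0,\cdot)$ does not hold on the dense subspace on which you want to apply Phragm\'en--Lindel\"of.

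In step (b) you take a genuinely different route from the one in \cite{BPCZ}. You propose to work with pseudo-Eisenstein series, unfold them into an Eulerian contour integral of local Rankin--Selberg zeta integrals times ratios of partial $L$-functions, and deduce the functional equation $Z^{\mathrm{RS}}(s,f)=Z^{\mathrm{RS}}(-s,\widetilde{f})$ from the local functional equations. Two remarks. First, this is not where that identity comes from: the local Rankin--Selberg functional equation has the normalization $s\mapsto 1-s$ and carries $\varepsilon$-factors, and assembling it globally on a pseudo-Eisenstein series requires precisely the uniform control of Archimedean zeta integrals and normalized intertwining operators that you flag as ``the main obstacle.'' In \cite{BPCZ} and in the proof of Proposition \ref{prop:corank_1_Rankin_Selberg} here, the functional equation on the dense subspace is instead the elementary observation that the absolutely convergent twisted period $Z_n(s,f)=\int_{[H]}f(h)|\det h|^s\,\rd h$ on $\cS([G])$ is entire and satisfies $Z_n(s,f)=Z_n(-s,\widetilde{f})$ by the change of variable $h\mapsto{}^{t}h^{-1}$. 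Combined with the unfolding $Z_n(s,f)=Z^{\mathrm{RS}}(s,f)$ on $\cS_\chi([G])$, this gives the required hypothesis of Proposition \ref{prop:Phragmen-Lindelof} with no Euler decomposition and no tracking of local constants. Second, your choice of dense subspace $S=\fO_\chi$ works in principle, but $S=\cS_\chi([G])$ is the natural one here because the elementary functional equation already lives there. So your plan would need substantial additional analytic input to close the Archimedean uniformity issues, whereas the route it is meant to reproduce sidesteps them entirely.
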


We provide a mild extension of Theorem \ref{thm:corank_1_Rankin_Selberg} to $\cT_{\mathrm{RS}}([G])$.

\begin{proposition} \label{prop:corank_1_Rankin_Selberg}
    We have the following statements:
    \begin{enumerate}
        \item The linear functional $\cP_{\mathrm{RS}}$ on $\cS_{\mathrm{RS}}([G])$ extends (uniquely) by continuity to a linear functional $\cP_{\mathrm{RS}}^*$ on $\cT_{\mathrm{RS}}([G])$.
        \item For $f \in \cT_{\mathrm{RS}}([G])$, the zeta integral $Z(\cdot,f)$ extends to an entire function of $s$. And we have
        \begin{equation*}
            \cP_{\mathrm{RS}}^*(f) = Z^{\mathrm{RS}}(0,f).
        \end{equation*}
        \item For any $s \in \C$, the functional $Z^{\mathrm{RS}}(s,\cdot)$ on $\cT_{{\mathrm{RS}}}([G])$ is continuous.
    \end{enumerate}
\end{proposition}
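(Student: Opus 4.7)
The plan is to follow the strategy used for Theorem \ref{thm:corank_1_Rankin_Selberg} in \cite[\S 7]{BPCZ}, widening the space on which the Phragmen-Lindel\"of principle (Proposition \ref{prop:Phragmen-Lindelof}) is applied from $\cT_\chi([G])$ to $\cT_{\mathrm{RS}}([G])$. Concretely I would take $W = \cT_{\mathrm{RS}}([G])$ and the dense subspace $S = \cS_{\mathrm{RS}}([G])$ (density is the general fact recorded in \S\ref{ssec:Langlands_spectral_decomposition}). Set $Z_+(s,f) := Z^{\mathrm{RS}}(s,f)$: by \cite[Lemma 7.1.1.1]{BPCZ} this is absolutely convergent, continuous in $f \in \cT([G])$, and of finite order in vertical strips for $\opn{Re}(s) \gg 0$, so these properties restrict to $\cT_{\mathrm{RS}}([G])$ at no extra cost. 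Let $Z_-(s,f)$ denote the ``dual'' functional constructed in \cite[\S 7]{BPCZ} out of the Rankin-Selberg involution, defined and continuous on all of $\cT([G])$ for $\opn{Re}(s) \ll 0$ and of the same vertical-strip order as $Z_+$.

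The next step is to establish the functional equation $Z_+(s,f) = Z_-(-s,f)$ on $\cS_{\mathrm{RS}}([G])$. For $f \in \cS_{\mathrm{RS}}([G])$, Theorem \ref{thm:decomposition_cuspidal_support} yields an absolutely summable decomposition $f = \sum_{\chi \in \fX_{\mathrm{RS}}} f_\chi$ in $\cS([G])$, with each $f_\chi \in \cS_\chi([G])$. Theorem \ref{thm:corank_1_Rankin_Selberg} applied componentwise gives $Z_+(s,f_\chi) = Z_-(-s,f_\chi)$ together with an entire continuation of each term. Summing termwise, using the continuity of $Z_\pm$ on $\cS([G])$ in their respective half-planes, one obtains the functional equation for $f$ on the overlap of those half-planes; the identity then propagates to all of $\C$ as an equality of holomorphic functions.

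With all three hypotheses of Proposition \ref{prop:Phragmen-Lindelof} in place, one deduces the entire continuation of $Z^{\mathrm{RS}}(\cdot,f)$ for $f \in \cT_{\mathrm{RS}}([G])$ as well as the continuity of $Z^{\mathrm{RS}}(s,\cdot)$ on $\cT_{\mathrm{RS}}([G])$ for each $s \in \C$, giving parts (2) (modulo identifying $\cP_{\mathrm{RS}}^*$ with $Z^{\mathrm{RS}}(0,\cdot)$) and (3). Part (1) then follows by \emph{defining} $\cP_{\mathrm{RS}}^*(f) := Z^{\mathrm{RS}}(0,f)$: continuity on $\cT_{\mathrm{RS}}([G])$ is immediate from (3), and the agreement with $\cP_{\mathrm{RS}}$ on $\cS_{\mathrm{RS}}([G])$ follows from the same agreement on each $\cS_\chi([G])$ via Theorem \ref{thm:corank_1_Rankin_Selberg} and absolute summability. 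Uniqueness of the extension is the usual consequence of density.

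The main obstacle I anticipate is checking that the vertical-strip order bounds used in the proof of Theorem \ref{thm:corank_1_Rankin_Selberg} are in fact uniform across all Rankin-Selberg-regular cuspidal data $\chi$, or equivalently can be formulated as bounds by continuous semi-norms on the ambient $\cT([G])$ rather than on individual components $\cT_\chi([G])$. Since the proof of \cite[Lemma 7.1.1.1]{BPCZ} uses only Schwartz-type estimates on $W'_f$ (which are intrinsic to $f$ and not to its cuspidal support), I expect this uniformity to hold automatically after a careful reading of the estimates in \cite[\S 7]{BPCZ}. Once confirmed, the rest of the argument is a direct transposition of the one-component case.
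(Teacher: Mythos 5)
Your proposal is correct and takes essentially the same route as the paper: decompose $f \in \cS_{\mathrm{RS}}([G])$ according to cuspidal support via Theorem \ref{thm:decomposition_cuspidal_support}, reduce to the one-component case handled in \cite[\S 7]{BPCZ}, and then apply Proposition \ref{prop:Phragmen-Lindelof} with $W = \cT_{\mathrm{RS}}([G])$ and $S = \cS_{\mathrm{RS}}([G])$. The paper's only presentational variant is to introduce the manifestly entire auxiliary integral $Z_n(s,f) = \int_{[H]} f(h)\lvert\det h\rvert^s\,\rd h$ on $\cS([G])$ (with its trivial functional equation $Z_n(s,f) = Z_n(-s,\widetilde{f})$) and to reduce the verification to the single identity $Z_n(s,f) = Z^{\mathrm{RS}}(s,f)$ on $\cS_{\mathrm{RS}}([G])$, which is exactly the termwise summation you carry out; your worry about $\chi$-uniformity of the vertical-strip bounds is indeed resolved automatically, since \cite[Lemma 7.1.1.1]{BPCZ} gives bounds by continuous semi-norms on $\cT_N([G])$ that make no reference to cuspidal support.
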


\begin{proof}
    The proof follows the same line of \cite[p. 300]{BPCZ}, we sketch the proof. For $f \in \cS([G])$, we put
    \begin{equation*}
        Z_n(s,f) = \int_{[H]} f(h) \lvert \det h \rvert^s \rd h.
    \end{equation*}
    It is an entire function in $s$ with the functional equation $Z(s,f) = Z(-s,\widetilde{f})$, where $\widetilde{f}(g)=f({}^tg^{-1})$. In order to apply Proposition \ref{prop:Phragmen-Lindelof}, hence prove the proposition, it suffices to prove that
    \begin{num}
        \item \label{eq:RS_extension_suffices} $Z_n(s,f) = Z^{\mathrm{RS}}(s,f)$ for any $f \in \cS_{\mathrm{RS}}([G])$.
    \end{num}
    For $\chi \in \fX_{\mathrm{RS}}$, let $f_{\chi}$ be the projection of $f$ in $\cS_{\chi}([G])$. Then by Theorem \ref{thm:decomposition_cuspidal_support}, the sum $\sum_{\chi \in \fX_{\mathrm{RS}}} f_\chi$ is absolutely summable in $\cS([G])$. By the main result of \cite[\S 7]{BPCZ}, $Z^{\mathrm{RS}}(s,f_{\chi}) = Z_n(s,f_{\chi})$ for any $\chi \in \fX_{\mathrm{RS}}$. By \cite[Lemma 7.1.1.1]{BPCZ}, for any $s \in \C$ such that $\mathrm{Re}(s) \gg 0$, we have 
    \begin{equation*}
        \sum_{\chi \in \fX_{\mathrm{RS}}} Z^{\mathrm{RS}}(s,f_{\chi}) = Z^{\mathrm{RS}}(s,f).
    \end{equation*}
    where the RHS is absolutely summable. It is clear that $Z_n(s,f)$ depends continuously on $f$ for any $s \in \C$, therefore
   \begin{equation*}
           \sum_{\chi \in \fX_{\mathrm{RS}}} Z_n(s,f_{\chi}) = Z_n(s,f),
   \end{equation*}
   therefore \eqref{eq:RS_extension_suffices} is proved.
\end{proof}

We endow the topological dual $\cT'_{\mathrm{RS}}([G])$ of $\cT_{\mathrm{RS}}([G])$ with the weak topology. Since the natural map $\cT_{\mathrm{RS}}([G]) \to (\cT'_{\mathrm{RS}}([G]))'$ is a bijection, we obtain:
\begin{num} 
    \item \label{eq:Rankin_Selberg_holomorphic}  The map $Z^{\mathrm{RS}}(\cdot,\cdot): \C \to \cT'_{\mathrm{RS}}([G]), s \mapsto (f \mapsto Z^{\mathrm{RS}}(s,f) )$ is holomorphic.
\end{num}

\subsection{Rankin-Selberg period on $\GL_n \times \GL_n$}
\label{ssec:Rankin_Selberg_equal_rank}

In \S \ref{ssec:Rankin_Selberg_equal_rank}, we discuss the canonical extension of equal rank Rankin-Selberg based on \cite[\S 10.3]{BLX}. The discussion is parallel to \S \ref{ssec:Rankin_Selberg}. 

\subsubsection{}

Let $G=\GL_{n} \times \GL_{n}$ and let $H=\GL_n$, regarded as the diagonal subgroup of $G$. For $f \in \cS([G])$ and $\Phi \in \cS(\bA_n)$, the (equal rank) \emph{Rankin-Selberg period} of $f$ and $\Phi$ is defined by the absolute convergent integral
\begin{equation*}
    \cP_{\mathrm{RS}}(f,\Phi) := \int_{[H]} f(h) \Theta(h,\Phi) \rd h. 
\end{equation*}

\subsubsection{Rankin-Selberg regular cuspidal datum}

Let $\chi \in \fX(G)$ be a cuspidal datum of $G$. Assume that $\chi$ is represented by $(P,\pi)$, and we write 
\begin{equation} \label{eq:cuspidal_datum_equal_rank_1}
    M_P = M_{P_1} \times M_{P_{2}}, \quad M_{P_1} = G_{n_1} \times \cdots \times G_{n_s}, \quad M_{P_2} = G_{m_1} \times \cdots \times G_{m_t}
\end{equation}
and 
\begin{equation} \label{eq:cuspidal_datum_equal_rank_2}
    \pi = \pi_1 \boxtimes \pi_{2}, \quad \pi_1 = \pi_{1,1} \boxtimes \cdots \boxtimes \pi_{1,s}, \quad \pi_{2} = \pi_{2,1} \boxtimes \cdots \boxtimes \pi_{2,t}.
\end{equation}

We say $\chi$ is \emph{Rankin-Selberg regular}, if for any $1 \le i \le s, 1 \le j \le t$, we have $\pi_{1,i} \ne \pi_{2,j}^{\vee}$. We write $\fX_{\mathrm{RS}} \subset \fX([G])$ for the set of all the Rankin-Selberg regular cuspidal datum. We write $\cT_{\mathrm{RS}}([G])$ (resp. $\cS_{\mathrm{RS}}([G])$) for $\cT_{\fX_{\mathrm{RS}}}([G])$ (resp. $\cS_{\fX_{\mathrm{RS}}}([G])$).

\subsubsection{Zeta integral}

We define a character $\psi'_N$ of $[N]$ by
\begin{equation*}
    \psi'_N(u,u') = \psi \left(-\sum_{i=1}^{n-1} u_{i,i+1} + \sum_{j=1}^{n-1} u'_{j,j+1}  \right).
\end{equation*}
We write $N_H$ for $N \cap H$. For $f \in \cT([G])$, we associate
\[
    W^\prime_f(g) = \int_{[N]} f(ug) \psi_N^\prime(u)^{-1} \rd u.
\]

For $f \in \cT([G])$, $\Phi \in \cS(\bA_n)$ and $s \in \C$, we define the \emph{zeta-integral} by
\begin{equation*}
    Z^{\mathrm{RS}}(s,f,\Phi) = \int_{N_H(\bA) \backslash H(\bA)} W'_f(h) \Phi(e_nh) \lvert \det h \rvert^{s+\frac12} \rd h,
\end{equation*}
provided by the integral is absolutely convergent. For any $f \in \cT([G])$, the integral is convergent when $\mathrm{Re}(s) \gg 0$ and holomorphic in $s$, see \cite[Lemma 10.2]{BLX}.

\subsubsection{Main results}

\begin{theorem} \label{thm:corank_0_Rankin_Selberg}
    (\cite[Theorem 10.4, Lemma 10.5]{BLX}) Let $\chi$ be a Rankin-Selberg regular cuspidal datum and $\Phi \in \cS(\bA_n)$. Then
    \begin{enumerate}
        \item The linear functional $\cP_{\mathrm{RS}}(\cdot,\Phi)$ on $\cS_{\chi}([G])$ extends (uniquely) by continuity to a linear functional $\cP_{\mathrm{RS}}^*(\cdot,\Phi)$ on $\cT_{\chi}([G])$.
        \item For $f \in \cT_{\chi}([G])$, the zeta integral  $Z(\cdot,f,\Phi)$ extends to an entire function of $s$. And we have
        \begin{equation*}
            \cP_{\mathrm{RS}}^*(f) = Z(0,f,\Phi).
        \end{equation*}
        \item For any $s \in \C$, the bilinear form $Z(s,\cdot,\cdot)$ on $\cT_{\chi}([G]) \times \cS(\bA_n)$ is continuous in the sense that there exists continuous semi-norms $\| \cdot \|$ and $\| \cdot \|'$ on $\cT_{\chi}([G])$ and $\cS(\bA_n)$ respectively, such that
        \begin{equation*}
            Z(s,f,\Phi) \ll \|f\| \|\Phi\|'.
        \end{equation*}
    \end{enumerate}
\end{theorem}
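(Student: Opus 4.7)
The plan is to follow the Phragmen--Lindel\"{o}f strategy used in the proof of Proposition \ref{prop:corank_1_Rankin_Selberg} above, with the added twist that the presence of the theta series $\Theta(h,\Phi)$ must be carried through the analytic continuation. First I would introduce an auxiliary global period
\begin{equation*}
    Z_n(s,f,\Phi) := \int_{[H]} f(h)\,\Theta(h,\Phi)\,\lvert \det h \rvert^{s}\,\rd h, \quad f \in \cS([G]),\ \Phi \in \cS(\bA_n),
\end{equation*}
and observe that since $f$ is Schwartz while $\Theta(\cdot,\Phi) \in \cT_M([G_n])$ has only uniform moderate growth by Lemma \ref{Theta moderate}, the integral converges absolutely for every $s \in \C$ and defines an entire function of $s$ which, for each fixed $s$, is continuous in the pair $(f,\Phi)$.

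The second step is to establish a functional equation $Z_n(s,f,\Phi) = Z_n(-s,\tilde{f},\widehat{\Phi})$, where $\tilde{f}(g_1,g_2):=f({}^tg_1^{-1},{}^tg_2^{-1})$. This comes from the Poisson summation identity \eqref{eq:Theta_series_functional_equation} together with the change of variable $h \mapsto {}^t h^{-1}$ on $[H]$ (note the shift $\lvert \det h \rvert^s \mapsto \lvert \det h \rvert^{-s}$, and the extra factor $\lvert \det h \rvert^{1/2}$ built into the definition of $\Theta$ which matches unitarity under this involution).

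The third and central step is the identification $Z^{\mathrm{RS}}(s,f,\Phi) = Z_n(s,f,\Phi)$ for $f \in \cS_{\mathrm{RS}}([G])$ and $\mathrm{Re}(s) \gg 0$. For truly cuspidal $f$ this is the classical equal-rank Rankin--Selberg unfolding, using the Bruhat decomposition of $H(F) \backslash G(F)/N(F)$ and the Fourier expansion along $N$. For general $f \in \cS_{\chi}([G])$ with $\chi \in \fX_{\mathrm{RS}}$ one expands $f$ as a pseudo-Eisenstein series using the density of $\fO_{\chi}^G$ in $\cS_\chi([G])$ (Lemma \ref{lemma: pseudo dense}); the unfolding then produces contributions indexed by parabolic orbits, and the key point is that the Rankin-Selberg regularity condition $\pi_{1,i} \ne \pi_{2,j}^{\vee}$ guarantees that the extraneous boundary contributions involving constant terms of the Eisenstein sections carry no poles of $L(1, \pi_{1,i} \times \pi_{2,j})$ and (after integration against $\Theta$) vanish identically. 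This is the main technical obstacle; it is the analogue of the cuspidal exponent computation in \cite[\S 7]{BPCZ}, but complicated by the extra theta factor whose Mellin transform along the center of $H$ has to be tracked.

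Once these three ingredients are in place, the conclusion is formal. For each fixed $\Phi$ I would apply Proposition \ref{prop:Phragmen-Lindelof} to the pair $Z_+(s,f) := Z^{\mathrm{RS}}(s,f,\Phi)$ and $Z_-(s,f) := Z^{\mathrm{RS}}(-s,\tilde{f},\widehat{\Phi})$ on the dense subspace $S := \cS_\chi([G]) \subset W := \cT_\chi([G])$; the finite-order bounds in vertical strips follow from Lemma \ref{lemma: estimate Fourier coeff} combined with standard estimates for $W'_f$, and the functional equation together with the identity from step three provides the required analytic continuation on $S$. This produces an entire $\cT'_\chi([G])$-valued function $s \mapsto Z^{\mathrm{RS}}(s,\cdot,\Phi)$; specializing to $s=0$ defines $\cP_{\mathrm{RS}}^*(\cdot,\Phi)$. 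The joint continuity in $(f,\Phi)$ claimed in (3) then follows from the uniform Phragmen--Lindel\"{o}f bounds, using that $\Phi \mapsto \widehat{\Phi}$ and $\Phi \mapsto \Theta(\cdot,\Phi)$ are continuous, and that the auxiliary integral $Z_n(s,\cdot,\cdot)$ is manifestly a separately continuous bilinear form whence continuous in the LF topology by Lemma \ref{lemma: composition holomorphic}\ref{item: bilinear form}.
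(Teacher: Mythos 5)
This statement is not actually proved in the paper: the authors quote it directly as \cite[Theorem 10.4, Lemma 10.5]{BLX}, and the only commentary they add is a remark that the cited reference states the result for $(G,H)$-regular cuspidal data, but that the proof (and the technique of Lemma~\ref{lem:higher_corank_unfolding}) extends to general Rankin--Selberg regular data. So there is no paper-internal proof to line up your argument against; the relevant comparison is with the method the paper uses for the closely analogous results it does prove (Proposition~\ref{prop:corank_1_Rankin_Selberg}, Theorem~\ref{thm:higher_corank_Rankin_Selberg}).

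Your Phragmen--Lindel\"{o}f scaffolding in steps 1, 2, and 4 matches that template closely, and the functional equation you write down for the auxiliary integral $Z_n(s,f,\Phi)$ is the correct analogue (the normalization $\lvert\det h\rvert^{1/2}$ inside $\Theta$ combined with $\lvert\det h\rvert^{s+\frac12}$ in $Z^{\mathrm{RS}}$ makes the involution $h\mapsto{}^th^{-1}$ land correctly). Where you diverge from the paper's method is step 3. You propose to establish the unfolding identity $Z_n(s,f,\Phi)=Z^{\mathrm{RS}}(s,f,\Phi)$ on $\cS_{\chi}([G])$ by first treating cuspidal $f$ via Bruhat decomposition, then expanding general $f$ into pseudo-Eisenstein series. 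The paper, as the remark after the theorem explicitly signals, instead organizes this step as an iterated Fourier expansion along a tower of abelian unipotent subgroups (as in Lemma~\ref{lem:higher_corank_unfolding} and Proposition~\ref{prop:n_n_unfolding}), where at each stage the degenerate term is a constant-term integral whose vanishing is extracted from the RS-regularity hypothesis via restriction-to-Levi lemmas and Corollary~\ref{cor: constant term vanish}. Your route would require you to control the unfolding of a pseudo-Eisenstein series through the $\Theta$-weight and then pass to the limit, which is heavier on the analysis even if ultimately feasible; you flag this yourself as ``the main technical obstacle,'' which is accurate.

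Two smaller points worth tightening. First, to apply Proposition~\ref{prop:Phragmen-Lindelof} with $Z_-(s,f)=Z^{\mathrm{RS}}(-s,\tilde{f},\widehat{\Phi})$ you should record that the involution $f\mapsto\tilde{f}$ preserves the class of Rankin--Selberg regular cuspidal supports (it sends $\chi$ to $\chi^{\vee}$, which is again RS-regular), so that the convergence and continuity estimates of the form in Lemma~\ref{lem:higher_corank_convergence_1} apply to $Z_-$. Second, part (3) of the theorem asserts joint continuity in $(f,\Phi)$, not merely continuity in $f$ for fixed $\Phi$; Proposition~\ref{prop:Phragmen-Lindelof} as stated gives you the latter. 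To upgrade, one applies the proposition on a weighted $L^2$ space as in the proof of Theorem~\ref{thm:higher_corank_Rankin_Selberg} while tracking the $\Phi$-dependence of the semi-norms through Lemma~\ref{lem:constant_term_estimate_product} and the estimate $\lvert\Phi(e_n h)\rvert\ll\|\Phi\|'\|e_nh\|^{-N}$; this is routine, but your appeal to ``uniform Phragmen--Lindel\"{o}f bounds'' glosses over where that uniformity in $\Phi$ is actually established.
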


\begin{remark}
    In \emph{loc.cit}, the theorem is stated for $(G,H)$-regular cuspidal datum, but the proof indeed works for general Rankin-Selberg regular cuspidal data. See also the proof of Lemma \ref{lem:higher_corank_unfolding}.
\end{remark}

The following proposition is an analog of Proposition \ref{prop:corank_1_Rankin_Selberg} and we omit the proof.
\begin{proposition} \label{prop:corank_0_Rankin_Selberg}
    We have the following statements:
    \begin{enumerate}
        \item The linear functional $\cP_{\mathrm{RS}}$ on $\cS_{\mathrm{RS}}([G])$ extends (uniquely) by continuity to a linear functional $\cP_{\mathrm{RS}}^*$ on $\cT_{\mathrm{RS}}([G])$.
        \item For $f \in \cT_{\mathrm{RS}}([G])$ and $\Phi \in \cS(\bA_n)$, the zeta integral $Z(\cdot,f,\Phi)$ extends to an entire function of $s$.
        \item For any $s \in \C$, the bilinear map $Z(s,\cdot,\cdot)$ on $\cT_{\mathrm{RS}}([G]) \times \cS(\bA_n)$ is continuous.
    \end{enumerate}
\end{proposition}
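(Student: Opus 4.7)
The plan is to mirror the argument given for Proposition \ref{prop:corank_1_Rankin_Selberg}, with the additional variable $\Phi$ tracked at each step. Fix $\Phi \in \cS(\bA_n)$ for the moment and regard everything as a function of $s$ and $f$ only; the dependence on $\Phi$ will be handled by the continuity in the last step.

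First, I would introduce an auxiliary globally convergent zeta integral in the style of Godement--Jacquet. Concretely, for $f \in \cS([G])$ set
\[
    Z_n(s,f,\Phi) := \int_{[H]} f(h)\,\Theta(h,\Phi)\,|\det h|^{s} \rd h.
\]
Using Lemma \ref{Theta moderate} together with the Schwartz estimates on the restriction $f|_{[H]}$, this integral is absolutely convergent for all $s \in \C$ away from a finite set of poles coming from the zero Fourier mode of $\Theta$, and by splitting $\Theta$ into its $v=0$ and $v\neq 0$ parts and applying the Poisson summation identity \eqref{eq:Theta_series_functional_equation}, one obtains a functional equation of the form $Z_n(s,f,\Phi) = Z_n(-s,\widetilde f,\widehat\Phi)$ after a variable change $h \mapsto {}^th^{-1}$. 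In particular, $s \mapsto Z_n(s,f,\Phi)$ extends to an entire function of order at most $1$ in vertical strips, and depends continuously on $(f,\Phi) \in \cS([G]) \times \cS(\bA_n)$.

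The next step, which is the main technical input, is to identify $Z_n(s,f,\Phi)$ with $Z^{\mathrm{RS}}(s,f,\Phi)$ on $\cS_{\mathrm{RS}}([G])$. By Theorem \ref{thm:decomposition_cuspidal_support}, every $f \in \cS_{\mathrm{RS}}([G])$ decomposes absolutely in $\cS([G])$ as $f = \sum_{\chi \in \fX_{\mathrm{RS}}} f_\chi$. Theorem \ref{thm:corank_0_Rankin_Selberg} applied to each summand gives $Z^{\mathrm{RS}}(s,f_\chi,\Phi) = Z_n(s,f_\chi,\Phi)$ after analytic continuation, and a straightforward adaptation of \cite[Lemma 10.2]{BLX} (analogous to the use of \cite[Lemma 7.1.1.1]{BPCZ} in the corank 1 case) shows that for $\mathrm{Re}(s)$ sufficiently large the sum $\sum_\chi Z^{\mathrm{RS}}(s,f_\chi,\Phi)$ is absolutely convergent with value $Z^{\mathrm{RS}}(s,f,\Phi)$. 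Joint continuity of $Z_n(s,\cdot,\Phi)$ in $f$ (for fixed $s$, $\Phi$) then yields $Z^{\mathrm{RS}}(s,f,\Phi) = Z_n(s,f,\Phi)$ in a right half-plane, and hence everywhere on $\C$ by analytic continuation.

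Finally, I would apply the Phragm\'en--Lindel\"of principle (Proposition \ref{prop:Phragmen-Lindelof}) with $W = \cT_{\mathrm{RS}}([G])$, the dense subspace $S = \cS_{\mathrm{RS}}([G])$, and the pair
\[
    Z_+(s,f) := Z^{\mathrm{RS}}(s,f,\Phi), \qquad Z_-(s,f) := Z^{\mathrm{RS}}(-s,\widetilde f,\widehat\Phi),
\]
both of which are holomorphic and continuous in $f$ on a suitable right half-plane by \cite[Lemma 10.2]{BLX}; the identity $Z_+(s,f)=Z_-(-s,f)$ on $S$ is exactly the functional equation established above. This delivers the analytic continuation of $Z^{\mathrm{RS}}(s,f,\Phi)$ to an entire function on $\cT_{\mathrm{RS}}([G])$ that is continuous in $f$ for every $s$, yielding parts (2) and (3); part (1) is then obtained by specializing to $s=0$ and invoking the density of $\cS_{\mathrm{RS}}([G])$ in $\cT_{\mathrm{RS}}([G])$. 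The only place where genuine work (rather than mere transcription of the corank $1$ argument) is required is the analysis of the $v=0$ term in the Theta series, since this is what produces the Godement--Jacquet type poles; I expect this to be the main obstacle, but it is handled exactly as in the proof of \cite[Theorem 10.4]{BLX}. The continuity in $\Phi$ is already contained in Theorem \ref{thm:corank_0_Rankin_Selberg}(3) for each $\chi$, and it survives the summation over $\chi$ and the Phragm\'en--Lindel\"of extension without change.
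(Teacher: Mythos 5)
Your proposal follows exactly the paper's intended approach; the paper omits the proof of Proposition \ref{prop:corank_0_Rankin_Selberg} precisely because it is a transcription of the proof of Proposition \ref{prop:corank_1_Rankin_Selberg} with $\Theta(h,\Phi)$ inserted, and your sketch does carry out that transcription: introduce the auxiliary integral $Z_n(s,f,\Phi)$ on $\cS([G])$, establish the functional equation via Poisson summation, decompose $f$ by cuspidal support (Theorem \ref{thm:decomposition_cuspidal_support}) to identify $Z_n$ with $Z^{\mathrm{RS}}$ on $\cS_{\mathrm{RS}}([G])$ using Theorem \ref{thm:corank_0_Rankin_Selberg} and \cite[Lemma 10.2]{BLX}, then apply Proposition \ref{prop:Phragmen-Lindelof}.

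Two details are off, though neither breaks the argument. First, your worry about "a finite set of poles coming from the zero Fourier mode" is misplaced: for $f \in \cS([G])$ the restriction $f|_{[H]}$ is Schwartz on $[H]$, and since $\Theta(\cdot,\Phi)$ is only of uniform moderate growth (Lemma \ref{Theta moderate}) while $|\det h|^{s}$ is polynomially bounded, the product decays rapidly and $Z_n(s,f,\Phi)$ is absolutely convergent and entire in $s$ with no poles at all. The Godement--Jacquet poles you have in mind come from the $v=0$ term precisely when $f$ is an automorphic form of moderate growth (e.g.\ a cusp form or Eisenstein series), not a Schwartz function; here they do not arise, and no separate analysis of the $v=0$ contribution is needed. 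Second, to match the convention $Z_+(s,f)=Z_-(-s,f)$ in Proposition \ref{prop:Phragmen-Lindelof}, one should take $Z_-(s,f) := Z^{\mathrm{RS}}(s,\widetilde{f},\widehat\Phi)$ (holomorphic in a right half-plane), not $Z^{\mathrm{RS}}(-s,\widetilde{f},\widehat\Phi)$; with that correction $Z_-(-s,f)=Z^{\mathrm{RS}}(-s,\widetilde{f},\widehat\Phi)=Z_n(-s,\widetilde{f},\widehat\Phi)=Z_n(s,f,\Phi)=Z_+(s,f)$ on the dense subspace, as required.
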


By the same argument of \eqref{eq:Rankin_Selberg_holomorphic}, we obtain that for any $\Phi \in \cS(\bA_n)$, the map $Z^{\mathrm{RS}}(\cdot,\Phi,\cdot): \C \to \cT_{\mathrm{RS}}'([G]), s \mapsto (f \mapsto Z^{\mathrm{RS}}(s,\Phi,f) )$ is holomorphic.

Therefore, by \eqref{eq:Holomorphic_map_to_Hom}, we see that
\begin{num}
  \item \label{eq:Rankin_Selberg_corank_0_holomorphic} The map $Z^{\mathrm{RS}}(\cdot,\cdot,\cdot):\C \to \opn{Bil}_s(\cT_{\mathrm{RS}}([G]), \cS(\bA_n) ;\C), s \mapsto ((f,\Phi) \mapsto Z^{\mathrm{RS}}(s,f,\Phi))$ is holomorphic. 
\end{num}

\subsubsection{A twisted version}
\label{sssec:twisted_equal_rank}

Let $w_{\ell} = \begin{psmallmatrix}
     & & 1 \\ & \iddots & \\ 1 & &
\end{psmallmatrix} \in G_n$ be the longest Weyl group element. For $f \in \cS([G])$ and $\Phi \in \cS(\bA_n)$, we define
\begin{equation*}
    \widetilde{\cP}_{\mathrm{RS}}(f,\Phi) := \int_{[G_n]} f(w_{\ell} {}^tg^{-1} w_{\ell},g) \Theta(g,\Phi)  \rd g.
\end{equation*}
For $f \in \cT([G])$, $\Phi \in \cS(\bA_n)$ and $s \in \C$, we put the twisted Zeta integral
\begin{equation*}
    \widetilde{Z}^{\mathrm{RS}}(s,f,\Phi) = \int_{N_n(\bA) \backslash G_n(\bA)} W_f (w_{\ell} {}^tg^{-1}w_{\ell},g) \Phi(e_n g) \lvert \det g \rvert^{s+\frac 12} \rd g,
\end{equation*}
provided by the integral is absolutely convergent.

Let $\chi \in \fX(G)$. Assume that $\chi$ is represented by $(M,\pi)$ where $M$ and $\pi$ are as in \eqref{eq:cuspidal_datum_equal_rank_1} and \eqref{eq:cuspidal_datum_equal_rank_2}. We say that $\chi$ is \emph{twisted Rankin-Selberg regular}, if for any $1 \le i \le s,1 \le j \le t$, we have $\pi_{1,i} \ne \pi_{2,j}$. Let $\widetilde{\fX}_{\mathrm{RS}} \subset \fX(G)$ denote the set of twisted Rankin-Selberg regular cuspidal datum. We write $\cT_{\widetilde{\mathrm{RS}}}([G])$ for $\cT_{\widetilde{\fX}_{\mathrm{RS}}}([G])$.

\begin{corollary} \label{cor:twisted_equal_rank}
    We have the following statements:
    \begin{enumerate}
        \item For $f \in \cT([G])$ and $\Phi \in \cS(\bA_n)$, there exists $C>0$ such that the integral defining $\widetilde{Z}^{\mathrm{RS}}(s,f,\Phi)$ is convergent for $\mathrm{Re}(s) > C$ and defines a holomorphic function on $\cH_{>C}$.
        \item For any $\Phi \in \cS(\bA_n)$, the linear functional $\widetilde{\cP}_{\mathrm{RS}}(\cdot,\Phi)$ on $\cS_{\widetilde{\mathrm{RS}}}([G])$ extends (uniquely) by continuity to a continuous linear functional $\widetilde{\cP}_{\mathrm{RS}}(\cdot,\Phi)$ on $\cT_{\widetilde{\mathrm{RS}}}([G])$.
        \item For any $f \in \cT_{\widetilde{\mathrm{RS}}}([G])$ and $\Phi \in \cS(\bA_n)$, the zeta integral $\widetilde{Z}^{\mathrm{RS}}(\cdot,f,\Phi)$ extends to an entire function. And we have
        \begin{equation*}
          \widetilde{\cP}_{\mathrm{RS}}(f,\Phi) = \widetilde{Z}^{\mathrm{RS}}(0,f,\Phi) 
        \end{equation*}
        \item For any $s \in \C$, the bilinear map $\widetilde{Z}^{\mathrm{RS}}(s,\cdot,\cdot)$ on $\cT_{\widetilde{\mathrm{RS}}}([G]) \times \cS(\bA_n)$ is continuous.
    \end{enumerate}
\end{corollary}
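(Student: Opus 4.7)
\medskip

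\noindent\textbf{Proof proposal.} The plan is to deduce Corollary \ref{cor:twisted_equal_rank} from Proposition \ref{prop:corank_0_Rankin_Selberg} by pulling back along the involution
\[
    \tau: G_n \times G_n \to G_n \times G_n, \quad (g_1,g_2) \mapsto (w_\ell {}^t g_1^{-1} w_\ell, g_2).
\]
Concretely, I would define $\sigma:f \mapsto f'$ by $f'(g_1,g_2) := f(\tau(g_1,g_2))$. Since $\tau$ is an algebraic involution of $G$ defined over $F$ that preserves each maximal compact, $\sigma$ induces topological automorphisms of $\cS([G])$ and of $\cT([G])$, and in particular it preserves the Laurent-expansion/convergence estimates needed to conclude (1).

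\emph{Step 1: cuspidal support.} The involution $g \mapsto w_\ell {}^t g^{-1} w_\ell$ on $G_n$ sends a standard Levi $G_{n_1} \times \cdots \times G_{n_s}$ to $G_{n_s} \times \cdots \times G_{n_1}$, and sends a cuspidal representation $\pi_{1,1} \boxtimes \cdots \boxtimes \pi_{1,s}$ to $\pi_{1,s}^\vee \boxtimes \cdots \boxtimes \pi_{1,1}^\vee$, hence to the cuspidal datum of $\pi_1^\vee$ after reordering. Consequently $\sigma$ maps $\cT_{\widetilde{\mathrm{RS}}}([G])$ isomorphically onto $\cT_{\mathrm{RS}}([G])$: the condition $\pi_{1,i} \ne \pi_{2,j}$ for $\chi$ becomes $\pi_{1,i}^\vee \ne \pi_{2,j}^\vee$ for $\sigma(\chi)$, which is exactly Rankin--Selberg regularity.

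\emph{Step 2: matching of integrals.} A change of variable $h \mapsto w_\ell {}^t h^{-1} w_\ell$ on $[G_n]$ (which preserves Tamagawa measure) gives immediately
\[
    \widetilde{\cP}_{\mathrm{RS}}(f,\Phi) = \cP_{\mathrm{RS}}(\sigma(f),\Phi)
\]
for $f \in \cS([G])$. For the zeta integrals, I would compute how $\sigma$ intertwines Whittaker models. The substitution $u \mapsto w_\ell {}^t u^{-1} w_\ell$ on $N_n(\bA)$ preserves $N_n(\bA)$ and sends $\psi_{N_n}$ to $\psi_{N_n}^{-1}$; applying this in the first factor only yields the identity
\[
    W'_f(w_\ell {}^t g_1^{-1} w_\ell, g_2) = W_{\sigma(f)}(g_1,g_2),
\]
so that restricting to $g_1=g_2=g$ and comparing with the definitions produces
\[
    \widetilde{Z}^{\mathrm{RS}}(s,f,\Phi) = Z^{\mathrm{RS}}(s,\sigma(f),\Phi)
\]
for all $s$ with $\opn{Re}(s) \gg 0$, and hence by analytic continuation for all $s$ at which either side is defined. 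This gives (1) and allows items (2), (3), (4) to be transported directly from Proposition \ref{prop:corank_0_Rankin_Selberg} through the continuous automorphism $\sigma$.

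\emph{Main obstacle.} The only non-formal point is the Whittaker computation in Step 2: verifying that the involution on the first factor transforms the standard generic character $\psi_{N_n}$ exactly into its inverse (and not merely into an equivalent generic character), so that $\psi_N$ on the pulled-back function matches the twisted character $\psi'_N$ appearing in the definition of $Z^{\mathrm{RS}}$. This is a direct linear-algebra check on the action of $g \mapsto w_\ell {}^t g^{-1} w_\ell$ on the simple root spaces of $N_n$; once it is in place, the entire corollary follows mechanically from Proposition \ref{prop:corank_0_Rankin_Selberg} and the continuity of $\sigma$.
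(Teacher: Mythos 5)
Your proof is correct and is precisely the paper's argument: both introduce the involution $\sigma: f \mapsto f'$, $f'(g_1,g_2)=f(w_\ell {}^t g_1^{-1} w_\ell, g_2)$, match cuspidal supports (twisted Rankin--Selberg regular to Rankin--Selberg regular), verify the intertwining of Whittaker functionals $W'_{\sigma(f)}(h,h)=W_f(w_\ell {}^t h^{-1} w_\ell, h)$ by the change of variable $u \mapsto w_\ell {}^t u^{-1} w_\ell$ on $N_n$, and then transport Proposition \ref{prop:corank_0_Rankin_Selberg} through $\sigma$. Your step-by-step verification that the involution sends $\psi_{N_n}$ to $\psi_{N_n}^{-1}$ on the super-diagonal is exactly the point the paper leaves implicit (note the paper's own proof cites Proposition \ref{prop:corank_1_Rankin_Selberg}, which appears to be a typo for Proposition \ref{prop:corank_0_Rankin_Selberg} --- you correctly invoke the latter).
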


\begin{proof}
    For a function $f$ on $[G]$, we put a new function $f'$ by $f'(g_1,g_2) = f(w_{\ell} {}^t g_1^{-1}w_{\ell},g_{2})$. Then $f \in \cS([G])$ (resp. $\cT([G])$) if and only if $f' \in \cS([G])$ (resp. $f' \in \cT([G])$). Moreover, $f \mapsto f'$ induces an isomorphism of $\cS([G])$ (resp. $\cT([G])$) to itself.

    Note that for $f \in \cS([G])$ and $\Phi \in \cS(\bA_n)$, we have $\cP_{\mathrm{RS}}(f,\Phi) = \widetilde{\cP}_{\mathrm{RS}}(f',\Phi)$. For $f \in \cT([G])$ and $\Phi \in \cS(\bA_n)$, we have $Z^{\mathrm{RS}}(s,f,\Phi) = \widetilde{Z}^{\mathrm{RS}}(s,f',\Phi)$. The corollary then easily follows from Proposition \ref{prop:corank_1_Rankin_Selberg}.
\end{proof}

By \eqref{eq:Rankin_Selberg_corank_0_holomorphic}, we see that
\begin{num}
  \item \label{eq:twisted_Rankin_Selberg_corank_0_holomorphic} The map $\widetilde{Z}^{\mathrm{RS}}(\cdot,\cdot,\cdot):\C \to \opn{Bil}_s(\cT_{\widetilde{\mathrm{RS}}}([G]), \cS(\bA_n) ;\C), s \mapsto ((f,\Phi) \mapsto \widetilde{Z}^{\mathrm{RS}}(s,f,\Phi))$ is holomorphic. 
\end{num}

\subsubsection{Euler decomposition}

Let $\tS$ be a finite set of places of $F$, let $\sigma = \sigma_n \boxtimes \sigma'_{n}$ be a generic irreducible representation of $G(F_{\tS})$. For $W \in \cW(\sigma,\psi_{N,\tS})$ and $\Phi \in \cS(F_{\tS})$, we define local (twisted) Rankin-Selberg integral \cite{JPSS} as
\begin{equation*}
    \widetilde{Z}^{\mathrm{RS}}_{\tS}(s,W,\Phi) := \int_{N_n(F_{\tS}) \backslash G_n(F_{\tS})}  W( w_{\ell}h^{-1}w_{\ell},h) \Phi(e_nh) \lvert \det h \rvert^{s+\frac 12} \rd h.
\end{equation*}
The integral defining $\widetilde{Z}^{\mathrm{RS}}_{\tS}(s,W,\Phi)$ is convergent when $\mathrm{Re}(s) \gg 0$ and has meromorphic continuation to $\C$. Moreover, by \cite{JPSS} and \cite{Jacquet09}, for any $W \in \cW(\sigma,\psi_{N,\tS})$ and $\Phi \in \cS(F_{\tS})$, the quotient
\begin{equation*}
    \frac{\widetilde{Z}^{\mathrm{RS}}_{\tS}(s,W,\Phi)}{L_{\tS}(s+\frac 12,\sigma^{\vee}_n \times \sigma'_{n})}
\end{equation*}
is entire.

Let $P \subset G$ be a standard parabolic subgroup and let $\pi$ be a cuspidal automorphic representation of $M_P$. Assume that $(M_P,\pi)$ gives a twisted Rankin-Selberg regular cuspidal data $\chi$. Let $\Pi = \opn{Ind}_{P(\bA)}^{G(\bA)} \pi=\Pi_n \boxtimes \Pi'_{n} $.  Let $\varphi \in \Pi$ and $\Phi \in \cS(\bA_n)$.

Let $\tS$ be a sufficiently large set of places of $F$, that we assume to contain Archimedean places as well as the places where $\Pi$, $\psi$, $\varphi$ or $\Phi$ is ramified. We then have a decomposition $W_{E(\varphi)} = W_{E(\varphi),\tS} W_{E(\varphi)}^{\tS}$ such that $W_{E(\varphi)}^{\tS}(1)=1$ and is fixed by $K^{\tS}$. We also write $\Phi$ as $\Phi = \Phi_{\tS} \Phi^{\tS}$, where $\Phi^{\tS}$ is the characteristic function of $\cO_F^{\tS}$ and $\Phi_{\tS} \in \cS(F_{\tS})$.

By the unramified computation for the Rankin-Selberg integral, we have
\begin{equation} \label{eq:Rankin_Selberg_Euler_equal_rank}
    \widetilde{Z}^{\mathrm{RS}}(s,E(\varphi),\Phi) = (\Delta_{G_n}^{\tS,*})^{-1} \widetilde{Z}^{\mathrm{RS}}_{\tS}(s,W_{E(\varphi),\tS},\Phi_{\tS}) L^{\tS}(s+\frac 12,\Pi^{\vee}_n \times \Pi'_{n}).
\end{equation}

\section{Canonical extensions of Rankin-Selberg periods -- higher corank}

\label{sec:higher_corank_Rankin_Selberg}

\subsection{Statements of main results}
\label{ssec:higher_corank_statements}

\subsubsection{Notations}
In \S \ref{sec:higher_corank_Rankin_Selberg}, $n \ge 0, m \ge 2$ be integers. Let $G = G_n \times G_{n+m}$. Let $H = G_n$ be the subgroup of $G$ consisting of matrices of the form $(g, \mathrm{diag}(g,1_m))$.

For integers $0 \le r \le k$, let $N_{r,k}$ be the unipotent radical of the standard parabolic subgroup of $G_k$ with Levi $G_r \times G_1^{k-r}$. Note that $N_{0,k}=N_{1,k}$ is the upper triangular unipotent subgroup of $G_k$ and $N_{k,k} = \{1\}$.

For $0 \le r \le n$, we then put
\begin{equation*}
    N_r^G := N_{r,n} \times N_{r,n+m}, \quad N_r^H := N_r^G \cap H \cong N_{r,n}.
\end{equation*}
In particular, $N := N_0^G$ is a maximal unipotent subgroup of $G$ and $N_H := N \cap H = N_0^H$ is a maximal unipotent subgroup of $H$.

We also put
\begin{equation*}
    N_{n+1}^G := 1 \times N_{n+1,n+m}.
\end{equation*}

We define a character $\psi_N$ of $[N]$ by
\begin{equation*}
    \psi'_N(u,u') = \psi \left( -\sum_{i=1}^{n-1} u_{i,i+1} + \sum_{j=1}^{n+m-1} u'_{j,j+1}  \right), \quad u \in [N_n], u' \in [N_{n+m}].
\end{equation*}
For $1 \le r \le n+1$, $\psi_N'$ restricts to a character on the subgroup $N_r^G$, we denote it by $\psi'_r$.

\subsubsection{Rankin-Selberg regular cuspidal datum}

Let $\chi \in \fX(G)$, assume that $\chi$ is represented by $(M,\pi)$, where we write
\begin{equation} \label{eq:higher_corank_cuspidal_data_1}
    M = M_n \times M_{n+m}, \quad M_n = G_{n_1} \times \cdots \times G_{n_s}, \quad M_{n+m} = G_{m_1} \times \cdots \times G_{m_t},
\end{equation}
and
\begin{equation} \label{eq:higher_corank_cuspidal_data_2}
    \pi = \pi_n \boxtimes \pi_{n+m}, \quad \pi_n = \pi_{n,1} \boxtimes \cdots \boxtimes \pi_{n,s}, \quad \pi_{n+m} = \pi_{n+m,1} \boxtimes \cdots \boxtimes \pi_{n+m,t}.
\end{equation}

we say that $\chi$ is \emph{Rankin-Selberg regular}, for any $1 \le i \le s$ and $1 \le j \le k$ we have $\pi_{n,i} \ne \pi_{n+m,j}^{\vee}$.

Let $\fX_{\mathrm{RS}}$ denote the set of Rankin-Selberg regular cuspidal datum. We write $\cS_{\mathrm{RS}}([G])$ (resp. $\cT_{\mathrm{RS}}([G])$) for $\cS_{\fX_{\mathrm{RS}}}([G])$ (resp. $\cT_{\mathrm{RS}}([G])$).

\subsubsection{Zeta integrals}

For $f \in \cT([G])$, let 
\begin{equation*}
    W'_f(g) = \int_{[N]} f(ug) \psi_N'(u)^{-1} \rd u
\end{equation*}
be its Whittaker model. For $s \in \C$, we put
\begin{equation*}
    Z^{\mathrm{RS}}(s,f) = \int_{N_H(\bA) \backslash H(\bA)} W'_f(h) \lvert \det h \rvert^s \rd h
\end{equation*}
provided by the integral is absolutely convergent.

\begin{lemma} \label{lem:higher_corank_convergence_1}
    For any $N>0$, then there exist $c_N > 0$ such that
    \begin{enumerate}
        \item For every $f \in \cT_N([G])$, the integral defining $Z^{\mathrm{RS}}(s,f)$ is absolutely convergent for $\opn{Re}(s)>c_N$, and $Z(\cdot,f)$ is holomorphic and bounded in vertical strips on $\cH_{>c_N}$.
        \item For every $s \in \cH_{>c_N}$, the functional $f \mapsto Z^{\mathrm{RS}}(s,f)$ is continuous.
    \end{enumerate}
\end{lemma}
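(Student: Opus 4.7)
The plan is to combine the Iwasawa decomposition on $H(\bA)$ with the gauge estimate for Fourier coefficients of uniform moderate growth functions (Lemma \ref{lemma: estimate Fourier coeff}) and to reduce to one-variable adelic integrals controlled by Corollary \ref{cor:A_norm_integral}.

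First, using the Iwasawa decomposition $H(\bA) = N_H(\bA) T_H(\bA) K_H$, where $T_H$ is the diagonal torus of $H = G_n$, and noting that $\psi'_N$ restricts trivially to $N_H$ (the contributions from the two factors of $G$ cancel along the diagonal embedding), the zeta integral rewrites as
\[
    Z^{\mathrm{RS}}(s,f) = \int_{K_H} \int_{T_H(\bA)} W'_f(ak) |\det a|^s \delta_{B_n}(a)^{-1} \, da \, dk.
\]
Then I would apply Lemma \ref{lemma: estimate Fourier coeff}(\ref{T Fourier coeff}) to $W'_f = f_{N, \psi'_N}$: for each $N_1 \geq 0$ there is a continuous semi-norm $\mu_{N_1}$ on $\cT_N([G])$ such that
\[
    |W'_f(ak)| \leq \mu_{N_1}(f) \, \|m\|^{N}_{M_B(\bA)} \, \|\mathrm{Ad}^*(m^{-1}) l\|^{-N_1}_{V_B(\bA)}
\]
uniformly in $k \in K_H$ and $a \in T_H(\bA)$, where $m = (a, \mathrm{diag}(a, 1_m))$ is the image of $a$ in the diagonal torus $M_B$ of $G$ and $l$ is the functional corresponding to $\psi'_N$.

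Second, setting $b_i := a_i/a_{i+1}$ for $1 \leq i < n$ and $b_n := a_n$, a direct calculation of the adjoint scaling on each simple root space gives
\[
    \|\mathrm{Ad}^*(m^{-1}) l\|_{V_B(\bA)} = \|(b_1, \dots, b_n)\|_{\bA^n} \gg \prod_{j=1}^{n} \|b_j\|_{\bA}^{1/n},
\]
the crucial coordinate $b_n = a_n$ coming from the $(n, n+1)$-simple root of $G_{n+m}$ (which evaluates as $a_n/1$ on $\mathrm{diag}(a, 1_m)$). In the coordinates $(b_1, \dots, b_n)$, both $|\det a|$ and $\delta_{B_n}(a)$ are monomials in $|b_j|$, and $\|m\|^N_{M_B(\bA)}$ is bounded by a monomial in $\|b_j\|_{\mathbb{G}_m}$. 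Thus the torus integral factorizes as a product of one-variable integrals of the form $\int_{\bA^\times} \|x\|_{\bA}^{-N_1/n} |x|^{\alpha_j(s)} \, d^\times x$ with each $\alpha_j(s)$ affine in $\mathrm{Re}(s)$. By Corollary \ref{cor:A_norm_integral}, taking $N_1$ sufficiently large (depending on $N$), these converge simultaneously for $\mathrm{Re}(s) > c_N$ with $c_N > 0$ depending only on $N$, uniformly in $\mathrm{Im}(s)$.

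The remaining statements follow easily: holomorphy on $\cH_{> c_N}$ and boundedness in vertical strips come from absolute convergence and the $\mathrm{Im}(s)$-independence of the dominating function, while continuity of $f \mapsto Z^{\mathrm{RS}}(s, f)$ follows from the semi-norm $\mu_{N_1}$. The hardest step will be the lower bound $\|\mathrm{Ad}^*(m^{-1}) l\|_{V_B(\bA)} \gg \prod_j \|b_j\|_{\bA}^{1/n}$: although $H$ itself only exposes $n - 1$ simple-root directions, the embedding into $G_{n+m}$ supplies the missing $n$-th direction through $b_n = a_n$, providing decay in all $n$ torus coordinates, which is precisely what makes the integral convergent.
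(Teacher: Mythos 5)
Your proof is correct and follows essentially the same route as the paper: Iwasawa decomposition of $N_H(\bA)\backslash H(\bA)$, the gauge estimate of Lemma \ref{lemma: estimate Fourier coeff}(\ref{T Fourier coeff}), and reduction to one-variable adelic integrals handled by Corollary \ref{cor:A_norm_integral}. The one cosmetic difference is that you pass to root coordinates $b_i = a_i/a_{i+1}$, $b_n = a_n$ and use the elementary inequality $\prod_v \max_j\{\lvert b_j\rvert_v,1\} \geq \prod_j \|b_j\|_{\bA}^{1/n}$, whereas the paper stays in the $a_i$ coordinates and telescopes $\prod_i\|a_i/a_{i+1}\|^{-N_1}\|a_n\|^{-N_1} \ll \prod_i\|a_i\|^{-N_2}$ -- both are valid ways to exploit the extra decay along the $n$-th simple root of $G_{n+m}$ supplied by the embedding $g \mapsto (g,\operatorname{diag}(g,1_m))$.
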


The proof the Lemma \ref{lem:higher_corank_convergence_1} will be given in \S \ref{sssec:higher_corank_convergence_1}.

\subsubsection{}

\begin{theorem} \label{thm:higher_corank_Rankin_Selberg}
    We have the following assertions:
    \begin{enumerate}
        \item For any $f \in \cS([G])$, the \emph{Rankin-Selberg period}
        \begin{equation*}
            \cP_{\mathrm{RS}}(f) := \int_{[H]} f_{N_{n+1}^G,\psi'_{n+1}}(h) \rd h
        \end{equation*}
        is absolutely convergent.
        \item The restriction of $\cP_{\mathrm{RS}}$ to $\cS_{\mathrm{RS}}([G])$ extends by continuity to a linear functional $\cP^*_{\mathrm{RS}}$ on $\cT_{\mathrm{RS}}([G])$.
        \item For $f \in \cT_{\mathrm{RS}}([G])$, the zeta integral $Z(\cdot,f)$ extends to an entire function of $s$. And we have
        \begin{equation*}
            \cP^*_{\mathrm{RS}}(f) = Z^{\mathrm{RS}}(0,f)
        \end{equation*}
        \item For any $s \in \C$, the functional $Z^{\mathrm{RS}}(s,\cdot)$ is continuous.
    \end{enumerate}
\end{theorem}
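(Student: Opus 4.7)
The plan mirrors the corank-one proof of Proposition~\ref{prop:corank_1_Rankin_Selberg}: prove convergence, introduce an entire auxiliary integral, identify it with $Z^{\mathrm{RS}}$ on RS-regular Schwartz functions via unfolding, and extend to $\cT_{\mathrm{RS}}([G])$ by Phragm\'en--Lindel\"of. The convergence assertions are routine: assertion (1) follows from Lemma~\ref{lemma: estimate Fourier coeff}(\ref{Schwartz Fourier coefficient}), which gives rapid decay of $h \mapsto f_{N_{n+1}^G,\psi'_{n+1}}(h)$ on $[H]$ for $f \in \cS([G])$, while Lemma~\ref{lem:higher_corank_convergence_1} follows from Lemma~\ref{lemma: estimate Fourier coeff}(\ref{T Fourier coeff}) together with the Iwasawa decomposition on $H$ and a standard Siegel-set majoration of the Whittaker coefficient.

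The heart of the proof is the unfolding identity: for $\chi \in \fX_{\mathrm{RS}}$ and $f \in \cS_\chi([G])$, one has
\[
    Z^{\mathcal{P}}(s,f) := \int_{[H]} f_{N_{n+1}^G,\psi'_{n+1}}(h) |\det h|^s \, dh = Z^{\mathrm{RS}}(s,f)
\]
for $\opn{Re}(s) \gg 0$, where the left-hand side is entire in $s$ for any $f \in \cS([G])$ by the rapid-decay estimate above. I would prove the identity by iterated Fourier expansion along the tower $N_{n+1}^G \subset N_n^G \subset \cdots \subset N_1^G = N$. At each step $r+1 \to r$ the abelian quotient $N_r^G/N_{r+1}^G$ is a vector group carrying a linear $H(F)$-action (through conjugation on the $(i,r+1)$ coordinates in the relevant factors); Fourier analysis decomposes $f_{N_{r+1}^G,\psi'_{r+1}}$ into (a) a principal $H(F)$-orbit piece that, after collapse via the mirabolic stabiliser, reproduces $f_{N_r^G,\psi'_r}$, and (b) degenerate contributions from the trivial character and (when $r<n$) from smaller $H(F)$-orbits. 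Integrated over $[H]$, each degenerate contribution becomes a period of $f$ against a constant-term-type functional along a standard parabolic of $G$; the RS-regularity of $\chi$ ensures that each such period pairs $f$ against a section whose cuspidal support violates $\pi_{n,i}\ne\pi_{n+m,j}^\vee$, and Lemma~\ref{lemma: restriction to levi} combined with Lemma~\ref{lemma: constant term} then forces vanishing. After $n$ iterations only $Z^{\mathrm{RS}}(s,f)$ remains.

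To extend to $\cT_{\mathrm{RS}}([G])$ I apply Proposition~\ref{prop:Phragmen-Lindelof} with $W=\cT_{\mathrm{RS}}([G])$, $S=\cS_{\mathrm{RS}}([G])$, $Z_+(s,f)=Z^{\mathrm{RS}}(s,f)$ and $Z_-(s,f)=Z^{\mathrm{RS}}(s,\tilde f)$ where $\tilde f(g)=f({}^tg^{-1})$; the contragredient involution preserves the non-duality condition defining $\fX_{\mathrm{RS}}$, so $\tilde{}$ stabilises $\cT_{\mathrm{RS}}([G])$. The functional equation $Z_+(s,f)=Z_-(-s,f)$ on $S$ is obtained by applying $h\mapsto{}^th^{-1}$ in the definition of $Z^{\mathcal{P}}$ and then conjugating the resulting Fourier coefficient on $\overline{N_{n+1}^G}$ by a Weyl element of $G_{n+m}$ that normalises the image of $H$, bringing it into the form $\tilde f_{N_{n+1}^G,\psi'_{n+1}}$ up to an admissible character twist. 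Combined with the vertical-strip bounds from the first paragraph, Phragm\'en--Lindel\"of yields the entire extension of $Z^{\mathrm{RS}}$ on $\cT_{\mathrm{RS}}([G])$, giving assertions (2)--(4); the identity $\cP_{\mathrm{RS}}^*(f) = Z^{\mathrm{RS}}(0,f)$ then follows by continuity from its validity on the dense subspace $\cS_{\mathrm{RS}}([G])$.

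The principal difficulty is the unfolding of the second paragraph. Unlike the corank-one setting of §\ref{ssec:Rankin_Selberg}, which has no intermediate Fourier coefficient to expand, here one must navigate $n$ layers of Fourier expansion and, because $f$ is not cuspidal, systematically identify and kill the residual degenerate terms at each layer. The bookkeeping requires repackaging each residual as a period of a constant term of $f$ along a standard parabolic of $G$, then restricting to the relevant Levi (Lemma~\ref{lemma: restriction to levi}) so that the RS-regular cuspidal-support hypothesis can be invoked. A secondary technical subtlety is the explicit Weyl-element conjugation underlying the functional equation in the third paragraph, which I expect to reduce to a block-antidiagonal element of $G_{n+m}$ once the transformation of $\psi'_{n+1}$ under transpose-inverse is carefully tracked.
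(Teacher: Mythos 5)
Your outline of assertion (1), the iterated Fourier-expansion unfolding $Z_{n+1}(s,f)=Z^{\mathrm{RS}}(s,f)$ on $\cS_{\mathrm{RS}}([G])$, and the broad Phragm\'en--Lindel\"of strategy are all in line with the paper. The gap is in your choice of $Z_-$ and the asserted functional equation.

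You take $Z_-(s,f)=Z^{\mathrm{RS}}(s,\tilde f)$ and claim the functional equation $Z_+(s,f)=Z_-(-s,f)$ on $\cS_{\mathrm{RS}}([G])$ follows from the change of variable $h\mapsto {}^t h^{-1}$ together with conjugation by a Weyl element that normalises $H$ and ``brings $\overline{N_{n+1}^G}$ into the form $N_{n+1}^G$ up to an admissible character twist.'' This is exactly the point where the higher-corank case diverges from corank $1$, and the claim fails. After $h\mapsto{}^t h^{-1}$ one obtains a Fourier coefficient of $\tilde f$ along $\overline{N_{n+1}^G}=1\times\overline{N_{n+1,n+m}}$. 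The only Weyl element of $G_{n+m}$ (up to elements of $H$) that normalises the image of $H$ is $w_{n,m}=\mathrm{diag}(1_n,w_{\ell,m})$, and conjugation by $w_{n,m}$ sends $\overline{N_{n+1,n+m}}$ \emph{not} to $N_{n+1,n+m}$ but to the ``staircase'' unipotent group $\cU_{m-1}$ of \S\ref{sssec:exchange_of_root_setting} (a group with both lower-triangular entries in the $(n+1..n+m-1)\times(1..n)$ block and upper-triangular entries in the bottom-right $m\times m$ block); these two groups agree only when $m=1$. There is no Weyl element that simultaneously normalises $H$ and conjugates $\overline{N_{n+1,n+m}}$ into $N_{n+1,n+m}$ — the element achieving the latter ($w_\ell$) does not normalise $H$. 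Consequently $Z_{n+1}(s,f)\ne Z_{n+1}(-s,\tilde f)$ for $m\ge 2$, and the relation $Z^{\mathrm{RS}}(s,f)=Z^{\mathrm{RS}}(-s,\tilde f)$ does not hold on $\cS_{\mathrm{RS}}([G])$.

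What the paper does instead: it keeps the $\cU_{m-1}$-coefficient obtained from the change of variable, and then uses the exchange-of-roots identity (Lemma \ref{lem:exchange_of_root}, Corollary \ref{cor:exchange_of_root}) to rewrite the $\cU_{m-1}$-Fourier coefficient as the $\cU_0=N_{n+1,n+m}$-Fourier coefficient integrated over $\opn{Mat}_{(m-1)\times n}(\bA)$. This gives $Z_{n+1}(s,f)=Z'_{n+1}(-s,R(w_{n,m})\tilde f)$ (Proposition \ref{prop:higher_corank_unfolding_2}), where $Z'_{n+1}$ and its unfolded form $Z_1'$ carry a genuine extra integration over $\opn{Mat}_{(m-1)\times n}(\bA)$ and involve the conjugate Whittaker coefficient $W''_f$ rather than $W'_f$. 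Phragm\'en--Lindel\"of is then applied with $Z_-(s,f)=Z_1'(s,R(w_{n,m})\tilde f)$, not with $Z^{\mathrm{RS}}(s,\tilde f)$. Without introducing these auxiliary zeta integrals (and proving the exchange-of-roots lemma, which requires the convergence estimate Lemma \ref{lem:exchange_of_root_convergence} based on the exterior-power norm bound Lemma \ref{lem:Iwasawa_estimate}), your argument cannot close. The second-paragraph unfolding is sound; the third paragraph as written is not.
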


The rest of \S \ref{sec:higher_corank_Rankin_Selberg} is devoted to the proof of Theorem \ref{thm:higher_corank_Rankin_Selberg}.

\subsection{Proof of Theorem \ref{thm:higher_corank_Rankin_Selberg}}

\subsubsection{An unfolding identity}

For $f \in \cS([G])$ and $s \in \C$, we put
\begin{equation} \label{eq:higher_corank_Z_n+1}
    Z_{n+1}(s,f) = \int_{[H]} f_{N_{n+1}^G,\psi_{n+1}'}(h) \lvert \det h \rvert^s \rd h.
\end{equation}

\begin{lemma} \label{lem:higher_corank_convergence_2}
    For any $f \in \cS([G])$, the integral defining $Z_{n+1}(s,f)$ is absolutely convergent for any $s \in \C$, and $s \mapsto Z_{n+1}(s,f)$ is entire. Moreover, for any $s \in \C$, the map $f \mapsto Z_{n+1}(s,f)$ is continuous on $\cS([G])$.
\end{lemma}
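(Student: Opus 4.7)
The proof is essentially a direct consequence of the rapid decay of Schwartz functions. My plan has three steps.

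First, I would establish a pointwise bound on the Fourier coefficient. For $f \in \cS([G])$ and any $N > 0$, the Schwartz estimate $\lvert f(g) \rvert \ll \|f\|_{\infty,N} \|g\|_{G(\bA)}^{-N}$ from \eqref{eq:norm_infty_N}, combined with the compactness of $[N_{n+1}^G]$ and the standard adelic norm inequality $\|uh\|_{G(\bA)} \gg \|h\|_{G(\bA)}^{1/r}$ (uniform in $u$ over a compact fundamental domain of $[N_{n+1}^G]$), yields
\[
    \lvert f_{N_{n+1}^G, \psi'_{n+1}}(h) \rvert \ll \|f\|_{\infty,N} \|h\|_{G(\bA)}^{-N/r}, \quad h \in H(\bA).
\]
Since $\|h\|_{G(\bA)}$ is polynomially equivalent to $\|h\|_{H(\bA)}$ under our embedding $H \hookrightarrow G$, and $N$ may be taken arbitrarily large, I conclude that
\[
    \lvert f_{N_{n+1}^G, \psi'_{n+1}}(h) \rvert \ll \|f\|_{\infty,N'} \|h\|_{H(\bA)}^{-N}
\]
for every $N > 0$, with the implicit constant depending continuously on $f$ through a Schwartz semi-norm.

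Second, I would integrate this bound over $[H]$. Using the parameterization $A_H^\infty \cong \R_{>0}$ via $t \mapsto t^{1/n} I_n$, so that $\lvert \det h \rvert = t$, and the Borel--Harish-Chandra finite-volume theorem together with the standard fact that $\int_{G_n(F) \backslash G_n(\bA)^1} \|h'\|_{H(\bA)}^{-M} \rd h' < \infty$ for $M$ sufficiently large, I reduce to
\[
    \int_{[H]} \|h\|_{H(\bA)}^{-N} \lvert \det h \rvert^{\opn{Re}(s)} \rd h \ll \int_0^\infty \max(t, t^{-1})^{-N/n} t^{\opn{Re}(s)} \frac{\rd t}{t}.
\]
This converges once $N > n \lvert \opn{Re}(s) \rvert$; since $N$ can be chosen arbitrarily large, absolute convergence of $Z_{n+1}(s, f)$ for every $s \in \C$ follows, as does the continuity of $f \mapsto Z_{n+1}(s, f)$ on $\cS([G])$ through the semi-norm $\|\cdot\|_{\infty,N'}$.

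Finally, entireness of $s \mapsto Z_{n+1}(s, f)$ follows by a standard argument: on any compact subset $K \subset \C$, take $N$ uniformly large enough so that the integrand is dominated by an integrable majorant for all $s \in K$; holomorphy of $s \mapsto \lvert \det h \rvert^s$ then passes to the integral by dominated convergence and Morera's theorem. The principal place where care is needed is the comparison of the various polynomial norms and the handling of the $t \to 0$ tail of the $A_H^\infty$-integral, where one must exploit the identification of the adelic norm with $\max(t, t^{-1})$ rather than just $t$; everything else is routine bookkeeping.
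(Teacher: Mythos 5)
Your proposal is correct but takes a more elementary route than the paper's. The paper invokes the two-variable constant-term estimate (Lemma~\ref{lem:constant_term_estimate_product}), which produces the bound $\|h\|_{[H]}^{-N-M}\,\delta_{P_{n+1,n+m}}(h)^{-cM}$; the exponential $\delta$-factor is superfluous for the present lemma but is essential elsewhere in \S\ref{sec:higher_corank_Rankin_Selberg} (e.g.\ in Lemma~\ref{lem:exchange_of_root_convergence}, where one integrates over a noncompact matrix space). You instead use the crude sup bound $\lvert f_{N_{n+1}^G,\psi'_{n+1}}(h)\rvert \le \int_{[N_{n+1}^G]}\lvert f(uh)\rvert\,\rd u$ and observe that this gives polynomial decay $\|h\|_{[H]}^{-N}$ for arbitrary $N$, which suffices because $\lvert\det h\rvert^{\operatorname{Re}(s)}$ grows at most polynomially in $\|h\|_{[H]}$ and $\int_{[G_n]}\|h\|_{[G_n]}^{-M}\rd h<\infty$ for large $M$. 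This is a genuinely simpler argument that works here.

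One imprecision in Step~1 should be tightened. The Schwartz estimate is in terms of the \emph{quotient} norm $\|g\|_{[G]}$, not the ambient $\|g\|_{G(\bA)}$, and the bound you ultimately need is $\|uh\|_{[G]} \gg \|h\|_{[H]}^{1/r}$ uniformly for $u$ in a fundamental domain of $[N_{n+1}^G]$. The standard compact-set norm inequality only gives $\|uh\|_{[G]}\gg\|h\|_{[G]}^{1/r}$; passing from $\|h\|_{[G]}$ to $\|h\|_{[H]}$ (the reverse of the always-true $\|h\|_{[G]}\ll\|h\|_{[H]}^r$) is not automatic for an arbitrary embedding $H\hookrightarrow G$ — it is the properness of $[H]\to[G]$. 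In the present situation it does hold, and the cleanest way to see this is that $N_{n+1}^G$ lies in the $G_{n+m}$ factor while the embedding $H\hookrightarrow G=G_n\times G_{n+m}$ is an isomorphism onto the $G_n$ factor, so $\|uh\|_{[G]}\ge\|h\|_{[G_n]}=\|h\|_{[H]}$ directly (taking infima over $G(F)=G_n(F)\times G_{n+m}(F)$ and projecting to the first factor). Spelling this out would also make transparent why your shortcut works here but would not for a general Fourier coefficient: the paper's Lemma~\ref{lem:constant_term_estimate_product} is designed precisely to handle cases where this factor-structure trick is unavailable.

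Otherwise the handling of the central direction via $\max(t,t^{-1})$, the application of dominated convergence and Morera's theorem for entireness, and the continuity via the semi-norm $\|\cdot\|_{\infty,N'}$ are all sound.
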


The proof of the Lemma \ref{lem:higher_corank_convergence_2} will be given in \S \ref{sssec:higher_corank_convergence_2}.

\begin{proposition} \label{prop:higher_corank_unfolding}
    Let $\chi$ be an Rankin-Selberg regular cuspidal data. Then for any $f \in \cS_{\chi}([G])$, we have
    \begin{equation*}
        Z_{n+1}(s,f) = Z^{\mathrm{RS}}(s,f)
    \end{equation*}
    holds for $\opn{Re}(s)$ sufficiently large.
\end{proposition}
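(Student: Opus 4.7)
The strategy is to adapt the classical Jacquet--Piatetski-Shapiro--Shalika unfolding of Rankin-Selberg zeta integrals for $\mathrm{GL}_n \times \mathrm{GL}_{n+m}$ to functions $f \in \cS_\chi([G])$ whose cuspidal support is only assumed Rankin-Selberg regular. For cuspidal $f$, the JPSS argument produces the identity directly via a sequence of Fourier expansions; here the non-cuspidality of $f$ forces extra ``degenerate'' terms to appear, and the hypothesis $\chi \in \fX_{\mathrm{RS}}$ must be used to make them vanish. Since $f$ is Schwartz and the claim is only for $\mathrm{Re}(s)$ sufficiently large, all intermediate sums and integrals converge absolutely, permitting free interchange of summation and integration.

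The plan is to induct on $r = n+1, n, n-1, \ldots, 0$, starting from the given expression for $Z_{n+1}(s, f)$ and, at each step, unfolding $f_{N_{r+1}^G, \psi'_{r+1}}$ into $f_{N_r^G, \psi'_r}$ while correspondingly shrinking the outer domain. Concretely, I would apply Fourier inversion to $f_{N_{r+1}^G, \psi'_{r+1}}$ along the abelian quotient $N_r^G / N_{r+1}^G$ (which is of dimension $n$ when $r = n$ and $2r$ when $r < n$), obtaining
\[
f_{N_{r+1}^G, \psi'_{r+1}}(g) = \sum_{a} f_{N_r^G, \psi_a}(g),
\]
where $\psi_a$ ranges over the characters of $N_r^G$ agreeing with $\psi'_{r+1}$ on $N_{r+1}^G$. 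The conjugation action of the current outer group on the set $\{\psi_a\}$ is algebraic, so the sum breaks into orbits. The ``generic'' orbit is the one containing $\psi'_r$; its stabilizer is a parabolic-type subgroup of the current outer group, and folding this orbit's summation into the outer integral produces the intermediate expression at step $r$.

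The heart of the proof is the vanishing of the non-generic orbit contributions and of the ``zero'' orbit (trivial character on $N_r^G / N_{r+1}^G$). After reorganization by Fubini, each such term is an integral over the current outer domain of a Fourier coefficient of $f$ along a pair $(U, \psi^\sharp)$ with $U \supsetneq N_{r+1}^G$ and $\psi^\sharp$ trivial in some root direction, which can be rewritten as a partial constant term of a Whittaker-like quantity along a well-identified standard parabolic $P$ of $G$. By Lemma \ref{lemma: constant term} together with Lemma \ref{lemma: restriction to levi}, the cuspidal support of this partial constant term, viewed on the Levi $M_P$, is the restriction of $\chi$ to $M_P$. Nontriviality of the integral would then force $\chi$ to admit a matching $\pi_{n,i} \cong \pi_{n+m,j}^\vee$ between an inducing cuspidal of the $G_n$-factor and the contragredient of one on the $G_{n+m}$-factor, which is precisely what RS-regularity excludes. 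Iterating from $r = n$ down to $r = 0$ yields $f_{N_0^G, \psi'_0}(h) = W'_f(h)$ integrated against $N_H(\bA) \backslash H(\bA)$, i.e., $Z^{\mathrm{RS}}(s, f)$. The main obstacle is this vanishing step: for $r < n$, the product structure $\bA^r \times \bA^r$ of $N_r^G / N_{r+1}^G$ produces an intricate orbit decomposition of the outer group, so careful matrix-level bookkeeping is required to match each degenerate coefficient with a constant term along a specific parabolic and to confirm that RS-regularity kills every contribution.
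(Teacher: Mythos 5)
Your proposal follows the same overall route as the paper: induct on $r$, introduce intermediate zeta integrals $Z_r(s,f)$, perform a Fourier expansion across the abelian quotient between $N_{r+1}^G$ and $N_r^G$, identify the generic orbit with $Z_r$, and kill the degenerate contribution using Rankin--Selberg regularity via the constant term and Levi restriction lemmas (Lemma~\ref{lemma: constant term}, Lemma~\ref{lemma: restriction to levi}). The one substantive organizational difference is your choice to Fourier-expand on the full $N_r^G/N_{r+1}^G$ (dual $\cong F_r \times F_r$, whose $G_r(F)$-orbits are indeed intricate), whereas the paper first absorbs the diagonal abelian subgroup $U_{r+1}^H$ into the outer quotient --- that is, it rewrites the domain $\cP_{r+1}(F)N_{r+1}^H(\bA)\backslash H(\bA)$ as $G_r(F)N_r^H(\bA)\backslash H(\bA)$ times $[U_{r+1}^H]$ --- and then Fourier-expands only on the quotient $U_{r+1}^H(\bA)U_{r+1}^G(F)\backslash U_{r+1}^G(\bA)$, so that exactly two types of terms appear (trivial character and a single generic orbit $\cong \cP_r(F)\backslash G_r(F)$); this is the same Fourier series as yours, but your extra orbits (those with $a+b\ne 0$, e.g.\ $(a,0)$) vanish for free once you integrate against the diagonal $[U_{r+1}^H]$ hidden in the outer domain, so the ``intricate bookkeeping'' you flag as the main obstacle is a non-issue if you pull that integration forward as the paper does. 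You should also make explicit the convergence of the degenerate term $F_r(s,f)$ for $\mathrm{Re}(s)\gg 0$ (the paper proves this separately as a lemma); while plausible for Schwartz $f$, it does require the Fourier coefficient estimate of Lemma~\ref{lemma: estimate Fourier coeff}, not merely Schwartz decay.
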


The proof of Proposition \ref{prop:higher_corank_unfolding} will be given in \S \ref{ssec:higher_corank_Rankin_Selberg_unfolding}.

\begin{corollary} \label{cor:higher_corank_unfolding}
    Let $f \in \cS_{\mathrm{RS}}([G])$. For $\opn{Re}(s) \gg 0$, we have
    \begin{equation*}
        Z_{n+1}(s,f) = Z^{\mathrm{RS}}(s,f)
    \end{equation*}
    holds for $\opn{Re}(s)$ sufficiently large.
\end{corollary}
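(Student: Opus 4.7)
The plan is to decompose $f$ according to cuspidal support and reduce to Proposition~\ref{prop:higher_corank_unfolding} piece by piece. By Theorem~\ref{thm:decomposition_cuspidal_support}(1) I can write $f = \sum_{\chi \in \fX_{\mathrm{RS}}} f_\chi$ with $f_\chi \in \cS_\chi([G])$ and the sum absolutely summable in $\cS([G])$. Proposition~\ref{prop:higher_corank_unfolding} then supplies, for each such $\chi$ individually, the identity $Z_{n+1}(s, f_\chi) = Z^{\mathrm{RS}}(s, f_\chi)$ in some right half-plane (\emph{a priori} depending on $\chi$).

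To sum over $\chi$, I first promote the per-$\chi$ identities to a common half-plane and then invoke continuity. For any fixed $N > 0$, $f_\chi \in \cS([G]) \hookrightarrow \cT_N([G])$ continuously, so Lemma~\ref{lem:higher_corank_convergence_1} gives that $Z^{\mathrm{RS}}(s, f_\chi)$ is holomorphic on $\cH_{>c_N}$ with $c_N$ independent of $\chi$, while Lemma~\ref{lem:higher_corank_convergence_2} says $Z_{n+1}(s, f_\chi)$ is entire. By analytic continuation, the identity $Z_{n+1}(s, f_\chi) = Z^{\mathrm{RS}}(s, f_\chi)$ holds on $\cH_{>c_N}$ for every $\chi \in \fX_{\mathrm{RS}}$. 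Continuity of $Z_{n+1}(s,\cdot)$ on $\cS([G])$ for every $s$ (Lemma~\ref{lem:higher_corank_convergence_2}) and of $Z^{\mathrm{RS}}(s,\cdot)$ on $\cS([G])$ for $\opn{Re}(s) > c_N$ (Lemma~\ref{lem:higher_corank_convergence_1} combined with the continuous embedding $\cS([G]) \hookrightarrow \cT_N([G])$) then allow exchanging each continuous linear functional with the absolutely convergent sum, yielding
\begin{equation*}
  Z_{n+1}(s, f) \;=\; \sum_{\chi \in \fX_{\mathrm{RS}}} Z_{n+1}(s, f_\chi) \;=\; \sum_{\chi \in \fX_{\mathrm{RS}}} Z^{\mathrm{RS}}(s, f_\chi) \;=\; Z^{\mathrm{RS}}(s, f)
\end{equation*}
for $\opn{Re}(s) > c_N$, which is the claim.

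The only point that requires care—really the sole obstacle beyond routine bookkeeping—is the swap of the ``for each $\chi$'' and ``for all $s$ in a common region'' quantifiers inherent in assembling countably many $\chi$-wise identities into a single identity on a half-plane. This is handled by the uniformity of the constant $c_N$ in Lemma~\ref{lem:higher_corank_convergence_1}, which depends only on the weighted growth index $N$ and not on the cuspidal datum $\chi$; since $f \in \cS([G])$ lies in $\cT_N([G])$ for every $N > 0$, any such $N$ may be fixed at the outset.
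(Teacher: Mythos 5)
Your proposal is correct and follows essentially the same route as the paper: decompose $f = \sum_\chi f_\chi$ via Theorem~\ref{thm:decomposition_cuspidal_support}, apply Proposition~\ref{prop:higher_corank_unfolding} to each $f_\chi$, and pass to the sum using the continuity of $Z_{n+1}(s,\cdot)$ and $Z^{\mathrm{RS}}(s,\cdot)$ from Lemmas~\ref{lem:higher_corank_convergence_2} and \ref{lem:higher_corank_convergence_1}. The one point you develop more explicitly than the paper—promoting the per-$\chi$ identities to a common half-plane $\cH_{>c_N}$ via the entirety of $Z_{n+1}(s,f_\chi)$, the $\chi$-uniform holomorphy of $Z^{\mathrm{RS}}(s,f_\chi)$ on $\cH_{>c_N}$, and analytic continuation—is a genuine gap in the paper's terse phrasing that your argument correctly fills.
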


\begin{proof}
    By Theorem \ref{thm:decomposition_cuspidal_support}, we have a decomposition
    \begin{equation*}
        f = \sum_{\chi \in \fX_{\mathrm{RS}}} f_{\chi},
    \end{equation*}
    where $f_\chi \in \cS_{\chi}([G])$ and the sum is absolutely summable in $\cS([G])$. By Lemma \ref{lem:higher_corank_convergence_1} and Lemma \ref{lem:higher_corank_convergence_2}, both $Z_{n+1}(s,\cdot)$ and $Z^{\mathrm{RS}}(s,\cdot)$ is continuous when $\opn{Re}(s)$ is large enough. The result follows. 
\end{proof}

\subsubsection{Another zeta integral}

For $f \in \cT([G])$, we put
\begin{equation*}
    W''_f(g) := \int_{[N]} f(ug) \psi_N'(u) \rd u, \quad g \in G_{n+m}(\bA).
\end{equation*}

Then we define
\begin{equation} \label{eq:higher_corank_Z_1'}
    Z_1'(s,f) = \int_{N_H(\bA) \backslash H(\bA)} \int_{\opn{Mat}_{(m-1) \times n}(\bA)} W''_f \left( h, \begin{pmatrix}
    1_n & & \\ x & 1_{m-1} & \\ & & 1 \end{pmatrix} h \right)   \lvert \det h \rvert^s \rd x \rd h.
\end{equation}
provided by the integral is absolutely convergent.

\begin{lemma} \label{lem:higher_corank_convergence_3}
    For any $N>0$, there exists $c_N>0$ such that
    \begin{enumerate}
        \item For any $f \in \cT_N([G])$, the double integral defining $Z_1'(s,f)$ is absolutely convergent for $\opn{Re}(s)>c_N$, and $Z(\cdot,f)$ is holomorphic and bounded in vertical strips on $\cH_{>c_N}$.
        \item For every $s \in \cH_{>c_N}$, the functional $f \mapsto Z_1'(s,f)$ is continuous.
    \end{enumerate}
\end{lemma}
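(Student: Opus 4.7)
The plan is to reduce the double integral to a form controlled by the Whittaker coefficient estimate (Lemma \ref{lemma: estimate Fourier coeff}), following the same strategy used for Lemma \ref{lem:higher_corank_convergence_1}. First, I would apply the Iwasawa decomposition $H(\bA) = N_H(\bA) A_H(\bA) K_H$, writing $h = ak$ with $a \in A_H(\bA)$ and $k \in K_H$. Using the commutation $v(x) \mathrm{diag}(a, 1_m) = \mathrm{diag}(a, 1_m) v(xa)$ (which holds because $a$ is diagonal) together with the change of variable $x \mapsto xa^{-1}$, one rewrites
\[
Z_1'(s,f) = \int_{K_H} \int_{A_H(\bA)} \int_{\opn{Mat}_{(m-1) \times n}(\bA)} W''_f\bigl(ak, \mathrm{diag}(a, 1_m) v(x) \mathrm{diag}(k, 1_m)\bigr) \delta_{B_H}^{-1}(a) \lvert \det a \rvert^{s-(m-1)} \rd x\, \rd a\, \rd k.
\]

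Next, I would apply Iwasawa decomposition to $v(x)$ inside $G_{n+m}$ relative to the parabolic with Levi $G_n \times G_1^m$, writing $v(x) = u(x) t(x) \kappa(x)$ with $u(x) \in N_{n+m}(\bA)$, $t(x)$ diagonal, and $\kappa(x) \in K_{G_{n+m}}$. Since $W''_f$ is left-equivariant under $N_{n+m}(\bA)$ with a unitary character and $A_H$ normalizes $N_{n+m}$, the integrand is reduced in absolute value to $\lvert W''_f(ak, \mathrm{diag}(a,1_m) t(x) \kappa(x) \mathrm{diag}(k,1_m)) \rvert$. Lemma \ref{lem:Iwasawa_estimate} then provides polynomial upper bounds on the Iwasawa components together with the lower bounds $\lvert t_j(x) \cdots t_m(x) \rvert \gg \|x_i\|$ in terms of the rows $x_i$ of $x$, while Lemma \ref{lemma: estimate Fourier coeff} applied to $W''_f$ yields, for every $N_1 \geq 0$, a continuous seminorm $\|\cdot\|$ on $\cT_N([G])$ with
\[
\lvert W''_f(mk) \rvert \ll \|\opn{Ad}^*(m^{-1}) \psi'_N\|_{V(\bA)}^{-N_1} \|m\|_M^{N} \|f\|
\]
uniformly over the maximal torus $m$ and $k \in K$. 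Combining the two estimates, the rapid decay of $W''_f$ in the generic directions of the torus of $G_{n+m}$ dominates the polynomial growth of the Iwasawa components of $v(x)$, so the $x$-integral converges absolutely and is bounded by a continuous seminorm on $f$ times a function of $a$ of at most polynomial growth.

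For the remaining integral over $A_H(\bA)$, the factor $\delta_{B_H}^{-1}(a) \lvert \det a \rvert^{s-(m-1)}$ is balanced against the Whittaker decay of $W''_f$ in the $A_H$-direction, yielding absolute convergence provided $\opn{Re}(s) > c_N$ for some threshold $c_N$ depending on $N$; this is the same torus-integral convergence step that appears in the proof of Lemma \ref{lem:higher_corank_convergence_1}. Continuity in $f$ is then manifest from the seminorm bounds, and holomorphy of $Z_1'(\cdot,f)$ together with boundedness in vertical strips on $\cH_{>c_N}$ follows because $s$ enters the integrand only through the entire factor $\lvert \det a \rvert^s$ and all majorants are uniform in vertical strips. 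The main technical obstacle will be the careful bookkeeping of how $\opn{Ad}^*(m^{-1}) \psi'_N$ transforms under the Iwasawa decomposition of $v(x)$, verifying that its adelic norm dominates each simple root scale $t_{j+1}/t_j$ appearing in the torus of $G_{n+m}$ so that the lower bounds from Lemma \ref{lem:Iwasawa_estimate} translate into genuine integrability in $x$; this is essentially the same delicate point handled in the proof of Lemma \ref{lem:higher_corank_convergence_1}.
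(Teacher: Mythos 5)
Your argument is correct and follows the paper's proof almost step for step: Iwasawa decomposition on $H$, commute the torus through $v(x)$ and change variable, Iwasawa decomposition of $v(x)$, absorb the unipotent factor by $N$-equivariance of $W''_f$, then combine Lemma~\ref{lemma: estimate Fourier coeff} (Whittaker decay in the torus directions) with Lemma~\ref{lem:Iwasawa_estimate} (lower bounds on the torus part of $v(x)$ in terms of $\|x_i\|$), and finally control the residual torus integral via Corollary~\ref{cor:A_norm_integral} exactly as in Lemma~\ref{lem:higher_corank_convergence_1}. One small slip: you decompose $v(x)$ relative to a parabolic with Levi $G_n \times G_1^m$ but simultaneously require $t(x)$ to be diagonal — those conflict; you actually want the full Borel Iwasawa decomposition of $\begin{psmallmatrix}1_n & \\ x & 1_{m-1}\end{psmallmatrix}$ in $G_{n+m-1}$, as the paper does, so that the argument of $W''_f$ lands in the diagonal torus and Lemma~\ref{lemma: estimate Fourier coeff} applies directly (otherwise the non-diagonal $G_n$-block would need a further decomposition).
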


The proof of the Lemma \ref{lem:higher_corank_convergence_3} will be given in \S \ref{sssec:higher_corank_convergence_3}.

\subsubsection{Another unfolding identity}

Let $w_{\ell,m}$ denote the matrix $\begin{pmatrix}
     & & 1 \\ & \iddots & \\ 1 & &
\end{pmatrix}$ of size $m$. Let $w_{n,m}$ denote the matrix $\begin{pmatrix}
    1_n & \\ & w_{\ell,m}
\end{pmatrix}$. For a function $f$ on $[G]$, we put $\widetilde{f}(g) := f({}^t g^{-1})$.

\begin{proposition} \label{prop:higher_corank_unfolding_2}
     Let $\chi$ be an Rankin-Selberg regular cuspidal data. Then for any $f \in \cS_{\chi}([G])$, we have
     \begin{equation*}
         Z_{n+1}(s,f) = Z_1'(-s,R(w_{n,m})\widetilde{f}),
     \end{equation*}
     when $\opn{Re}(s)$ is sufficiently large.
\end{proposition}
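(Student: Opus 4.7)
The plan parallels the proof of Proposition \ref{prop:higher_corank_unfolding}, exploiting the transpose--inverse involution to convert the ``type II'' integral $Z_1'$ on the right-hand side into the Fourier coefficient along $N_{n+1}^G$ that defines $Z_{n+1}(s,f)$. I would begin by expanding $Z_1'(-s, R(w_{n,m})\widetilde{f})$ via its definition, obtaining a triple integral over $N_H(\bA) \backslash H(\bA)$, $\opn{Mat}_{(m-1) \times n}(\bA)$, and $[N]$ whose integrand is $R(w_{n,m})\widetilde{f}$ evaluated at $u\cdot(h, u_x \opn{diag}(h,1_m))$, weighted by $\psi'_N(u)$ and $|\det h|^{-s}$. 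Proposition \ref{prop:higher_corank_unfolding} itself is not used as a black box here, since its conclusion concerns a different unfolding of $Z_{n+1}(s,f)$; instead, its proof technique is what I would imitate.

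The second step is a composition of three measure-preserving substitutions: (i) $u \mapsto {}^t u^{-1}$ on the inner $[N]$-integration, converting to integration against lower-triangular unipotents; (ii) conjugation by the long Weyl elements $w_{\ell,n}$, $w_{\ell,n+m}$ to return to upper-triangular form; and (iii) $h \mapsto w_{\ell,n}\,{}^t h^{-1}\,w_{\ell,n}^{-1}$ on the outer $N_H \backslash H$-integration, which preserves $N_H$ and sends $|\det h|^{-s}$ to $|\det h|^s$. Under these, the identity $R(w_{n,m})\widetilde{f}(g_n,g_{n+m})=f({}^tg_n^{-1}, w_{n,m}\,{}^tg_{n+m}^{-1})$ causes the integrand to collapse to $f$ evaluated at points of the form $(u_n h,\, u' \opn{diag}(h,1_m))$, and a direct computation shows that the $(i,i{+}1)$-entries of $\,{}^t u^{-1}$ produce the inverted character $(\psi'_N)^{-1}$. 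The crux is then the combinatorial observation that the specific Weyl element $w_{n,m}$ (rather than $w_{\ell,n+m}$) is tailored so that the opposite unipotent $u_x$, parametrised by $x \in \opn{Mat}_{(m-1)\times n}$, merges after the substitutions with the (now upper-triangular) block coming from the $[N_{n+m}]$-integration to fill out precisely $[N_{n+1,n+m}] = [N_{n+1}^G]$, with induced character exactly $(\psi'_{n+1})^{-1}$. Simultaneously, the inner $[N_n]$-integration absorbs into the outer $N_H \backslash H$-integration to form the full $[H]$-integration, yielding
\begin{equation*}
    \int_{[H]} f_{N_{n+1}^G,\, \psi'_{n+1}}(h)\, |\det h|^s\, dh \;=\; Z_{n+1}(s,f).
\end{equation*}

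The main obstacle is the matrix-level bookkeeping in the combinatorial step: explicitly verifying that the combined $x$- and unipotent-integrations cover $[N_{n+1}^G]$ without overlap and with the correct character, which requires tracking how $w_{n,m}$ interacts with both the transpose-inverse and the long Weyl conjugations in each block. The Rankin--Selberg regularity of $\chi$ plays the same role as in Proposition \ref{prop:higher_corank_unfolding}: it guarantees absolute convergence of the intermediate multi-integrals for $\opn{Re}(s)$ in an appropriate region where Fubini applies at each step, and ensures that degenerate Fourier terms which might otherwise arise from the reverse unfolding vanish. The resulting identity, established where both sides converge absolutely, then extends to the stated region by analytic continuation via the entire function $Z_{n+1}(s,f)$ (Lemma \ref{lem:higher_corank_convergence_2}).
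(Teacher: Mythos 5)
Your high-level plan --- expand $Z'_1(-s, R(w_{n,m})\widetilde{f})$, apply the transpose--inverse substitution, and match against $Z_{n+1}(s,f)$ --- is pointed in the right direction, and you correctly identify $w_{n,m}$ and the transpose--inverse involution as key ingredients. But the description of the central step is wrong in kind, not just in detail. You describe the passage from the $x$-integration plus Whittaker integral to the Fourier coefficient along $[N_{n+1}^G]$ as ``matrix-level bookkeeping'' and a ``combinatorial observation'', with the $x$-integration ``merging'' with the unipotent block to ``cover $[N_{n+1,n+m}]$ without overlap''. This cannot be a substitution argument: $x$ ranges over the \emph{noncompact} adelic group $\opn{Mat}_{(m-1)\times n}(\bA)$, while $[N_{n+1}^G]$ is a compact quotient, so no change of variable can transform the one integral into the other. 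The two steps that actually do the work, neither of which appears in your sketch, are both nontrivial Fourier-analytic arguments.

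The first missing piece is the \emph{exchange-of-root} identity (Corollary~\ref{cor:exchange_of_root}, obtained by iterating Lemma~\ref{lem:exchange_of_root} $m-1$ times). Each step of that chain is a Fourier inversion that trades a one-dimensional compact $[\cU_r']$-integral for a noncompact $\bA_n$-integral, and the iteration is what converts the Fourier coefficient along $\cU_{m-1}$ into the $\opn{Mat}_{(m-1)\times n}(\bA)$-integral of Fourier coefficients along $\cU_0 = N_{n+1,n+m}$. In the paper this step, together with the change of variable, already gives $Z_{n+1}(s,f) = Z'_{n+1}(-s, R(w_{n,m})\widetilde{f})$ for \emph{all} $f\in\cS([G])$, with no regularity hypothesis. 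The second missing piece is the unfolding chain $Z'_{n+1} = Z'_n = \cdots = Z'_1$ from Lemma~\ref{lem:higher_corank_unfolding}, a sequence of $n$ Piatetski-Shapiro/Shalika-type Fourier expansions along the mirabolic cosets $\cP_r(F)\backslash G_r(F)$. This is where Rankin--Selberg regularity enters --- not, as you suggest, to justify Fubini (the intermediate integrals converge for $f\in\cS$ by Lemma~\ref{lem:higher_corank_convergence_4} independently of $\chi$), but to force the degenerate terms $F'_r(s,f)$ at each stage to vanish, via Lemma~\ref{lemma: constant term} and the restriction-to-Levi argument. Your proposal conflates these two chains into a single substitution-and-bookkeeping step and, carried out as written, would stall at exactly the point you identify as the ``main obstacle''.
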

The proof of Proposition \ref{prop:higher_corank_unfolding_2} will be given in \S \ref{ssec:higher_corank_Rankin_Selberg_unfolding}.

By the same argument of Corollary \ref{cor:higher_corank_unfolding}, 
\begin{num} \label{eq:higher_corank_unfolding_2}
    \item $Z_{n+1}(s,f) =  Z_1'(-s,R(w_{n,m})\widetilde{f})$, holds for any $f \in \cS_{\mathrm{RS}}([G])$.
\end{num}

\subsubsection{Proof of Theorem \ref{thm:higher_corank_Rankin_Selberg}}

Assertion (1) is a special case Lemma \ref{lem:higher_corank_convergence_2}. Fix $N>0$, we apply Proposition \ref{prop:Phragmen-Lindelof} to
\begin{equation*}
    W = L^{2}_{N,\opn{RS}}([G])^{\infty}, \quad S = \cS_{\mathrm{RS}}([G]), \quad Z_+(s,f) = Z^{\mathrm{RS}}(s,f), \quad Z_-(s,f) = Z_1'(s,R(w_{n,m})\widetilde{f}).
\end{equation*}
The conditions of Proposition \ref{prop:Phragmen-Lindelof} are satisifed by Lemma \ref{lem:higher_corank_convergence_1}, Lemma \ref{lem:higher_corank_convergence_3}, Lemma \ref{lem:higher_corank_convergence_2}, Corollary \ref{cor:higher_corank_unfolding} and \eqref{eq:higher_corank_unfolding_2}.

As a consequence, for any $f \in L^2_{N,\opn{RS}}([G])^{\infty}$, $Z^{\mathrm{RS}}(s,f)$ is entire and for any $s \in \C$, the map $f \mapsto Z^{\mathrm{RS}}(s,f)$. As $N$ varies, Assertion (4) is proved.

For $f \in L^{2,\infty}_{N}([G])^{\infty}$, we put
\begin{equation*}
    \cP^*_{\mathrm{RS}}(f) := Z(0,f),
\end{equation*}
by Corollary \eqref{cor:higher_corank_unfolding}, $\cP^*_{\mathrm{RS}}$ defines a continuous extension of $\cP_{\mathrm{RS}}$ to $L^{2}_N([G])^{\infty}$. As $N$ varies, assertions (2) and (3) are proved.

\subsubsection{} We endow the topological dual $\cT'_{\opn{RS}}([G])$ with the weak topology, from Theorem \ref{thm:higher_corank_Rankin_Selberg}, we see that 
\begin{num}
    \item \label{eq:higher_corank_holomorphic} The map $Z^{\mathrm{RS}}(\cdot,\cdot): \C \to \cT'_{\opn{RS}}([G]), s \mapsto (f \mapsto Z^{\mathrm{RS}}(s,f))$ is holomorphic. 
\end{num}

\subsection{Exchange of root identity} \label{ssec:exchange_of_root}

We prove an exchange of root identity in the style of \cite[Appendix A]{MS11}, \cite[\S 4]{IT13}. The main result is Corollary \ref{cor:exchange_of_root}.

\subsubsection{Settings} \label{sssec:exchange_of_root_setting}

For $0 \le r \le m-1$, let $\cU_r$ denote the unipotent subgroup of $G_{n+m}$ of the shape in the left of figure \ref{fig:unipotent_U_r}.

\begin{figure}[!ht]
\centering
\resizebox{0.8\textwidth}{!}{%
\begin{circuitikz}
\tikzstyle{every node}=[font=\LARGE]
\draw [short] (5,11.75) -- (16.25,11.75);
\draw [short] (5,11.75) -- (5,0.5);
\draw [short] (5,0.5) -- (16.25,0.5);
\draw [short] (16.25,11.75) -- (16.25,0.5);
\draw [short] (8.75,8) -- (8.75,6.75);
\draw [short] (8.75,6.75) -- (10,6.75);
\draw [short] (10,6.75) -- (10,5.5);
\draw [short] (10,5.5) -- (11.25,5.5);
\draw [short] (11.25,5.5) -- (11.25,4.25);
\draw [short] (11.25,4.25) -- (12.5,4.25);
\draw [short] (12.5,4.25) -- (12.5,3);
\draw [short] (12.5,3) -- (13.75,3);
\draw [short] (13.75,3) -- (13.75,1.75);
\draw [short] (13.75,1.75) -- (15,1.75);
\draw [short] (15,1.75) -- (15,0.5);
\draw [short] (8.75,8) -- (16.25,8);
\draw [short] (8.75,8) -- (5,8);
\draw [short] (8.75,6.75) -- (8.75,5.5);
\draw [short] (8.75,5.5) -- (5,5.5);
\draw [ color={rgb,255:red,150; green,150; blue,150} , fill={rgb,255:red,150; green,150; blue,150}] (5,8) rectangle (8.75,5.5);
\draw [ color={rgb,255:red,150; green,150; blue,150} , fill={rgb,255:red,150; green,150; blue,150}] (10,8) rectangle (16.25,6.75);
\draw [ color={rgb,255:red,150; green,150; blue,150} , fill={rgb,255:red,150; green,150; blue,150}] (11.25,6.75) rectangle (16.25,5.5);
\draw [ color={rgb,255:red,150; green,150; blue,150} , fill={rgb,255:red,150; green,150; blue,150}] (12.5,5.5) rectangle (16.25,4.25);
\draw [ color={rgb,255:red,150; green,150; blue,150} , fill={rgb,255:red,150; green,150; blue,150}] (13.75,4.25) rectangle (16.25,3);

\draw [ color={rgb,255:red,150; green,150; blue,150} , fill={rgb,255:red,150; green,150; blue,150}] (15,3) rectangle (16.25,1.75);
\draw  (5,11.75) rectangle (7,11.75);
\draw  (5,11.75) rectangle  node {\LARGE 1} (6.25,10.5);
\draw  (6.25,10.5) rectangle  node {\LARGE 1} (7.5,9.25);
\draw  (7.5,9.25) rectangle  node {\LARGE 1} (8.75,8);
\draw  (8.75,8) rectangle (8.75,7.75);
\draw  (8.75,8) rectangle  node {\LARGE 1} (10,6.75);
\draw  (10,6.75) rectangle  node {\LARGE 1} (11.25,5.5);
\draw  (11.25,5.5) rectangle  node {\LARGE 1} (12.5,4.25);
\draw  (12.5,4.25) rectangle  node {\LARGE 1} (13.75,3);
\draw  (13.75,3) rectangle  node {\LARGE 1} (15,1.75);
\draw  (15,1.75) rectangle  node {\LARGE 1} (16.25,0.5);
\draw [<->, >=Stealth] (4,8) -- (4,5.5)node[pos=0.5, fill=white]{$r=2$};
\draw [<->, >=Stealth] (5,12.5) -- (8.75,12.5)node[pos=0.5, fill=white]{$n=3$};
\draw [<->, >=Stealth] (8.75,12.5) -- (16.25,12.5)node[pos=0.5, fill=white]{$m=6$};
\draw [ color={rgb,255:red,150; green,150; blue,150} , fill={rgb,255:red,150; green,150; blue,150}] (12.5,11.75) rectangle (16.25,8);
\end{circuitikz}

\qquad \qquad \qquad
\begin{circuitikz}
\tikzstyle{every node}=[font=\LARGE]
\draw [short] (5,11.75) -- (16.25,11.75);
\draw [short] (5,11.75) -- (5,0.5);
\draw [short] (5,0.5) -- (16.25,0.5);
\draw [short] (16.25,11.75) -- (16.25,0.5);
\draw [short] (8.75,8) -- (8.75,6.75);
\draw [short] (8.75,6.75) -- (10,6.75);
\draw [short] (10,6.75) -- (10,5.5);
\draw [short] (10,5.5) -- (11.25,5.5);
\draw [short] (11.25,5.5) -- (11.25,4.25);
\draw [short] (11.25,4.25) -- (12.5,4.25);
\draw [short] (12.5,4.25) -- (12.5,3);
\draw [short] (12.5,3) -- (13.75,3);
\draw [short] (13.75,3) -- (13.75,1.75);
\draw [short] (13.75,1.75) -- (15,1.75);
\draw [short] (15,1.75) -- (15,0.5);
\draw [short] (8.75,8) -- (16.25,8);
\draw [short] (8.75,8) -- (5,8);
\draw [ color={rgb,255:red,150; green,150; blue,150} , fill={rgb,255:red,150; green,150; blue,150}] (10,8) rectangle (16.25,6.75);
\draw [ color={rgb,255:red,150; green,150; blue,150} , fill={rgb,255:red,150; green,150; blue,150}] (11.25,6.75) rectangle (16.25,5.5);
\draw [ color={rgb,255:red,150; green,150; blue,150} , fill={rgb,255:red,150; green,150; blue,150}] (12.5,5.5) rectangle (16.25,4.25);
\draw [ color={rgb,255:red,150; green,150; blue,150} , fill={rgb,255:red,150; green,150; blue,150}] (13.75,4.25) rectangle (16.25,3);

\draw [ color={rgb,255:red,150; green,150; blue,150} , fill={rgb,255:red,150; green,150; blue,150}] (15,3) rectangle (16.25,1.75);
\draw  (5,11.75) rectangle (7,11.75);
\draw  (5,11.75) rectangle  node {\LARGE 1} (6.25,10.5);
\draw  (6.25,10.5) rectangle  node {\LARGE 1} (7.5,9.25);
\draw  (7.5,9.25) rectangle  node {\LARGE 1} (8.75,8);
\draw  (8.75,8) rectangle (8.75,7.75);
\draw  (8.75,8) rectangle  node {\LARGE 1} (10,6.75);
\draw  (10,6.75) rectangle  node {\LARGE 1} (11.25,5.5);
\draw  (11.25,5.5) rectangle  node {\LARGE 1} (12.5,4.25);
\draw  (12.5,4.25) rectangle  node {\LARGE 1} (13.75,3);
\draw  (13.75,3) rectangle  node {\LARGE 1} (15,1.75);
\draw  (15,1.75) rectangle  node {\LARGE 1} (16.25,0.5);
\draw [<->, >=Stealth] (4,8) -- (4,5.5)node[pos=0.5, fill=white]{$r=2$};
\draw [<->, >=Stealth] (5,12.5) -- (8.75,12.5)node[pos=0.5, fill=white]{$n=3$};
\draw [<->, >=Stealth] (8.75,12.5) -- (16.25,12.5)node[pos=0.5, fill=white]{$m=6$};
\draw [ color={rgb,255:red,150; green,150; blue,150} , fill={rgb,255:red,150; green,150; blue,150}] (12.5,11.75) rectangle (16.25,8);
\draw [ color={rgb,255:red,150; green,150; blue,150} , fill={rgb,255:red,150; green,150; blue,150}] (5,8) rectangle (8.75,6.75);
\draw  (5,6.75) rectangle  node {\LARGE $R_r$} (8.75,5.5);
\draw  (12.5,8) rectangle  node {\LARGE $C_r$} (11.25,11.75);
\end{circuitikz}

}

\label{fig:unipotent_U_r}
\caption{The unipotent subgroups $\cU_r$, $\cU_r'$, $R_r$ and $C_r$}
\end{figure}
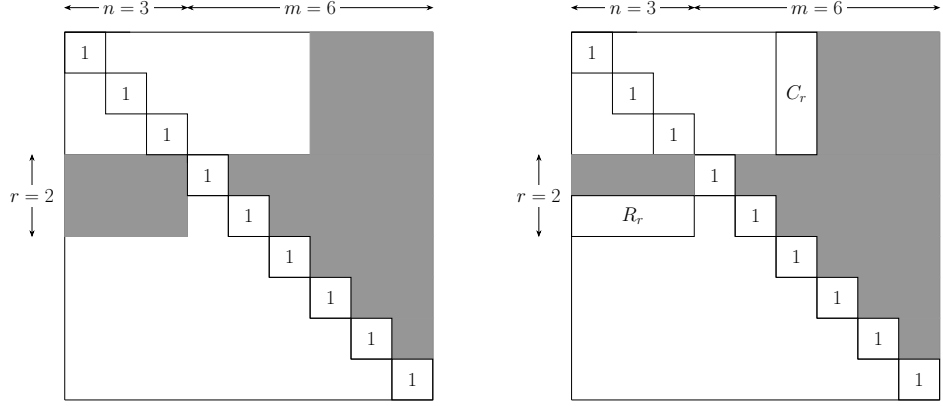

It consists of matrices $(u_{ij})$ with $1$ on the diagonal and $u_{ij} \ne 0$ only when $j>i>n$ or $1 \le i \le n, j \ge n+r+2$ or $1 \le j \le n$ and $n + 1 \le i \le n+ r$. Note that $\cU_0 = N_{n+1,n+m}$

Let $\psi_r$ denote the character $(u_{ij}) \mapsto \psi(u_{n+1,n+2} + \cdots + u_{n+m-1,n+m})$ on $\cU_r(\bA)$.

For $r \ge 1$ and $x \in \bA_n$, let $R_r(x)$ denote the matrix $\begin{pmatrix}
    1_n & & & \\ & 1_{r-1} & & \\ x &  & 1 & \\ & & & 1_{m-r}
\end{pmatrix}$. We write $R_r$ for the algebraic subgroup of $G_{n+m}$ formed by $R_r(x)$.

\subsubsection{}
\begin{lemma} \label{lem:exchange_of_root_convergence}
    Let $1 \le r \le m-1$ and $1 \le k \le r$. Let $f \in \cT([G_{n+m}])$. The integral
    \begin{equation} \label{eq:exchange_of_root_convergence}
        \int_{\opn{Mat}_{k \times n}(\bA)} f_{\cU_{r-k},\psi_{r-k}} \left( \begin{pmatrix}
            1_n &  &  &  \\  & 1_{r-k} &  & \\ x &  & 1_k & \\ & & & 1_{m-r} 
        \end{pmatrix} g  \right) \rd x
    \end{equation}
    is absolutely convergent for any $g \in G_{n+m}(\bA)$.
\end{lemma}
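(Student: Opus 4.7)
My plan is to combine the generic-character decay bound for Fourier coefficients (Lemma \ref{lemma: estimate Fourier coeff}(2)) with the Iwasawa growth estimate of Lemma \ref{lem:Iwasawa_estimate} and the weighted $\theta$-series convergence of Lemma \ref{lem:scaling_Theta_series}. The key point is to extract enough decay in $\|x\|$ from the Fourier coefficient to kill the polynomial growth of $m(x)$, then integrate row by row.

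First, I would put the Fourier coefficient into a standard form. The unipotent $\cU_{r-k}$ is conjugate, via a Weyl element $w \in G_{n+m}(F)$, to a subgroup $\cU' \subset N_Q$ of the unipotent radical of some standard parabolic $Q$ of $G_{n+m}$. Extending the conjugated character trivially along the root subgroups of $N_Q/\cU'$, one rewrites
\begin{equation*}
    f_{\cU_{r-k}, \psi_{r-k}}(m(x) g) = (L(w^{-1}) f)_{N_Q, \psi^w}(w^{-1} m(x) g),
\end{equation*}
where $\psi^w$ remains generic on the simple roots corresponding to the Whittaker-like entries $u_{n+i, n+i+1}$ of $\psi_{r-k}$ for $1 \le i \le m-1$. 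Crucially, since $\psi_{r-k}$ is trivial on the added root subgroups of $N_Q \setminus \cU'$, the passage to the enlarged Fourier coefficient does not create new obstructions.

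Second, I would Iwasawa-decompose $w^{-1} m(x) g = u(x) a(x) k(x)$ relative to $Q$ and apply Lemma \ref{lem:Iwasawa_estimate} to the embedded block $\begin{pmatrix} 1_n & 0 \\ x & 1_k \end{pmatrix}$ (sitting inside $G_{n+k} \hookrightarrow G_{n+m}$ after an auxiliary Weyl conjugation), producing lower bounds $|\alpha_i(a(x))| \gg \|x_i\|_{\bA_n}$ on the torus component $a(x)$ for a collection of simple roots $\alpha_i$ indexed by the rows $x_i$ of $x$. Applying Lemma \ref{lemma: estimate Fourier coeff}(2) to the Fourier coefficient $(L(w^{-1})f)_{N_Q, \psi^w}$ then yields
\begin{equation*}
    \bigl| f_{\cU_{r-k}, \psi_{r-k}}(m(x) g) \bigr| \ll C(g) \, \| \opn{Ad}^*(a(x)^{-1}) l^w \|^{-N_1} \, \|a(x)\|^{N}.
\end{equation*}
Because $\opn{Ad}^*(a(x)^{-1})$ scales the generic components of $l^w$ by the $\alpha_i(a(x))$, the norm in the bound is $\gg \prod_i \|x_i\|_{\bA_n}^{c}$ for some $c > 0$, while $\|a(x)\|^N \ll \|x\|^M$. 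Taking $N_1$ sufficiently large produces an integrand bounded by $\prod_{i=1}^{k} \|x_i\|_{\bA_n}^{-N'}$ with $N'$ arbitrary, and absolute integrability over $\opn{Mat}_{k \times n}(\bA) \cong (\bA_n)^k$ then follows row by row from Lemma \ref{lem:scaling_Theta_series}.

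The principal obstacle is the alignment claim in the second step: the generic simple roots of $\psi^w$ must correspond precisely to the torus directions in which Lemma \ref{lem:Iwasawa_estimate} forces growth in $\|x_i\|_{\bA_n}$. This requires an explicit choice of the Weyl element $w$ and the parabolic $Q$, together with careful root-system bookkeeping to track how the Whittaker-direction components of $\psi_{r-k}$ are permuted under $w$. A secondary subtlety is that $\cU_{r-k}$ is not itself the unipotent radical of a standard parabolic when $r > k$ (it has both upper-right and lower-left parts in the original ordering), so the extension from $\cU'$ to $N_Q$ in Step 1 must be done in a way that preserves the generic simple-root components needed for the decay analysis.
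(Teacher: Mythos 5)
Your overall plan aligns with the paper's: conjugate by a rational Weyl element to put $\cU_{r-k}$ into a standard position, Iwasawa-decompose the $x$-matrix, feed the resulting torus variables into the Fourier coefficient estimate of Lemma~\ref{lemma: estimate Fourier coeff}, upgrade torus decay to $\|x\|$-decay via Lemma~\ref{lem:Iwasawa_estimate}, and integrate. The choice of lemmas is essentially right. However, Step~1 contains a genuine gap. You assert that $w$ carries $\cU_{r-k}$ onto a \emph{proper} subgroup $\cU' \subsetneq N_Q$ and that "extending the conjugated character trivially along $N_Q/\cU'$" yields
\begin{equation*}
    f_{\cU_{r-k}, \psi_{r-k}}(m(x) g) = (L(w^{-1}) f)_{N_Q, \psi^w}(w^{-1} m(x) g).
\end{equation*}
This is false if $\cU'$ is proper. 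Integrating over the larger group $[N_Q]$ (with the trivially extended character) produces, at best, an \emph{average} of the $\cU'$-Fourier coefficient over the left translates by a complement $V$ of $\cU'$, i.e. $f_{N_Q,\psi^w}(g') = \int_{[V]} f_{\cU',\psi'}(vg')\,\rd v$. This is a different function, and no amount of "the character is trivial on $V$" makes it equal to $f_{\cU',\psi'}(g')$. The remark that "the passage to the enlarged Fourier coefficient does not create new obstructions" papers over exactly the hole.

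Fortunately, the hole is not necessary: a direct count shows that $\cU_{r-k}$ already \emph{is} (a rational conjugate of) a full standard unipotent radical, so no extension is needed. One has $\dim \cU_{r-k} = nm + \binom{m}{2} - n$, and the unique standard Levi type $(a_1,\dots,a_s)$ with $\sum a_i = n+m$ and $\sum a_i^2 = (n+1)^2 + (m-1)$ is one $G_{n+1}$ factor together with $m-1$ copies of $G_1$. Conjugating by the permutation in the paper's proof sends $\cU_{r-k}$ onto $N_{Q}$ where $M_{Q} = G_1^{r-k}\times G_{n+1}\times G_1^{m-r+k-1}$, and transports $\psi_{r-k}$ to the degenerate character $(u_{ij}) \mapsto \psi(u_{12}+\cdots+u_{r-k-1,r-k}+u_{r-k,n+r-k+1}+u_{n+r-k+1,n+r-k+2}+\cdots+u_{n+m-1,n+m})$. (One can check that this character would not even be well-defined on $N_Q$ if $M_Q$ had a $G_n$ block rather than $G_{n+1}$, since the root at $(r-k,\,n+r-k+1)$ would then lie in $[N_Q,N_Q]$.) So your Step~1 should be replaced by a pure conjugation onto $N_Q$ with no "extension" step; once that is done the rest of your outline closes. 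A minor further issue: the final integral $\int_{\mathrm{Mat}_{k\times n}(\bA)} \|x\|^{-N}\,\rd x$ is not covered by Lemma~\ref{lem:scaling_Theta_series}, which bounds an $F$-rational sum of $\|av\|^{-N}$; the integral is elementary (Euler product plus local tail estimate) and needs no invocation of that lemma.
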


The proof of Lemma \ref{lem:exchange_of_root_convergence} will be given in \S \ref{sssec:exchange_of_root_convergence}.

\subsubsection{}

\begin{lemma} \label{lem:exchange_of_root}
    Let $f \in \cT([G_{n+m}])$ and $1 \le r \le m-1$, then
    \begin{equation} \label{eq:exchange_of_root}
        f_{\cU_r,\psi_{r}}(g) = \int_{\bA_n} f_{\cU_{r-1},\psi_{r-1}}(R_r(x)g) \rd x
    \end{equation}
    holds for any $g \in G_{n+m}(\bA)$, where the integral on the right-hand side is convergent.
\end{lemma}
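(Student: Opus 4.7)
The plan is a standard exchange-of-roots computation in the spirit of \cite{MS11, IT13}. Define $\cU^0 := \cU_r \cap \cU_{r-1}$ and let $C_r$ denote the ``column'' subgroup of $G_{n+m}$ consisting of matrices $I + \sum_{i=1}^n y_i E_{i, n+r+1}$. Then one has semidirect product decompositions $\cU_r = R_r \ltimes \cU^0$ and $\cU_{r-1} = C_r \ltimes \cU^0$; the characters $\psi_r$ and $\psi_{r-1}$ agree on $\cU^0$, and are trivial on $R_r$, $C_r$ respectively. A direct matrix computation using $E_{n+r, j} E_{i, n+r+1} = \delta_{ij} E_{n+r, n+r+1}$ yields the commutation relation $R_r(x) C_r(y) = C_r(y) R_r(x) v(x, y)$, where $v(x, y) := I + \langle x, y \rangle E_{n+r, n+r+1} \in \cU^0(\bA)$. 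Since $E_{n+r, n+r+1}$ is the $(n+r, n+r+1)$-entry picked up by the characters, $\psi_r(v(x, y)) = \psi(\langle x, y \rangle)$, a non-degenerate pairing. Both $R_r$ and $C_r$ normalize $\cU^0$ and preserve the restricted character $\psi_r|_{\cU^0} = \psi_{r-1}|_{\cU^0}$, as the relevant commutators never land on the $(n+k, n+k+1)$-entries.

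I would then introduce the auxiliary function
\begin{equation*}
    \Phi(x, y) := f_{\cU^0, \psi_r}(R_r(x) C_r(y) g), \quad (x, y) \in R_r(\bA) \times C_r(\bA),
\end{equation*}
which by $G(F)$-invariance of $f$ and the commutator identity satisfies $\Phi(x + \gamma, y) = \Phi(x, y)$ for $\gamma \in R_r(F)$ and $\Phi(x, y + \eta) = \psi(\langle x, \eta \rangle) \Phi(x, y)$ for $\eta \in C_r(F)$. Decomposing $[\cU_r] = [R_r] \times [\cU^0]$ and using invariance of $\psi_r|_{\cU^0}$ under $R_r$-conjugation, the change of variable $u^0 \mapsto R_r(-x) u^0 R_r(x)$ in the inner integral gives $f_{\cU_r, \psi_r}(g) = \int_{[R_r]} \Phi(x, 0) \rd x$. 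Symmetrically, decomposing $[\cU_{r-1}] = [C_r] \times [\cU^0]$ and rearranging via $C_r(y) R_r(x) = R_r(x) C_r(y) v(x, y)^{-1}$ together with the substitution $u^0 \mapsto v(x, y) u^0$ on $\cU^0$, one finds
\begin{equation*}
    f_{\cU_{r-1}, \psi_{r-1}}(R_r(x) g) = \int_{[C_r]} \psi(-\langle x, y \rangle) \Phi(x, y) \rd y,
\end{equation*}
so the right-hand side of \eqref{eq:exchange_of_root} becomes $\int_{R_r(\bA)} \int_{[C_r]} \psi(-\langle x, y \rangle) \Phi(x, y) \rd y \rd x$.

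The two expressions are reconciled by Fourier analysis. Setting $\widetilde{\Phi}(x, y) := \psi(-\langle x, y \rangle) \Phi(x, y)$, the transformation laws above imply $\widetilde{\Phi}$ is honestly $C_r(F)$-periodic in $y$. Let $c_\eta(x) := \int_{[C_r]} \widetilde{\Phi}(x, y) \psi(-\langle \eta, y \rangle) \rd y$ denote its Fourier coefficients in $y$. A short computation gives $c_0(x + \gamma) = c_\gamma(x)$ for $\gamma \in R_r(F)$, while Fourier inversion at $y = 0$ yields $\sum_{\eta \in F_n} c_\eta(x) = \widetilde{\Phi}(x, 0) = \Phi(x, 0)$. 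Unfolding the adelic $x$-integral over $R_r(F)$ then produces
\begin{equation*}
    \int_{R_r(\bA)} c_0(x) \rd x = \int_{[R_r]} \sum_{\gamma \in F_n} c_\gamma(x) \rd x = \int_{[R_r]} \Phi(x, 0) \rd x,
\end{equation*}
matching the two sides. The main obstacle will be rigorously justifying these Fourier manipulations -- the swap of sum and integral and the pointwise convergence of the Fourier series at $y = 0$. These are legitimized by the absolute convergence afforded by Lemma \ref{lem:exchange_of_root_convergence} together with the smoothness of $\Phi$ on the compact torus $[C_r]$.
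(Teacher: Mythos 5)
Your proof is correct and follows essentially the same strategy as the paper: the key mechanisms — the semidirect decompositions $\cU_r = \cU^0\rtimes R_r$, $\cU_{r-1}=\cU^0\rtimes C_r$, Fourier expansion on the compact torus $[C_r]$, the translation law under $R_r(F)$ (your $c_0(x+\gamma)=c_\gamma(x)$, the paper's $\psi_{r-1}(R_r(-a)uR_r(a))=\psi_{r-1,-a}(u)$), and the final unfolding of the adelic $R_r$-integral, justified by Lemma~\ref{lem:exchange_of_root_convergence} — coincide exactly with the paper's steps. The only difference is organizational: you package the argument via the auxiliary two-variable function $\Phi(x,y)$ and its Fourier coefficients $c_\eta$, whereas the paper phrases the same identity directly at the level of the twisted characters $\psi_{r-1,a}$ on $\cU_{r-1}$.
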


\begin{proof}
    The convergence of the integral follows from Lemma \ref{lem:exchange_of_root_convergence}. For $1 \le r \le m-1$, let $\cU'_r := \cU_r \cap \cU_{r-1}$ denote the subgroup of $\cU_r$, see the shaded region of right hand side of figure \ref{fig:unipotent_U_r}. Let $\psi_r'$ denote the restriction of $\psi_r$ (equivalently $\psi_{r-1}$) on $\cU_r'$.

    For $y \in \bA^n$, let $C_r(y)$ denote the element $\begin{pmatrix}
    1_n & & y & \\ & 1_{r-1} & & \\  &  & 1 & \\ & & & 1_{m-r}
\end{pmatrix}$ of $G_{n+m}(\bA)$. Let $C_r$ denote the algebraic subgroup of $G_{n+m}$ formed by $C_r(y)$.

    The following statements can be checked directly:
    \begin{enumerate}
        \item \label{item:exchange_of_root_1} $\cU_r = \cU'_r \rtimes R_r$, $\cU_{r-1} = \cU'_r \rtimes C_r$, $R_r$ normalizes $\cU_{r-1}$ and $C_r$ normalizes $\cU_r$.
        \item \label{item:exchange_of_root_2}  By \eqref{item:exchange_of_root_1} above, we can write an element of $\cU_{r-1}(\bA)$ by $u'C_r(y)$, where $u' \in \cU'_r(\bA)$ and $y \in \bA^n$. For $a \in F_n$, the map $u'C_r(y) \mapsto \psi_r(u')\psi(ay)$ defines a character of $\cU_{r-1}(\bA)$ trivial on $\cU_{r-1}(F)$. We denote this character by $\psi_{r-1,a}$. Note that $\psi_{r-1,0}=\psi_{r-1}$.
        \item \label{item:exchange_of_root_3} The equality
        \begin{equation} \label{eq:exchange_of_root_key}
            \psi_{r-1}(R_r(-a)uR_r(a)) = \psi_{r-1,-a}(u)
        \end{equation}
        holds for any $a \in F_n, u \in \cU_{r-1}(\bA)$.
        \item \label{item:exchange_of_root_4} As a consequence of \eqref{item:exchange_of_root_3} above, the equation
        \begin{equation} \label{eq:psi_r_row_invariant}
            \psi_r'(R_r(-a)uR_r(a)) = \psi'_r(u)
        \end{equation}
        holds for any $a \in F_n$ and $u \in \cU_r'(\bA)$. Similarly, one can check
        \begin{equation} \label{eq:psi_r_column_invariant}
            \psi_r'(C_r(-b)uC_r(b)) = \psi'_r(u)
        \end{equation}
        holds for any $b \in F^n$ and $u \in \cU_r'(\bA)$.
    \end{enumerate}

    By \eqref{eq:psi_r_row_invariant} above, for any $g \in G_{n+1}(\bA)$, we have
    \begin{equation} \label{eq:exchange_of_root_1}
        f_{\cU_r,\psi_r}(g) = \int_{F_n \backslash \bA_n} f_{\cU_r',\psi_r'}(R_r(x)g) \rd x.
    \end{equation}
    By \eqref{eq:psi_r_column_invariant}, for any $g \in G_{n+m}(\bA)$, the map $y \mapsto f_{\cU_r',\psi_r'}(C_r(y)g)$ defines a function on $F^n \backslash \bA^n$. Therefore, by Fourier expansion, we can write
    \begin{equation} \label{eq:exchange_of_root_2}
        f_{\cU'_r,\psi_r'}(g) = \sum_{a \in F_n} \int_{ F^n \backslash \bA^n} f_{\cU'_r,\psi_r'}(C_r(y)g) \psi^{-1}(ay) \rd y = \sum_{a \in F_n} f_{\cU_{r-1},\psi_{r-1,-a}}(g)
    \end{equation}
    By \eqref{eq:exchange_of_root_key}, we have
    \begin{equation} \label{eq:exchange_of_root_3}
        f_{\cU_{r-1},\psi_{r-1,-a}}(g) = f_{\cU_{r-1},\psi_{r-1}}(R_r(a)g).
    \end{equation}
    
    Combining \eqref{eq:exchange_of_root_1}, \eqref{eq:exchange_of_root_2} and \eqref{eq:exchange_of_root_3} and Lemma \ref{lem:exchange_of_root_convergence}, \eqref{eq:exchange_of_root} is proved.
\end{proof}

\subsubsection{}
\begin{corollary} \label{cor:exchange_of_root}
    For any $f \in \cT([G_{n+m}])$, we have
    \begin{equation*}
        f_{\cU_{m-1},\psi_{m-1}}(g) = \int_{\opn{Mat}_{(m-1) \times n}(\bA)} f_{\cU_0,\psi_0} \left( \begin{pmatrix}
            1_n &  &  \\ x & 1_{m-1} & \\ & & 1
        \end{pmatrix}  g \right) \rd x,
    \end{equation*}
    where the integral of the right-hand side is absolutely convergent.
\end{corollary}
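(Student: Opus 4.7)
The plan is to iterate Lemma \ref{lem:exchange_of_root} a total of $m-1$ times, descending from $r = m-1$ down to $r = 1$. At each step we have
\begin{equation*}
    f_{\cU_r, \psi_r}(g) = \int_{\bA_n} f_{\cU_{r-1},\psi_{r-1}}(R_r(x_r) g) \rd x_r,
\end{equation*}
so applying this recursively, after $m-1$ iterations we arrive at an iterated integral
\begin{equation*}
    f_{\cU_{m-1},\psi_{m-1}}(g) = \int_{\bA_n} \cdots \int_{\bA_n} f_{\cU_0,\psi_0}\bigl( R_1(x_1) R_2(x_2) \cdots R_{m-1}(x_{m-1}) g \bigr) \rd x_1 \cdots \rd x_{m-1},
\end{equation*}
valid for every $g \in G_{n+m}(\bA)$. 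A priori this is only an iterated integral, not an absolutely convergent multiple integral, but each inner integral converges by Lemma \ref{lem:exchange_of_root}.

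The next step is a direct matrix computation: the factors $R_r(x_r)$ for distinct $r$ commute with one another, since $R_r(x_r)$ differs from the identity only in row $n+r$ within columns $1,\ldots,n$, and the rows $\{n+r\}_{r=1}^{m-1}$ are pairwise disjoint and contain no non-identity entries of any other $R_s(x_s)$. Consequently the product simplifies to the block matrix
\begin{equation*}
    R_1(x_1) R_2(x_2) \cdots R_{m-1}(x_{m-1}) = \begin{pmatrix} 1_n & & \\ x & 1_{m-1} & \\ & & 1 \end{pmatrix}, \qquad x = \begin{pmatrix} x_1 \\ \vdots \\ x_{m-1} \end{pmatrix} \in \opn{Mat}_{(m-1) \times n}(\bA),
\end{equation*}
which is precisely the matrix appearing in the statement of the corollary.

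Finally, to upgrade the iterated integral to the single integral over $\opn{Mat}_{(m-1) \times n}(\bA)$ claimed in the corollary, one invokes Lemma \ref{lem:exchange_of_root_convergence} with $r = k = m-1$, which asserts the absolute convergence of
\begin{equation*}
    \int_{\opn{Mat}_{(m-1) \times n}(\bA)} f_{\cU_0,\psi_0}\left( \begin{pmatrix} 1_n & & \\ x & 1_{m-1} & \\ & & 1 \end{pmatrix} g \right) \rd x.
\end{equation*}
With absolute convergence in hand, Fubini's theorem identifies the iterated integral with this single integral, completing the proof. There is no real obstacle here; the argument is a formal consequence of Lemma \ref{lem:exchange_of_root}, the commutation of the $R_r$'s, and the convergence input from Lemma \ref{lem:exchange_of_root_convergence}.
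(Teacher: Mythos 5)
Your proof is correct and matches the paper's own argument, which is simply compressed into three sentences: iterate Lemma \ref{lem:exchange_of_root} to descend from $r=m-1$ to $r=0$, and invoke Lemma \ref{lem:exchange_of_root_convergence} to collapse the resulting iterated integral into the stated multiple integral. The commutation computation you spell out is a small but genuine detail the paper leaves implicit: because the off-diagonal entries of $R_r(x_r)$ sit in row $n+r$ and columns $1,\ldots,n$ (and $n+s>n$ for all $s$), the matrices $R_r(x_r)$ satisfy $R_r(x_r)R_s(x_s)=1+E_r(x_r)+E_s(x_s)$ and hence commute and stack additively into the block matrix, exactly as you say.

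One minor remark on presentation: the Fubini step is indeed justified by Lemma \ref{lem:exchange_of_root_convergence} with $r=k=m-1$, as you use it. The paper's phrase ``the convergence of each step also follows from Lemma \ref{lem:exchange_of_root_convergence}'' suggests the authors also have in mind the intermediate applications with $r=m-1$ and $1\le k\le m-1$, which control the absolute convergence of each partial regrouping of the iterated integral; going directly to $k=m-1$ as you do is cleaner and suffices, since absolute convergence of the full multiple integral plus the known value of the iterated integral already forces equality.
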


\begin{proof}
    The convergence of the right-hand side follows from Lemma \ref{lem:exchange_of_root_convergence}. 

    The equality follows from successively using Lemma \ref{lem:exchange_of_root}. The convergence of each step also follows from Lemma \ref{lem:exchange_of_root_convergence}.
\end{proof}

\subsubsection{Convergence} \label{sssec:exchange_of_root_convergence}

\begin{proof}[Proof of Lemma \ref{lem:exchange_of_root_convergence}]
    We temporarily denote by $P$ the standard parabolic subgroup of $G_{n+m}$ whose Levi factor is $G_1^{r-k} \times G_n \times G_1^{m-r+k}$. Let $\psi_{N_P}$ denote the character
    \begin{equation*}
        (u_{ij}) \mapsto \psi(u_{12}+\cdots+u_{r-k-1,r-k}+u_{r-k,n+r-k+1}+u_{n+r-k+1,n+r-k+2}+\cdots+u_{n+m-1,n+m})
    \end{equation*}
    on $N_P(\bA)$. When $r-k=0$, this is understood as $(u_{ij}) \mapsto\psi( u_{n,n+1}+\cdots+u_{n+m-1,n+m})$. When $r-k=1$, this is understood as $(u_{ij}) \mapsto \psi(u_{1,n+2}+u_{n+2,n+3}+\cdots+u_{n+m-1,n+m})$.

    Let $w \in G_{n+m}(F)$ be the permutation matrix associated to the permutation sending $1,2,\cdots,n+m$ to $n+1,\cdots,n+r-k,1,\cdots,n,n+r-k+1,\cdots,n+m$ respectively. Then the right hand side of \eqref{eq:exchange_of_root_convergence} can be written as
    \begin{equation} \label{eq:exchange_of_root_convergence_equivalence}
        \int_{\opn{Mat}_{k \times n}(\bA)} f_{N_P,\psi_{N_P}}  \left( \begin{pmatrix}
            1_{r-k} & & & \\ & 1_n & & \\ & x & 1_k & \\ & & & 1_{m-r}
        \end{pmatrix} gw \right) \rd x.
    \end{equation}
     Therefore we are reduced to show the convergence of \eqref{eq:exchange_of_root_convergence_equivalence}. Let $Q$ be the parabolic subgroup of $G_{n+k}$ whose Levi factor is $G_n \times (G_1)^k$. Assume that $\begin{pmatrix}
         1_{n} & \\ x & 1_k
     \end{pmatrix}$ is written as \eqref{eq:exchange_of_root_convergence_Iwasawa}, then we have the Iwasawa decomposition
     \begin{equation*}
          \begin{pmatrix}
            1_{r-k} & & & \\ & 1_n & & \\ & x & 1_k & \\ & & & 1_{m-r}
        \end{pmatrix} = u'(x) \begin{pmatrix}
            1_{r-k} & & & \\ & g(x) & & \\ & & t(x) & \\ & & & 1_{m-r}
        \end{pmatrix} k'(x)
     \end{equation*}
     for some $(u'(x),k'(x)) \in N_P(\bA) \times K_{n+m}$. Write $t(x)$ as $\opn{diag}(t_1(x),\cdots,t_k(x))$, assume that  $f \in \cT_N([G_{n+m}])$, by Lemma \ref{lemma: estimate Fourier coeff}, we see that the integral \eqref{eq:exchange_of_root_convergence_equivalence} is essentially bounded by
     \begin{equation} \label{eq:exchange_of_root_convergence_bound}
         \int_{\opn{Mat}_{k \times n}(\bA)} \prod_{i=1}^{k-1} \| t_i(x) t_{i+1}(x)^{-1}\|^{-N_1}_{\bA}  \|t_k(x)\|^{-N_1}_{\bA} \prod_{i=1}^k \| t_i(x) \|_{G_1}^{N} \| g \|_{G_n}^{N} \rd x
     \end{equation}
     for any $N_1 > 0$. Note that for any $N_2>0$, there exists $N_1>0$, such that
     \begin{equation} \label{eq:exchange_of_root_convergence_6}
        \prod_{i=1}^{k-1}  \| t_i(x) t_{i+1}(x)^{-1}\|^{-N_1}_{\bA}  \|t_k(x)\|^{-N_1}_{\bA} \ll \prod_{i=1}^{k} \| t_i(x) \cdots t_n(x) \|^{-N_2}.
     \end{equation}
    Combining \eqref{eq:exchange_of_root_convergence_6}, \eqref{eq:lemma_Iwasawa_1} and \eqref{eq:lemma_Iwasawa_2}, we see that the integral \eqref{eq:exchange_of_root_convergence_bound} is bounded by
    \begin{equation*}
        \int_{\opn{Mat}_{k \times n}(\bA)} \| x \|_{\opn{Mat}_{k \times n}(\bA)}^{-N_2} \rd x
    \end{equation*}
    for any $N_2 \gg 0$. The convergence hence follows.
\end{proof}

\subsection{Convergence of zeta integrals} \label{ssec:higher_corank_convergence}

\subsubsection{More zeta integrals}

The goal of \S \ref{ssec:higher_corank_convergence} is to prove convergence of various zeta integrals.

For later use in \S \ref{ssec:higher_corank_Rankin_Selberg_unfolding}, we introduce more zeta integrals. Let $f \in \cS([G])$. For $1 \le r \le n+1$, we define
\begin{equation*}
    Z_r(s,f) = \int_{ \cP_r(F)N_r^H(\bA) \backslash H(\bA)} f_{N_r^G,\psi'_r}(h) \lvert \det h \rvert^s \rd h.
\end{equation*}
Note that when $r=n+1$, this coincides with the definition in \eqref{eq:higher_corank_Z_n+1}, and $Z_1(s,f)=Z^{\mathrm{RS}}(s,f)$

For $1 \le r \le n$, we also introduce
\begin{equation*}
    Z_r'(s,f) = \int_{\cP_r(F)N_{r,n}(\bA) \backslash G_n(\bA)} \int_{\opn{Mat}_{(m-1) \times n}(\bA)} f_{N_r^G,\psi^{\prime,-1}_r}  \left( h, \begin{pmatrix}
        1_n &  &  \\ x & 1_{m-1} & \\ & & 1
    \end{pmatrix} h \right) \lvert \det h \rvert^s \rd h.
\end{equation*}
Note that when $r=1$, this coincides with the definition in  \eqref{eq:higher_corank_Z_1'}. And we put
\begin{equation*}
    Z_{n+1}'(s,f) = \int_{[G_n]} \int_{\opn{Mat}_{(m-1) \times n}(\bA)} f_{N_{n+1}^G,\psi^{\prime,-1}_{n+1}}  \left( h,\begin{pmatrix}
        1_n &  &  \\ x & 1_{m-1} & \\ & & 1
    \end{pmatrix} h \right) \lvert \det h \rvert^s \rd h.
\end{equation*}

\begin{lemma} \label{lem:higher_corank_convergence_4}
    For $f \in \cS([G])$ and $1 \le r \le n+1$, the integral defining $Z_r(s,f)$ and $Z'_r(s,f)$ are absolutely convergent when $\opn{Re}(s)$ is sufficiently large.
\end{lemma}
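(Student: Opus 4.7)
Both $Z_r$ and $Z_r'$ are of the same shape: a Fourier coefficient $f_{N_r^G,\psi'_r}$ of a Schwartz function integrated against $|\det h|^s$ (with, for $Z_r'$, an extra inner unipotent integral) over a mirabolic--unipotent quotient. The plan is to apply an Iwasawa decomposition to reduce the integration to the Levi $M_{r,n} \cong G_r \times G_1^{n-r}$, then invoke the Schwartz Fourier-coefficient estimate of Lemma \ref{lemma: estimate Fourier coeff}\eqref{Schwartz Fourier coefficient} to reduce the question to standard Tate-like convergence (Corollary \ref{cor:A_norm_integral} and Corollary \ref{cor:Theta_series_integral}).

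For $Z_r$, one first verifies left $\cP_r(F) N_r^H(\bA)$-invariance of the integrand: the mirabolic condition (trivial last row of any $p \in \cP_r$) forces the superdiagonal entries appearing in $\psi'_r$ to be fixed under $\cP_r(F)$-conjugation, and the opposite-sign contributions on the $G_n$ and $G_{n+m}$ sides of $\psi'_r$ cancel on the diagonally embedded $N_r^H$. Iwasawa $H(\bA) = N_r^H(\bA) M_{r,n}(\bA) K_n$ then yields
\[
    Z_r(s,f) = \int_{K_n} \int_{\cP_r(F) \backslash G_r(\bA)} \int_{(\bA^\times)^{n-r}} f_{N_r^G,\psi'_r}\bigl(\iota(g;t)k\bigr) |\det g|^s \prod_i |t_i|^{s} \, \delta_{P_{r,n}}(g;t)^{-1}\, \rd t\, \rd g\, \rd k,
\]
where $\iota: M_{r,n} \hookrightarrow M' := M_{r,n} \times M_{r,n+m}$ is the Levi embedding induced by $h \mapsto (h,\mathrm{diag}(h,1_m))$. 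Lemma \ref{lemma: estimate Fourier coeff}\eqref{Schwartz Fourier coefficient} then bounds $|f_{N_r^G,\psi'_r}(\iota(g;t)k)|$, uniformly in $k \in K_n$, by
\[
    \|\opn{Ad}^*(\iota(g;t)^{-1})\ell\|^{-N_1}_{V(\bA)}\, \|\iota(g;t)\|^{-N_2}_{M'}\, \delta_{P_r^G}(\iota(g;t))^{-cN_2}
\]
for arbitrary $N_1, N_2 \ge 0$. The $\opn{Ad}^*$-norm has positive polynomial lower bounds in the $|t_i/t_{i+1}|$ and $|t_{n-r}|$ directions coming from the simple-root components of $\ell$, so the torus integral reduces to a product of Tate integrals of the type in Corollary \ref{cor:A_norm_integral}, convergent for $\opn{Re}(s)$ large. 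The $g$-integral over $\cP_r(F)\backslash G_r(\bA)$ is then absorbed, using the $\|\iota(g;t)\|_{M'}^{-N_2}$ factor (for $N_2$ large), into a mirabolic/theta-type integral of the shape in Corollary \ref{cor:Theta_series_integral}, yielding convergence in the same right half-plane.

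The treatment of $Z_r'$ is parallel, with an extra integration over $x \in \opn{Mat}_{(m-1)\times n}(\bA)$. Applying Lemma \ref{lem:Iwasawa_estimate} to
\(
    \begin{psmallmatrix} 1_n & & \\ x & 1_{m-1} & \\ & & 1 \end{psmallmatrix} = u(x)\, \mathrm{diag}(g(x), t(x), 1)\, k(x)
\)
and composing with the Iwasawa of $h$, the estimates \eqref{eq:lemma_Iwasawa_1}--\eqref{eq:lemma_Iwasawa_2} convert the Schwartz decay in the $t(x)$-directions (obtained again from Lemma \ref{lemma: estimate Fourier coeff}\eqref{Schwartz Fourier coefficient}) into a factor $\ll \|x\|_{\opn{Mat}_{(m-1)\times n}(\bA)}^{-M}$ for $M$ as large as desired, producing an absolutely convergent $x$-integral; the outer $h$-integration is then controlled exactly as in the case of $Z_r$. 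The main --- but purely mechanical --- obstacle is the bookkeeping of the torus exponents: one must track how $|t_i|$ (and, for $Z_r'$, the $t_i(x)$) enters through $|\det h|^s$, $\delta_{P_{r,n}}^{-1}$, $\delta_{P_r^G}^{-cN_2}$, and $\|\opn{Ad}^*(\cdot)\ell\|^{-N_1}$, and verify that for $N_1, N_2$ large the Schwartz decay dominates. Once this is arranged, Fubini and the two corollaries above deliver absolute convergence for $\opn{Re}(s)$ in a common right half-plane.
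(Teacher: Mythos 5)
Your proposal follows the paper's proof in essentially every respect: Iwasawa decomposition with respect to the parabolic of $H$ with Levi $G_r\times G_1^{n-r}$, the Fourier-coefficient estimate of Lemma \ref{lemma: estimate Fourier coeff}\eqref{Schwartz Fourier coefficient}, reduction to the Tate-type convergence of Corollary \ref{cor:A_norm_integral}, the mirabolic integral of Corollary \ref{cor:Theta_series_integral}, and, for $Z_r'$, the matrix Iwasawa estimate of Lemma \ref{lem:Iwasawa_estimate} to absorb the $x$-integral.

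One imprecision is worth flagging. You describe the $\operatorname{Ad}^*$-norm lower bound as coming purely from torus directions, $|t_i/t_{i+1}|$ and $|t_{n-r}|$, and then attribute the convergence of the $g$-integral over $\cP_r(F)\backslash G_r(\bA)$ solely to the $\|\iota(g;t)\|_{M'}^{-N_2}$ factor. But $\ell$ also has a component at the $(r,r+1)$-entry, and its adjoint orbit under $(h_r,t_1)\in G_r\times G_1$ produces the mixed factor $\|t_1^{-1}e_r h_r\|^{-N_1}$; it is precisely this $e_r h_r$-decay — and not $\|\iota(g;t)\|^{-N_2}$ — that supplies the $\|e_r h\|^{-N}$ input required by Corollary \ref{cor:Theta_series_integral}. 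The paper makes this explicit. A second small point: each fixed choice of auxiliary parameters $(N,N_1,N_2)$ yields convergence only on a bounded $\opn{Re}(s)$-strip (both Corollaries \ref{cor:A_norm_integral} and \ref{cor:Theta_series_integral} give two-sided constraints); one obtains a right half-plane only by letting the auxiliary parameters vary so the strips sweep out $\opn{Re}(s)\gg 0$. This is the "mechanical bookkeeping" you defer, and it is carried out in the paper via the exponents $\alpha(N),\alpha_i(N)$. Neither point is a genuine gap — the route is the paper's — but both deserve to be stated explicitly in a full writeup.
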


\subsubsection{Proof of Lemma \ref{lem:higher_corank_convergence_2}} \label{sssec:higher_corank_convergence_2}

By Lemma \ref{lem:constant_term_estimate_product}, for any $N,M>0$, there exists a continuous semi-norm $\| \cdot \|$ on $\cS([G])$, such that the integral defining $Z_{n+1}(s,f)$ is bounded by
\begin{equation*}
    \|f\| \cdot \int_{[H]} \|h\|^{-N}_H \|h\|_H^{-M} \delta_{P_{n+1,n+m}}(h)^{-cM} \lvert \det h \rvert^{\opn{Re}(s)} \rd h
\end{equation*}
for some constants $c>0$ and any $N,M>0$. The result follows.

\subsubsection{Proof of Lemma \ref{lem:higher_corank_convergence_4}}
\label{sssec:higher_corank_convergence_4}

Let $Q_r$ denote the parabolic subgroup of $H$ with Levi component $G_r \times G_1^{n-r}$.  Convergence of $Z_{n+1}(s,f)$ is covered in Lemma \ref{lem:higher_corank_convergence_2}. For $1 \le r \le n$, let $P_r$ denote the parabolic subgroup of $G$ with Levi component $(G_r \times G_1^{n-r}) \times (G_r \times G_1^{m+n-r})$. Using Iwasawa decomposition $H(\bA)=Q_r(\bA)K_n$. The integral defining $Z_r(s,f)$ is bounded by
\begin{equation}  \label{eq:higher_corank_convergence_1}
    \int_{\cP_r(F) \backslash G_r(\bA)} \int_{[G_1]^{n-r}} \int_{K_n} \left \vert R(k)f_{N_r^G,\psi_r'} \begin{pmatrix}
        h & \\ & t
    \end{pmatrix}  \right \vert \lvert \det h \rvert^s \lvert  \det t \rvert^s \delta_{Q_r} \begin{pmatrix}
        h & \\ & t
    \end{pmatrix}^{-1} \rd h \rd t \rd k.
\end{equation}
By Lemma \ref{lemma: estimate Fourier coeff}, there exists $c>0$ such that for any $N>0$ and $N_1>0$, we have
\begin{equation*}
    \begin{split}
    &\left \vert R(k)f_{N_r^G,\psi_r'} \begin{pmatrix}
        h & \\ & t
    \end{pmatrix}  \right \vert \ll \\
    &\| t_1^{-1} e_r h_r  \|^{-2N_1}_{\bA_r} \|t_1 t_2^{-1} \|_{\bA}^{-2N_1} \cdots \|t_{n-r-1}t_{n-r}^{-1}\|_{\bA}^{-2N_1} \|t_{n-r}\|_{\bA}^{-N_1} \delta_{P_r} \begin{pmatrix}
        h & \\ & t
    \end{pmatrix}^{-cN} \|h\|_{G_r}^{-N} \|t\|_{G_1^r}^{-N},
    \end{split}
\end{equation*}
where $t = \opn{diag}(t_1,\cdots,t_{n-r})$. Since for any $N_2>0$ there exists $N_1>0$ such that
\begin{equation*}
    \| t_1^{-1} e_r h_r  \|^{-2N_1}_{\bA_r} \|t_1 t_2^{-1} \|_{\bA}^{-2N_1} \cdots \|t_{n-r-1}t_{n-r}^{-1}\|_{\bA}^{-2N_1} \|t_{n-r}\|_{\bA}^{-N_1}  \ll \|e_rh\|_{\bA}^{-N_2} \|t_1\|_{\bA}^{-N_2} \cdots \|t_{n-r}\|_{\bA}^{-N_2}.
\end{equation*}
We then see that the integral \eqref{eq:higher_corank_convergence_1} is essentially bounded by
\begin{equation} \label{eq:higher_corank_convergence_4}
    \int_{\cP_r(F) \backslash G_r(\bA)} \int_{[G_1]^{n-r}} \|e_r h\|_{\bA}^{-N_2} \prod_{i=1}^{n-r} \|t_i\|_{\bA}^{-N_2} \lvert \det h \rvert^{s-\alpha(N)} \prod_{i=1}^{t-r} \lvert t_i \rvert^{s-\alpha_i(N)} \| h \|^{-N}_{G_n} \rd h \rd t,
\end{equation}
for some $c>0$ and any $N>0$ and $N_2>0$, where
\begin{equation} \label{eq:higher_corank_convergence_3}
    \alpha(N) = 2c(n+m-r)N+n-r, \quad \alpha_i(N) = c(2n+m-4r+2-4i)N + (n-2r+1-2i).
\end{equation}
We have $\alpha(N)>\alpha_1(N)>\cdots>\alpha_{n-r}(N)$.
Let $C=2$ in Corollary \ref{cor:Theta_series_integral}, together with Corollary \ref{cor:A_norm_integral}, we see that there exists $N_0>0$ such that for any $C_1>0$ and $N > N_0$, the integral is convergent for
\begin{equation*}
    -2 + \alpha(N) < \opn{Re}(s) < 2 + \alpha(N) \text{ and } 1 + \alpha_{1}(N) < \opn{Re}(s) < C_1 + \alpha_{n-r}(N).
\end{equation*}
As $N$ and $C_1$ vary, we see that the integral is convergent when $\opn{Re}(s) \gg 1$. This shows Lemma \ref{lem:higher_corank_convergence_4} for $Z_r(s,f)$. 

To show the convergence of $Z_r'(s,f)$. We prove the case when $1 \le r \le n$, the case when $r=n+1$ follows from a similar (and easier) argument. For simplicity, for $x \in \opn{Mat}_{(m-1) \times n}$ we write $A(x)$ for the matrix $\begin{pmatrix}
    1_n & & \\ x & 1_{m-1} & \\ & & 1
\end{pmatrix}$. The absolute convergence of $Z_r'(s,f)$ is equivalent to the convergence of 
\begin{equation*}
    \int_{\cP_r(F)N_{r,n}(\bA) \backslash G_n(\bA)} \int_{\opn{Mat}_{(m-1) \times n}(\bA)} \left \vert f_{N_r^G,\psi^{\prime,-1}_r}(h,hA(x)) \right \vert \lvert \det h \rvert^{\opn{Re}(s)-m} \rd h.
\end{equation*}

Using Iwasawa decomposition, note that for $k \in K_n$, $kA(x)k^{-1}=A(xk)$, the integral can be written as
\begin{equation} \label{eq:higher_corank_convergence_6}
    \begin{split}
    \int_{\cP_r(F) \backslash G_r(\bA)} \int_{[G_1]^r} \int_{K_n} \int_{\opn{Mat}_{(m-1) \times n}(\bA)} &\left \vert f_{N_r^G,\psi^{\prime,-1}_r}\left( \begin{pmatrix}
        h & \\ & t
    \end{pmatrix}k , \begin{pmatrix}
         h & \\ & t
    \end{pmatrix} A(x)k \right) \right \vert \\
    &\lvert \det h \rvert^{\opn{Re}(s)-m} \lvert \det t \rvert^{\opn{Re}(s)-m} \delta_{Q_r} \begin{pmatrix}
        h & \\ & t
    \end{pmatrix}^{-1}
    \rd x \rd k \rd t \rd h.
    \end{split}
\end{equation}
Let $R_r$ (resp. $R_r'$) denote the parabolic subgroup of $G_{n+m}$ (resp. $G_{n+m-1}$) with Levi component $G_r \times G_1^{n+m-r}$ (resp. $G_r \times G_1^{n+m-r-1}$). Let $\begin{pmatrix}
    1_n & \\ x & 1_m
\end{pmatrix} =n'(x)m'(x)k'(x)$ be a measurable decomposition of $\begin{pmatrix}
    1_n & \\ x & 1_m
\end{pmatrix}$ under the Iwasawa decomposition $N_{R'_r}(\bA) M_{R'_r}(\bA)K_{n+m-1}$ (see \S \ref{sssec:Iwasawa_decomposition}). Then we can write $A(x)$ as $\begin{pmatrix}
    n'(x) & \\ & 1
\end{pmatrix} \begin{pmatrix}
    m'(x) & \\ & 1
\end{pmatrix} \begin{pmatrix}
    k'(x) & \\ & 1
\end{pmatrix} =: n(x)m(x)k(x)$, which is an Iwasawa decomposition of $A(x)$ under $G_{n+m}(\bA) = N_{R_r}(\bA) M_{R_r}(\bA) K_{n+m}$. We also write $m(x)$ as $m(x) = \opn{diag}(h(x),t(x),t'(x),1)$, where $h(x) \in \GL_r(\bA),t(x) = \opn{diag}(t_1(x),\cdots,t_{n-r}(x))$ and $t'(x)=\opn{diag}(t_1'(x),\cdots,t_{m-1}'(x))$. The integral \eqref{eq:higher_corank_convergence_6} then can be written as
\begin{equation} \label{eq:higher_corank_convergence_2}
    \begin{split}
    \int_{\cP_r(F) \backslash G_r(\bA)} \int_{[G_1]^r} \int_{K_n} \int_{\opn{Mat}_{(m-1) \times n}(\bA)} &\left \vert (R(k,kk(x))f)_{N_r^G,\psi^{\prime,-1}_r}\left( \begin{pmatrix}
        h & \\ & t
    \end{pmatrix} , \begin{pmatrix}
         h & \\ & t
    \end{pmatrix} m(x) \right) \right \vert \\
    &\lvert \det h \rvert^{\opn{Re}(s)-m} \lvert \det t \rvert^{\opn{Re}(s)-m} \delta_{Q_r} \begin{pmatrix}
        h & \\ & t
    \end{pmatrix}^{-1}
    \rd x \rd k \rd t \rd h.
    \end{split}
\end{equation}
We will use the notation from Lemma \ref{lemma: estimate Fourier coeff}. Let $l$ denote the map 
\begin{equation*}
    (u,u') \in N_{P_r} \mapsto \sum_{i=r}^{n-1} u_{i,i+1} - \sum_{j=r}^{n+m-1} u_{j,j+1} \in \bG_a.
\end{equation*}
One readily checks that there exists $N_0>0$ such that
\begin{equation*}
   \left \Vert \opn{Ad}^* \left( \begin{pmatrix}
        h & \\ & t
    \end{pmatrix}, \begin{pmatrix}
        h & \\ & t
    \end{pmatrix} m(x) \right)^{-1} l \right \Vert_{V_{P_r},\bA} \gg \| e_r hh(x) \|^{N_0} \prod_{i=1}^{n-r} \|t_it_i(x)\|^{N_0} \prod_{i=1}^{m-1} \|t'(x)\|^{N_0}.
\end{equation*}
Therefore, by Lemma \ref{lemma: estimate Fourier coeff}, we have
\begin{equation}  \label{eq:higher_corank_convergence_5}
    \begin{split}
   &\left \vert R(k,kk(x))f_{N_r^G,\psi_r^{\prime,-1}} \left( \begin{pmatrix}
        h & \\ & t
    \end{pmatrix}, \begin{pmatrix}
        h & \\ & t
    \end{pmatrix} m(x) \right) \right \vert \ll  \|e_r hh(x) \|^{-N_1}_{\bA_r} \prod_{i=1}^{n-r} \| t_i t_i(x) \|^{-N_1}_{\bA} \prod_{i=1}^{m-1} \|t_i'(x)\|^{-N_2}_{\bA} \\
    &  \|h\|_{G_r}^{-N} \|t\|_{G_1^r}^{-N} \|tt(x)\|_{G_1^r}^{-N} \|t'(x)\|^{-N}_{G_1^{m-1}} \delta_{P_r} \left(\begin{pmatrix}
        h & \\ & t
    \end{pmatrix} , \begin{pmatrix}
         h & \\ & t
    \end{pmatrix} m(x) \right)^{-cN}.
    \end{split} 
\end{equation}
for some $c>0$ and any $N_1,N_2>0$ and $N>0$.
By Lemma \ref{lem:Iwasawa_estimate}, for any $N_3>0$, we can find $N_2>0$ such that right hand side of \eqref{eq:higher_corank_convergence_5} is essentially bounded by
\begin{equation*}
    \|e_rh\|^{-N_1} \prod_{i=1}^{n-r} \|t_i\|^{-N_1} \|x\|_{\opn{Mat}_{(m-1) \times n}(\bA)}^{-N_3} \|h\|_{G_r}^{-N} \|t\|_{G_1^r}^{-N} \delta_{P_r} \left( \begin{pmatrix}
        h & \\ & t
    \end{pmatrix},\begin{pmatrix}
        h &  & \\ & t & \\ & & 1_{m}
    \end{pmatrix} \right)^{-cN}
\end{equation*}

Therefore the integral \eqref{eq:higher_corank_convergence_2} is essentially bounded by
\begin{equation} \label{eq:higher_corank_convergence_7}
    \begin{split}
  \int_{\cP_r(F) \backslash G_r(\bA)} \int_{[G_1]^r} &\int_{\opn{Mat}_{(m-1) \times n}(\bA)}  \|e_rh\|^{-N_1}_{\bA_r} \prod_{i=1}^{n-r} \|t_i\|_{\bA}^{-N_1} \| x \|_{\opn{Mat}_{(m-1) \times n}(\bA)}^{-N_3} \| h \|_{G_r}^{-N} \| t \|_{G_1^r}^{-N}  \\
  & \lvert \det h \rvert^{\opn{Re}(s)-\alpha(N)-m} \prod_{i=1}^{n-r} \lvert t_i \rvert^{\opn{Re}(s)-\alpha_i(N)-m} \rd x \rd h \rd t
  \end{split}
\end{equation}
for any $N,N_1,N_3>0$, where $\alpha(N)$ and $\alpha_i(N)$ is as in \eqref{eq:higher_corank_convergence_3}. The convergence follows from the convergence of \eqref{eq:higher_corank_convergence_4} when $\opn{Re}(s) \gg 0$.

\subsubsection{Proof of Lemma \ref{lem:higher_corank_convergence_1}}
\label{sssec:higher_corank_convergence_1}

Let $B_H$ be the upper triangular Borel subgroup of $H$. By Iwasawa decomposition $H(\bA)=B_H(\bA)K_n$, the integral defining $Z^{\mathrm{RS}}(s,f)$ is bounded by
\begin{equation*}
    \int_{[G_1^r]} \int_{K_n} \lvert W'_{R(k)f}(t) \rvert \lvert \det t \rvert^s \delta_{B_H}(t)^{-1} \rd t \rd k.
\end{equation*}
Similar to the derivation of \eqref{eq:higher_corank_convergence_4}, for any $N_2>0$, there exists a continuous semi-norm $\| \cdot \|$ on $\cT_N([G])$, such that the integral is essentially bounded by
\begin{equation*}
   \|f \| \cdot \int_{[G_1^r]} \prod_{i=1}^n \|t_i\|^{-N_2} \lvert t_i \rvert^{\opn{Re}(s)-(n+1-2i)} \|t\|_{G_1}^N \rd t.
\end{equation*}
Note that there exists $M>0$ such that
\begin{equation} \label{eq:higher_corank_convergence_8}
    \| t \|_{G_1} \ll \max\{ \lvert t \rvert^M , \lvert t \rvert^{-M} \},
\end{equation}
therefore the result follows from Corollary \ref{cor:A_norm_integral}.

\subsubsection{Proof of Lemma \ref{lem:higher_corank_convergence_3}}
\label{sssec:higher_corank_convergence_3}

Let $\begin{pmatrix}
    1 & \\ x & 1_m
\end{pmatrix} = n'(x)t'(x)k'(x)$ be a measurable decomposition under $G_{n+m-1}(\bA) = N_{n+m-1}(\bA)T_{n+m-1}(\bA)K_{n+m-1}$. Then $A(x) = \begin{pmatrix}
    n'(x) & \\ & 1
\end{pmatrix}\begin{pmatrix}
    t'(x) & \\ & 1
\end{pmatrix}\begin{pmatrix}
    k'(x) & \\ & 1
\end{pmatrix} := n(x)t(x)k(x)$, where $t(x) = \opn{diag}(t_1(x),\cdots,t_n(x),t_1'(x),\cdots,t_{m-1}'(x),1)$. Same as the derivation of \eqref{eq:higher_corank_convergence_2}, the integral defining $Z_1'(s,f)$ is essentially bounded by
\begin{equation*}
    \int_{[G_1]^r} \int_{K_n} \int_{\opn{Mat}_{(m-1) \times n}(\bA)} \left \vert W''_{R(k,kk(x))f}(t,tt(x)) \right \vert \lvert \det t \rvert^{\opn{Re}(s)-m} \delta_{B_H}(t)^{-1} \rd x \rd k \rd t.
\end{equation*}
The using the same argument for \eqref{eq:higher_corank_convergence_7}, for any $N_1,N_3>0$, there exists a continuous semi-norm $\| \cdot \|$ on $\cT_N([G])$, such that this integral is essentially bounded by
\begin{equation*}
    \|f \| \cdot \int_{[G_1]^r} \int_{\opn{Mat}_{(m-1 \times n)}(\bA)} \prod_{i=1}^n \|t_i\|^{-N_1} \|x\|_{\opn{Mat}_{(m-1) \times n}(\bA)}^{-N_3} \|t\|_{G_1}^N \prod_{i=1}^n \lvert t_i \rvert^{\opn{Re}(s)-m-n-1+2i} \rd x \rd t,
\end{equation*}
Using \eqref{eq:higher_corank_convergence_8} and Corollary \ref{cor:A_norm_integral}, the result follows.

\subsection{Unfolding} \label{ssec:higher_corank_Rankin_Selberg_unfolding}

The goal of \S \ref{ssec:higher_corank_Rankin_Selberg_unfolding} is to prove Proposition \ref{prop:higher_corank_unfolding} and Proposition \ref{prop:higher_corank_unfolding_2}.

Recall the subgroup $\cU_0$ and the character $\psi_0$ on $\cU_0(\bA)$ defined in \S \ref{sssec:exchange_of_root_setting}. By the change of variable $h \mapsto {}^t h^{-1}$, for any $f \in \cS([G])$ we have
\begin{equation*}
    Z_{n+1}(s,f) = \int_{[H]} (R(w_{n,m}) \widetilde{f})_{\{1\} \times \cU_0,\psi_0^{-1}}(h) \lvert \det h \rvert^{-s} \rd h.
\end{equation*}
Therefore by Corollary \ref{cor:exchange_of_root} and the absolute convergence of $Z_{n+1}'$, we see that
\begin{equation*}
    Z_{n+1}(s,f) = Z'_{n+1}(-s, R(w_{n,m}) \widetilde{f}).
\end{equation*}

Therefore we are left to show:
\begin{lemma} \label{lem:higher_corank_unfolding}
    Let $\chi \in \fX(G)$ be an Rankin-Selberg regular cuspidal data. Then for any $1 \le r \le n$ and $f \in \cS_{\chi}([G])$,
    \begin{equation*}
        Z_r(s,f) = Z_{r+1}(s,f), \quad Z_r'(s,f) = Z_{r+1}'(s,f)
    \end{equation*}
    holds when $\opn{Re}(s)$ is sufficiently large.
\end{lemma}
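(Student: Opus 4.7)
The plan is to prove both identities simultaneously via Fourier expansion along the abelian subquotient $V := N_r^G/N_{r+1}^G$. I will describe the argument for $Z_r(s,f) = Z_{r+1}(s,f)$; the identity $Z_r'(s,f) = Z_{r+1}'(s,f)$ follows by an identical argument, with the extra integration over $\opn{Mat}_{(m-1)\times n}(\bA)$ carried passively through every step. First I verify the structural facts needed: $N_{r+1}^G$ is normal in $N_r^G$ with quotient $V \cong \bG_a^r \oplus \bG_a^r$, the two factors being the $(r+1)$-st columns in rows $1,\ldots,r$ of $N_{r,n}$ and $N_{r,n+m}$ respectively; $\psi'_r|_{N_{r+1}^G} = \psi'_{r+1}$; the character $\psi'_{r+1}$ is invariant under $N_r^G$-conjugation; the induced character $\psi'_r|_V$ depends only on the $r$-th coordinate of each factor; and the diagonal image of $N_r^H/N_{r+1}^H \cong \bG_a^r$ inside $V$ lies in the kernel of $\psi'_r|_V$.

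Next, Poisson summation on $[V]$ gives
\[
    f_{N_{r+1}^G,\psi'_{r+1}}(h) = \sum_{(\alpha,\alpha') \in F^r \oplus F^r} f^{(\alpha,\alpha')}(h), \quad f^{(\alpha,\alpha')}(h) := \int_{[V]} f_{N_{r+1}^G,\psi'_{r+1}}(vh)\psi_{\alpha,\alpha'}(v)^{-1} \rd v.
\]
The group $\cP_{r+1}(F) = G_r(F) \ltimes U_{r+1}(F)$ acts on $V^*(F)$ through its quotient $G_r(F)$, which acts diagonally by its standard representation. The $\cP_{r+1}(F)$-orbits on $V^*$ are $\{(0,0)\}$, the two ``axis'' orbits $(F^r\setminus 0)\times\{0\}$ and $\{0\}\times(F^r\setminus 0)$, the ``proportional'' orbits (one for each $c \in F^\times$, with representative $(\alpha, c\alpha)$ for any $\alpha\ne 0$), and, when $r \ge 2$, the open orbit of linearly independent pairs. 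Using the covariance $f^{(\alpha,\alpha')}(\gamma h) = f^{\gamma\cdot(\alpha,\alpha')}(h)$ for $\gamma \in \cP_{r+1}(F)$, I exchange the outer sum with the integral $\int_{\cP_{r+1}(F)N_{r+1}^H(\bA)\backslash H(\bA)} \cdots |\det h|^s \rd h$ and collapse each orbit into an integral over $(S\cdot N_{r+1}^H(\bA))\backslash H(\bA)$, where $S\subset \cP_{r+1}$ is the stabilizer of the chosen representative.

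For the proportional orbit with $c=1$ and representative $(-e_r^*, e_r^*)$, the corresponding Fourier mode is precisely $f_{N_r^G,\psi'_r}$, and the stabilizer in $\cP_{r+1}(F)$ is $\cP_r(F)\cdot U_{r+1}(F)$. Since the diagonal image of $U_{r+1}$ in $V$ coincides with the image of $N_r^H/N_{r+1}^H$, and $\psi'_r|_V$ is trivial there, the $[U_{r+1}]$-integration precisely absorbs the missing $N_r^H/N_{r+1}^H$-factor, so the collapsed integral equals $Z_r(s,f)$. For each other orbit, the twisted Fourier coefficient $f^{(\alpha,\alpha')}$, after possibly applying the exchange-of-root identity of \S\ref{ssec:exchange_of_root} to expose the relevant parabolic structure, can be rewritten as a constant term of $f$ along a proper parabolic of $G$ whose Levi induces a pairing between a block $\pi_{n,i}$ of $\pi_n$ and a block $\pi_{n+m,j}^\vee$ of $\pi_{n+m}^\vee$ (possibly twisted by a nontrivial unramified character coming from the parameter $c$). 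The Rankin-Selberg regular hypothesis $\pi_{n,i}\ne\pi_{n+m,j}^\vee$ then forces the cuspidal projection of $f$ to vanish on this stratum, via Lemmas \ref{lemma: restriction to levi} and \ref{lemma: restriction to component}.

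The main obstacle I anticipate is precisely this last step: the case-by-case vanishing of each extraneous orbit contribution. The proportional orbits with $c\ne 1$ and the axis orbits are the most straightforward, reducing after a short exchange-of-root manipulation to a standard Rankin-Selberg pairing that is killed by the regularity assumption. The open orbit (present only when $r\ge 2$) is the most delicate: here the ``generic'' character on $V$ couples $G_n$ and $G_{n+m}$ nontrivially, and one must carefully track how the resulting constant term, viewed as a function on the product Levi $[M_n \times M_{n+m}]$, factors through a cuspidal datum whose matching would contradict Rankin-Selberg regularity---and hence whose cuspidal projection of $f$ vanishes.
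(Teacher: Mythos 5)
Your high-level plan, Fourier expansion along $V := N_r^G/N_{r+1}^G \cong \bG_a^r \oplus \bG_a^r$, is a valid alternative to the paper's route (which first averages over the diagonal $U_{r+1}^H(\bA)$ inside the domain of $Z_{r+1}$ and then Fourier expands only on the anti-diagonal quotient $U_{r+1}^H(\bA)U_{r+1}^G(F)\backslash U_{r+1}^G(\bA) \cong [\bG_a]^r$, so there are exactly two orbits). Because you expand on the full $2r$-dimensional group, you correctly encounter five orbit types. The covariance, the stabilizer computation for the ``diagonal'' orbit, and the observation that the $[U_{r+1}]$-integral (from the measure-$1$ fiber $N_r^H(\bA)/N_{r+1}^H(\bA)$ already present in the outer domain) absorbs the difference between $S_{\xi_0}(F)N_{r+1}^H(\bA)\backslash H(\bA)$ and $\cP_r(F)N_r^H(\bA)\backslash H(\bA)$ are all fine.

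Where the proposal goes wrong is in the last paragraph, which misidentifies the vanishing mechanism for almost every stratum. For any nonzero $\xi=(\alpha,\alpha')$ with $\alpha+\alpha'\ne 0$ --- the two axis orbits, all proportional orbits other than the anti-diagonal one, and the open orbit --- the induced character of $U_{r+1}(\bA)\cong N_r^H(\bA)/N_{r+1}^H(\bA)$ is nontrivial. Since $U_{r+1}(F)\subset S_{\xi}(F)$ and $U_{r+1}(\bA)N_{r+1}^H(\bA)=N_r^H(\bA)$, the collapsed integral over $S_{\xi}(F)N_{r+1}^H(\bA)\backslash H(\bA)$ fibers over $[U_{r+1}]$, and that inner integral is identically zero. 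No exchange-of-root identity and no Rankin--Selberg regularity are needed or even relevant here, and the claim that the open orbit is ``the most delicate'' is backwards: it vanishes for the same trivial reason. The only stratum that genuinely requires the cuspidal-support hypothesis is the zero orbit $(0,0)$. There the collapsed integral is exactly the paper's term $F_r(s,f)$, and the vanishing argument is not an exchange-of-root manipulation at all: one writes the constant term $(f_{N_{r+1}^G,\psi'_{r+1}})_{U_{r+1}^G}$ as $f_{P_r}$ twisted by generic characters on the outer blocks of the Levi $(G_r\times G_{n-r})\times(G_r\times G_{n+m-r})$, applies Iwasawa decomposition, and invokes Lemma \ref{lemma: constant term}, Lemma \ref{lemma: restriction to levi}, and \eqref{eq:constant_term_L^2_N} to see that the integration over the diagonal $[G_r]$ already kills the contribution because the inner-block cuspidal data $(\chi_r,\chi_r')$ satisfies $\chi_r\ne\chi_r^\vee$. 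If you redistribute the work --- trivial vanishing for the $\alpha+\alpha'\ne0$ orbits, and the constant-term-restriction argument for the zero orbit --- your plan closes the gap; as written, the case analysis is misattributed and the proposed vanishing argument for the extra orbits would not go through in the form you describe. You should also note that the convergence of each orbit contribution (the analogue of the paper's Lemma \ref{lem:higher_corank_F_r_convergence}) has to be verified separately before the exchange of sum and integral is legitimate.
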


\begin{proof}
    For $1 \le r \le n$, recall from the introduction, we regard $U_r$ as a subgroup of $G_n$. We put $U_r^G:=U_r \times U_r \subset G$. We also define $U_{n+1}^G := \{1\} \times U_{n+1}$. For $1 \le r \le n+1$ Let $U_{r}^H := U_r^G \cap H$.

    For $1 \le r \le n$, using Fourier expansion on the compact abelian group $U_r^H(\bA)U_r^G(F) \backslash U_r^G(\bA)$, we can write
    \begin{equation*}
       \int_{[U_r^H]} f_{N_{r+1}^G,\psi_r^{\prime,\pm}}(ug) \rd u =  (f_{N_{r+1}^G,\psi_r^{\prime,\pm}})_{U_{r+1}^G}(g) + \sum_{\gamma \in \cP_r(F) \backslash G_r(F)} (f_{N_{r+1}^G,\psi_r^{\prime,\pm}})_{U_{r+1}^G,\psi^{\pm}}((\gamma,\gamma)g),
    \end{equation*}
    where
    \begin{equation*}
        (f_{N_{r+1}^G,\psi_r^{\prime,\pm}})_{U_{r+1}^G}(g) = \int_{[U_{r+1}^G]} f_{N_{r+1}^G,\psi^{\prime,\pm}}(ug) \rd g,
    \end{equation*}
    and
    \begin{equation*}
        \sum_{\gamma \in \cP_r(F) \backslash G_r(F)} (f_{N_{r+1}^G,\psi_r^{\prime,\pm}})_{U_{r+1}^G,\psi^{\pm}}(g) = \int_{[U_{r+1}^G]} f_{N_{r+1}^G,\psi^{\prime,\pm}}(ug) \psi_N^{\prime,\pm}(u) \rd u = f_{N_r^G,\psi_r^{\prime,\pm}}(g).
    \end{equation*}
    We then formally have that $1 \le r \le n$
    \begin{equation} \label{eq:higher_corank_unfolding_5}
        Z_{r+1}(s,f) = Z_r(s,f) + F_r(s,f), \quad Z_{r+1}'(s,f) = Z_r'(s,f) + F_r'(s,f),
    \end{equation}
    where
    \begin{equation*}
        F_r(s,f) = \int_{G_r(F)N_r^H(\bA) \backslash H(\bA)}  (f_{N_{r+1}^G,\psi_r^{\prime}})_{U_{r+1}^G}(h) \lvert \det h \rvert^s \rd h,
    \end{equation*}
    and
    \begin{equation*}
        \begin{split}
        &F_r'(s,f) = \int_{G_r(F)N_{r,n}(\bA) \backslash G_n(\bA)} \int_{\opn{Mat}_{(m-1) \times n}(\bA)} (f_{N_{r+1}^G,\psi_r^{\prime,-1}})_{U_{r+1}^G} \left( h, \begin{pmatrix}
            1_n & & \\ x & 1_{m-1} & \\ & & 1
        \end{pmatrix} h \right) \lvert \det h \rvert^s \rd h \\
         &=\int_{G_r(F)N_{r,n}(\bA) \backslash G_n(\bA)} \int_{\opn{Mat}_{(m-1) \times n}(\bA)} (f_{N_{r+1}^G,\psi_r^{\prime,-1}})_{U_{r+1}^G} \left( h, h\begin{pmatrix}
            1_n & & \\ x & 1_{m-1} & \\ & & 1
        \end{pmatrix}  \right) \lvert \det h \rvert^{s-m} \rd h
        \end{split}
    \end{equation*}
    To verify \eqref{eq:higher_corank_unfolding_5}, we need to show that:
    \begin{lemma} \label{lem:higher_corank_F_r_convergence}
        The integral defining $F_r(s,f)$ and $F_r'(s,f)$ are absolutely convergent when $\opn{Re}(s) \gg 0$.
    \end{lemma}
    Assume Lemma \ref{lem:higher_corank_F_r_convergence} for now, it remains to show $F_r(s,f) = F_r'(s,f)=0$ for $\opn{Re}(s) \gg 0$ and $1 \le r \le n$. Note that $F_r(s,f) = 0$ (for any $f$ and $\psi$) implies $F'_r(s,f)=0$. We now prove that $F_r(s,f)=0$ for $\opn{Re}(s) \gg 0$. We temporarily denote by $P_r$ the parabolic subgroup of $G$ with Levi component $(G_r \times G_{n-r}) \times (G_r \times G_{n+m-r})$. 
    \begin{equation*}
        \begin{split}
        &(f_{N_{r+1}^G,\psi_r^{\prime}})_{U_{r+1}^G}(g) \\ =&\int_{[N_{n-r}]} \int_{[N_{m+n-r}]} f_{P_r} \left( \left( \begin{pmatrix}
            1_r & \\ & u
        \end{pmatrix}, \begin{pmatrix}
            1_r & \\ & u'
        \end{pmatrix}  \right) g \right) \psi_{N_{n-r}}(u)^{-1} \psi_{N_{m+n-r}}(u') \rd u \rd u'.
        \end{split}
    \end{equation*}
    Let $Q_r$ be the parabolic subgroup of $G_n$ with Levi component $G_r \times G_{n-r}$. Using the Iwasawa decomposition $G_n(\bA) = Q_r(\bA)K_n$, we can write $F_r(s,f)$ as
    \begin{equation} \label{eq:higher_corank_unfolding_3}
        \begin{split}
            &\int_{[G_r]} \int_{N_{n-r}(\bA) \backslash G_{n-r}(\A)} \int_{K_n} \int_{[N_{n-r}]} \int_{[N_{n+m-r}]} \\
            & (R(k)f)_{P_r} \left( \begin{pmatrix}
                h_r & \\ & u h_{n-r}
            \end{pmatrix},  \begin{pmatrix}
                h_r & \\ & u' \begin{pmatrix}
                    h_{n-r} & \\ & 1_m
                \end{pmatrix}
            \end{pmatrix} \right) \delta_{P_r} \begin{pmatrix}
                h_r & \\  & h_{n-r}
            \end{pmatrix}^{\frac{s-n+r}{2n-2r+m}} \\
            &\lvert \det h_{n-r} \rvert^{\frac{(2n+m)s+rm}{2n-2r+m}} \rd u' \rd u \rd k \rd h_{n-r} \rd h_r.
        \end{split}
    \end{equation}
    Fix $N$ sufficiently large. By \eqref{eq:constant_term_L^2_N}, for $\opn{Re}(s) \gg 0$, we have $[M_{P_r}] \ni m \mapsto f_{P_r}(m) \delta_{P_r}(m)^s \in L_{N,\chi^{M_{P_r}}}^2([M_{P_r}])^{\infty}$. We write an element of $[M_{P_r}]$ as $(h_r,x,h'_r,y) \in [G_r] \times [G_{n-r}] \times [G_r] \times [G_{n+m-r}]$. By Lemma \ref{lemma: constant term}, Lemma \ref{lemma: restriction to levi} and the definition of Rankin-Selberg regular, for any $(x,y) \in [G_{n-r}] \times [G_{n+m-r}]$, $(h_r,h_r') \mapsto f_{P_r}(h_r,x,h_r',y) \delta_{P_r}(h_r,x,h_r',y)^s$ lies in sum of $L^2_{N,\chi}([G_r \times G_r])^{\infty}$, where $\chi=(\chi_r,\chi_r')$ with $\chi_r \ne \chi_r^{\vee}$. Therefore, the integration of \eqref{eq:higher_corank_unfolding_3} over $[G_r]$ already vanishes. This finishes the proof.

    It remains to prove Lemma \ref{lem:higher_corank_F_r_convergence}. We use the notation from Lemma \ref{lemma: estimate Fourier coeff}. Let $l$ denote the map
    \begin{equation*}
        (u,u') \in N_{P_r} \mapsto -\sum_{i=r+1}^{n-1} u_{i,i+1} + \sum_{i=r+1}^{m+n-1} u'_{i,i+1},
    \end{equation*}
    when $r=n-1$ or $n$, the first term is understood as $0$. Then $(f_{N_{n+1}^G,\psi_{r}'})_{U_{r+1}^G} = f_{N_{P_r},\psi_l}$. Using Iwasawa decomposition, the integral defining $F_r(s,f)$ is bounded by
    \begin{equation} \label{eq:higher_corank_unfolding_4}
        \int_{[G_r]} \int_{[G_1]^{n-r}} \int_{K_n} \left \vert f_{N_{P_r},\psi_l} \left( \begin{pmatrix}
            h & \\ & t
        \end{pmatrix}k
       \right) \right \vert \delta_{Q_r} \begin{pmatrix}
            h & \\ & t
        \end{pmatrix}^{-1} \rd h \rd t \rd k.
    \end{equation}
    Note that there exists $N_0>0$ such that
    \begin{equation*}
        \left \Vert \opn{Ad}^* \begin{pmatrix}
            h & \\ & t
        \end{pmatrix}^{-1} l \right \Vert \gg \prod_{i=1}^{n-r} \|t_i\|_{\bA}^{N_0}.
    \end{equation*}
    By Lemma \ref{lemma: estimate Fourier coeff}, the integral
    \eqref{eq:higher_corank_unfolding_4} is essentially bounded by
    \begin{equation*}
        \int_{[G_r]} \int_{[G_1]^{n-r}} \prod_{i=1}^{n-r} \| t_i \|^{-N_1} \|h\|_{G_r}^{-N} \| t \|^{-N}_{G_1^r} \delta_{P_r} \begin{pmatrix}
            h & \\ & t
        \end{pmatrix}^{-cN} \delta_{Q_r} \begin{pmatrix}
            h & \\ & t
        \end{pmatrix}^{-1} \lvert \det h \rvert^s \lvert \det t \rvert^s \rd t \rd h,
    \end{equation*}
    whose convergence follows from the same argument of \S \ref{sssec:higher_corank_convergence_4}. The convergence of $F_r'(s,f)$ is also similar to the argument in \S \ref{sssec:higher_corank_convergence_4} and is left to the reader.
\end{proof}

\subsection{A twisted version} \label{ssec:twisted_higher_corank}

In \S \ref{ssec:twisted_higher_corank}, we discuss a twisted version of results in \S \ref{ssec:Rankin_Selberg} and \S \ref{ssec:higher_corank_statements}. We fix integers $n \ge 0$ and $m \ge 1$. Note that, in contrast with earlier part of \S \ref{sec:higher_corank_Rankin_Selberg}, we allow $m=1$.

\subsubsection{A twisted version}
\label{sssec:twisted_higher_corank}

Let $w_{\ell} = \begin{psmallmatrix}
     & & 1 \\ & \iddots & \\ 1 & &
\end{psmallmatrix} \in G_n$ be the longest Weyl group element. For $f \in \cS([G])$, we define the \emph{twisted Rankin-Selberg period} as
\begin{equation*}
    \widetilde{\cP}_{\mathrm{RS}}(f,\Phi) := \int_{[G_n]} f_{N_{n+1}^G,\psi_{n+1}'}\left(w_{\ell} {}^tg^{-1} w_{\ell}, \begin{pmatrix}
        g & \\ & 1_m
    \end{pmatrix}  \right) \rd g.
\end{equation*}
Let $\psi_N$ denote the character
\begin{equation*}
    \psi_{N}(u,u') = \psi \left( \sum_{i=1}^{n-1} u_{i,i+1} + \sum_{j=1}^{m+n-1} u'_{j,j+1}  \right)
\end{equation*}
and for $f \in \cT([G])$, its Whittaker function is defined by
\begin{equation*}
    W_f(g) = \int_{[N]} f(ug) \psi_N(u)^{-1} \rd u.
\end{equation*}

For $f \in \cT([G])$ and $s \in \C$, we put the twisted Zeta integral
\begin{equation*}
    \widetilde{Z}^{\mathrm{RS}}(s,f) = \int_{N_n(\bA) \backslash G_n(\bA)} W_f \left(w_{\ell} {}^tg^{-1}w_{\ell},\begin{pmatrix}
        g & \\ & 1_m
    \end{pmatrix} \right) \lvert \det g \rvert^s \rd g,
\end{equation*}
provided by the integral is absolutely convergent.

Let $\chi \in \fX(G)$. Assume that $\chi$ is represented by $(M,\pi)$ where $M$ and $\pi$ are as in \eqref{eq:higher_corank_cuspidal_data_1} and \eqref{eq:higher_corank_cuspidal_data_2}. We say that $\chi$ is \emph{twisted Rankin-Selberg regular}, if for any $1 \le i \le s,1 \le j \le t$, we have $\pi_{n,i} \ne \pi_{n+m,j}$. Let $\widetilde{\fX}_{\mathrm{RS}} \subset \fX(G)$ denote the set of twisted Rankin-Selberg regular cuspidal datum. We write $\cT_{\widetilde{\mathrm{RS}}}([G])$ for $\cT_{\widetilde{\fX}_{\mathrm{RS}}}([G])$.

The proof of the following corollary is parallel to the proof of Corollary \ref{cor:twisted_equal_rank}, and we omit the proof.
\begin{corollary} \label{cor:twisted_higher_corank}
    We have the following statements:
    \begin{enumerate}
        \item For $f \in \cT([G])$, there exists $C>0$ such that the integral defining $\widetilde{Z}^{\mathrm{RS}}(s,f)$ is convergent for $\mathrm{Re}(s) > C$ and defines a holomorphic function on $\cH_{>C}$.
        \item The linear functional $\widetilde{\cP}_{\mathrm{RS}}$ on $\cS_{\widetilde{\mathrm{RS}}}([G])$ extends (uniquely) by continuity to a continuous linear functional $\widetilde{\cP}_{\mathrm{RS}}$ on $\cT_{\widetilde{\mathrm{RS}}}([G])$.
        \item For any $f \in \cT_{\widetilde{\mathrm{RS}}}([G])$, the zeta integral $Z(\cdot,f)$ extends to an entire function. And we have
        \begin{equation*}
          \widetilde{\cP}_{\mathrm{RS}}(f) = \widetilde{Z}^{\mathrm{RS}}(0,f) 
        \end{equation*}
        \item For any $s \in \C$, the functional $\widetilde{Z}(s,\cdot)$ on $\cT_{\widetilde{\mathrm{RS}}}([G])$ is continuous.
    \end{enumerate}
\end{corollary}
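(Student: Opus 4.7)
The plan is to follow the same pattern as the proof of Corollary \ref{cor:twisted_equal_rank}: transport the twisted objects to their untwisted counterparts by the involution $g_n \mapsto w_{\ell} {}^t g_n^{-1} w_{\ell}$ on the first factor, and then invoke the already-established results. The only new ingredient beyond the equal-rank case is that we must distinguish $m=1$, where we feed the argument into Proposition \ref{prop:corank_1_Rankin_Selberg}, from $m \ge 2$, where we feed it into Theorem \ref{thm:higher_corank_Rankin_Selberg}; the underlying manipulation is identical in both cases.

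Concretely, I would define $\sigma(g) := w_{\ell} {}^t g^{-1} w_{\ell}$ on $G_n$ and set $f'(g_n, g_{n+m}) := f(\sigma(g_n), g_{n+m})$. Since $\sigma$ is a $K_n$-equivariant automorphism of $G_n$ at each place and preserves the Tamagawa measure, the assignment $f \mapsto f'$ is a topological isomorphism of $\cS([G])$ onto itself and of $\cT([G])$ onto itself. The key compatibility to verify is the identity
\begin{equation*}
    W_f(\sigma(g), y) = W'_{f'}(g, y), \qquad g \in G_n(\bA),\ y \in G_{n+m}(\bA),
\end{equation*}
which I would check by the change of variables $u_n \mapsto \sigma(u_n)$ inside the $[N_n]$-integral defining $W_f$: indeed $\sigma$ normalizes $N_n$, and for upper-triangular unipotent $u_n = (u_{ij})$ one has $[\sigma(u_n)]_{i,i+1} = -u_{n-i,\,n+1-i}$, so the character $\psi_N(\sigma(u_n)) = \psi(-\sum u_{i,i+1})$, which precisely produces the sign in $\psi'_N$. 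Since $N^G_{n+1} \subset 1 \times G_{n+m}$, the involution does not touch that subgroup, and we obtain $f_{N^G_{n+1},\psi'_{n+1}}(\sigma(g), y) = f'_{N^G_{n+1},\psi'_{n+1}}(g, y)$ tautologically. Combining these with the embedding $H = G_n \hookrightarrow G$ used to define the untwisted objects yields
\begin{equation*}
    \widetilde{\cP}_{\mathrm{RS}}(f) = \cP_{\mathrm{RS}}(f'), \qquad \widetilde{Z}^{\mathrm{RS}}(s, f) = Z^{\mathrm{RS}}(s, f').
\end{equation*}

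Next I would match the cuspidal support. The involution $\sigma$ sends an irreducible representation of $G_{n_i}$ to its contragredient, so if $(M_P, \pi_n \boxtimes \pi_{n+m})$ represents a cuspidal datum of $f$ then $f'$ is supported on the cuspidal datum with $\pi_n$ replaced by $\pi_n^{\vee}$. Under this replacement, the twisted regularity condition $\pi_{n,i} \ne \pi_{n+m,j}$ becomes the untwisted regularity condition $\pi_{n,i}^{\vee} \ne \pi_{n+m,j}$, i.e.\ $\pi_{n,i} \ne \pi_{n+m,j}^{\vee}$. Hence $f \mapsto f'$ restricts to topological isomorphisms $\cS_{\widetilde{\mathrm{RS}}}([G]) \xrightarrow{\sim} \cS_{\mathrm{RS}}([G])$ and $\cT_{\widetilde{\mathrm{RS}}}([G]) \xrightarrow{\sim} \cT_{\mathrm{RS}}([G])$.

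Finally, all four assertions follow by transport of structure: assertion (1) follows from Lemma \ref{lem:higher_corank_convergence_1} (respectively from \cite[Lemma 7.1.1.1]{BPCZ} when $m=1$) applied to $f'$; assertions (2), (3), (4) then reduce verbatim to Theorem \ref{thm:higher_corank_Rankin_Selberg} for $m \ge 2$ and to Proposition \ref{prop:corank_1_Rankin_Selberg} for $m=1$. I expect no serious obstacle; the only slightly delicate point is bookkeeping of the sign of the character under $\sigma$, which is the step that forces the two untwisted conventions $\psi'_N$ and $\psi_N$ to differ by a sign on the first factor. Once that sign is confirmed, the rest is formal.
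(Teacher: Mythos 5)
Your proposal is correct and takes essentially the same route the authors intended (they omit the proof, saying only that it is parallel to Corollary~\ref{cor:twisted_equal_rank}). You correctly work out the key detail they left implicit — that conjugating the character $\psi_N$ by the involution $\sigma(g)=w_\ell\,{}^tg^{-1}w_\ell$ on the first factor yields $\psi_N'$ — and you correctly point out the small bookkeeping item the paper glosses over: for $m=1$ one lands in Proposition~\ref{prop:corank_1_Rankin_Selberg} rather than Theorem~\ref{thm:higher_corank_Rankin_Selberg}. The sign check on the superdiagonal entries, the identification $\cS_{\widetilde{\mathrm{RS}}}\xrightarrow{\sim}\cS_{\mathrm{RS}}$ (and likewise for $\cT$) via $\pi_{n,i}\mapsto\pi_{n,i}^\vee$, and the observation that $N_{n+1}^G$ lives entirely in the second factor so $\widetilde{\cP}_{\mathrm{RS}}(f)=\cP_{\mathrm{RS}}(f')$ tautologically, are all exactly what one needs.
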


By \eqref{eq:higher_corank_holomorphic}, we see that
\begin{num} 
    \item \label{eq:twsited_higher_corank_holomorphic}  The map $\widetilde{Z}^{\mathrm{RS}}(\cdot,\cdot): \C \to \cT'_{\widetilde{\mathrm{RS}}}([G]), s \mapsto (f \mapsto \widetilde{Z}^{\mathrm{RS}}(s,f) )$ is holomorphic.
\end{num}

\subsubsection{Euler decomposition}

Let $\tS$ be a finite set of places of $F$, let $\sigma = \sigma_n \boxtimes \sigma_{n+m}$ be a generic irreducible representation of $G(F_{\tS})$. For $W \in \cW(\sigma,\psi_{N,\tS})$, we define local (twisted) Rankin-Selberg integral of $W$ \cite{JPSS} as
\begin{equation*}
    \widetilde{Z}^{\mathrm{RS}}_{\tS}(s,W) := \int_{N_n(F_{\tS}) \backslash G_n(F_{\tS})}  W \left( w_{\ell}h^{-1}w_{\ell},\begin{pmatrix}
        h &  \\ & 1_m
    \end{pmatrix} \right) \lvert \det h \rvert^{s} \rd h.
\end{equation*}
The integral defining $\widetilde{Z}^{\mathrm{RS}}_{\tS}(s,W)$ is convergent when $\mathrm{Re}(s) \gg 0$ and has meromorphic continuation to $\C$. Moreover, by \cite{JPSS} and \cite{Jacquet09}, the quotient
\begin{equation*}
    \frac{\widetilde{Z}^{\mathrm{RS}}_{\tS}(s,W)}{L_{\tS}(s+\frac m2,\sigma^{\vee}_n \times \sigma_{n+m})}
\end{equation*}
is entire.

Let $P = P_n \times P_{n+m} \subset G$ be a standard parabolic subgroup and let $\pi = \pi_n \boxtimes \pi_{n+m}$ be a cuspidal automorphic representation of $M_P$. Assume that $(M_P,\pi)$ gives a twisted Rankin-Selberg regular cuspidal data $\chi$. Let $\Pi = \opn{Ind}_{P(\bA)}^{G(\bA)} \pi=\Pi_n \boxtimes \Pi_{n+m} $. 

For future use in \S \ref{ssec:n_m_Eisenstein}, we consider a section $\varphi \in \cA_{P,\pi_n \lvert \cdot \rvert^{\frac{n+m}{2}} \boxtimes \pi_{n+m} \lvert \cdot \rvert^{-\frac{n}{2}}}$. We write $E(\varphi)(g) = E(g,\varphi,0)$ for the Eisenstein series of $\varphi$. Note that $E(\varphi) \in \cT_{\widetilde{\mathrm{RS}}}([G])$.

Let $\tS$ be a sufficiently large set of places of $F$, that we assume to contain Archimedean places as well as the places where $\Pi$, $\psi$ or $\varphi$ is ramified. We then have a decomposition $W_{E(\varphi)} = W_{E(\varphi),\tS} W_{E(\varphi)}^{\tS}$ such that $W_{E(\varphi)}^{\tS}(1)=1$ and is fixed by $K^{\tS}$.

Note that $W_{E(\varphi),\tS} \in \cW(\Pi_{\tS},\psi_{N,\tS})$. By the unramified computation for the Rankin-Selberg integral, we have
\begin{equation} \label{eq:Rankin_Selberg_Euler}
    \widetilde{Z}^{\mathrm{RS}}(s,E(\varphi)) = (\Delta_{G_n}^{\tS,*})^{-1} \widetilde{Z}^{\mathrm{RS}}_{\tS}(s,W_{E(\varphi),\tS}) L^{\tS}(s-n,\Pi^{\vee}_n \times \Pi_{n+m})
\end{equation}

\section{Periods detecting $(n,n)$-Eisenstein series}
\label{sec:n_n}

\subsection{Statements of the main results}

\subsubsection{Notations}
\label{sssec:n_n_notation}

In \S \ref{sec:n_n}, we will use the following notations. Let $n \ge 1$ be a fixed integer, and let $G = G_{2n}$. Let $H=\Sp_{2n}$, regarded as a subgroup of $G$. Let $N$ denote the upper triangular unipotent subgroup of $G$ and let $N_H := N \cap H$.

Let $Q_n$ be the standard parabolic subgroup of $G$ with Levi component $ G_n\times G_n$. Note that $Q_{n}^H:=Q_n \cap H$ is the Siegel parabolic subgroup of $H$. The Levi component of $Q_n^H$ consists of elements of the form
\[
    \begin{pmatrix}
J{}^tg^{-1}J &  \\
 & g
\end{pmatrix}, \quad g\in G_n.
\]

For $0 \le r \le n$, we write $P_r$ for the standard parabolic subgroup whose Levi component is $G_1^{n-r} \times G_{2r} \times G_1^{n-r}$. Let $N_r$ denote the unipotent radical of $P_r$. We denote by $P^H_r = P_r \cap H$. Note that $P_0$ is the Borel subgroup of $G$ and $P_0^H$ is the Borel subgroup of $H$.

Let $\cP_{2r}$ denote the mirabolic subgroup of $\GL_{2r}$, it consists of elements of $\GL_{2r}$ with last row $(0,\cdots,0,1)$.  Let $\cP^H_{2r} := \cP_{2r} \cap \Sp_{2r}$. We regard $\Sp_{2r}$ as the subgroup $\begin{pmatrix}
    1_{n-r} & & \\ & h & \\ & & 1_{n-r}
\end{pmatrix}$ of $H$, where $h \in \Sp_{2r}$. We hence regard $\cP^H_{2r}$ as a subgroup of $H$ via the embedding $\cP^H_{2r} \subset \Sp_{2r} \subset H$.

Let $\Delta$ denote the BZSV quadruple \cite{MWZ1} $(G,H,\opn{std} \oplus \opn{std}^{\vee},1)$. Let $\psi_n$ denote the degenerate character on $N(\bA)$ defined by
\[
    \psi_n(u) = \psi \left( \sum_{ \substack{1 \le i \le 2n-1 \\ i \ne n}} u_{i,i+1} \right).
\]

For $1 \le r \le n$, we write $\psi_{N_r}$ for the restriction of $\psi_n$ to $N_r(\bA)$. For $f \in \cT([G])$, we put
\begin{equation*}
    f_{N_r,\psi}(g) := \int_{[N_r]} f(ug) \psi_{N_r}^{-1}(u) \rd u.
\end{equation*}

We write $K_H$ for the standard maximal compact subgroup of $H(\bA)$. For any semi-standard parabolic subgroup $Q$ of $H$, we have the Iwasawa decomposition $H(\bA) = Q(\bA)K_H$.

\subsubsection{The period}

For $f \in \cS([G])$ and $\Phi \in \cS(\bA_{2n})$, we define a bilinear map $\cS([G]) \times \cS(\bA_{2n}) \to \C$ by
\begin{equation*}
    \cP(f,\Phi) = \int_{[H]} f(h) \Theta(h,\Phi) \rd h.
\end{equation*}

By Lemma \ref{Theta moderate}, the integral defining $\cP$ is absolutely convergent and defines a continuous bilinear map on $\cS([G]) \times \cS(\bA_{2n})$.

\subsubsection{Zeta integral}

For every $f \in \cT([G])$, we associate the following degenerate Whittaker coefficient
\[
    V_f(g)= \int_{[N]}f(ug) \psi_n(u)^{-1} \rd u.
\]

For $f \in \cT([G])$ and $\Phi \in \cS(\bA_{2n})$ and $\lambda \in \fa^*_{Q_n,\C}$, we set
\[
    Z(\lambda, f, \Phi)= \int_{N_H(\bA) \backslash H(\bA)} V_f(h) \Phi(e_{2n}h) e^{\langle \lambda, H_{Q_n}(h) \rangle} \rd h
\]
provided by the expression converges absolutely. Denote the unique element in $\Delta_{Q_n}$ by $\alpha$. We define $s_\lambda := -\langle \lambda, \alpha^{\vee} \rangle$. Therefore $s_{\lambda}$ has the property
\[
     \exp \left( \langle \lambda, H_{Q_n}\begin{pmatrix}
J{}^tg^{-1}J &  \\
 & g
\end{pmatrix} \rangle    \right)= \lvert \det g \rvert ^{s_\lambda},
\]
thus inducing a linear map $\fa^*_{Q_n,\C} \to \C, \lambda \mapsto s_\lambda$.

The following two lemmas provide the convergence of zeta integral
\begin{lemma} \label{lemma: T convergence}
    Let $N \geq 0$. There exists $c_N >0$ such that
    \begin{enumerate}
        \item For every $f \in \cT_N([G])$ and $\Phi \in \cS(\bA_{2n})$, the expression defining $Z(\lambda, f, \Phi)$ converges absolutely when $\opn{Re}(s_{\lambda}) > c_N$ and defines a holomorphic function of $\lambda$ on the region $\opn{Re}(s_{\lambda})>c_N$;
        
        \item For every $\Phi \in \cS(\bA_{2n})$ and $\lambda \in \fa^*_{Q_n,\C}$ with $\opn{Re}(s_\lambda)>c_N$, the functional $f \in \cT_N([G]) \mapsto Z(\lambda, f, \Phi)$ is continuous.
    \end{enumerate}
\end{lemma}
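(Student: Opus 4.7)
The plan is to reduce the integral over $N_H(\bA) \backslash H(\bA)$ to one over $T_H(\bA) \times K_H$ via the Iwasawa decomposition $H(\bA) = N_H(\bA) T_H(\bA) K_H$. Parametrize $T_H$ by $t = \mathrm{diag}(t_1, \ldots, t_n, t_n^{-1}, \ldots, t_1^{-1})$. A direct computation yields $\delta_{B_H}(t) = \prod_{i=1}^n |t_i|^{2(n-i+1)}$, $e^{\langle \lambda, H_{Q_n}(tk)\rangle} = \prod_{i=1}^n |t_i|^{-s_\lambda}$, and $e_{2n} \cdot t = t_1^{-1} e_{2n}$. One first verifies (using the symplectic relation $u_{n+j,n+j+1} = -u_{n-j,n-j+1}$ for $u \in N_H$, which is precisely why the $n$-th simple root was omitted in $\psi_n$) that $\psi_n|_{N_H(\bA)}$ is trivial, so that $V_f$ descends to $N_H(\bA) \backslash H(\bA)$. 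Then $|Z(\lambda,f,\Phi)|$ is dominated by
\[
\int_{K_H} \int_{T_H(\bA)} |V_f(tk)| \, |\Phi(t_1^{-1} e_{2n} k)| \prod_{i=1}^n |t_i|^{-\opn{Re}(s_\lambda) - 2(n-i+1)} \, dt \, dk.
\]

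The next step is to bound $V_f$ using Lemma \ref{lemma: estimate Fourier coeff}(\ref{T Fourier coeff}) with $l \in V_{P_0}(F)$ the linear form defining $\psi_n$: for $f \in \cT_N([G])$ and any $N_1 > 0$, one has $|V_f(tk)| \ll \|\opn{Ad}^*(t^{-1}) l\|^{-N_1} \|t\|_{M_0}^N$ uniformly in $k$, through a continuous semi-norm of $f$. Computing $\opn{Ad}^*(t^{-1}) l$ on $T_H$ shows that $\|\opn{Ad}^*(t^{-1}) l\| \gg \|t_i t_{i+1}^{-1}\|_{\bA^\times}$ for each $i = 1, \ldots, n-1$ (each simple root outside the $n$-th appears on $T_H$ as $\pm(t_i t_{i+1}^{-1})$). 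Distributing the exponent $N_1$ then gives, for any prescribed $M > 0$, $\|\opn{Ad}^*(t^{-1}) l\|^{-N_1} \ll \prod_{i=1}^{n-1} \|t_i t_{i+1}^{-1}\|^{-M}$.

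Now substitute $u_i := t_i t_{i+1}^{-1}$ for $1 \le i \le n-1$ and $u_n := t_n$, so that $t_i = u_i u_{i+1} \cdots u_n$ and $t_1^{-1} = (u_1 \cdots u_n)^{-1}$. This converts the integral into an iterated integral over $n$ copies of $\bA^\times$ whose integrand is bounded by
\[
\prod_{i=1}^{n-1} \|u_i\|^{-M} \cdot |\Phi((u_1 \cdots u_n)^{-1} e_{2n} k)| \cdot \prod_{j=1}^n |u_j|^{-j\opn{Re}(s_\lambda) - c_j} \cdot \prod_{j=1}^n \|u_j\|^{N'}
\]
for explicit constants $c_j$ (from combining the modular character with $|t_i|^{-s_\lambda}$) and some $N' = N'(N,n)$ dominating $\|t\|_{M_0}^N$. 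For $\opn{Re}(s_\lambda)$ large, the factors $|u_j|^{-j \opn{Re}(s_\lambda) - c_j}$ suppress large $|u_j|$; for small $|u_j|$ with $j < n$ the Fourier decay $\|u_j\|^{-M}$ takes over; for small $|u_n|$ the Schwartz decay of $\Phi$ in the direction $(u_1 \cdots u_n)^{-1} e_{2n} k$ compensates. An application of Corollary \ref{cor:A_norm_integral} to each $\bA^\times$-factor then yields absolute convergence for $\opn{Re}(s_\lambda) > c_N$ with $c_N$ depending linearly on $N$. Holomorphy in $\lambda$ follows from dominated convergence and Morera; continuity of $f \mapsto Z(\lambda, f, \Phi)$ on $\cT_N([G])$ is immediate because each bound depended on $f$ only through a continuous semi-norm.

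The main technical obstacle I anticipate is the combinatorial verification that the Fourier-coefficient decay, which controls only the \emph{ratios} $t_i/t_{i+1}$, combines correctly with the Schwartz decay of $\Phi$, which controls only a \emph{single} direction, to dominate all regions of $T_H(\bA)$ where divergence could occur. The pivotal observation is that small $|t_n|$ with bounded ratios forces small $|t_1|$, at which point $\Phi(t_1^{-1} e_{2n} k)$ supplies rapid decay; symmetrically, large $|u_j|$ in any direction is handled once $\opn{Re}(s_\lambda)$ is taken sufficiently large relative to $N$.
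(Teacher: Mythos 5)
Your setup (Iwasawa decomposition, parametrization of $T_H$, computation of $\delta_{B_H}$, $e^{\langle\lambda,H_{Q_n}(\cdot)\rangle}$, and $e_{2n}\cdot t$) matches the paper exactly, and the verification that $\psi_n$ is trivial on $N_H(\bA)$ is correct and is a detail the paper leaves implicit. Your invocation of Lemma \ref{lemma: estimate Fourier coeff}(\ref{T Fourier coeff}) and the observation that $\opn{Ad}^*(t^{-1})l$ contains both $t_it_{i+1}^{-1}$ and its inverse (so that one controls the $\GL_1$-norm, not just the $\bA$-norm) are also correct.

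The gap is precisely the one you flag as the ``main technical obstacle,'' and your proposal does not resolve it. After the change of variables $u_i := t_it_{i+1}^{-1}$ ($i<n$), $u_n:=t_n$, your integrand bound
\begin{equation*}
\prod_{i=1}^{n-1}\lVert u_i\rVert^{-M}\;\lvert\Phi\bigl((u_1\cdots u_n)^{-1}e_{2n}k\bigr)\rvert\;\prod_{j=1}^n\lvert u_j\rvert^{-j\opn{Re}(s_\lambda)-c_j}\;\prod_{j=1}^n\lVert u_j\rVert^{N'}
\end{equation*}
does \emph{not} factor as a product over $j$, because the Schwartz decay of $\Phi$ is a decay in the single direction $(u_1\cdots u_n)^{-1}$, not in $u_n$ alone. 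In particular the phrase ``an application of Corollary \ref{cor:A_norm_integral} to each $\bA^\times$-factor'' cannot be applied as written: there is no one-variable factor containing $u_n$ whose integral one can isolate, and the growing term $\lVert u_n\rVert^{N'}\lvert u_n\rvert^{-n\opn{Re}(s_\lambda)-c_n}$ is controlled only by $\Phi$, which is coupled to all the $u_j$. One can check, for instance, that if $u_{n-1}$ and $u_n$ are chosen inverse to each other at a single finite place (so that $u_1\cdots u_n = 1$ and $\Phi$ gives no decay), your displayed bound does not visibly decay unless $M$ is re-chosen after $\opn{Re}(s_\lambda)$; the ``combinatorial verification'' you anticipate is exactly the step that is missing.

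The paper resolves this without a change of variables: it first upgrades the pair of estimates
\begin{equation*}
\lvert V_f(D(a)k)\rvert\ll\prod_{i=1}^{n-1}\lVert a_ia_{i+1}^{-1}\rVert_\bA^{-N_1}\prod_i\lVert a_i\rVert_{G_1}^{2N},\qquad \lvert\Phi(a_1^{-1}e_{2n}k)\rvert\ll\lVert a_1^{-1}\rVert_\bA^{-N_1},
\end{equation*}
into the \emph{factoring} bound $\prod_{i=1}^n\lVert a_i^{-1}\rVert_\bA^{-N_2}$ by the telescoping submultiplicativity
\begin{equation*}
\lVert a_i^{-1}\rVert_\bA\;\le\;\lVert a_1^{-1}\rVert_\bA\prod_{j<i}\lVert a_ja_{j+1}^{-1}\rVert_\bA,
\end{equation*}
valid for all $i$ (take $N_1=nN_2$). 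Once the bound factors, the integral splits into $n$ independent one-variable integrals, each of which is handled directly by Corollary \ref{cor:A_norm_integral}. This is the content of your ``pivotal observation'' that small $t_n$ with bounded ratios forces small $t_1$, but the point is that one must turn it into the inequality above \emph{before} separating the variables, not afterwards. Replacing $u_n=t_n$ by $u_0=t_1^{-1}$ (and keeping the ratios $u_1,\ldots,u_{n-1}$) would make your change of variables compatible with this, but then the change of variables is doing no work --- it is the paper's argument restated. As it stands, your proposal identifies the correct ingredients and the correct difficulty, but does not supply the step that closes the argument.
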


\begin{lemma} \label{lemma: every lambda convergence}
    We have the following statements
    \begin{enumerate}
        \item For every $f \in \cS([G])$, $\Phi \in \cS(\bA_{2n})$ and $\lambda \in \fa^*_{Q_n,\C}$, the expression defining $Z(\lambda, f, \Phi)$ converges absolutely and defines an entire function in $\lambda$;
        
        \item For every $\lambda \in \fa^*_{Q_n,\C}$ and $\Phi \in \cS(\bA_{2n})$, the functional $f \in \cS([G]) \mapsto Z(\lambda,f,\Phi)$ is continuous;
    \end{enumerate}
\end{lemma}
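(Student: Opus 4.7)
The strategy is to run the Iwasawa-reduction argument underlying Lemma~\ref{lemma: T convergence} but replace the moderate-growth estimates with the Schwartz Fourier-coefficient estimate of Lemma~\ref{lemma: estimate Fourier coeff}(1). Because that estimate produces arbitrary polynomial \emph{decay} rather than polynomial growth, it will dominate the exponential factor $e^{\langle\lambda,H_{Q_n}(h)\rangle}$ for every $\lambda\in\fa^*_{Q_n,\C}$, not merely for $\opn{Re}(s_\lambda)$ large.

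Using $H(\bA)=T_H(\bA)N_H(\bA)K_H$, identify $N_H(\bA)\backslash H(\bA)$ with $T_H(\bA)\times K_H$ under the measure $\delta_{B_H}(t)^{-1}\,dt\,dk$, where $B_H=T_HN_H$ is the upper-triangular Borel of $H$. Parametrising $t=\opn{diag}(a_1,\ldots,a_n,a_n^{-1},\ldots,a_1^{-1})\in T_H(\bA)$, the integrand decomposes as: $e^{\langle\lambda,H_{Q_n}(h)\rangle}=\prod_{i=1}^n|a_i|^{-s_\lambda}$; $\Phi(e_{2n}h)=\Phi(a_1^{-1}e_{2n}k)$, which is bounded by a continuous semi-norm of $\Phi$ on $\cS(\bA_{2n})$; and $V_f(tk)$. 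Applying Lemma~\ref{lemma: estimate Fourier coeff}(1) with $P$ the Borel of $G$ and $\psi_l=\psi_n$, we obtain, for every $N_1,N_2\ge 0$, a continuous semi-norm $\nu_{N_1,N_2}$ on $\cS([G])$ and a constant $c>0$ with
\[
    |V_f(tk)|\ll \nu_{N_1,N_2}(f)\,\|\opn{Ad}^*(t^{-1})\psi_n\|^{-N_1}\,\|t\|^{-N_2}\,\delta_{B_G}(t)^{-cN_2}
\]
uniformly in $k\in K\supset K_H$. Choosing $N_2$ sufficiently large in terms of $\opn{Re}(s_\lambda)$, the factor $\|t\|^{-N_2}\delta_{B_G}(t)^{-cN_2}$ supplies polynomial decay in each $|a_i|^{\pm 1}$ strong enough to dominate $\delta_{B_H}(t)^{-1}\prod_i|a_i|^{-s_\lambda}$, yielding absolute convergence of the resulting torus integral over $(\bA^\times)^n\times K_H$. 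Since the majorant depends on $f$ only through $\nu_{N_1,N_2}(f)$, this simultaneously establishes assertion~(1) at a fixed $\lambda$ and the continuity in $f$ of assertion~(2).

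The estimate is moreover locally uniform in $\lambda$: on any compact $K_\lambda\subset\fa^*_{Q_n,\C}$ a single choice of $N_2$ majorises the integrand uniformly for $\lambda\in K_\lambda$, so dominated convergence together with Morera's theorem yields that $\lambda\mapsto Z(\lambda,f,\Phi)$ is entire. The one substantive point is the bookkeeping within the preceding step, namely that the $\GL_{2n}$-exponents encoded by $\|t\|^{-N_2}\delta_{B_G}(t)^{-cN_2}$ can be tuned to beat the $\Sp_{2n}$-exponents of $\delta_{B_H}(t)^{-1}\prod_i|a_i|^{-s_\lambda}$ in both the $|a_i|\to 0$ and $|a_i|\to\infty$ regimes; once both sides are written in the coordinates $(a_1,\ldots,a_n)$, this reduces to a direct inequality between root exponents.
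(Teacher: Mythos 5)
Your overall strategy matches the paper's: pass to the Iwasawa decomposition $H(\bA)=N_H(\bA)T_H(\bA)K_H$, parametrize the torus by $t=D(a_1,\ldots,a_n)$, and apply the Schwartz Fourier-coefficient estimate Lemma~\ref{lemma: estimate Fourier coeff}(1). This is exactly what the paper does (it invokes ``the same argument of the proof of Lemma~\ref{lemma: Z_r convergence}'', which is built on that estimate). However, the bookkeeping step you flag as ``the one substantive point'' contains a genuine error, and as written the argument does not close.

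The error is the claim that the factor $\|t\|^{-N_2}\delta_{B_G}(t)^{-cN_2}$ supplies polynomial decay in \emph{both} directions $|a_i|\to 0$ and $|a_i|\to\infty$. In fact $\delta_{B_G}\bigl(D(a_1,\ldots,a_n)\bigr)=\prod_i|a_i|^{4n-4i+2}$, so $\delta_{B_G}(t)^{-cN_2}=\prod_i|a_i|^{-(4n-4i+2)cN_2}$ \emph{blows up} as $|a_i|\to 0$. The other piece, $\|t\|^{-N_2}\sim\prod_i\|a_i\|_{G_1}^{-2N_2}$, bounded via $\|a_i\|_{G_1}\geq|a_i|^{-1}$, contributes at most $\prod_i|a_i|^{2N_2}$. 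The net $a_i$-exponent near $|a_i|=0$ is therefore $(2-(4n-4i+2)c)N_2$, which is negative (and increasing $N_2$ only makes it worse) unless the constant $c$ of Lemma~\ref{lemma: estimate Fourier coeff} happens to satisfy $c<1/(2n-1)$ --- a bound the lemma does not give you. So the factor you single out cannot by itself dominate the $\delta_{B_H}(t)^{-1}\prod_i|a_i|^{-s_\lambda}$ weight.

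In the paper's proof the decay at $|a_i|\to 0$ comes from the two factors you mention but do not use: the character factor $\|\opn{Ad}^*(t^{-1})\psi_n\|^{-N_1}$, which gives $\prod_{i<n}\|a_ia_{i+1}^{-1}\|_{\bA}^{-N_1}$, together with the \emph{Schwartz decay} of $\Phi$, which gives $\|a_1^{-1}\|_{\bA}^{-N_1}$ (you only use that $\Phi$ is bounded). Telescoping these two produces $\prod_i\|a_i^{-1}\|_{\bA}^{-N_3}$ for $N_1\gg N_3$, and since $\|a_i^{-1}\|_{\bA}\geq|a_i|^{-1}$ this supplies arbitrarily strong polynomial decay near $|a_i|=0$, defeating the blowup of $\delta_{B_G}(t)^{-cN_2}$ there; the factor $\delta_{B_G}(t)^{-cN_2}$ is then what handles $|a_i|\to\infty$. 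The final reduction is not a direct inequality between root exponents of $\|t\|^{-N_2}\delta_{B_G}(t)^{-cN_2}$ alone --- one must feed all four factors into Corollary~\ref{cor:A_norm_integral}, which is precisely what the displayed bound in \S\ref{sssec:n_n_convergence_S} does. The remainder of your argument (locally uniform majorization in $\lambda$, Morera, continuity in $f$ from the semi-norm $\nu_{N_1,N_2}$) is fine once this gap is repaired.
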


Lemma \ref{lemma: T convergence} and Lemma \ref{lemma: every lambda convergence} will be proved in \S \ref{sssec:n_n_convergence_T} and \S \ref{sssec:n_n_convergence_S} respectively.

\subsubsection{$\Delta$-regular cuspidal datum}

Let $\chi \in \fX(G)$ be a cuspidal data, let $\chi^{M_{Q_n}}$ be the preimage of $\chi$ in $\fX(M_{Q_n}) = \fX(\GL_n \times \GL_{n})$. We say that $\chi$ is \emph{$\Delta$-regular}, if for any $\chi' \in \chi^{M_{Q_n}}$ is twisted Rankin-Selberg regular in the sense of \S \ref{sssec:twisted_equal_rank}. The reader can check that this definition is the same as the one given in \eqref{eq:intro_3}. We remark that $\Delta$ here stands for the quadruple defined in \S \ref{sssec:n_n_notation}. Note that any regular cuspidal data is $\Delta$-regular.

Let $\fX_{\Delta} \subset \fX(G)$ denote the set of $\Delta$-regular cuspidal data. We write $\cS_{\Delta}([G])$ (resp. $\cT_{\Delta}([G])$) for $\cS_{\fX_{\Delta}}([G])$ (resp. $\cT_{\fX_{\Delta}}([G])$).

\subsubsection{Main results}

For $\Phi \in \cS(\bA_{2n})$, we denote by $\Phi^{\flat} \in \cS(\bA_n)$ the restriction of $\Phi$ to $\{0\} \times \bA_n$.
\begin{theorem} \label{thm:n_n}
    We have the following statements
    \begin{enumerate}
        \item For any $\Phi \in \cS(\bA_{2n})$, the restriction of $\cP(\cdot,\Phi)$ to $\cS_{\Delta}([G])$ extends (uniquely) by continuity to a functional $\cP^{*}$ on $\cT_{\Delta}([G])$.
        \item For any $f \in \cT_{\Delta}([G])$ and $\Phi \in \cS(\bA_{2n})$, the map $\lambda \mapsto Z(\lambda,f,\Phi)$ extends to an entire function in $\lambda \in \fa^*_{{Q_n},\C}$. Indeed, for any $k \in K_H$, $(R(k)f)_{Q_n}|_{[G_{n} \times G_{n}]} \in \cT_{\widetilde{\mathrm{RS}}}([G_n \times G_{n}])$, and we have
        \begin{equation}
            Z(\lambda,f,\Phi) = \int_{K_H} \widetilde{Z}^{\mathrm{RS}}(s_{\lambda}+n+\frac 12,(R(k)f)_{Q_n}, (R(k)\Phi)^{\flat}) \rd k,
        \end{equation}
        here $(R(k)f)_{Q_n}$ means $(R(k)f)_{Q_n}|_{[G_n \times G_{n}]}$. 
        \item We have
        \begin{equation*}
            \cP^{*}(f,\Phi) = Z(0,f,\Phi).
        \end{equation*}
        \item The bilinear map $\cT_{\Delta}([G]) \times \cS(\bA_{2n}) \to \C, (f,\Phi) \mapsto \cP^*(f,\Phi)$ is continuous.
    \end{enumerate}
\end{theorem}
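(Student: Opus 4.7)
The plan is to combine an unfolding identity on Schwartz functions with an Iwasawa reduction to the twisted Rankin--Selberg zeta integral of \S \ref{sssec:twisted_equal_rank}, then transport the canonical extension of Corollary \ref{cor:twisted_equal_rank} from $\cT_{\widetilde{\mathrm{RS}}}([G_n \times G_n])$ back up to $\cT_\Delta([G])$ via integration over the compact group $K_H$.

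\emph{Unfolding step.} For $f \in \cS_\Delta([G])$ and $\Phi \in \cS(\bA_{2n})$, the target is the identity
\[
\cP(f,\Phi) \;=\; \int_{N_H(\bA)\backslash H(\bA)} V_f(h)\,\Phi(e_{2n}h)\,\rd h \;=\; Z(0,f,\Phi).
\]
Starting from $\cP(f,\Phi) = \int_{[H]} f(h)\,\Theta(h,\Phi)\,\rd h$, one unfolds $\Theta(h,\Phi) = \sum_{v \in F_{2n}} \Phi(vh)$ using the $H(F)$-orbit decomposition of $F_{2n}$ (two orbits by Witt's theorem: $\{0\}$ and $H(F) \cdot e_{2n}$, with stabilizer the mirabolic $\cP_{2n}^H$), and then carries out an iterated Fourier expansion along the unipotent radicals of the chain $P_r^H \subset H$ for $r$ decreasing from $n-1$ to $0$, in the same spirit as Proposition \ref{prop:n_m_unfolding}. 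At each stage the expansion separates the generic contribution (feeding the next stage) from non-generic residual contributions; the $\Delta$-regularity hypothesis, via Lemma \ref{lemma: constant term} and Lemma \ref{lemma: restriction to levi}, is precisely what makes every residual term orthogonal to the cuspidal support of $f$ and hence vanish. The same orthogonality eliminates the zero-orbit contribution $\Phi(0)\int_{[H]} f\,\rd h$, leaving only the fully generic Whittaker piece $Z(0,f,\Phi)$.

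\emph{Iwasawa reduction.} Using $H(\bA) = N_{Q_n^H}(\bA) M_{Q_n^H}(\bA) K_H$ together with $N_{Q_n^H} \subset N_H$ and $N_H \cap M_{Q_n^H} = N_n$ (embedded diagonally as $u \mapsto \mathrm{diag}(J\,{}^tu^{-1}J, u)$), we parametrize $N_H \backslash H(\bA) \simeq (N_n \backslash G_n)(\bA) \times K_H$ via $h = \mathrm{diag}(J\,{}^tg^{-1}J, g)\,k$. A direct calculation gives
\[
V_f(mk) \;=\; W_{(R(k)f)_{Q_n}}\bigl(J\,{}^tg^{-1}J,\,g\bigr) \qquad \text{and} \qquad \Phi(e_{2n}\,mk) \;=\; (R(k)\Phi)^{\flat}(e_n g),
\]
where the Whittaker on the right is the double Whittaker on $[G_n \times G_n]$. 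The Siegel modulus $\delta_{Q_n^H}(m) = \lvert\det g\rvert^{n+1}$ combined with the $\lvert\det g\rvert^{1/2}$ built into $\widetilde{Z}^{\mathrm{RS}}$ contributes the shift of $n + \tfrac{1}{2}$, and identifying $J = w_\ell$ with the notation of \S \ref{sssec:twisted_equal_rank} yields
\[
Z(\lambda,f,\Phi) \;=\; \int_{K_H} \widetilde{Z}^{\mathrm{RS}}\!\bigl(s_\lambda + n + \tfrac{1}{2},\; (R(k)f)_{Q_n},\; (R(k)\Phi)^{\flat}\bigr)\,\rd k.
\]

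\emph{Extension and main obstacle.} For $f \in \cT_\Delta([G])$, Lemma \ref{lemma: constant term} yields $(R(k)f)_{Q_n} \in \cT_\Delta([G]_{Q_n})$, and Lemma \ref{lemma: restriction to levi} together with the very definition of $\Delta$-regularity produces $(R(k)f)_{Q_n} \in \cT_{\widetilde{\mathrm{RS}}}([G_n \times G_n])$. Corollary \ref{cor:twisted_equal_rank} and \eqref{eq:twisted_Rankin_Selberg_corank_0_holomorphic} then provide the entire continuation of $s \mapsto \widetilde{Z}^{\mathrm{RS}}(s, \cdot, \cdot)$ with continuous bilinear dependence on its arguments. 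Applying Lemma \ref{lemma: integrate compact} to the integral over the compact group $K_H$ yields entirety of $\lambda \mapsto Z(\lambda,f,\Phi)$ and continuous bilinear dependence on $(f,\Phi)$, giving (2) and (4). Setting $\cP^{*}(f,\Phi) := Z(0,f,\Phi)$ defines the extension in (1); coincidence with $\cP(f,\Phi)$ on $\cS_\Delta([G])$ is the unfolding step, and by density of $\cS_\Delta([G]) \subset \cT_\Delta([G])$ this is the unique continuous extension, yielding (3). The technical heart of the argument is the unfolding: because $f$ is only $\Delta$-regular rather than fully cuspidal, the iterated Fourier expansion leaves a nontrivial family of residual period integrals over Levi subgroups, and one must verify that twisted Rankin--Selberg regularity of the cuspidal support on $M_{Q_n} = G_n \times G_n$ is exactly the condition required to annihilate each of these residual contributions.
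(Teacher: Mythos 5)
Your overall strategy matches the paper's: an unfolding identity on $\cS_\Delta([G])$ showing $\cP(f,\Phi)=Z(0,f,\Phi)$ (the paper's Proposition \ref{prop:n_n_unfolding}), followed by the Iwasawa reduction $Z(\lambda,f,\Phi)=\int_{K_H}\widetilde{Z}^{\mathrm{RS}}(s_\lambda+n+\frac12,(R(k)f)_{Q_n},(R(k)\Phi)^{\flat})\,\rd k$, and then the transport of Corollary \ref{cor:twisted_equal_rank} through Lemma \ref{lemma: composition holomorphic} and Lemma \ref{lemma: integrate compact}. The formal skeleton of your Iwasawa step and your extension argument are the same as the paper's \S \ref{ssec:n_n_proof}.

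The gap is in the justification of the vanishing of the residual terms in the unfolding. You attribute the vanishing to ``orthogonality to the cuspidal support of $f$'' via Lemma \ref{lemma: constant term} and Lemma \ref{lemma: restriction to levi}. That is not what happens. Unwinding the iterated Fourier expansion (the paper's chain $Z_0=\cdots=Z_n=\cP$), the term $F_r(f,\Phi)$ at stage $r$ reduces --- after a further Iwasawa decomposition against $R_r^H=R_r\cap H$ --- to the inner integral
\begin{equation*}
\int_{[\Sp_{2r}]} f_{R_r}\!\begin{pmatrix} 1_{n-r}& & \\ & h & \\ & & 1_{n-r}\end{pmatrix}\rd h,
\end{equation*}
i.e.\ a symplectic period of the constant term of $f$ over a smaller symplectic block sitting inside the Levi $M_{R_r}=G_{n-r}\times G_{2r}\times G_{n-r}$. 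This is a genuine period integral of $f_{R_r}$; it is not a pairing against anything in a complementary cuspidal support, so no abstract orthogonality kills it. Its vanishing is a theorem: the symplectic period on $\GL_{2r}$ annihilates the part of $L^2$ with cuspidal support outside the ``even'' locus (Theorem \ref{thm:symplectic_period}, due to Offen), and this must then be propagated through the constant term to the Levi via Corollary \ref{cor: constant term vanish}, where one checks that $\Delta$-regularity forces the restricted cuspidal datum on the $G_{2r}$-block to be non-even. The same theorem is also what kills the zero-orbit contribution $\Phi(0)\int_{[H]}f\,\rd h$. Without invoking Offen's vanishing theorem and its Levi variant, the unfolding identity $\cP(f,\Phi)=Z(0,f,\Phi)$ is not established, and this is precisely the nontrivial input your proposal leaves unsupported.

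One further small omission: the bijection used to claim $(R(k)f)_{Q_n}|_{[G_n\times G_n]}\in\cT_{\widetilde{\mathrm{RS}}}([G_n\times G_n])$ requires not just Lemma \ref{lemma: constant term} and Lemma \ref{lemma: restriction to levi} but the observation that the preimage $\fX_\Delta^{M_{Q_n}}$ lies in $\widetilde{\fX}_{\mathrm{RS}}(M_{Q_n})$, which is the definition of $\Delta$-regularity --- you should state this explicitly, since it is what guarantees the holomorphic continuation coming from Corollary \ref{cor:twisted_equal_rank} applies to each constant term $(R(k)f)_{Q_n}$.
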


\subsection{Convergence of Zeta integrals}
\label{ssec:n_n_convergence}

\subsubsection{More zeta integrals}

For $f \in \cS([G])$ and $\Phi \in \cS(\bA_{2n})$ and $0 \le r \le n$. We define
\begin{equation*}
    Z_r(f,\Phi) = \int_{ N_r(\bA) \cP_{2r}^H(F) \backslash H(\bA)} f_{N_r,\psi}(h) \rd h.
\end{equation*}

Note that when $r=0$, $Z_r(f,\Phi) = Z(0,f,\Phi)$.

\begin{lemma} \label{lemma: Z_r convergence}
    For every $0 \leq r \leq n$ and $f \in \cS([G])$ and $\Phi \in \cS(\bA_{2n})$, the integral defining $Z_r(f,\Phi)$ converges absolutely.
\end{lemma}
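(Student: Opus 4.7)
The plan is to use the Iwasawa decomposition of $H(\bA)$ adapted to $P_r^H := P_r \cap H$, apply the Fourier coefficient estimate in Lemma \ref{lemma: estimate Fourier coeff} to $f_{N_r, \psi}$, and unfold the resulting $\cP_{2r}^H(F)$-quotient on the $\Sp_{2r}$-factor via the identification $\cP_{2r}^H(F) \backslash \Sp_{2r}(F) \cong F^{2r} \setminus \{0\}$ induced by the transitive action of $\Sp_{2r}$ on non-zero vectors.

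First, $P_r^H$ has Levi $M_r^H \cong (G_1)^{n-r} \times \Sp_{2r}$, with $(G_1)^{n-r}$ embedded as $t = (t_1, \ldots, t_{n-r}) \mapsto \mathrm{diag}(t_1, \ldots, t_{n-r}, 1_{2r}, t_{n-r}^{-1}, \ldots, t_1^{-1})$, and unipotent radical $N_r^H = N_r \cap H$. Since $\psi_{N_r}$ is a unitary character, $|f_{N_r, \psi}|$ is left $N_r^H(\bA)$-invariant, so the Iwasawa decomposition $H(\bA) = N_r^H(\bA) M_r^H(\bA) K_H$ reduces the convergence of $Z_r(f, \Phi)$ to that of
\[
\int_{K_H} \int_{(\bA^\times)^{n-r}} \int_{\cP_{2r}^H(F) \backslash \Sp_{2r}(\bA)} \bigl| f_{N_r, \psi}((t, g) k) \bigr| \, \delta_{P_r^H}(t)^{-1} \rd g \rd t \rd k.
\]
Applying Lemma \ref{lemma: estimate Fourier coeff}(1) with $l \in V_{P_r}(F)$ the functional defining $\psi_{N_r}$ yields, for every $N_1, N_2 \ge 0$ and some $c > 0$, a continuous semi-norm on $\cS([G])$ dominating
\[
|f_{N_r, \psi}((t, g) k)| \ll \bigl\| \mathrm{Ad}^*((t, g)^{-1}) l \bigr\|^{-N_1}_{V_{P_r}(\bA)} \, \|(t, g)\|^{-N_2}_{M_{P_r}} \, \delta_{P_r}((t, g))^{-cN_2}.
\]
A direct computation of $\mathrm{Ad}^*((t, g)^{-1}) l$ shows that the first factor dominates a positive power of $\prod_v \max(1, |t_{n-r,v}|_v)$, providing rapid decay as $|t_{n-r}| \to \infty$; decay in the remaining coordinates of $t$ and in $g$ comes from $\|(t, g)\|^{-N_2}_{M_{P_r}}$, and decay as any $|t_i| \to 0$ from the combination with $\delta_{P_r}^{-cN_2}$.

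For the innermost integral, unfold via the $\Sp_{2r}(F)$-transitivity on $F^{2r} \setminus \{0\}$:
\[
\int_{\cP_{2r}^H(F) \backslash \Sp_{2r}(\bA)} \Psi(g) \rd g = \int_{\Sp_{2r}(F) \backslash \Sp_{2r}(\bA)} \sum_{v \in F^{2r} \setminus \{0\}} \Psi(\gamma_v g) \rd g,
\]
where $\gamma_v \in \Sp_{2r}(F)$ satisfies $e_{2r} \gamma_v = v$. Combining the bound of the previous paragraph with Lemma \ref{Theta moderate} controls the sum over $v$ by a moderate power of $\|g\|_{\Sp_{2r}}$, which is absorbed by $\|(t, g)\|^{-N_2}_{M_{P_r}}$ for $N_2$ sufficiently large. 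The toral integral over $(\bA^\times)^{n-r}$ is then handled by Corollary \ref{cor:A_norm_integral}, and the compactness of $K_H$ closes the argument; if the integrand carries an additional factor $\Phi(e_{2n} h)$, it only contributes extra decay. The main obstacle is the book-keeping required to verify that the decay produced by Lemma \ref{lemma: estimate Fourier coeff} outpaces the polynomial growth coming from $\delta_{P_r^H}^{-1}$, the Theta-series bound, and all Iwasawa factors uniformly in $k \in K_H$; this is ultimately routine because $f \in \cS([G])$ affords arbitrary polynomial decay in every direction.
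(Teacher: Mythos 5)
Your overall strategy (Iwasawa decomposition along $P_r^H$, Fourier coefficient estimate, unfolding of the $\cP_{2r}^H(F)\backslash\Sp_{2r}(\bA)$ integral through $F_{2r}\setminus\{0\}$, then Corollary \ref{cor:A_norm_integral} for the toral integral) matches the paper's, but two claims in your middle step are wrong or misleading and create a genuine gap.

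First, your description of what $\bigl\| \mathrm{Ad}^*((t, g)^{-1}) l \bigr\|^{-N_1}_{V_{P_r}(\bA)}$ buys you is too weak. You say it dominates a power of $\prod_v\max(1,|t_{n-r,v}|_v)$, giving decay only in $t_{n-r}$. In fact the relevant component of $l$ (the one attached to the entry $u_{n+r,n+r+1}$ straddling the $\Sp_{2r}$ block) transforms under $\mathrm{Ad}^*$ so that the estimate dominates a positive power of $\|a_{n-r}\,e_{2r}g\|_{\bA_{2r}}$, jointly in $a_{n-r}$ \emph{and} the last row of $g$, on top of $\prod_i\|a_ia_{i+1}^{-1}\|$. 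This joint factor $\|e_{2r}g\|^{-N}$ is exactly what makes the unfolding tractable: after writing the $\cP_{2r}^H(F)\backslash\Sp_{2r}(\bA)$ integral as an integral over $[\Sp_{2r}]$ of $\sum_{v\ne0}(\cdot)$, Lemma \ref{Theta moderate} applies to $\sum_v\|vg\|_{\bA_{2r}}^{-N}$, yielding a moderate power of $\|g\|$ that is then absorbed by the separate $\|g\|_{G_{2r}}^{-N_2}$ factor. With only the $\|(t,g)\|^{-N_2}$ factor you cannot mimic this: $\sum_v\|\gamma_v g\|_{G_{2r}}^{-N_2}$ is not the kind of sum Lemma \ref{Theta moderate} controls, and even granting $\|\gamma_vg\|\gg\|vg\|$, you would have burned your only $\|g\|^{-N_2}$ factor on the sum over $v$, leaving nothing to tame the resulting $\|g\|^{M}$ growth over $[\Sp_{2r}]$.

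Second, the factor $\Phi(e_{2n}h)=\Phi(a_1^{-1}e_{2n}k)$ is not "only extra decay"; it is load-bearing. Its Schwartz decay gives $\|a_1^{-1}\|_\bA^{-N_1}$, which telescopes against the $\prod_i\|a_ia_{i+1}^{-1}\|^{-N_1}$ factors to produce $\prod_i\|a_i^{-1}\|_\bA^{-N_3}$. Without it, the product of telescoped bounds points in the wrong direction and does not dominate the polynomial growth of $\delta_{P_r}^{-cN_2}$ as the $|a_i|\to0$; the constant $c$ in Lemma \ref{lemma: estimate Fourier coeff} is fixed, so you cannot overcome this growth simply by taking $N_2$ large. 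It is the freely chosen $N_1$ in the $\Phi$-and-$\mathrm{Ad}^*$ bounds that makes Corollary \ref{cor:A_norm_integral} applicable. (The paper's displayed definition of $Z_r(f,\Phi)$ actually omits the $\Phi(e_{2n}h)$ factor, but the proof makes clear it should be there; your sentence treating it as optional suggests you are working with the literal, typo'd definition, which would make the lemma false as the $a_1$-integral diverges.)
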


\subsubsection{Proof of Lemma \ref{lemma: T convergence}}
\label{sssec:n_n_convergence_T}

\begin{proof} 
    By the Iwasawa decomposition $H(\bA)= P_0^H(\bA) K_{H}$, we need to show the existence of $c_N>0$ such that  
    \begin{equation} \label{equation: T convergence}
        \begin{split}
        \int_{K_H} \int_{(\bA^\times)^n}  \left \vert V_{f} (D(a_1, \cdots, a_n)k) \right \vert &\left \vert \Phi(a_1^{-1}e_{2n}k) \right \vert \delta_{P_0^H}(D(a_1, \cdots, a_n))^{-1}  \\
        & \prod_{i=1}^n \lvert a_i \rvert^{-\opn{Re}(s)}  \rd a_1 \cdots \rd a_n \rd k 
        \end{split}
    \end{equation}
    when $\opn{Re}(s) > c_N$. Where 
    \begin{equation*}
        D(a_1,\cdots,a_n) = \opn{diag}(a_1,\cdots,a_n,a_n^{-1},\cdots,a_1^{-1}).
    \end{equation*}
    The modular function is given by
    \[
       \delta_{P_0^H}(D(a_1, \cdots, a_n))= \prod_{i=1}^n \lvert a_i \rvert ^{2n-2i+2}.
    \]
    We apply Lemma \ref{lemma: estimate Fourier coeff} \eqref{T Fourier coeff}, then for every $N_1 >0$, we have
    \[
        \lvert V_{f} (D(a_1, \cdots, a_n)k) \rvert \ll \prod_{i=1}^{n-1} \| a_i a_{i+1}^{-1} \|_{\bA}^{-N_1} \prod_{i=1}^n \| a_i \|_{G_1}^{2N}
    \]
    for $(k, a_1, \cdots, a_n) \in K_H \times (\bA^\times)^n$. Note for every $N_1 >0$, we have $\lvert \Phi(a_1^{-1}e_{2n}k) \rvert \ll \| a_1^{-1} \|_\bA^{-N_1}$ for $(k, a_1) \in K_H \times \bA^\times$. Note for every $N_2>0$, there exists $N_1>0$ such that
    \[
       \prod_{i=1}^{n-1} \| a_i a_{i+1}^{-1} \|_{\bA}^{-N_1}  \| a_1^{-1} \|_\bA^{-N_1} \ll  \prod_{i=1}^n \| a_i^{-1} \|_{\bA}^{-N_2}.
    \]
    Then for every $N_2>0$, \eqref{equation: T convergence} is essentially bounded by
    \begin{equation} \label{eq:n_n_convergence_1}
        \prod_{i=1}^n \int_{\bA^\times} \| a_i \|_{G_1}^{2N} \| a_i^{-1} \|_\bA^{-N_2} \lvert a_i \rvert^{-\opn{Re}(s)-(2n-2i+2)} \rd a_i
    \end{equation}
    Since there exists $M>0$ such that $\|a_i\|_{G_1} \ll \max\{ \lvert a_i \rvert^M, \lvert a_i \rvert^{-M}  \}$, the convergence of \eqref{eq:n_n_convergence_1} follows from Corollary \ref{cor:A_norm_integral}.
\end{proof}

\subsubsection{Proof of Lemma \ref{lemma: Z_r convergence}}
\label{sssec:n_n_convergence_zeta_integra}

\begin{proof} 
     We assume that $r >0$, the case $r=0$ will be covered in Lemma \ref{lemma: every lambda convergence}. By the Iwasawa decomposition $H(\bA)=P_r^H(\bA) K_{H}$, we need to show the convergence of
    \begin{equation}
        \begin{split}
        \int_{K_H} \int_{(\bA^\times)^{n-r}} \int_{\cP^H_{2r}(F) \backslash \Sp_{2r}(\bA)}  &\left \vert f_{N_r,\psi}(D(a_1, \cdots, a_{n-r}, h)k) \right \vert \lvert \Phi(a_1^{-1}e_{2n}k) \rvert \\
        &\delta_{P_r^H}(D(a_1, \cdots, a_{n-r}, h))^{-1} \rd h \rd a_1 \cdots \rd a_{n-r} \rd k, \label{equation: convergence}
        \end{split}
    \end{equation}
    where
    \[
        D(a_1, \cdots, a_{n-r}, h)=\opn{diag}(a_1,\cdots,a_{n-r},h,a_{n-r}^{-1},\cdots,a_1^{-1}).
    \]
    and the modular function $\delta_{P_r^H}$ is given by
    \[
        \delta_{P_r^H}(D(a_1, \cdots, a_{n-r}, h))= \prod_{i=1}^{n-r} \lvert a_i \rvert ^{2n+2-2i}.
    \]
    We now apply Lemma \ref{lemma: estimate Fourier coeff} \eqref{Schwartz Fourier coefficient}. For this, we note $\psi_{N_r}= \psi \circ l$, where $l : N_{r} \to \mathbb{G}_a$ sends $u \in N_{r}$ to $u_{1,2}+ \cdots u_{n-r,n-r+1}+ u_{n+r, n+r+1}+ \cdots + u_{2n-1, 2n}$. One can check that
    \[
         \prod_{i=1}^{n-r-1} \| a_i a_{i+1}^{-1} \|_\bA \| a_{n-r} e_{2r}h \|_{\bA_{2r}}  \ll \| \opn{Ad}^*(D(a_1, \cdots, a_{n-r}, h)^{-1})l \|_{V_{P_r}, \bA}.
    \]
    Therefore, by \ref{lemma: estimate Fourier coeff} \eqref{Schwartz Fourier coefficient}, we can find $c > 0$ such that for every $N_1, N_2 >0$ we have
    \begin{align*}
        &\lvert f_{N_r,\psi}(D(a_1,\cdots,a_{n-r},h)k) \rvert \\
        & \ll \prod_{i=1}^{n-r-1} \| a_i a_{i+1}^{-1} \|_\bA^{-N_1}\| a_{n-r} e_{2r}h \|_{\bA_{2r}}^{-N_1} \prod_{i=1}^{n-r} \| a_i \|_{G_1}^{-2N_2} \| h \|_{G_{2r}}^{-N_2} \delta_{P_r}(D(a_1, \cdots, a_{n-r}, h))^{-cN_2}
    \end{align*}
    for $(k, a_1, \cdots, a_{n-r}, h) \in K_{H} \times (\bA^\times)^r \times \Sp_{2r}(\bA)$. The modular function is
    \[
        \delta_{P_r}(D(a_1, \cdots, a_{n-r}, h))= \prod_{i=1}^r \lvert a_i \rvert^{4n-4i+2}.
    \]
    On the other hand, for every $N_1 >0$, we have
    \[
        \lvert (R(k)\Phi)(a_1^{-1}e_{2n}) \rvert \ll \| a_1^{-1} \|_\bA^{-N_1}, \quad (k, a_1) \in K_{H} \times \bA^\times.
    \]
    One can check for every $N_3 >0$, there exists $N_1 >0$ such that
    \[
        \prod_{i=1}^{n-r-1} \| a_i a_{i+1}^{-1} \|_\bA^{-N_1} \| a_{n-r} e_{2r}h \|_{\bA_{2r}}^{-N_1}\| a_1^{-1} \|_\bA^{-N_1} \ll \prod_{i=1}^{n-r} \| a_i^{-1} \|_\bA^{-N_3} \| e_{2r} h \|_{\bA_{2r}}^{-N_3}.
    \]
    Then we deduce the existence of $c >0$ such that for every $N_3, N_2 >0$, \eqref{equation: convergence} is essentially bounded by the product of 
    \begin{equation} \label{eq: integral on H}
        \int_{\cP_{2r}^H(F) \backslash \Sp_{2r}(\bA)} \| h \|_{G_{2r}}^{-N_2} \| e_{2r}h \|_{\bA_{2r}}^{-N_3} \rd h
    \end{equation}
    and 
    \begin{equation} \label{eq: integral on G_1}
        \prod_{i=1}^{n-r} \int_{\bA^\times} \| a_i^{-1} \|^{-N_3}_\bA \lvert a_i \rvert^{-(4n-4i+2)cN_2-(2n-2i+2)} \rd a_i
    \end{equation}
    By Lemma \ref{Theta moderate}, there exists $N_0 >0$, such that for every $N_3 \geq N_0$, we have
    \begin{align*}
        &\int_{\cP_{2r}^H(F) \backslash \Sp_{2r}(\bA)} \| h \|_{G_{2r}}^{-N_2} \| e_{2r}h \|_{\bA_{2r}}^{-N_3} \rd h \\
        &= \int_{[\Sp_{2r}]}\| h \|_{G_{2r}}^{-N_2} \left( \sum_{v \in F_{2r} \backslash \{0\} } \| v h \|^{-N_3}_{\bA_{2r}} \right) \rd h \ll \int_{[\Sp_{2r}]}\| h \|_{\Sp_{2r}}^{N_0-N_2} \rd h
    \end{align*}
    Therefore, by Corollary \ref{cor:A_norm_integral}, the integral \eqref{eq: integral on G_1} and \eqref{eq: integral on H} are absolutely convergent when $N_3 \gg N_2 \gg 0$.
\end{proof}

\subsubsection{Proof of Lemma \ref{lemma: every lambda convergence}}
\label{sssec:n_n_convergence_S}

\begin{proof} 
    By the same argument of the proof of Lemma \ref{lemma: Z_r convergence}, the integral defining $Z(\lambda,f,\Phi)$ is bounded by
    \begin{equation*}
        \prod_{i=1}^{n} \int_{\bA^\times} \| a_i^{-1} \|_{\bA}^{-N_3} \lvert a_i \rvert^{-(4n-4i+2)cN_2-(2n-2i+2+\opn{Re}(s_{\lambda}))},
    \end{equation*}
    which is absolutely convergent when $N_3 \gg N_2 \gg  \max\{0, \opn{Re}(s_{\lambda}) \}$ by Corollary \ref{cor:A_norm_integral}.
\end{proof}

\subsection{Unfolding}
\label{ssec:n_n_unfolding}

\subsubsection{Main result}

In  \S \ref{ssec:n_n_unfolding}, we prove the following proposition.

\begin{proposition} \label{prop:n_n_unfolding}
    For any $f \in \cS_{\Delta}([G])$ and $\Phi \in \cS(\bA_{2n})$, we have 
    \begin{equation*}
        \cP(f,\Phi) = Z(0,f,\Phi)
    \end{equation*}
\end{proposition}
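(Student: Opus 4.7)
The plan is to establish the identity by unfolding $\cP(f,\Phi)$ along a chain of intermediate integrals $\cP(f,\Phi) = Z_n(f,\Phi) = Z_{n-1}(f,\Phi) = \cdots = Z_0(f,\Phi) = Z(0,f,\Phi)$, where the intermediate $Z_r$ are the ones introduced in \S\ref{ssec:n_n_convergence} (with the $\Phi(e_{2n}h)$-factor included in the integrand, so that the stated identity $Z_0 = Z(0,\cdot)$ holds).

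The first step is a theta-unfolding. Since $|h|_{\bA} = 1$ for $h \in \Sp_{2n}(\bA)$, one has $\Theta(h,\Phi) = \sum_{v \in F_{2n}} \Phi(vh)$, and $\Sp_{2n}(F)$ acts transitively on $F_{2n} \setminus \{0\}$ with stabilizer of $e_{2n}$ equal to $\cP_{2n}^H(F)$. Interchanging sum and integral (legitimate for $f \in \cS([G])$ by Lemma \ref{Theta moderate}) and collapsing the nonzero orbit yields
\[
    \cP(f,\Phi) = \Phi(0) \int_{[H]} f(h)\, dh + Z_n(f,\Phi).
\]
The leftover term is the pairing of $f$ against the constant function $\mathbf{1}$ on $[H]$, which corresponds to the trivial cuspidal datum $\chi_0 = (G,\mathbf{1})$ of $G$. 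Since $\chi_0 \notin \fX_\Delta$ (the trivial datum manifestly fails \eqref{eq:intro_3}), $f \in \cS_\Delta([G])$ is orthogonal to $\mathbf{1}$ and this term vanishes.

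Next, for each $0 \leq r \leq n-1$, I would establish $Z_{r+1}(f,\Phi) = Z_r(f,\Phi)$ by Fourier expansion along an abelian unipotent sub-quotient of $N_r/N_{r+1}$ inside $H$. Decomposing under the natural action of a smaller Levi factor splits the expansion into two kinds of contributions: the generic-character orbit re-assembles into $Z_r(f,\Phi)$ after replacing the domain $N_{r+1}(\bA)\cP_{2r+2}^H(F) \backslash H(\bA)$ by the domain $N_r(\bA)\cP_{2r}^H(F) \backslash H(\bA)$, while finitely many degenerate orbits produce error terms. Absolute convergence of all intermediate integrals is furnished by Lemma \ref{lemma: Z_r convergence}.

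The principal obstacle is showing these error terms vanish. Each error term takes the form of an integral involving a constant term $f_Q$ of $f$ along a standard parabolic $Q \subset G$ whose Levi contains a block $G_n \times G_n$, paired against a degenerate Whittaker character together with the $\Phi$-factor. By Lemma \ref{lemma: constant term} and Lemma \ref{lemma: restriction to levi}, the restriction of $f_Q$ to $[M_Q]$ lies in $\cS_{\chi^{M_Q}}([M_Q])$ for cuspidal data $\chi \in \fX_\Delta$; the $\Delta$-regularity condition translates precisely to the inequality $\pi_n^{(1)} \neq \pi_n^{(2)}$ between the cuspidal components on the two $G_n$-blocks, and this non-equality forces the offending inner period to vanish by the orthogonality arguments underlying Proposition \ref{prop:corank_0_Rankin_Selberg} and its twisted variant (Corollary \ref{cor:twisted_equal_rank}), applied fiberwise over the remaining block factors and an Iwasawa $K_H$-slice. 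Chaining the resulting equalities $\cP(f,\Phi) = Z_n(f,\Phi) = \cdots = Z_0(f,\Phi) = Z(0,f,\Phi)$ completes the proof.
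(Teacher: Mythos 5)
Your skeleton is right — theta-unfolding to get $\cP(f,\Phi) = \Phi(0)\int_{[H]}f + Z_n(f,\Phi)$, then iterated Fourier expansion along abelian unipotent quotients to descend $Z_{r+1}$ to $Z_r$ — and this matches the paper's reduction to the chain $Z_0 = Z_1 = \cdots = Z_n = \cP$. But the vanishing mechanism you invoke at both places is wrong, and this is the heart of the argument, not a detail.

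For the leftover term $\Phi(0)\int_{[H]}f(h)\,dh$: you claim it vanishes because $f$ is ``orthogonal to $\mathbf{1}$'' owing to $\chi_0 = (G,\mathbf{1}) \notin \fX_\Delta$. That is an $L^2([G])$-orthogonality statement, but $\int_{[H]}f$ is a period along the proper subgroup $[H] = [\Sp_{2n}]$, not an inner product over $[G]$; spectral orthogonality in $L^2([G])$ does not transfer to the $H$-period in any direct way. The correct input is Offen's theorem on symplectic periods (Theorem \ref{thm:symplectic_period}): the $\Sp_{2n}$-period on $\cS([G_{2n}])$ is supported on \emph{even} cuspidal data, and one checks that any even cuspidal datum fails the $\Delta$-regularity condition, so $\fX_\Delta \subset \fX_{\mathrm{even}}^c$ and the period vanishes.

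For the error terms $F_r$ in the inductive step: you describe each as an integral of a constant term $f_Q$ along a parabolic whose Levi contains a $G_n \times G_n$ block, and you propose to kill it by the twisted Rankin--Selberg orthogonality of Proposition \ref{prop:corank_0_Rankin_Selberg} / Corollary \ref{cor:twisted_equal_rank}. That is not what appears. After the Iwasawa decomposition, the error term $F_r$ reduces to a symplectic period $\int_{[\Sp_{2r}]} f_{R_r}(\cdot)\,dh$ over the middle $G_{2r}$ block of $M_{R_r} = G_{n-r}\times G_{2r}\times G_{n-r}$ — not a Rankin--Selberg integral on a $G_n\times G_n$ block. Its vanishing is Corollary \ref{cor: constant term vanish}, which again rests on Offen's theorem and the observation that $\Delta$-regularity forces the $G_{2r}$-component of every $\chi' \in \chi^{M_{R_r}}$ to be non-even. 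Replacing Offen's symplectic-period orthogonality by Rankin--Selberg orthogonality is a genuine conceptual gap: these are periods on different pairs of groups and there is no argument given (nor an obvious one) deducing one from the other.
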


\subsubsection{A result of Offen}

We say a cuspidal data $\chi \in \fX(G)$ is \emph{even}, if $\chi$ can be represented by $(M,\pi)$, where
\begin{equation*}
    M = \GL_{n_1} \times \GL_{n_1} \times \cdots \times \GL_{n_k} \times \GL_{n_k}
\end{equation*}
and 
\begin{equation*}
    \pi = \pi_1 \boxtimes \pi_1 \boxtimes \cdots \boxtimes \pi_k \boxtimes \pi_k
\end{equation*}
according to this decomposition. We denote by $\fX_{\mathrm{even}}$ the set of even cuspidal datum, and denote its complement by $\fX_{\mathrm{even}}^c$.

\begin{theorem}[Offen] \label{thm:symplectic_period}
    The symplectic period is vanishing on $\cS_{\fX_{\mathrm{even}}^c}([G])$. That is, for any $f \in \cS_{\fX_{\mathrm{even}}^c}([G])$,
    \begin{equation*}
        \int_{[H]} f(h) \rd h = 0.
    \end{equation*}
\end{theorem}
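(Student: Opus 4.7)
The plan is to reduce to pseudo-Eisenstein series by density, then unfold against the Bruhat decomposition $P(F) \backslash G(F) / H(F)$ and eliminate orbits using cuspidality of $\pi$. Since $H = \Sp_{2n}$ is semisimple, $[H]$ has finite volume, so the functional $f \mapsto \int_{[H]} f(h) \rd h$ on $\cS([G])$ is dominated by $\sup_{h \in [H]} \lvert f(h) \rvert \| h \|_H^N$ for $N$ large and is therefore continuous. By Lemma \ref{lemma: pseudo dense}, $\fO^G_\chi$ is dense in $\cS_\chi([G])$ for each $\chi$, so the vanishing on $\cS_{\fX_{\mathrm{even}}^c}([G])$ reduces to proving $\int_{[H]} f(h) \rd h = 0$ for every pseudo-Eisenstein series $f \in \fO^G_\chi$ built from a section $\varphi \in \cA_{P,\pi}$ representing some $\chi \in \fX_{\mathrm{even}}^c$.

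For such $f(g) = \sum_{\gamma \in P(F) \backslash G(F)} \varphi(\gamma g)$, I would substitute into the period and split the sum into $H(F)$-orbits on $P(F) \backslash G(F)$. After a convergence check orbit by orbit, one obtains
\[
\int_{[H]} f(h) \rd h = \sum_{\xi} \int_{H_\xi(F) \backslash H(\bA)} \varphi(\xi h) \rd h,
\]
where $\xi$ runs over representatives of the (finitely many) double cosets $P(F) \backslash G(F) / H(F)$ and $H_\xi$ denotes the algebraic stabilizer $\xi^{-1} P \xi \cap H$. Each $H_\xi$ admits a Levi decomposition $H_\xi = L_\xi \ltimes U_\xi$, and the projection $P \twoheadrightarrow M_P$ sends $U_\xi$ onto a unipotent subgroup $V_\xi \subseteq M_P$.

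The main dichotomy is the following. For every orbit other than a distinguished ``diagonal pair'' family, $V_\xi$ lies in the unipotent radical of a proper parabolic of $M_P$, so integrating first over $[U_\xi]$ computes a constant term of $\pi$ along that parabolic and vanishes by cuspidality. The surviving orbits are exactly those for which $H_\xi$ surjects onto a Levi subgroup of the form $\prod_i \GL_{n_i}$ embedded diagonally inside a block $\prod_i (\GL_{n_i} \times \GL_{n_i}) \subseteq M_P$, which already forces $M_P$ to have the even block shape $\prod_i (\GL_{n_i} \times \GL_{n_i})$. On such an orbit, the remaining integration factors as a product of matrix-coefficient pairings between $\pi_{2i-1}$ and $\pi_{2i}$; Schur orthogonality for cuspidal representations forces these to vanish unless $\pi_{2i-1}$ and $\pi_{2i}$ are isomorphic after the involution coming from the symplectic embedding, which is precisely the even condition on $\pi$. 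Thus for $\chi \in \fX_{\mathrm{even}}^c$ every term is zero.

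The main obstacle will be the combinatorial orbit classification: enumerating the double cosets $P(F) \backslash G(F) / H(F)$ and, for each $\xi$, explicitly computing the unipotent projection $V_\xi \subset M_P$. One must verify that no non-even Levi $M_P$ admits a ``closed orbit'' whose stabilizer escapes the cuspidality step — this is essentially the structural content of Offen's analysis of $\Sp_{2n}$-orbits on partial flag varieties of $\GL_{2n}$, and the bookkeeping of the resulting Schur-type pairings on the surviving orbits is what isolates the even cuspidal data as the exact exception to vanishing.
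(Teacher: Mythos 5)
Your proposal takes a genuinely different route from the paper on the core step. The paper does \emph{not} re-derive the vanishing of the symplectic period of pseudo-Eisenstein series attached to non-even cuspidal data: it simply cites \cite[Proposition~6.2, Theorem~6.3]{Offen06} (and \cite[\S 7.1]{LO18}) as a black box, then promotes that statement from pseudo-Eisenstein series to $\cS_\chi([G])$ via density (Lemma~\ref{lemma: pseudo dense}), and from there to $\cS_{\fX^c_{\mathrm{even}}}([G])$ via the \emph{absolute summability} of the cuspidal-support decomposition (Theorem~\ref{thm:decomposition_cuspidal_support}). You, by contrast, propose to reconstruct Offen's orbit-by-orbit unfolding along $P(F)\backslash G(F)/H(F)$ from scratch. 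That is the harder half of the story, and your sketch treats several genuinely delicate points too optimistically: pseudo-Eisenstein series are built from $\phi\in\cS([G]_P)$, not from $\varphi\in\cA_{P,\pi}$, so the convergence of the unfolded integral has to be argued for pseudo-Eisenstein series proper; the stabilizer $H_\xi=\xi^{-1}P\xi\cap H$ need not have a clean Levi decomposition for every $\xi$, and its unipotent image in $M_P$ need not be the full unipotent radical of a parabolic of $M_P$, so cuspidality alone does not always kill the orbit without a finer case analysis; and the ``matrix-coefficient'' step on the surviving orbits is an $L^2$-orthogonality-of-cuspforms argument (which also requires controlling central characters) rather than literal Schur orthogonality. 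All of this is exactly the content of Offen's paper, and it is not established by the outline.

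There is also a concrete gap independent of the Offen step: you write that density of $\fO^G_\chi$ in $\cS_\chi([G])$ ``reduces the vanishing on $\cS_{\fX_{\mathrm{even}}^c}([G])$'' to pseudo-Eisenstein series, but $\cS_{\fX_{\mathrm{even}}^c}([G])$ is not the union (nor the span) of the $\cS_\chi([G])$ over $\chi\in\fX_{\mathrm{even}}^c$ in any obvious topological sense. One needs Theorem~\ref{thm:decomposition_cuspidal_support}(1): every $f\in\cS_{\fX_{\mathrm{even}}^c}([G])$ decomposes as an \emph{absolutely summable} series $f=\sum_{\chi\in\fX_{\mathrm{even}}^c}f_\chi$ in $\cS([G])$, and then the continuity of $\int_{[H]}$ lets you interchange the sum with the integral. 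Without that input, the reduction from $\cS_{\fX_{\mathrm{even}}^c}$ to individual cuspidal supports is not justified.
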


\begin{proof}
    It is proved in \cite[Proposition 6.2, Theorem 6.3]{Offen06} (see also \cite[\S 7.1]{LO18}) that if $\chi \in \fX(G)$ is not even and $f \in \fO_\chi$ is a pseudo-Eisenstein series, then $\int_{[H]} f(h) \rd h = 0$. By Lemma \ref{lemma: pseudo dense}, for any $f \in \cS_{\chi}([G])$, we have $\int_{[H]} f(h) \rd h = 0$.

    Finally, for any $f \in \cS_{\fX^c_{\mathrm{even}}}([G])$, by Theorem \ref{thm:decomposition_cuspidal_support}, $f$ can be written as $\sum_{\chi \in \fX_{\mathrm{even}}^c} f_{\chi}$, where $f_{\chi} \in \cS([G])$ and the sum is absolutely convergent in $\cS([G])$. The theorem follows.
\end{proof}

\begin{corollary} \label{cor: constant term vanish}
    Let $a \geq 2b$ be integers and $\chi \in \fX(G_a)$. Let $P = MN$ be a standard parabolic subgroup of $G_a$ such that $G_{2b}$ is a factor of its Levi component $M$. For $\chi' \in \fX(M_P)$, denote by $\chi'_{2b} \in \fX(G_{2b})$ the component of $\chi'$ at $G_{2b}$. Suppose that for any $\chi' \in \chi^M$, $\chi'_{2b}$ is not even. Regard $\Sp_{2b} \subset G_{2b}$ as a subgroup of $M$, then for any $f \in \cS_\chi([G_a])$, we have
    \[
        \int_{[\Sp_{2b}]}f_P(h) \rd h=0.
    \]
\end{corollary}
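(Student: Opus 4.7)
The plan is to reduce the identity to Offen's vanishing theorem (Theorem~\ref{thm:symplectic_period}) applied to the pair $(G_{2b}, \Sp_{2b})$. The entire argument is essentially an exercise in tracking cuspidal supports through constant terms and restrictions to factors of the Levi.

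First, I would apply Lemma~\ref{lemma: constant term} to conclude $f_P \in \cS_\chi([G_a]_P)$, and then Lemma~\ref{lemma: restriction to levi} to obtain $f_P|_{[M]} \in \cS_{\chi^M}([M])$. Writing $M = M' \times G_{2b}$ (where $G_{2b}$ denotes the designated factor), we have a product decomposition $\fX(M) = \fX(M') \times \fX(G_{2b})$. Under the embedding $\Sp_{2b} \hookrightarrow G_{2b} \hookrightarrow \{1_{M'}\} \times G_{2b} \subset M$, the integrand $f_P(h)$ coincides with $F(h)$, where $F : [G_{2b}] \to \C$ is defined by $F(h') := f_P((1_{M'}, h'))$. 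Applying Lemma~\ref{lemma: restriction to component} to the restriction of $f_P|_{[M]}$ from $[M] = [M'] \times [G_{2b}]$ to $[G_{2b}]$, we find $F \in \cS_{(\chi^M)_{G_{2b}}}([G_{2b}])$, where $(\chi^M)_{G_{2b}}$ is the image of $\chi^M$ under the projection $\fX(M) \to \fX(G_{2b})$.

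The hypothesis of the corollary is precisely that $(\chi^M)_{G_{2b}} \subset \fX^c_{\mathrm{even}}$, so $F \in \cS_{\fX^c_{\mathrm{even}}}([G_{2b}])$. Theorem~\ref{thm:symplectic_period} then gives $\int_{[\Sp_{2b}]} F(h) \rd h = 0$, which is the desired vanishing. The only substantive content of the proof is Theorem~\ref{thm:symplectic_period}, which we invoke as a black box; the rest is formal bookkeeping with the two restriction lemmas, so I do not anticipate a real obstacle.
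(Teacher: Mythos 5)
Your proposal has the right overall shape — reduce to Theorem~\ref{thm:symplectic_period} via the constant term and restriction lemmas — but the very first step contains a genuine error that invalidates the chain. Lemma~\ref{lemma: constant term} gives only $f_P \in \cT_\chi([G_a]_P)$, not $f_P \in \cS_\chi([G_a]_P)$. The constant term of a Schwartz function on $[G_a]$ is in general \emph{not} Schwartz on $[G_a]_P$: Lemma~\ref{lemma: estimate constant term} bounds $f_P$ by $\delta_P(x)^{-cN}\|x\|_P^{-N}$, which permits growth in the $A_P^\infty$ direction where $\delta_P$ becomes small. (Indeed, if $f_P$ were already Schwartz, the separate statement \eqref{eq: truncation of constant term} — which produces a Schwartz function only after multiplying by $\kappa\circ H_P$ — would be vacuous.) Consequently the restriction $f_P|_{[M]}$ and the further restriction to $[G_{2b}]$ land only in the uniform-moderate-growth spaces $\cT_{\chi^M}([M])$, $\cT_{(\chi^M)_{G_{2b}}}([G_{2b}])$, not the Schwartz spaces, so you cannot apply Theorem~\ref{thm:symplectic_period}, which is stated only for $\cS_{\fX^c_{\mathrm{even}}}([G_{2b}])$ — and rightly so, since the symplectic period is not even absolutely convergent on all of $\cT([G_{2b}])$.

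The paper's argument contains precisely the repair you omitted: a truncation along $\fa_P$. Pick $\kappa \in C_c^\infty(\fa_P)$ with $\kappa(0)=1$. By \eqref{eq: truncation of constant term} together with Lemma~\ref{lemma: stable under truncation}, the product $(\kappa\circ H_P)\cdot f_P$ lies in $\cT_\chi([G_a]_P)\cap \cS([G_a]_P) = \cS_\chi([G_a]_P)$; from there your restriction steps via Lemma~\ref{lemma: restriction to levi} and Lemma~\ref{lemma: restriction to component} are valid and Theorem~\ref{thm:symplectic_period} applies. The reason the truncation is harmless is that $H_P$ is identically zero on $\Sp_{2b}(\bA)$ (the determinant is trivial on a symplectic group and the other Levi factors are at the identity), so $(\kappa\circ H_P)\equiv 1$ on $[\Sp_{2b}]$. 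You should also note, as the paper does, that $\delta_P$ is trivial on $\Sp_{2b}$, so Lemma~\ref{lemma: estimate constant term} gives absolute convergence of the original integral $\int_{[\Sp_{2b}]}f_P$ and hence the equality $\int_{[\Sp_{2b}]} f_P = \int_{[\Sp_{2b}]}(\kappa\circ H_P) f_P$ is legitimate.
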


\begin{proof}
    Note that $\delta_P$ is trivial on $\Sp_{2b}$, it follows from Lemma \ref{lemma: estimate constant term} that the restriction of $f_P$ to $[\Sp_{2b}]$ belongs to $\cS([\Sp_{2b}])$. The integral hence converges absolutely. By Lemma \ref{lemma: constant term}, we have $f_{P} \in \cT_{\chi}([G_{a}]_{P})$. Let $\kappa \in C_c^\infty(\fa_{P})$ be a compactly supported smooth function on $\fa_{P}$ with $\kappa(0)=1$. By \eqref{eq: truncation of constant term} and Lemma \ref{lemma: stable under truncation}, we conclude that
    \[
        (\kappa \circ H_{P}) \cdot f_{P} \in \cT_{\chi}([G_{a}]_{P}) \cap \cS([G_{a}]_{P})= \cS_\chi([G_{a}]_{P}).
    \]
    By Lemma \ref{lemma: restriction to levi}
    and Lemma \ref{lemma: restriction to component}, the restriction of $(\kappa \circ H_{P}) \cdot f_{P}$ to $[G_{2b}]$ belongs to $\sum_{\chi' \in \chi^M} \cS_{\chi'_{2b}}([G_{2b}])$. It follows from Theorem \ref{thm:symplectic_period} that
    \[
       \int_{[\Sp_{2b}]} f_P(h) \rd h = \int_{[\Sp_{2b}]}(\kappa \circ H_{P})(h)f_{P}(h) \rd h =0.
    \]
\end{proof}

\subsubsection{Proof of Proposition \ref{prop:n_n_unfolding}}

Proposition \ref{prop:n_n_unfolding} is implied by the following Lemma
\begin{lemma}
    For any $f \in \cS_{\Delta}([G])$ and $\Phi \in \cS(\bA_{2n})$, we have
    \[
        Z_0(f,\Phi) = Z_1(f,\Phi) = \cdots = Z_n(f,\Phi) = \cP(f,\Phi)
    \]
\end{lemma}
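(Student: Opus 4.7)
The plan is to prove the chain by establishing two distinct pieces: the endpoint identity $Z_n(f,\Phi) = \cP(f,\Phi)$, and the descending identities $Z_{r+1}(f,\Phi) = Z_r(f,\Phi)$ for $0 \le r \le n-1$. The remaining identification $Z_0(f,\Phi) = Z(0,f,\Phi)$ will follow directly from unfolding the definitions (noting $N_0 = N$, so $f_{N_0,\psi}$ is exactly $V_f$, and $\cP_0^H$ degenerates so that the outer quotient becomes $N_H(\bA)\backslash H(\bA)$), together with the fact that $\Phi(e_{2n}h)$ is the factor that assembles the $\Theta$-series on the orbit through $e_{2n}$.

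For the endpoint $Z_n(f,\Phi) = \cP(f,\Phi)$: since $N_n$ is trivial, $f_{N_n,\psi} = f$. Decompose
\begin{equation*}
  \Theta(h,\Phi) \;=\; \Phi(0)\lvert\det h\rvert^{1/2} \;+\; \sum_{v \in F_{2n}\setminus\{0\}} \Phi(vh)\lvert\det h\rvert^{1/2},
\end{equation*}
and use $\lvert\det h\rvert = 1$ on $H(\bA)$. The first summand contributes $\Phi(0)\int_{[H]} f(h)\,\rd h$, which vanishes by Offen's theorem (Theorem \ref{thm:symplectic_period}), once one checks that $\Delta$-regularity implies non-evenness of $\chi$: if $\chi$ were represented by a paired cuspidal datum, then every admissible split of blocks realizing a preimage in $\fX(G_n \times G_n)$ would place matching cuspidal factors on the two sides, contradicting twisted Rankin–Selberg regularity. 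The second summand unfolds via the single $H(F)$-orbit $F_{2n}\setminus\{0\} = e_{2n}\cdot H(F)$, whose stabilizer is $\cP^H_{2n}(F)$, yielding exactly $Z_n(f,\Phi)$.

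For each descending step $Z_{r+1}(f,\Phi) = Z_r(f,\Phi)$, perform a Fourier expansion of $f_{N_{r+1},\psi}$ along the abelian unipotent quotient $N_r/N_{r+1}$, mirroring the unfolding step in the proof of Lemma \ref{lem:higher_corank_unfolding}. The trivial-character piece of this expansion is $f_{N_r,\psi}$; reassembling the sum over nontrivial integrations using the inclusion $\cP^H_{2r}(F) \subset \cP^H_{2r+2}(F)$ and the corresponding $H(F)$-orbit decomposition on the Pontryagin dual, one extracts $Z_r(f,\Phi)$ as the main term. The remaining ``extra'' terms collapse, after summing over nontrivial characters permuted by a Levi containing $\Sp_{2r}$, to integrals over $[\Sp_{2r}]$ of a constant term $f_P$ for some standard parabolic $P$ of $G$ whose Levi has $G_{2r}$ as a factor. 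These vanish by Corollary \ref{cor: constant term vanish}, where again $\Delta$-regularity of $\chi$ is used to guarantee that every preimage $\chi' \in \chi^{M_P}$ has non-even $G_{2r}$-component: an even component in the $G_{2r}$ slot would force a matched pair of cuspidal factors to survive on one side of any $G_n \times G_n$ split, contradicting twisted Rankin–Selberg regularity.

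The main obstacle will be the combinatorial bookkeeping around the non-even implication: one must trace precisely how $\Delta$-regularity of $\chi \in \fX(G_{2n})$ descends to cuspidal supports of constant terms along the parabolics appearing in the Fourier expansion, verifying that no pairing can survive in the $G_{2r}$-slot for any $0 \le r \le n-1$. Convergence of every integral and summation used in the manipulations is already guaranteed by Lemma \ref{lemma: Z_r convergence} (and Lemma \ref{lemma: every lambda convergence} for the base step), so the formal Fourier-expansion identities can be justified by Fubini without further analytic input.
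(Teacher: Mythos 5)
Your overall strategy is the same as the paper's: prove the endpoint $Z_n(f,\Phi)=\cP(f,\Phi)$ by unfolding the $\Theta$-series, and prove each descending step by a Fourier expansion whose ``extra'' term is an integral of a constant term over $[\Sp_{2r}]$, killed by Offen's theorem via Corollary~\ref{cor: constant term vanish}. The endpoint argument you give (splitting $\Theta(h,\Phi)=\Phi(0)+\sum_{v\ne 0}\Phi(vh)$ on $H$, using transitivity of $\Sp_{2n}(F)$ on $F_{2n}\setminus\{0\}$ with stabilizer $\cP_{2n}^H(F)$, and Offen for the $\Phi(0)$ term) is exactly the natural one the paper alludes to.

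However, your description of the descending step has the two Fourier pieces reversed, and this is not just a slip of the pen. You write that ``the trivial-character piece of this expansion is $f_{N_r,\psi}$'' and that the ``extra'' terms come from ``summing over nontrivial characters.'' This is backwards. The restriction of $\psi_{N_r}$ to $N_r/N_{r+1}$ is a \emph{nontrivial} character of $U_{r+1}$ (it still hits the superdiagonal entries $u_{n-r,n-r+1}$ and $u_{n+r,n+r+1}$), so $f_{N_r,\psi}$ is obtained from $f_{N_{r+1},\psi}$ by applying a nontrivial Fourier coefficient, not the trivial one. In the correct accounting: the \emph{trivial} character yields the full constant term $(f_{N_{r+1},\psi})_{U_{r+1}}$, and it is this piece that produces the extra term $F_r(f,\Phi)=\int_{\Sp_{2r}(F)N_r^H(\bA)\backslash H(\bA)}(f_{N_{r+1},\psi})_{U_{r+1}}(h)\Phi(e_{2n}h)\,\rd h$; the \emph{nontrivial} characters form a single $\Sp_{2r}(F)$-orbit with stabilizer $\cP_{2r}^H(F)$, and their sum reassembles to $\sum_{\gamma\in\cP_{2r}^H(F)\backslash\Sp_{2r}(F)}f_{N_r,\psi}(\gamma h)$, which unfolds to $Z_r(f,\Phi)$. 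The distinction matters because your vanishing mechanism, Corollary~\ref{cor: constant term vanish}, is a statement about the symplectic period of a \emph{constant term}; it can only be applied to the trivial-character piece. Taken as written, your extra terms would involve nontrivial twisted coefficients for which the constant-term vanishing argument simply does not apply.

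Two further small points. First, $N_r/N_{r+1}\cong U_{r+1}$ is not abelian (it is a Heisenberg group for $r\ge 1$); the compact abelian group on which the Fourier inversion takes place is $U_{r+1}(\bA)/U_{r+1}(F)U_{r+1}^H(\bA)$, after projecting $f_{N_{r+1},\psi}$ to its $U_{r+1}^H(\bA)$-invariants by integrating over $[U_{r+1}^H]$. Second, you claim all convergence is already covered by Lemma~\ref{lemma: Z_r convergence}; this is not quite enough -- the absolute convergence of the $F_r$ integral requires its own estimate (Lemma~\ref{lemma: F_r converge}), since $F_r$ is the integral of a constant term and not of a degenerate Fourier coefficient of $f$.
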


\begin{proof}
    We show that $Z_r(f,\Phi)=Z_{r+1}(f,\Phi)$ for $0 \le r \le n-1$, the proof of $Z_n(f,\Phi)=\cP(f,\Phi)$ is similar and is left to the reader.

    Let $r \ge 1$, we denote by $U_r$ the unipotent radical of the parabolic subgroup of $\GL_{2r}$ with Levi component $G_1 \times G_{2r-2} \times G_1$, which we regard $U_r$ as the subgroup $\begin{pmatrix}
        1_{n-r} & & \\ & u & \\ & & 1_{n-r}
    \end{pmatrix}, u \in U_r$ of $G$. Let $U_r^H := U_r \cap H$. Note that $U_r^H$ is a normal subgroup of $U_r$. By an abuse of notation, we write $\psi$ for the character $u \mapsto \psi(u_{12}+u_{2r-1,2r})$ of $U_r(\bA)$. 
    
    By Fourier inversion on the compact abelian group $U_{r+1}(\bA)/U_{r+1}(F)U_{r+1}^H(\bA)$, we have
    \[
        \int_{[U_{r+1}^H]} f_{N_{r+1},\psi}(uh) \rd h =(f_{N_{r+1},\psi})_{U_{r+1}} + \sum_{\gamma \in \cP^H_{2r}(F) \backslash \Sp_{2r}(F)} (f_{N_{r+1},\psi})_{U_{r+1},\psi}
    \]
    for all $h \in H(\bA)$, where we have set
    \begin{equation*}
    \begin{split}
    	&(f_{N_{r+1},\psi})_{U_r}(g) = \int_{[U_r]} f_{N_{r+1},\psi}(ug) \rd u, \\
    	&(f_{N_{r+1},\psi})_{U_r,\psi}(g) = \int_{[U_r]} f_{N_{r+1},\psi}(ug) \psi(u) \rd u = f_{N_r,\psi}(g).
    \end{split}
    \end{equation*}
    Therefore, we formally have
    \begin{equation} \label{equation: induction}
        Z_r(f,\Phi)= Z_{r+1}(f,\Phi)+ F_r(f,\Phi)
    \end{equation}
    where we have set
    \begin{equation*}
        F_r(f,\Phi) = \int_{\Sp_{2r}(F)N^H_r(\bA) \backslash H(\bA)} (f_{N_{r+1},\psi})_{U_r}(h) \Phi(e_{2n}h) \rd h.
    \end{equation*}
    To verify \eqref{equation: induction}, we need to show
    \begin{lemma} \label{lemma: F_r converge}
    For every $0 \leq r \leq n-1$, $f \in \cS([G])$ and $\Phi \in \cS(\bA_{2n})$, the integral defining $F_r(f,\Phi)$ converges absolutely.
    \end{lemma}
    
    \begin{proof} [Proof of Lemma \ref{lemma: F_r converge}] 
    By the same arguments as the proof of Lemma \ref{lemma: Z_r convergence}, there exists $c>0$ such that for every $N, N_2 >0$, the integral defining $F_r(f,\Phi)$ is essentially bounded by the product of
    \[
     \int_{[\Sp_{2r}]} \| h \|_{G_{2r}}^{-N_2} \rd h
    \]
    and 
    \[
       \prod_{i=1}^{n-r}  \int_{\bA^\times} \| a_i \|_{G_1}^{-2N_2} \| a_i^{-1} \|^{-N}_\bA \lvert a_i \rvert^{-(4n-4i+2)cN_2-(2n-2i+2)} \rd a_i.
    \]
    We can take $N \gg N_2 \gg 0$ such that these integrals converge.
\end{proof}
     
 Let $R_r$ be the standard parabolic subgroup of $G$ with Levi component $G_{n-r} \times G_{2r} \times G_{n-r}$. Let $V_k$ denote the upper triangular unipotent subgroup of $G_k$. Then 
 \begin{equation*}
     (f_{N_{r+1},\psi})_{U_r}(g) = \int_{[V_{n-r}]} \int_{[V_{n-r}]} f_{R_r} \left( \begin{pmatrix}
        u_1 & & \\ & 1_{2r} & \\ & & u_2
     \end{pmatrix}g \right) \psi^{-1}(u_1) \psi^{-1}(u_2) \rd u_1 \rd u_2.
 \end{equation*}
    Let $R_r^H := R_r \cap H$. Using the Iwasawa decomposition $H(\bA) = R_r^H(\bA) K_H$, we can write $F_r(f,\Phi)$ as
     \begin{equation*}
         \begin{split}
             F_r(f,\Phi) &= \int_{V_{n-r}(\bA) \backslash G_{n-r}(\bA)} \int_{[\Sp_{2r}]} \int_K  \int_{[V_{n-r}]} \int_{[V_{n-r}]} f_{R_r} \left( \begin{pmatrix}
                 u_1 g & & \\ & h & \\ & & u_2 J_{n-r} {}^t g^{-1} J_{n-r}
             \end{pmatrix} k \right) \\
             & \delta_{R_r^H} \begin{pmatrix}
                 g &  & \\ & h & \\ & & J_{n-r} {}^t g^{-1} J_{n-r}
             \end{pmatrix}^{-1} \rd u_1 \rd u_2 \rd k \rd h \rd g.
         \end{split}
     \end{equation*}
     Therefore the vanishing is implied by
     \begin{equation} \label{eq:n_n_unfolding_1}
         \int_{[\Sp_{2r}]} f_{R_r} \begin{pmatrix}
             1_{n-r} & & \\ & h & \\ & & 1_{n-r}
         \end{pmatrix} \rd h
     \end{equation}
     vanishes for any $f \in \cS_{\Delta}([G])$. It suffices to prove that for any $\Delta$-regular cuspidal data and any $f \in \cS_{\chi}([G])$, the integral \eqref{eq:n_n_unfolding_1} vanishes. However, one can check directly that for any $\chi’ = (\chi_1,\chi_2,\chi_3) \in \chi^{M_{P_r}} \subset \fX(G_{n-r} \times G_{2r} \times G_{n-r})$, $\chi_2$ is not even. Therefore, the integral vanishes by Corollary \ref{cor: constant term vanish}.
\end{proof}

\subsection{Proof of Theorem \ref{thm:n_n} }
\label{ssec:n_n_proof}

Let $\fX_{\Delta}^{M_{Q_n}}$ denote the preimage of $\fX_{\Delta}$ in $\fX(M_{Q_n})$. By the definition of $\Delta$-regularity, we have $\fX_{\Delta}^{M_{Q_n}} \subset \fX_{\widetilde{\mathrm{RS}}}(M_{Q_n})$ (indeed it is easy to see that this is an equality).

 Let $f \in \cT_{\Delta}([G])$ and $\Phi \in \cS(\bA_{2n})$, By the Iwasawa decomposition $H(\bA)= Q_n^H(\bA)K_{H}$, when $Z(\lambda,f,\Phi)$ is absolutely convergent, we have
\begin{equation} \label{eq:n_n_proof_1}
    Z(\lambda, f, \Phi)= \int_{K_{H}} \int_{N_n(\bA) \backslash G_n(\bA)} V_{R(k)f}\begin{pmatrix}
J{}^tg^{-1}J &  \\
 & g
\end{pmatrix} \Phi(e_{2n}g) \lvert \det g \rvert^{s_\lambda+n+1} \rd g \rd k .
\end{equation}
From the definition of the degenerate Whittaker coefficient, we have
\[
    V_{f}\begin{pmatrix}
g^\prime &  \\
 & g
\end{pmatrix}= W^{M_{Q_n}}_{f_{Q_n}}\begin{pmatrix}
g^\prime &  \\
 & g
\end{pmatrix}, \quad g,g' \in G_n(\bA)
\]
Then we can write
\[
    Z(\lambda, f, \Phi)=  \int_{K_{H}} \widetilde{Z}^{\mathrm{RS}}(s_\lambda+n+\frac12, (R(k)\Phi)^\flat , {(R(k)f)_{Q_n}}) \rd k, \quad \opn{Re}(s_\lambda) \gg 0.
\]
Then it follows from Corollary \ref{cor:twisted_equal_rank} that $\widetilde{Z}^{\mathrm{RS}}(s_\lambda+n+\frac12, (R(k)\Phi)^\flat, {(R(k)f)_{Q_n}})$ extends to an entire function of $s_\lambda$. Applying Lemma \ref{lemma: composition holomorphic} \eqref{item: bilinear form} with
\[
    W= \cS(\bA_n), \quad V= \cT_{\widetilde{\mathrm{RS}}}([G_n \times G_n]), \quad X=K_H \times \cS(\bA_{2n}) \times \cT_{\Delta}([G]),
\]
the holomorphic map
\[
    s \in \C \mapsto \widetilde{Z}^{\mathrm{RS}}(s+n+\frac12, \cdot, \cdot ) \in \opn{Bil}(\cS(\bA_n), \cT_{\widetilde{{\mathrm{RS}}}}([G_n \times G_n])),
\]
and continuous maps
\begin{align*}
    &(s, k, f, \Phi) \in \C \times K_{H} \times \cS(\bA_{2n}) \times \cT_{\Delta}([G])  \mapsto (R(k)\Phi)^\flat \in \cS(\bA_n), \\
    &(s, k, f, \Phi) \in \C \times K_{H} \times \cS(\bA_{2n}) \times \cT_{\Delta}([G])  \mapsto (R(k)f)_{Q_n} \in \cT_{\widetilde{\mathrm{RS}}}([G_n \times G_n])
\end{align*}
we deduce that the map
\[
    (s, k, f, \Phi)  \in \C \times K_{H} \times \cS(\bA_{2n}) \times \cT_{\Delta}([G]) \mapsto \widetilde{Z}^{\mathrm{RS}}(s+n+\frac12, (R(k)\Phi)^\flat,  {(R(k)f)_{Q_n}}) \in \C
\]
is continuous and holomorphic in the first variable. Then it follows from Lemma \ref{lemma: integrate compact} that 
the integral
\begin{equation*}
    \int_{K_{H}} \widetilde{Z}^{\mathrm{RS}}(s_\lambda+n+\frac12, (R(k)\Phi)^\flat,  {(R(k)f)_{Q_n}}) \rd k
\end{equation*}
is holomorphic in $s \in \C$. Therefore $Z(\lambda,f,\Phi)$ extends to an entire function. This proves (2). Lemma \ref{lemma: integrate compact} also implies $Z(0, f, \Phi)$ is continuous in $(f,\Phi) \in \cT_{\Delta}([G]) \times \cS(\bA_{2n})$. Moreover by Lemma \ref{lemma: every lambda convergence} and Proposition \ref{prop:n_n_unfolding},
\[
    Z(0, f, \Phi) = \cP(f,\Phi).
\]
Therefore for $\Phi \in \cS(\bA_{2n})$, the $f \mapsto Z(0,f,\Phi)$ provides a continuous extension of $\cP(\cdot,\Phi)$ to $\cT_{\Delta}([G])$, this proves (1),(3) and (4).

\subsection{Periods of $\Delta$-regular Eisenstein series}

\subsubsection{Local zeta integral}

Fix a place $v$ of $F$, let $\Pi_M = \Pi \boxtimes \Pi'$ be an irreducible generic representation of $M_{Q_n}(F_v)$. Recall the space $\opn{Ind}_{Q_n(F_v)}^{G(F_v)} \cW(\Pi_M ,\psi_v)$ defined in \S \ref{sssec:prelim_Jacquet_integral}. For $W^M \in \opn{Ind}_{Q_n(F_v)}^{G(F_v)} \cW(\Pi_{M},\psi_v)$ and $\Phi \in \cS(F_{v,2n})$ and $\lambda \in \fa_{Q_n,\C}^*$, we define
\[
    Z_v(\lambda, W^M,\Phi)= \int_{N_H(F_v)\backslash H(F_v)} W^M(h) \Phi(e_{2n}h)e^{\langle \lambda, H_{Q_n}(h) \rangle} \rd h,
\]
provided by the integral is absolutely convergent.

\begin{lemma} \label{lem:n_n_zeta_integarl}
    \begin{enumerate}
        \item For any $W^M \in \opn{Ind}_{Q_n(F_v)}^{G(F_v)}$ and $\Phi \in \cS(F_{v,2n})$, the integral defining $Z_v(\lambda,W^M,\Phi)$ is absolutely convergent when $\opn{Re}(s_{\lambda}) \gg 0$ and has a meromorphic continuation to $\fa_{Q_n,\C}^*$.
    \end{enumerate}
\end{lemma}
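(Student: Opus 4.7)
The plan is to mirror the global argument of Theorem~\ref{thm:n_n} at the local level, reducing $Z_v(\lambda, W^M, \Phi)$ to an integral of local (twisted) Rankin–Selberg zeta integrals for $G_n \times G_n$, for which convergence and meromorphic continuation are classical results of Jacquet–Piatetski-Shapiro–Shalika.

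First, I would apply the Iwasawa decomposition $H(F_v) = Q_n^H(F_v) K_{H,v}$. Writing a generic element $h = qk$ with $q \in Q_n^H(F_v)$ and $k \in K_{H,v}$, and further decomposing $q = uw$ with $u \in N_{Q_n^H}(F_v)$ and $w = \begin{psmallmatrix} J {}^tg^{-1} J & \\ & g \end{psmallmatrix}$ (with $g \in G_n(F_v)$), the defining property of $W^M \in \opn{Ind}_{Q_n(F_v)}^{G(F_v)} \cW(\Pi_M, \psi_v)$ expresses $\delta_{Q_n}^{-1/2}(w) W^M(wk)$ as an element of $\cW(\Pi_M, \psi_v)$ evaluated at $(J{}^tg^{-1}J, g)$. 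Since $e_{2n} w = (0, e_n g)$, one has $\Phi(e_{2n} w k) = (R(k)\Phi)^\flat(e_n g)$, and $e^{\langle \lambda, H_{Q_n}(w) \rangle} = |\det g|^{s_\lambda}$. After collecting the modulus factors, the formal identity
\[
    Z_v(\lambda, W^M, \Phi) = \int_{K_{H,v}} \widetilde{Z}^{\mathrm{RS}}_v\!\left(s_\lambda + n + \tfrac{1}{2},\, R(k) W^M,\, (R(k)\Phi)^\flat\right) \rd k
\]
emerges, where $\widetilde{Z}^{\mathrm{RS}}_v$ is the local twisted Rankin–Selberg integral for $G_n \times G_n$ attached to the Whittaker function obtained from restricting $R(k) W^M$ to the Levi $M_{Q_n}$.

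For convergence when $\opn{Re}(s_\lambda) \gg 0$, I would bound $W^M$ uniformly in $k \in K_{H,v}$ by the standard gauge estimates on Whittaker functions (going back to Jacquet), apply Corollary~\ref{cor:A_norm_integral} together with the rapid decay of $\Phi$, and conclude that the outer $K_{H,v}$-integral is finite because $K_{H,v}$ is compact and the inner integrand is locally bounded in $k$. For meromorphic continuation, I would invoke the classical fact that for each $k$, the local integral $\widetilde{Z}^{\mathrm{RS}}_v(s, R(k) W^M, (R(k)\Phi)^\flat)$ admits meromorphic continuation to $s \in \C$, with the quotient
\[
    \frac{\widetilde{Z}^{\mathrm{RS}}_v(s, R(k) W^M, (R(k)\Phi)^\flat)}{L_v(s, \Pi^\vee \times \Pi')}
\]
entire. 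A uniform-in-$k$ version of this statement on vertical strips (which follows from the usual functional equation argument and the fact that $K_{H,v}$ acts continuously by right translation on the induced Whittaker model) allows one to interchange the $K_{H,v}$-integral with the meromorphic continuation, yielding meromorphic continuation of $Z_v(\lambda, W^M, \Phi)$ to all of $\fa_{Q_n,\C}^*$.

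The main technical point will be the uniformity in $k \in K_{H,v}$ needed to carry the meromorphic continuation through the compact integral. In the non-archimedean case this is essentially automatic because $R(k) W^M$ varies in a finite-dimensional subspace as $k$ ranges over a set of coset representatives, so the integral reduces to a finite sum. In the archimedean case one uses that $k \mapsto R(k) W^M$ is smooth into a Fréchet space of Whittaker functions and that the bounds in the functional equation of JPSS can be made continuous in the Whittaker datum, exactly as in Lemma~\ref{lemma: composition holomorphic} and Lemma~\ref{lemma: integrate compact} which were used for the global analog in \S\ref{ssec:n_n_proof}.
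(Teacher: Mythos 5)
Your proposal is correct and follows essentially the same route as the paper's proof: Iwasawa decomposition $H(F_v)=Q_n^H(F_v)K_{H,v}$ reduces $Z_v(\lambda,W^M,\Phi)$ to a $K_{H,v}$-integral of local twisted Rankin--Selberg integrals, convergence is delegated to the classical estimates of Jacquet--Piatetski-Shapiro--Shalika and Jacquet, the non-Archimedean case reduces to a finite sum, and the Archimedean case uses the continuity of the normalized zeta integral into $\cO(\C)$ (the paper cites \cite[Theorem 2.3]{Jacquet09}) combined with Lemma \ref{lemma: integrate compact}. The only cosmetic difference is that you propose to re-derive convergence via gauge estimates and Corollary \ref{cor:A_norm_integral} rather than citing the Rankin--Selberg convergence results directly, but this does not change the structure of the argument.
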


\begin{proof}
    Using the Iwasawa decomposition, we can formally write $Z_v(\lambda,W^M,\Phi)$ as
    \begin{equation} \label{eq:n_n_zeta_integral_1}
    \begin{split}
    Z_v(\lambda, W^M,\Phi)&= \int_{K_{H,v}} \int_{N_n(F_v) \backslash G_n(F_v)}R(k)W\begin{pmatrix}
    J{}^tg^{-1}J &  \\
     & g
    \end{pmatrix}(R(k)\Phi)^\flat(e_n g) \lvert \det g \rvert^{s_\lambda+n+1} \rd g \rd k \\
    &=\int_{K_{H,v}}\widetilde{Z}_v^{\mathrm{RS}}(s_\lambda+n+\frac12, (R(k)\Phi)^\flat, R(k)W^M|_{M_{Q_n}(F_v)}).
    \end{split}
    \end{equation}

    The convergence of the zeta integral hence follows from the convergence of the usual Rankin-Selberg integral \cite{JPSS}, \cite{Jacquet09}.

    When $v$ is non-Archimedean, by \eqref{eq:n_n_zeta_integral_1}, $Z_v(\lambda,W^M,\Phi)$ is essentially a finite sum of twisted Rankin-Selberg integral, hence has meromorphic continuation. We now assume $v$ is Archimedean.  Let $\cO(\C)$ denote the entire function on $\C$ with the usual compact-open topology. By \cite[Theorem 2.3]{Jacquet09}, the map    
    \begin{equation} \label{eq: zeta integral continuous}
        \cW(\lvert \cdot \rvert^{\frac{n}{2}}\Pi \boxtimes \lvert \cdot \rvert^{-\frac{n}{2}}\Pi^\prime,\psi) \times \cS(F_{v,n}) \to \cO(\C), (W, \Phi') \mapsto \left(s \mapsto \frac{\widetilde{Z}^{\mathrm{RS}}_v(s +n+\frac12,\Phi', W)}{L_v(s+1,\Pi^\vee \times \Pi^{\prime} )} \right)
    \end{equation}
    is continuous. Therefore, the map
    \[
        K_{H,v} \to \cO(\C): k \mapsto \frac{\widetilde{Z}_v^{\mathrm{RS}}(s+n+\frac12, (R(k)\Phi)^\flat, R(k)W )}{L_\tS(s+1,\Pi^\vee \times \Pi^{\prime} )}
    \]
    is continuous. Combining with \eqref{eq:n_n_zeta_integral_1}, the meromorphicity follows from Lemma \ref{lemma: integrate compact}.
\end{proof}

We finally remark by the same argument, Lemma \ref{lem:n_n_zeta_integarl} still holds if we replace $v$ by a finite set $\tS$ of places of $F$.

\subsubsection{Fixed points}
\label{sssec:n_n_fixed_points}

Let $P=M_PN_P$ be a standard parabolic subgroup, we write $M_P$ as  $G_{n_1}\times \cdots \times G_{n_k}$. Let $\pi= \pi_1 \boxtimes \cdots \boxtimes \pi_k$ be a cuspidal unitary automorphic representation of $M_P$ (central character not necessarily trivial on $A_M^{\infty}$) such that the cuspidal datum $\chi$ represented by $(M_P, \pi_0)$ (see \S \ref{sssec:intro_main_result}) is $\Delta$-regular.

We write $\opn{Fix}(\pi)$ for the set of permutations $\sigma: \{1,2,\cdots,k\} \to \{1,2,\cdots,k\}$ such that there exists $1 \le t \le k$ with:
\begin{enumerate}
    \item $n_{\sigma(1)} + \cdots + n_{\sigma(t)}=n, n_{\sigma(t+1)} + \cdots + n_{\sigma(k)}=n$.
    \item $\sigma(1) < \cdots < \sigma(t)$ and $\sigma(t+1) < \cdots < \sigma(k)$.
\end{enumerate}

We also introduce the following notations
\begin{enumerate}
    \item $P_{\sigma}$ the standard parabolic subgroup of $G_{2n+m}$ with $M_{P_{\sigma}} = G_{n_{\sigma(1)}} \times \cdots \times G_{n_{\sigma(k)}}$,
    \item $P_{\sigma,n}$ (resp. $P_{\sigma,n}'$) the standard parabolic subgroup of $G_n$ with Levi subgroup $G_{n_{\sigma(1)}} \times \cdots \times G_{n_{\sigma(t)}}$ (resp. $G_{n_{\sigma(t+1)}} \times \cdots \times G_{n_{\sigma(k)}}$),
    \item $\pi_{\sigma} = \pi_{\sigma(1)} \boxtimes \cdots \boxtimes \pi_{\sigma(k)}$, which is a cuspidal automorphic representation of $M_{P_\sigma}$,
    \item $\pi_{\sigma,n} = \pi_{\sigma(1)} \boxtimes \cdots \boxtimes \pi_{\sigma(t)}$ and $\pi_{\sigma,n}' = \pi_{\sigma(t+1)} \boxtimes \cdots \boxtimes \pi_{\sigma(k)}$.
    \item $\Pi_{\sigma,n} = \opn{Ind}_{P_{\sigma,n}(\bA)}^{G_n(\bA)} \pi_{\sigma,n}$ and $\Pi_{\sigma,n}' = \opn{Ind}_{P_{\sigma,n+m}(\bA)}^{G_{n+m}(\bA)} \pi_{\sigma,n+m}$.
\end{enumerate}

\subsubsection{$L$-functions}

Let $\sigma \in \opn{Fix}(\pi)$, we put
\begin{equation*}
    L(s,T_{\sigma} \check{X}) := L(s,\Pi_{\sigma,n} \times \Pi_{\sigma,n}^{\prime,\vee}) L(s,\Pi_{\sigma,n}^{\vee} \times \Pi'_{\sigma,n}). 
\end{equation*}

\subsubsection{Periods of Eisenstein series}

Let $\varphi \in \Pi= \opn{Ind}_{P(\bA)}^{G(\bA)} \pi= \cA_{P, \pi}$ and write $E(\varphi)(g)=E(g, \varphi, 0)$ for the Eisenstein series of $\varphi$. Then $E(\varphi) \in \cT_{\Delta}([G])$.
\begin{theorem} \label{thm:n_n_Eisenstein}
    We have
    \begin{equation*}
        \begin{split}
        \cP(E(\varphi)) = &(\Delta_{H}^{\tS,*})^{-1} L(1,\pi,\widehat{\fn}_P^-)^{-1} \sum_{\sigma \in \opn{Fix}(\pi)} L^\tS(1,T_{\sigma} \check{X}) L_\tS(1,\pi_\sigma,\widehat{\fn}_{P_{\sigma}}^-)Z_\tS(\lambda, \Phi_\tS, \Omega^{M_{Q_n}}_\tS(N_{\pi, \tS}(\sigma)W^M_{\varphi, \tS})).
        \end{split}
    \end{equation*}
\end{theorem}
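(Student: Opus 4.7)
The plan is to combine three ingredients: (i) the identification $\cP(E(\varphi),\Phi) = Z(0,E(\varphi),\Phi)$ provided by Theorem \ref{thm:n_n}(3) together with its expression as an integral over $K_H$ of twisted Rankin--Selberg zeta integrals from part (2); (ii) the Langlands constant term formula for $E(\varphi)$ along $Q_n$; and (iii) the Euler decomposition \eqref{eq:Rankin_Selberg_Euler_equal_rank} of the twisted Rankin--Selberg integral together with Shahidi's identity \eqref{eq: Induction Jacquet functional} and the normalization \eqref{eq: normalized intertwining operator} of intertwining operators. Throughout we interpret $\cP(E(\varphi))$ as $\cP(E(\varphi),\Phi)$ for a fixed decomposition $\Phi = \Phi_{\tS} \otimes \mathbf{1}_{\widehat{\cO}^{\tS,2n}}$.

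By Theorem \ref{thm:n_n}(2),
\[
    Z(0,E(\varphi),\Phi) = \int_{K_H} \widetilde{Z}^{\mathrm{RS}}\!\left(n+\tfrac12,(R(k)E(\varphi))_{Q_n},(R(k)\Phi)^{\flat}\right)\rd k.
\]
First I would compute the constant term
\[
    E(\varphi)_{Q_n} = \sum_{w \in {}_{Q_n}W_P} E^{Q_n}(M(w,0)\varphi),
\]
restricted to $[M_{Q_n}] = [G_n \times G_n]$. The set $\opn{Fix}(\pi)$ is in bijection with the subset of $w \in {}_{Q_n}W_P$ satisfying $M_P \subseteq w^{-1}M_{Q_n}w$, and for such $w = w_\sigma$ the term is genuinely an Eisenstein series in $\cA_{P_\sigma \cap M_{Q_n}, w_\sigma \pi}(M_{Q_n})$ whose cuspidal datum, by the $\Delta$-regularity hypothesis on $\pi_0$, lies in $\widetilde{\fX}_{\mathrm{RS}}(M_{Q_n})$. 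For every other $w$, the associated cuspidal datum on $M_{Q_n}$ has a common cuspidal block $\pi_{n,i} = \pi_{n,j}$ across the two $G_n$ factors, so it lies outside $\widetilde{\fX}_{\mathrm{RS}}$, and Corollary \ref{cor:twisted_equal_rank} (applied through the continuous extension of $\widetilde{Z}^{\mathrm{RS}}$ to $\cT_{\widetilde{\mathrm{RS}}}([G_n \times G_n])$, together with Lemma \ref{lemma: restriction to levi}) forces the corresponding contribution to vanish. Hence only $|\opn{Fix}(\pi)|$ terms survive.

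For each $\sigma \in \opn{Fix}(\pi)$ the Euler factorization \eqref{eq:Rankin_Selberg_Euler_equal_rank} gives
\[
    \widetilde{Z}^{\mathrm{RS}}\!\left(n+\tfrac12,E^{Q_n}(M(w_\sigma)\varphi),\Phi^{\flat}\right) = (\Delta_{G_n}^{\tS,*})^{-1} L^{\tS}(1,\Pi_{\sigma,n}^{\vee}\times \Pi_{\sigma,n}')\, \widetilde{Z}^{\mathrm{RS}}_{\tS}\!\left(n+\tfrac12,W^{M_{Q_n}}_{\tS},\Phi_{\tS}^{\flat}\right).
\]
Shahidi's identity \eqref{eq: Induction Jacquet functional} applied on $M_{Q_n}$ rewrites $W^{M_{Q_n}}_{\tS}$ as $L_{\tS}(1,w_\sigma\pi,\widehat{\fn}_{P_\sigma \cap M_{Q_n}}^-)^{-1}\,\Omega^{M_{Q_n}}_{\tS}(W^M_{M(w_\sigma)\varphi,\tS})$, and \eqref{eq: normalized intertwining operator}--\eqref{eq: intertwine Whittaker} convert $W^M_{M(w_\sigma)\varphi,\tS}$ into $n_\pi(w_\sigma,0)\, N_{\pi,\tS}(w_\sigma)W^M_{\varphi,\tS}$. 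Using the block decomposition of $\fn_{P_\sigma}^-$ into roots lying inside $M_{Q_n}$ and roots crossing the two $G_n$ factors, one obtains the factorization
\[
    L(s,\pi_\sigma,\widehat{\fn}_{P_\sigma}^-) = L(s,w_\sigma\pi,\widehat{\fn}_{P_\sigma \cap M_{Q_n}}^-)\cdot L(s,\Pi_{\sigma,n}^{\vee}\times \Pi_{\sigma,n}'),
\]
which, combined with the defining formula $n_\pi(w_\sigma,0) = L(1,w_\sigma \pi,\widehat{\fn}_{P_\sigma}^-)/L(1,\pi,\widehat{\fn}_P^-)$ and the self-dual Rankin--Selberg identity $L(s,T_\sigma\check X) = L(s,\Pi_{\sigma,n}^{\vee}\times \Pi_{\sigma,n}')L(s,\Pi_{\sigma,n}\times \Pi_{\sigma,n}'^{\vee})$, assembles the $L$-factors into exactly
\[
    L^{\tS}(1,T_\sigma\check X)\, L_{\tS}(1,\pi_\sigma,\widehat{\fn}_{P_\sigma}^-)\,L(1,\pi,\widehat{\fn}_P^-)^{-1}.
\]
The Tamagawa factor $\Delta_{G_n}^{\tS,*}$ combines with the $K_H$-integral and the Iwasawa factor from \S\ref{sssec:Iwasawa_decomposition} to produce $(\Delta_H^{\tS,*})^{-1}$, and the $K_H$-integral collapses onto the local zeta integral $Z_{\tS}(\lambda,\Phi_{\tS},\Omega^{M_{Q_n}}_{\tS}(N_{\pi,\tS}(\sigma)W^M_{\varphi,\tS}))$ by the definition of the latter.

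The main obstacle I expect is the careful $L$-factor bookkeeping in the final step: one must verify that the ratio $n_\pi(w_\sigma,0)$, the Shahidi denominator on $M_{Q_n}$, and the Rankin--Selberg $L^{\tS}$-factor conspire to produce the asserted combination $L^{\tS}(1,T_\sigma\check X)\,L_{\tS}(1,\pi_\sigma,\widehat{\fn}_{P_\sigma}^-)$ using the correct orientation of positive roots of $\fn_{P_\sigma}^-$ relative to the Levi $M_{Q_n}$. A secondary obstacle is making the vanishing of the non-$\opn{Fix}$ constant-term contributions fully rigorous: one needs to pass from the formal sum of Eisenstein pieces on $M_{Q_n}$ to the continuously extended functional on $\cT_{\widetilde{\mathrm{RS}}}([G_n \times G_n])$ and invoke the cuspidal-support projection argument used in the proof of Theorem \ref{thm:n_n} above.
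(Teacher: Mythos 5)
Your overall route coincides with the paper's: apply Theorem \ref{thm:n_n}(2)--(3) to reduce to a $K_H$-integral of twisted Rankin--Selberg zeta integrals of the constant term along $Q_n$, then insert the Euler decomposition \eqref{eq:Rankin_Selberg_Euler_equal_rank}, Shahidi's formula \eqref{eq: Induction Jacquet functional}, and the normalization \eqref{eq: normalized intertwining operator}--\eqref{eq: intertwine Whittaker}, and finish by $L$-factor bookkeeping. That much is right. However, there are two places where the proposal departs from what actually happens.

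First, your treatment of the constant term is confused. Because $\varphi$ is a \emph{cuspidal} section of $P$, the constant term formula along $Q_n$ is already a sum over $W(P;Q_n)$, not over all of ${}_{Q_n}W_P$: the terms corresponding to $w$ with $M_P\not\subseteq w^{-1}M_{Q_n}w$ are killed outright by cuspidality of $\varphi$ (they involve the constant term of $M(w)\varphi$ along a proper parabolic of $M_P$), not by any argument about twisted Rankin--Selberg regularity of their cuspidal datum on $M_{Q_n}$. Those "extra" terms do not exist, so there is nothing for Corollary \ref{cor:twisted_equal_rank} to make vanish. The genuine point that needs $\widetilde{\fX}_{\mathrm{RS}}$-membership is a different one: for each $w\in W(P;Q_n)$ and $k\in K_H$, one must observe $E^{Q_n}(M(w)R(k)\varphi)\in\cT_{\widetilde{\mathrm{RS}}}([M_{Q_n}])$ in order to apply the (extended, term-by-term) zeta functional $\widetilde Z^{\mathrm{RS}}(n+\tfrac12,\cdot,\cdot)$ to each summand of the constant term separately. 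That is where the regularity hypothesis enters, and it is a positive assertion about the surviving terms rather than a vanishing argument for nonexistent ones.

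Second, there is a genuine gap: you do not address the degenerate case in which the right-hand side of the asserted formula is formally $0\cdot\infty$. The paper disposes of this at the start of the argument: if there exist $i\ne j$ with $\pi_i^{\vee}=\pi_j$, then by Shahidi's computation the Whittaker function of $E^{Q_n}(M(w)R(k)\varphi)|_{[M_{Q_n}]}$ vanishes identically, so both sides are $0$, and one may henceforth assume $\pi_i^{\vee}\ne\pi_j$ for all $i\ne j$, which makes all the partial $L$-functions in the Euler decomposition well behaved at $s=1$. Without this reduction, the manipulations with $L(1,\pi,\widehat{\fn}_P^-)^{-1}$ and the Euler factorization are not justified. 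Finally, a minor slip in your factorization of $L(s,\pi_\sigma,\widehat{\fn}_{P_\sigma}^-)$: the "crossing" factor is $L(s,\Pi_{\sigma,n}\times\Pi_{\sigma,n}^{\prime,\vee})$, not $L(s,\Pi_{\sigma,n}^{\vee}\times\Pi_{\sigma,n}')$; these are distinct in general, and the paper pairs $L^{\tS}(1,\Pi_{\sigma,n}\times\Pi_{\sigma,n}^{\prime,\vee})$ from Shahidi with $L^{\tS}(1,\Pi_{\sigma,n}^{\vee}\times\Pi_{\sigma,n}')$ from the Euler decomposition to build $L^{\tS}(1,T_\sigma\check X)$.
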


Recall the $L$-function $L(s,\pi,\widehat{\fn}_P^-$ defined in \eqref{eq:L_function_on_n_P_-}.

\begin{proof}
 By the constant term formula for Eisenstein series, we have
\[
    (R(k)E(\varphi))_{Q_n}= \sum_{w \in W(P ; Q_n)}E^{Q_n}(M(w)R(k)\varphi), \quad k \in K_H.
\]
By Theorem \ref{thm:n_n}, we can write
\begin{equation}
    \begin{split}
    \cP^*(E(\varphi),\Phi)&= \int_{K_{H}}\widetilde{Z}^{\mathrm{RS}}(n+\frac12, {(R(k)E(\varphi))_{Q_n}},(R(k)\Phi)^\flat) \rd k  \\
    &= \sum_{w \in W(P; Q_n)}\int_{K_{H}}\widetilde{Z}^{\mathrm{RS}}(n+\frac12,  E^{Q_n}(M(w)R(k)\varphi),(R(k)\Phi)^\flat ) \rd k.
    \end{split}
\end{equation}
where the second equality holds because for each $w \in W(P;Q_n)$ and each $k \in K_H$, $E^{Q_n}(M(w)R(k)\varphi) \in \cT_{\widetilde{\mathrm{RS}}}([M_{Q_n}])$.

Assume that there exists $1 \le i < j \le k$ such that $\pi_i^{\vee} = \pi_j$, then by the computation of Fourier coefficient of Eisenstein series \cite[\S 4]{Shahidi}, the Whittaker function of $E^{Q_n}(M(w)R(k)\varphi)|_{[M_Q]}$ vanishes for any $k \in K_H$, therefore $\cP^*(E(\varphi),\Phi)$ vanishes. Therefore, from now on, we assume that $\pi_i^{\vee} \ne \pi_j$ for any $i \ne j$. In particular, for any finite subset $\tS$ of places of $F$, the partial $L$-function $L^{\tS}(s,\pi_i^{\vee} \times \pi_j)$ is regular (and non-vanishing) at $s=1$.

Let $\tS$ be a sufficiently large finite set of places of $F$, which we assume to contain Archimedean places as well as the places where $\Pi$ or $\psi$ is ramified. We also assume $\varphi$ is fixed by $K^\tS$ and $\Phi$ can be written as $\Phi= \Phi_\tS \Phi^\tS$, where $\Phi^\tS$ is the characteristic function of $\cO_{F, 2n}^{\tS}$ and $\Phi_\tS \in \cS(F_{\tS, 2n})$.

Note that there is a bijection between and $W(P; Q_n)$ and $\opn{Fix} (\pi)$, where each $w$ corresponds to the $\sigma$ such that $w M_P w^{-1}= M_{P, \sigma}$. In the following, we fix an arbitrary $w \in W(P; Q_n)$, and corresponding $\sigma \in \opn{Fix}(\pi)$. It's clear that under this correspondence, one can identify the representation $w\pi$ of $wM_P w^{-1}$ with the representation $\pi_\sigma$ of $M_{P, \sigma}$. 

Note that the restriction of $E^{Q_n}(M(w)R(k)\varphi)$ to $[M_{Q_n}]$ belongs to $\lvert \cdot \rvert^{\frac{n}{2}}\Pi_{\sigma,n} \boxtimes \lvert \cdot \rvert^{-\frac{n}{2}}\Pi_{\sigma,n}^\prime$, then it follows from \eqref{eq:Rankin_Selberg_Euler_equal_rank} and \eqref{eq:n_n_zeta_integral_1} that
\begin{equation} \label{eq:n_n_Eisenstein_1}
    \int_{K_H} \widetilde{Z}^{\mathrm{RS}}(n+\frac12, (R(k)\Phi)^\flat,  E^{Q_n}(M(w)R(k)\varphi) ) \rd k =(\Delta_{H}^{\tS, *})^{-1}L^\tS(1,\Pi_{\sigma,n}^{\vee} \times \Pi_{\sigma,n}')  Z_{\tS}(0,W^{M_{Q_n}}_{M(w)E(\varphi),\tS},\Phi_{\tS}).
\end{equation}

By \eqref{eq: Induction Jacquet functional} and \eqref{eq: intertwine Whittaker}, we have
\begin{equation*}
    \begin{split}
     W^{M_{Q_n}}_{E^{Q_n}(M(w)\varphi),\tS}
     &=\frac{1}{L^{\tS}(1,\pi_{\sigma,n}, \widehat{\fn}_{P_{\sigma,n}}^{-}) L^{\tS}(1,\pi'_{\sigma,n}, \widehat{\fn}_{P'_{\sigma,n}}^{-})}  \frac{L(1,\pi_{\sigma},\widehat{\fn}_{P_{\sigma}}^-)}{L(1,\pi,\widehat{\fn}_P^-)} \Omega^{M_{Q_n}}_\tS(N_{\pi,\tS}(w)W^{M_P}_{\varphi,\tS}) \\
     &= \frac{L^{\tS}(1,\Pi_{\sigma,n} \times \Pi^{\prime,\vee}_{\sigma,n})}{L(1,\pi,\widehat{\fn}_P^-)} L_{\tS}(1,\pi_{\sigma},\widehat{\fn}_{P_{\sigma}}^-) \Omega^{Q_n}_\tS(N_{\pi,\tS}(w)W^{M_P}_{\varphi,\tS})
     \end{split}
\end{equation*}

Therefore we can write the left hand side of \eqref{eq:n_n_Eisenstein_1} as
\begin{equation*}
    (\Delta_{H}^{\tS,*})^{-1} \frac{L^\tS(1, \Pi_{\sigma, n}^\vee \times \Pi_{\sigma, n}^\prime) L^{\tS}(1,\Pi^{\vee}_{\sigma,n} \times \Pi'_{\sigma,n})}{L(1,\pi,\widehat{\fn}_P^-)} L_{\tS}(1,\pi_{\sigma},\widehat{\fn}_{P_{\sigma}}^-) Z_{\tS}(0, \Omega^{Q_n}_\tS(N_{\pi,\tS}(w)W^{M_P}_{\varphi,\tS}),\Phi).
\end{equation*}
This finishes the proof
\end{proof}

\section{Periods detecting $(n,n+m)$-Eisenstein series}
\label{sec:higher_corank}

\subsection{Statement of the main results}

\subsubsection{Notations}
\label{sssec:n_m_notation}
 In \S \ref{sec:higher_corank}, fix integers $n \ge 0$ and $m \ge 1$. Let $G = G_{2n+m}$ and let $H = \Sp_{2n}$. We regard $H$ as the subgroup $\begin{pmatrix}
    h &  \\ & 1
\end{pmatrix}, h \in H$ of $G$. We will study period related to the quadruple $\Delta := \Delta_{n,m} = (G,H,0,\iota_{n,m})$, where $\iota_{n,m}: \SL_2 \to G$ is the representation $\mathbf{1}^{n+1} \oplus \opn{Sym}^{m-1}$ of $\SL_2$. 

 Let $N = N_{2n+m}$ denote the upper triangular unipotent subgroup of $G$ and let $N^H := N \cap H$.

 For $0 \le r \le n$, let $P_r := P_r^{n,m}$ be the parabolic subgroup of of $G$ whose Levi component is isomorphic to $G_1^{n-r} \times G_{2r} \times G_1^{n+m-r}$. Let $P_r^H := P_r \cap H$, it is a parabolic subgroup of $H$ whose unipotent radical is $N_r^H := N_r \cap H$. The Levi component of $P_r^H$ is $\Sp_{2r} \times G_1^{n-r}$.

Let $\cP_{2r}$ denote the mirabolic subgroup of $\GL_{2r}$, it consists of elements of $\GL_{2r}$ with last row $(0,\cdots,0,1)$.  Let $\cP^H_{2r} := \cP_{2r} \cap \Sp_{2r}$. We regard $\Sp_{2r}$ as the subgroup $\begin{pmatrix}
    1_r & & \\ & h & \\ & & 1_r
\end{pmatrix}$ of $H$, where $h \in \Sp_{2r}$. We hence regard $\cP^H_{2r}$ as a subgroup of $H$ via the embedding $\cP^H_{2r} \subset \Sp_{2r} \subset H$.

Let $\psi_n$ denote the degenerate character 
\begin{equation*}
    N(\bA) \ni u \mapsto \psi \left( \sum_{\substack{1 \le i \le n+m-1 \\ i \ne n }} u_{i,i+1} \right)
\end{equation*}
of $N(\bA)$ which is trivial on $N(F)$. 

We also denote by $N_{n+1}$ the unipotent radical of the parabolic $P_{n+1}$ of $G$ whose Levi component is $G_{n+1} \times G_1^{n+m-1}$.

For $1 \le r \le n+1$, We write $\psi_{N_r}$ for the restriction of $\psi_n$ to $N_r(\bA)$. For $f \in \cT([G])$, we put
\begin{equation*}
    f_{N_r,\psi}(g) := \int_{[N_r]} f(ug) \psi_{N_r}^{-1}(u) \rd u.
\end{equation*}

\subsubsection{The period}

For $f \in \cS([G])$, we define the period $\cP := \cP_{\Delta}$ on $\cS([G])$ by
\begin{equation*}
    \cP(f) = \int_{[H]} f_{N_{n+1},\psi}(h) \rd h.
\end{equation*}
By Lemma \ref{lemma: constant term}, the integral
\begin{equation*}
    \int_{[H]} \int_{[N_{n+1}]} \lvert f(nh) \rvert \rd n \rd h
\end{equation*}
is absolutely convergent. Hence the integral defining $\cP(f)$ is absolutely convergent.

\subsubsection{Zeta integral}
 For $f \in \cT([G])$, we associate the \emph{degenerate Whittaker coefficient}
\begin{equation*}
    V_f(g) = \int_{[N]} f(ug) \psi_n^{-1}(u) \rd u.
\end{equation*}

Note that $V_f(g) = f_{N_0,\psi}(g)$. Let $Q_{n}$ denote the parabolic subgroup of $G$ of type $(n,n+m)$. For $f \in \cT([G])$, and for $\lambda \in \fa_{Q_n,\C}^*$, we set
\begin{equation*}
    Z(\lambda,f) = \int_{N_{H}(\bA) \backslash H(\bA)} V_f(h) e^{\langle \lambda,H_{Q_n}(h) \rangle} \rd h,
\end{equation*}
provided by the integral is absolutely convergent. Note that $Z(\lambda,f)$ only depends on $s_{\lambda} := \langle \lambda, \alpha^{\vee} \rangle \in \C$.

\begin{lemma} \label{lem:n_m_convergence_1}
    We have the following statements:
    \begin{enumerate}
        \item for any $\lambda \in \fa_{Q_n,\C}^*$, the integral defining $Z(\lambda,f)$ is absolutely convergent, and it defines an entire function on $\fa_{Q_n,\C}^*$,
        \item for any $\lambda \in \fa_{Q_n,\C}^*$, the map $f \mapsto Z(\lambda,f)$ is continuous on $\cS([G])$.
    \end{enumerate}
\end{lemma}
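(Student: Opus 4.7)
The argument runs in parallel with the proof of Lemma \ref{lemma: every lambda convergence} in the $(n,n)$ case; the only genuine new feature is that, in the absence of an auxiliary $\Theta$-series $\Phi$, the decay supplied there by $\lvert \Phi(a_1^{-1} e_{2n} k) \rvert$ must here be extracted entirely from the Schwartz Fourier-coefficient bound. First, applying the Iwasawa decomposition $H(\bA) = P_0^H(\bA) K_H$, I rewrite
\[
    Z(\lambda, f) = \int_{K_H} \int_{(\bA^\times)^n} V_{R(k)f}(D(a)) \, e^{\langle \lambda, H_{Q_n}(D(a)) \rangle} \, \delta_{P_0^H}(D(a))^{-1} \, \rd a_1 \cdots \rd a_n \, \rd k,
\]
where $D(a) := \opn{diag}(a_1, \ldots, a_n, a_n^{-1}, \ldots, a_1^{-1})$ is embedded in $G$ through the upper-left $2n \times 2n$ block. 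Direct computation then gives $e^{\langle \lambda, H_{Q_n}(D(a)) \rangle} = \prod_i \lvert a_i \rvert^{s_\lambda}$ (up to sign), $\delta_{P_0^H}(D(a)) = \prod_i \lvert a_i \rvert^{2n-2i+2}$, and the ambient modular character $\delta_{P_0}(D(a)) = \prod_i \lvert a_i \rvert^{4n-4i+2}$, where $P_0$ is the Borel of $G$. Remarkably, the $m$-dependence of the last quantity cancels, so the relevant exponents match those appearing in the $(n,n)$ setting.

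The core estimate comes from applying Lemma \ref{lemma: estimate Fourier coeff} \eqref{Schwartz Fourier coefficient} to $V_f = f_{N, \psi_n}$, with $\psi_n = \psi \circ \ell$ for $\ell : u \mapsto \sum_{i \ne n} u_{i,i+1}$. A short calculation yields
\[
    \lVert \opn{Ad}^*(D(a)^{-1}) \ell \rVert_{V_{P_0}(\bA)} \gg \lVert a_1^{-1} \rVert_{\bA} + \sum_{i=1}^{n-1} \lVert a_i a_{i+1}^{-1} \rVert_{\bA},
\]
where the summand $\lVert a_1^{-1} \rVert_{\bA}$ arises from the simple-root coordinate $u_{2n, 2n+1}$. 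Although $\psi_n$ omits the slot $u_{n, n+1}$, these remaining roots telescopically control each $a_i^{-1}$: for every $N_3 > 0$ there exists $N_1 > 0$ such that
\[
    \lVert a_1^{-1} \rVert_{\bA}^{-N_1} \prod_{i=1}^{n-1} \lVert a_i a_{i+1}^{-1} \rVert_{\bA}^{-N_1} \ll \prod_{i=1}^n \lVert a_i^{-1} \rVert_{\bA}^{-N_3}.
\]
Combined with the $\delta_{P_0}(D(a))^{-cN_2} \lVert D(a) \rVert^{-N_2}$ factor from the Schwartz bound, the integrand is dominated, up to a continuous semi-norm of $f$ on $\cS([G])$, by a product over $i = 1, \ldots, n$ of
\[
    \int_{\bA^\times} \lVert a_i^{-1} \rVert_{\bA}^{-N_3} \, \lVert a_i \rVert_{G_1}^{-2N_2} \, \lvert a_i \rvert^{s_\lambda - (2n-2i+2) - (4n-4i+2)cN_2} \, \rd a_i.
\]

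For any fixed $\lambda$, choosing $N_2$ sufficiently large and then $N_3$ correspondingly large places each single-variable integral within the convergence range of Corollary \ref{cor:A_norm_integral}, with $\lVert a \rVert_{G_1} \ll \max(\lvert a \rvert^M, \lvert a \rvert^{-M})$ handling the auxiliary norm factor. Since the resulting bound is locally uniform in $\lambda$, absolute convergence and holomorphy of $Z(\lambda, f)$ on all of $\fa^*_{Q_n, \C}$ follow by a standard Morera/Fubini argument, proving (1); the continuity assertion (2) is then built into the Schwartz-semi-norm dependence of the bound. The only point requiring care is verifying that the degeneracy of $\psi_n$---i.e.\ the omission of the $u_{n, n+1}$ slot---does not obstruct gaining decay in $a_n$, and this is settled by the telescoping identity above. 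No essentially new ideas beyond those of Lemma \ref{lemma: every lambda convergence} are required.
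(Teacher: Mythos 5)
Your proof is correct and is precisely the parallel argument the paper points to when it states that the lemma is proved as in \S\ref{ssec:n_n_convergence} and leaves the details to the reader. You correctly isolate the single genuine change: the decay in $\lVert a_1^{-1}\rVert_{\bA}$, which in the $(n,n)$ case is extracted from the $\Theta$-series factor $\lvert\Phi(a_1^{-1}e_{2n}k)\rvert$, is here supplied by the $u_{2n,2n+1}$ root slot of $\psi_n$ via Lemma \ref{lemma: estimate Fourier coeff} (available precisely because $m\ge 1$; incidentally, the upper bound on $i$ in the definition of $\psi_n$ in \S\ref{sssec:n_m_notation} should read $2n+m-1$ rather than $n+m-1$, as in the introduction, and your $\ell$ implicitly uses the corrected range), while the $m$-independence of $\delta_{P_0}(D(a))=\prod_i\lvert a_i\rvert^{4n+2-4i}$ that you compute leaves the exponent bookkeeping identical to the equal-rank case.
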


\begin{lemma} \label{lem:n_m_convergence_2}
    Let $N>0$, then there exists $c_N>0$ such that
    \begin{enumerate}
        \item The integral defining $Z(\lambda,f)$ is absolutely convergent when $f \in \cT_{N}([G])$ and $\mathrm{Re}(s_{\lambda}) > c_N$, and defines a holomorphic function of $\lambda$ on the region $\mathrm{Re}(s_{\lambda})>c_N$.
        \item For fix $\lambda$ such that $\mathrm{Re}(s_{\lambda})>c_N$. The map $\cT_N([G]) \ni f  \mapsto Z(\lambda,f) \in \C$ is continuous.
    \end{enumerate}
\end{lemma}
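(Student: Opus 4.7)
The argument follows the pattern of the proof of Lemma \ref{lemma: T convergence} from the $(n,n)$-setting, adapted to the absence of a Schwartz factor $\Phi$. After the Iwasawa decomposition $H(\bA) = P_0^H(\bA) K_H$ and the torus parametrization $D(a) = \mathrm{diag}(a_1, \ldots, a_n, a_n^{-1}, \ldots, a_1^{-1}, 1_m)$, I would rewrite
\[
  Z(\lambda, f) = \int_{K_H} \int_{(\bA^\times)^n} V_f(D(a)k) \cdot |a_1 \cdots a_n|^{-s_\lambda} \cdot \prod_{i=1}^n |a_i|^{-(2n-2i+2)} \, da \, dk,
\]
since $e^{\langle \lambda, H_{Q_n}(D(a))\rangle} = |a_1 \cdots a_n|^{-s_\lambda}$ (by the same sign convention as in Section 5) and $\delta_{P_0^H}(D(a))^{-1} = \prod_i |a_i|^{-(2n-2i+2)}$.

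Next I would apply Lemma \ref{lemma: estimate Fourier coeff}(2) to $V_f$, obtaining $|V_f(D(a)k)| \ll \|\mathrm{Ad}^*(D(a)^{-1})\ell\|^{-N_1} \|D(a)\|^N$ for arbitrary $N_1$, where $\ell \colon N \to \mathbb{G}_a$ linearizes $\psi_n$ and is supported on $S := \{1, \ldots, n-1\} \cup \{n+1, \ldots, n+m-1\}$. Since $\mathrm{Ad}^*(D(a)^{-1})\ell$ carries the coordinate $d_i/d_{i+1}$ on $u_{i,i+1}$ and the affine norm satisfies $\|(x_i)_{i\in S}\|^{|S|} \gg \prod_{i\in S} \|x_i\|_{\bA}$, one extracts, after enlarging $N_1$, the ratio bounds $\|a_j/a_{j+1}\|_\bA^{-N_2}$ for $j = 1, \ldots, n-1$, together with (when $m \ge n+1$) the seed bound $\|a_1^{-1}\|_\bA^{-N_2}$ coming from the coordinate $i = 2n \in S$. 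Combining these with the telescoping inequality $\|a_j/a_{j+1}\|^{-N_1}\,\|a_i^{\pm 1}\|^{-N_1} \gg \|a_i^{\pm 1}\|^{-N_2}$ for adjacent indices reduces the integral to a product of one-dimensional Tate integrals $\int_{\bA^\times} \|x\|^{-N_3} |x|^{t_i} d^\times x$ with $t_i$ affine in $\mathrm{Re}(s_\lambda)$; Corollary \ref{cor:A_norm_integral} then provides absolute convergence once $c_N$ is chosen large enough that $\mathrm{Re}(s_\lambda) > c_N$ places every $t_i$ in the admissible window. Holomorphicity in $\lambda$ on the same region follows from dominated convergence and Morera's theorem, and the continuous dependence on $f \in \cT_N([G])$ is inherited from the continuous semi-norm in Lemma \ref{lemma: estimate Fourier coeff}(2).

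\textbf{Main obstacle.} The principal technical difficulty is producing the single-variable seed bound needed to telescope the character-derived ratio bounds into individual estimates on each $\|a_i\|$. In the $(n,n)$ setting this was supplied by the factor $\Phi(a_1^{-1}e_{2n}k)$; in our present setting it is supplied automatically by $\psi_n$ via the coordinate $u_{2n, 2n+1}$ precisely when $m \ge n+1$. For the regime $1 \le m \le n$, however, neither Lemma \ref{lemma: estimate Fourier coeff}(2) nor the weight $|a_1\cdots a_n|^{-s_\lambda}$ produces decay in any single $a_i$, and an additional input is required. I would expect to circumvent this by first performing an exchange-of-roots expansion in the spirit of Lemma \ref{lem:exchange_of_root}, rewriting $V_f$ as an absolutely convergent integral over $\mathrm{Mat}_{(m-1) \times n}(\bA)$ of a Fourier coefficient attached to a more generic character whose Lemma \ref{lemma: estimate Fourier coeff}(2)-bound now supplies the missing seed. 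The convergence of this auxiliary matrix integration would then have to be tracked as in Lemma \ref{lem:exchange_of_root_convergence}, and I anticipate that this is where the real technical work of the proof will lie.
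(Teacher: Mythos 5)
Your overall strategy — Iwasawa decomposition $H(\bA) = P_0^H(\bA)K_H$, Lemma \ref{lemma: estimate Fourier coeff}(2) applied to $V_f$, telescoping the character-derived ratio bounds into single-variable decay in each $\|a_i\|$, then Corollary \ref{cor:A_norm_integral} — is exactly the proof the paper has in mind when it delegates the argument as ``parallel to \S\ref{ssec:n_n_convergence},'' and the holomorphicity and continuity claims follow as you say.

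The ``main obstacle'' you describe, however, is a phantom, caused by a typo in \S\ref{sec:higher_corank}. The displayed definition of $\psi_n$ there reads $\sum_{1 \le i \le n+m-1,\, i \ne n} u_{i,i+1}$, but the intended upper limit is $2n+m-1$, as one can see from the formula in the introduction
\[
    \psi_n(u) = \psi(u_{1,2}+\cdots+u_{n-1,n}+u_{n+1,n+2}+\cdots+u_{2n+m-1,2n+m}),
\]
and as one needs in order for $\psi_n$ to restrict to the generic character of $N \cap M_{Q_n}$ in the Iwasawa step of \S\ref{ssec:n_m_proof}. With the correct support set $S = \{1,\ldots,n-1\} \cup \{n+1,\ldots,2n+m-1\}$ the index $i = 2n$ lies in $S$ whenever $m \ge 1$ and $n \ge 1$, i.e.\ always in the setting of \S\ref{sec:higher_corank}, not only when $m \ge n+1$. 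Since $D(a) = \opn{diag}(a_1,\ldots,a_n,a_n^{-1},\ldots,a_1^{-1},1_m)$ gives $d_{2n}/d_{2n+1} = a_1^{-1}$, the seed bound $\|a_1^{-1}\|_{\bA}^{-N_1}$ is always available from Lemma \ref{lemma: estimate Fourier coeff}(2); it plays exactly the role that $|\Phi(a_1^{-1}e_{2n}k)|$ plays in the $(n,n)$ case of Lemma \ref{lemma: T convergence}. Your first paragraph therefore already constitutes a complete proof for every $m \ge 1$; the exchange-of-roots detour is unnecessary, and in any case Lemma \ref{lem:exchange_of_root} concerns a unipotent subgroup of $G_{n+m}$ rather than the Whittaker coefficient over the full unipotent of $G_{2n+m}$, so it would not apply directly as you suggest.
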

The proof of the two lemmas is parallel to proofs given in \S \ref{ssec:n_n_convergence}, so we leave it to the readers.

\subsubsection{$\Delta$-regular cuspidal datum}

Let $\chi \in \fX(G)$ be a cuspidal data, let $\chi^{M_{Q_n}}$ be the preimage of $\chi$ in $\fX(M_{Q_n}) = \fX(\GL_n \times \GL_{n+m})$. We say that $\chi$ is \emph{$\Delta$-regular}, if for any $\chi' \in \chi^{M_{Q_n}}$ is twisted Rankin-Selberg regular in the sense of \ref{sssec:twisted_higher_corank}. We remark that $\Delta$ here stands for the quadruple defined in \S \ref{sssec:n_m_notation}. Note that any regular cuspidal data is $\Delta$-regular.

Let $\fX_{\Delta} \subset \fX(G)$ denote the set of $\Delta$-regular cuspidal data. We write $\cS_{\Delta}([G])$ (resp. $\cT_{\Delta}([G])$) for $\cS_{\fX_{\Delta}}([G])$ (resp. $\cT_{\fX_{\Delta}}([G])$).

\subsubsection{Main results}

\begin{theorem} \label{thm:n_m}
    We have the following statements
    \begin{enumerate}
        \item The restriction of $\cP$ to $\cS_{\Delta}([G])$ extends (uniquely) by continuity to a functional $\cP^{*}$ on $\cT_{\Delta}([G])$.
        \item For any $f \in \cT_{\Delta}([G])$, the map $\lambda \mapsto Z(\lambda,f)$ extends to an entire function in $\lambda \in \fa^*_{{Q_n},\C}$. Indeed, for any $k \in K_H$, $(R(k)f)_{Q_n}|_{[G_{n} \times G_{n+m}]} \in \cT_{\widetilde{\mathrm{RS}}}([G_n \times G_{n+m}])$, and we have
        \begin{equation}
            Z(\lambda,f) = \int_{K_H} \widetilde{Z}^{\mathrm{RS}}(s_{\lambda}+n+1,(R(k)f)_{Q_n}) \rd k,
        \end{equation}
        here $(R(k)f)_{Q_n}$ means $(R(k)f)_{Q_n}|_{[G_n \times G_{n+m}]}$
        \item We have
        \begin{equation*}
            \cP^{*}(f) = Z(0,f).
        \end{equation*}
    \end{enumerate}
\end{theorem}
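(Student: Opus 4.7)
The plan is to mirror the strategy of the $(n,n)$ case treated in \S\ref{ssec:n_n_proof}, substituting the twisted higher corank Rankin-Selberg results from \S\ref{ssec:twisted_higher_corank} for their equal-rank counterparts. The central ingredient is the unfolding identity $\cP(f) = Z(0,f)$ for $f \in \cS_\Delta([G])$, which is Proposition~\ref{prop:n_m_unfolding} referenced in the introduction; granting it, the extension, holomorphy and continuity assertions follow by an Iwasawa decomposition together with Corollary~\ref{cor:twisted_higher_corank}.

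For the unfolding, my approach would be to introduce a chain of intermediate zeta integrals
\[
    Z_r(f) = \int_{\cP^H_{2r}(F) N^H_r(\bA) \backslash H(\bA)} f_{N_r,\psi}(h) \rd h,
\]
arranged so that the chain begins at $Z(0,f)$ (when the unipotent coefficient is along the full $N$) and terminates at $\cP(f)$ (when the coefficient is along $N_{n+1}$). Absolute convergence of each $Z_r$ on $\cS([G])$ can be checked along the lines of Lemma~\ref{lemma: Z_r convergence}. At each stage, Fourier expansion along a compact abelian quotient $U_{r+1}(\bA)/U_{r+1}(F)U^H_{r+1}(\bA)$, with $U_{r+1}$ defined as in the proof of Proposition~\ref{prop:n_n_unfolding}, yields
\[
    Z_{r+1}(f) = Z_r(f) + F_r(f),
\]
where $F_r(f)$ is an integral of the constant term of $f$ along the standard parabolic $R_r$ with Levi $G_{n-r} \times G_{2r} \times G_{n+m-r}$, containing an inner $[\Sp_{2r}]$-integration against the trivial character.

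The crux, and the main obstacle, is the vanishing $F_r(f)=0$ for $f \in \cS_\Delta([G])$. After Iwasawa this reduces to proving
\[
    \int_{[\Sp_{2r}]} f_{R_r}\!\begin{pmatrix} 1_{n-r} & & \\ & h & \\ & & 1_{n+m-r} \end{pmatrix} \rd h = 0
\]
for every $f \in \cS_\chi([G])$ with $\chi \in \fX_\Delta$. The mechanism is that of Corollary~\ref{cor: constant term vanish}: cut off $f_{R_r}$ by $\kappa \circ H_{R_r}$ to place it in $\cS_\chi([G]_{R_r})$, use Lemma~\ref{lemma: restriction to levi} and Lemma~\ref{lemma: restriction to component} to transfer the cuspidal support to the middle $G_{2r}$-factor, and then apply Offen's theorem~\ref{thm:symplectic_period}. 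The combinatorial heart is to verify that $\Delta$-regularity in the form \eqref{eq:intro_3} forces every cuspidal datum appearing on $G_{2r}$ to be non-even: a hypothetical even datum would pair cuspidal blocks of $\pi$ into coincidences $\pi_i = \pi_j$, and the resulting partition of $\{1,\dots,k\}$, combined with the outer factors $G_{n-r}$ and $G_{n+m-r}$, could be extended to a subset $I$ with $\sum_{i \in I} n_i = n$ contradicting \eqref{eq:intro_3}.

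Granting the unfolding identity, the remaining assertions are obtained as in \S\ref{ssec:n_n_proof}. For $\mathrm{Re}(s_\lambda) \gg 0$, the Iwasawa decomposition $H(\bA) = Q_n^H(\bA) K_H$ together with the identity $V_f(g',g) = W^{M_{Q_n}}_{f_{Q_n}}(g',g)$ on $M_{Q_n}(\bA)$ yields
\[
    Z(\lambda,f) = \int_{K_H} \widetilde{Z}^{\mathrm{RS}}\!\bigl(s_\lambda + n + 1,\, (R(k)f)_{Q_n}\bigr) \rd k.
\]
For $f \in \cT_\Delta([G])$ and any $k \in K_H$, Lemma~\ref{lemma: constant term} and the definition of $\Delta$-regularity ensure $(R(k)f)_{Q_n}|_{[G_n \times G_{n+m}]} \in \cT_{\widetilde{\mathrm{RS}}}([G_n \times G_{n+m}])$. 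Corollary~\ref{cor:twisted_higher_corank} then supplies an entire extension of $\widetilde{Z}^{\mathrm{RS}}(s,\cdot)$ in $s$ with the required continuity, and Lemma~\ref{lemma: composition holomorphic} together with Lemma~\ref{lemma: integrate compact} allow these properties to pass through the integration over $K_H$. Setting $\cP^*(f) := Z(0,f)$ defines a continuous functional on $\cT_\Delta([G])$, and the unfolding identity on the dense subspace $\cS_\Delta([G])$ shows that $\cP^*$ extends $\cP$, completing the proof.
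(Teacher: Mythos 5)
Your proposal is correct and follows essentially the same route as the paper: it introduces the chain of intermediate zeta integrals $Z_r(f)$, establishes $Z_{r+1} = Z_r + F_r$ by Fourier expansion along the compact abelian quotient of $U_{r+1}$, kills $F_r$ via Corollary~\ref{cor: constant term vanish} and Offen's theorem, and then deduces the extension/holomorphy statements from Iwasawa decomposition, Corollary~\ref{cor:twisted_higher_corank}, Lemma~\ref{lemma: composition holomorphic} and Lemma~\ref{lemma: integrate compact}. Your elaboration of the combinatorial step (taking $I$ to be the $G_{n-r}$ blocks together with one half of a hypothetical even pairing on the $G_{2r}$ factor, so that $\sum_{i \in I} n_i = (n-r) + r = n$ while $\pi_i = \pi_{\sigma(i)}$ splits across $I$ and $I^c$) is a valid and slightly more explicit account of what the paper leaves to the reader.
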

The proof of the Proposition will be given in \S \ref{ssec:n_m_proof}.

\subsection{Unfolding} \label{ssec:n_m_unfolding}

In \S \ref{ssec:n_m_unfolding}, we show the following result:

\begin{proposition} \label{prop:n_m_unfolding}
    For any $f \in \cS_{\Delta}([G])$, we have
    \begin{equation*}
        \cP(f) = Z(0,f).
    \end{equation*}
\end{proposition}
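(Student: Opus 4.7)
The plan is to prove the identity by an inductive unfolding argument, in close analogy with the proof of Proposition \ref{prop:n_n_unfolding} carried out in \S\ref{ssec:n_n_unfolding}. The strategy is to introduce a family of intermediate zeta integrals $\{Z_r(f)\}$ interpolating between $Z(0,f)$ and $\cP(f)$, and at each step show that the difference is a "trivial Fourier coefficient" which can be forced to vanish by $\Delta$-regularity. A natural choice is, for $1 \le r \le n$,
\[
    Z_r(f) := \int_{\cP_{2r}^H(F) N_r^H(\bA) \backslash H(\bA)} f_{N_r,\psi}(h) \, \rd h,
\]
together with $Z_0(f) := Z(0,f)$ and a boundary identification $Z_n(f) = \cP(f)$ carried out via one additional unfolding that passes from the unipotent radical $N_n$ (of Levi type $G_{2n} \times G_1^m$) to $N_{n+1}$ (of Levi type $G_{n+1} \times G_1^{n+m-1}$). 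Absolute convergence of each $Z_r(f)$ on $\cS([G])$ is verified by Iwasawa decomposition $H(\bA) = P_r^H(\bA) K_H$ and the decay estimate of Lemma \ref{lemma: estimate Fourier coeff} on Schwartz Fourier coefficients, as in the proof of Lemma \ref{lemma: Z_r convergence}. In contrast with \S\ref{sec:n_n}, where the theta function $\Theta(\cdot,\Phi)$ was needed to provide decay in the "$e_{2n}h$"-direction, the present case $m \ge 1$ obtains this decay for free from the extra Whittaker entries $u_{n+1,n+2} + \cdots + u_{2n+m-1,2n+m}$ of $\psi_n$.

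The core inductive step $Z_r(f) = Z_{r+1}(f)$ for $0 \le r \le n-1$ proceeds via Fourier expansion on the compact abelian group $U_{r+1}(\bA)/U_{r+1}(F)U_{r+1}^H(\bA)$, where $U_{r+1}$ is the unipotent radical of the parabolic of $\GL_{2r+2}$ of Levi type $(1,2r,1)$, embedded into the middle $G_{2r+2}$-block of $G$. The non-trivial characters are parametrised by $\cP_{2r}^H(F) \backslash \Sp_{2r}(F)$ and, combined with the relation $N_{r+1} \cdot U_{r+1} = N_r$, collapse to exactly $Z_{r+1}(f)$. This produces
\[
    Z_r(f) = Z_{r+1}(f) + F_r(f),
\]
where the trivial-character contribution
\[
    F_r(f) = \int_{\Sp_{2r}(F) N_r^H(\bA) \backslash H(\bA)} (f_{N_{r+1},\psi})_{U_{r+1}}(h) \, \rd h
\]
must be shown to vanish. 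Absolute convergence of $F_r(f)$ is checked in the same spirit as Lemma \ref{lemma: F_r converge}.

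The crucial remaining point is $F_r(f) = 0$ for $f \in \cS_\Delta([G])$. Let $R_r$ be the standard parabolic of $G$ with Levi $G_{n-r} \times G_{2r} \times G_{n+m-r}$. After Iwasawa decomposition of $H(\bA)$ along $R_r \cap H$, the integral $F_r(f)$ reduces to an integral over $[\Sp_{2r}]$ (embedded in the middle $G_{2r}$-factor) of $f_{R_r}$. Corollary \ref{cor: constant term vanish} provides the vanishing \emph{provided} that for every $\chi' \in \chi^{M_{R_r}}$ the projection of $\chi'$ onto the $G_{2r}$-factor is not even. This is precisely where $\Delta$-regularity enters: $\Delta$-regularity of $\chi$ amounts to twisted Rankin--Selberg regularity of the preimage $\chi^{M_{Q_n}}$ in $\fX(G_n \times G_{n+m})$; any even cuspidal datum $\tau \boxtimes \tau$ on the middle $G_{2r}$-block would split across the $(n, n+m)$-divide of $M_{Q_n}$ and force a common cuspidal factor $\tau$ to appear on both sides, contradicting twisted Rankin--Selberg regularity.

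The main obstacle I expect is precisely this last combinatorial step: one has to carefully track how cuspidal data on $M_{R_r}$ restrict under the natural map to $M_{Q_n}$, and check that evenness on the middle $G_{2r}$-block really does force a matching cuspidal factor on both sides of $M_{Q_n}$. Once this is in place, chaining the equalities $Z(0,f) = Z_0(f) = Z_1(f) = \cdots = Z_n(f) = \cP(f)$ completes the argument.
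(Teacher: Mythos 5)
Your proposal is correct and follows essentially the same line of argument as the paper: the same chain of intermediate zeta integrals $Z_r$, the same Fourier expansion along $U_{r+1}(\bA)/U_{r+1}(F)U_{r+1}^H(\bA)$ producing $Z_r = Z_{r+1} + F_r$, and the same appeal to Corollary~\ref{cor: constant term vanish} after Iwasawa decomposition along $R_r^H$ to kill the constant-term contribution $F_r$. The point you flag as the ``main obstacle'' — that an even cuspidal datum on the middle $G_{2r}$-block can always be split (using the Weyl group built into the equivalence of cuspidal data) so that some cuspidal factor straddles the $(n,n+m)$-divide of $M_{Q_n}$, contradicting twisted Rankin--Selberg regularity — is precisely the combinatorial verification the paper delegates to its $(n,n)$ analogue, and your resolution of it is correct.
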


\subsubsection{More zeta integrals}

For $f \in \cS([G])$, we put 
\begin{equation*}
    Z_r(f) = \int_{N_r^H(\bA)\cP_{2r}^H(F) \backslash H(\bA)} f_{N_r.\psi_r}(h) \rd h.
\end{equation*}
Note that $Z_0(f) = Z(0,f)$.

\begin{proposition}
    For any $f \in \cS([G])$, the integral defining $Z_r(f)$ is absolutely convergent.
\end{proposition}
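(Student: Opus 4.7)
The plan is to follow the proof of the analogous Lemma \ref{lemma: Z_r convergence} from the $n,n$ setting given in \S \ref{sssec:n_n_convergence_zeta_integra}. The essential conceptual point is that, in the $n,m$ case with $m \ge 1$, we no longer have a Schwartz function $\Phi$ in the integrand; its role is now played by the trailing entries of the character $\psi_n$ supported on the $G_1^m$ block of the Levi, most importantly by the entry $u_{2n,2n+1}$.

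First I would treat the generic range $1 \le r \le n - 1$. Applying the Iwasawa decomposition $H(\bA) = P_r^H(\bA) K_H$ and parameterizing a typical Levi element by $D(a_1, \ldots, a_{n-r}, h) = \opn{diag}(a_1, \ldots, a_{n-r}, h, a_{n-r}^{-1}, \ldots, a_1^{-1}, 1_m)$ with $h \in \Sp_{2r}(\bA)$, the integral $Z_r(f)$ rewrites as
\begin{equation*}
\int_{K_H} \int_{(\bA^\times)^{n-r}} \int_{\cP_{2r}^H(F) \backslash \Sp_{2r}(\bA)} \lvert f_{N_r,\psi}(D(a_1, \ldots, a_{n-r}, h) k) \rvert \, \delta_{P_r^H}(D)^{-1} \, dh \, d\underline{a} \, dk.
\end{equation*}
Writing $\psi_{N_r} = \psi \circ l$ with $l(u) = \sum_{i=1}^{n-r} u_{i,i+1} + \sum_{i=n+r}^{2n+m-1} u_{i,i+1}$, a direct computation analogous to the one in the $n,n$ proof yields
\begin{equation*}
\|\opn{Ad}^*(D^{-1}) l\|_{V_{P_r}, \bA} \gg \max\bigl\{ \|a_i a_{i+1}^{-1}\|_{\bA},\ \|a_{n-r} e_{2r} h\|_{\bA_{2r}},\ \|a_1^{-1}\|_{\bA} \bigr\},
\end{equation*}
where the crucial new factor $\|a_1^{-1}\|_{\bA}$ arises from the entry $u_{2n,2n+1}$: position $2n$ is scaled by $a_1^{-1}$ under $D$ while position $2n+1$ is fixed by $D$. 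Applying Lemma \ref{lemma: estimate Fourier coeff}\eqref{Schwartz Fourier coefficient} then produces a bound on $\lvert f_{N_r,\psi}(Dk) \rvert$ structurally identical to the one in \S \ref{sssec:n_n_convergence_zeta_integra}, with the $\|a_1^{-1}\|_{\bA}^{-N_1}$ factor from $l$ now playing the role previously played by the Schwartz decay $\lvert \Phi(a_1^{-1} e_{2n} k) \rvert$.

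The remainder is the same telescoping bookkeeping: one reduces to the convergence of $\int_{\cP_{2r}^H(F) \backslash \Sp_{2r}(\bA)} \|h\|_{G_{2r}}^{-N_2} \|e_{2r} h\|_{\bA_{2r}}^{-N_3} dh$, handled by folding over $\cP_{2r}^H(F) \backslash \Sp_{2r}(F) \cong F_{2r} \setminus \{0\}$ together with Lemma \ref{Theta moderate}, and of Tate-type integrals $\int_{\bA^\times} \|a_i^{-1}\|_{\bA}^{-N_3} \lvert a_i \rvert^{-\alpha_i(N_2)} \, d a_i$ handled by Corollary \ref{cor:A_norm_integral}, all convergent for $N_3 \gg N_2 \gg 0$. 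The degenerate boundary case $r = n$ (where $P_n^H = H$ and no $a_i$-directions appear) is handled by directly applying Lemma \ref{lemma: estimate Fourier coeff}\eqref{Schwartz Fourier coefficient} to the character $l(u) = u_{2n,2n+1} + \cdots + u_{2n+m-1,2n+m}$; using the symplectic identity relating $(h^{-1})_{2n,\bullet}$ to the first column of $h$, one obtains $\|\opn{Ad}^*(h^{-1}) l\|_{V_{P_n},\bA} \gg \|h e_1\|_{\bA_{2n}}$, and the convergence of the resulting integral on $\cP_{2n}^H(F) \backslash H(\bA)$ again reduces to a $\Theta$-type estimate via Lemma \ref{Theta moderate}.

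The main obstacle, and essentially the only substantive new input beyond the $n,n$ argument, is the identification of $u_{2n, 2n+1}$ as the source of the missing $\|a_1^{-1}\|_{\bA}^{-N}$ decay (respectively $\|h e_1\|_{\bA_{2n}}^{-N}$ for $r = n$) that was previously supplied by $\Phi$; once this is in place, every subsequent step is a routine translation of the corresponding computation in \S \ref{sssec:n_n_convergence_zeta_integra}.
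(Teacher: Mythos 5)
Your reconstruction captures exactly the right conceptual point: in the $(n,n+m)$ setting the entry $u_{2n,2n+1}$ of $\psi_{N_r}$ takes over the role that $\Phi(a_1^{-1}e_{2n}k)$ played in the $(n,n)$ proof, supplying the $\|a_1^{-1}\|_{\bA}^{-N_1}$ seed for the telescoping chain $\|a_1^{-1}\|\,\|a_1a_2^{-1}\|\,\cdots\,\|a_{n-r}e_{2r}h\|$. (Your ``$\max$'' should really be a product, but since $\|\opn{Ad}^*(D^{-1})l\|$ dominates each factor and one is free to raise it to any power, this is harmless.) The paper itself omits the argument, saying only that it follows the proof of Lemma \ref{lemma: Z_r convergence}, and your writeup is the natural implementation of that instruction, so the two are essentially the same.

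One small correction in the $r=n$ boundary case. With the convention $\opn{Ad}^*(m^{-1})l = l\circ\opn{Ad}(m)$ forced by the $(n,n)$ estimate (so that $u_{i,i+1}$ contributes $a_ia_{i+1}^{-1}$, not its inverse), the entry $u_{2n,2n+1}$ of $l$ transformed by $D=\opn{diag}(h,1_m)$ has coefficient vector $e_{2n}h$ (the last \emph{row} of $h$), not $(h^{-1})_{2n,\bullet}$; the detour through the symplectic identity is unnecessary and in fact lands you on the wrong vector $he_1$ (first column). This matters because the subsequent folding over $\cP_{2n}^H(F)\backslash\Sp_{2n}(F)\cong e_{2n}\Sp_{2n}(F)=F_{2n}\setminus\{0\}$ produces $\sum_{v\in F_{2n}\setminus\{0\}}\|vh\|^{-N_3}$, which is exactly the $\Theta$-series estimate of Lemma \ref{Theta moderate}; with $\|\gamma h e_1\|$ the summand is not well defined on the left cosets of $\cP_{2n}^H(F)$ and the folding does not close up. Replacing $\|he_1\|$ with $\|e_{2n}h\|$ fixes this and aligns perfectly with what happens at $u_{n+r,n+r+1}$ in the $r<n$ range (where the paper records $\|a_{n-r}e_{2r}h\|$), so the overall argument goes through unchanged.
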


\begin{proof}
    The proof of the proposition follows the same line of the proof of Lemma \ref{lemma: Z_r convergence}, and we omit the proof.
\end{proof}

\subsubsection{}

Proposition \ref{prop:n_m_unfolding} will directly follow from the following lemma.

\begin{lemma} \label{lem:n_m_unfolding}
    For any $f \in \cS_{\Delta}([G])$, we have
    \begin{equation*}
         Z_0(f) = Z_1(f) = \cdots = Z_n(f) = \cP(f).
    \end{equation*}
\end{lemma}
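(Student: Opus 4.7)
The plan is to adapt the unfolding argument behind Proposition~\ref{prop:n_n_unfolding} to the $(n, n+m)$-setting, the only genuinely new ingredient being a bookkeeping check that the required cuspidal-support vanishing still holds. For each $0 \le r \le n-1$ I would Fourier-expand $f_{N_{r+1},\psi}$ along a compact abelian quotient $U_{r+1}^H(\bA) U_{r+1}(F)\backslash U_{r+1}(\bA)$, where $U_{r+1}$ is the analogue of the abelian unipotent used in the proof of Lemma~\ref{lemma: F_r converge}, but with the right-hand $G_{n+m-r}$ block replacing $G_{n-r}$. The non-trivial characters of this quotient are permuted simply transitively by $\cP_{2r}^H(F)\backslash \Sp_{2r}(F)$, and collapsing the resulting orbit sum against the outer integral rebuilds $Z_r(f)$. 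This yields the formal identity
\[
Z_{r+1}(f) = Z_r(f) + F_r(f), \qquad F_r(f) = \int_{\Sp_{2r}(F)N_r^H(\bA)\backslash H(\bA)} (f_{N_{r+1},\psi})_{U_{r+1}}(h)\,\rd h.
\]
Absolute convergence of $F_r(f)$ is established by the same estimates as in Lemma~\ref{lemma: F_r converge}: Lemma~\ref{lemma: estimate Fourier coeff} controls the Fourier coefficient by a rapid-decay weight in the torus entries, and Corollary~\ref{cor:A_norm_integral} absorbs the remaining factors after Iwasawa decomposition.

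The vanishing $F_r(f) = 0$ is the crux of the argument. Writing $(f_{N_{r+1},\psi})_{U_{r+1}}$ as a generic Fourier coefficient of the constant term $f_{R_r}$ along the standard parabolic $R_r$ of $G$ with Levi $G_{n-r} \times G_{2r} \times G_{n+m-r}$, and using the Iwasawa decomposition $H(\bA) = R_r^H(\bA)K_H$, $F_r(f) = 0$ reduces to
\[
\int_{[\Sp_{2r}]} f_{R_r}\!\begin{pmatrix} 1_{n-r} & & \\ & h & \\ & & 1_{n+m-r} \end{pmatrix} \rd h = 0, \quad f \in \cS_\chi([G]),\ \chi \in \fX_\Delta.
\]
This is exactly the conclusion of Corollary~\ref{cor: constant term vanish}, provided I verify that for every $\chi' \in \chi^{M_{R_r}}$ the $G_{2r}$-component of $\chi'$ is not even.

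Verifying this non-evenness is the main obstacle, and it is where $\Delta$-regularity is genuinely used. The key point is that the standard parabolic $R_r$ is contained in $Q_n$: any element of $\chi^{M_{R_r}}$ refines (up to Weyl conjugation) to an element of $\chi^{M_{Q_n}}$, and the boundary between the $G_n$ and $G_{n+m}$ factors of $M_{Q_n}$ bisects the middle $G_{2r}$-block of $M_{R_r}$ into two $G_r$-halves. If the $G_{2r}$-component of some $\chi' \in \chi^{M_{R_r}}$ were even, its paired cuspidal factors would necessarily straddle the $G_n$/$G_{n+m}$ boundary, producing a common irreducible constituent on the two sides of some element of $\chi^{M_{Q_n}}$, in violation of the twisted Rankin--Selberg regularity built into $\fX_\Delta$. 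This is the same ``one can check directly'' argument used in the $(n,n)$ case, and I would spell it out by combinatorial bookkeeping on unordered multisets of cuspidal representations.

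Finally, the identity $Z_n(f) = \cP(f)$ is obtained by one more application of the same Fourier-expansion recipe, this time along the abelian quotient attached to the transition $N_n \leadsto N_{n+1}$: the orbit sum now rebuilds the full integral $\int_{[H]} f_{N_{n+1},\psi}(h)\,\rd h = \cP(f)$, and the trivial-character remainder vanishes by Corollary~\ref{cor: constant term vanish} applied to the parabolic with Levi $G_{n-1}\times G_2\times G_{n+m-1}$, again after unwinding $\Delta$-regularity into non-evenness of the resulting $G_2$-component. Chaining the $n$ inductive equalities $Z_r(f) = Z_{r+1}(f)$ with this last identity yields the lemma.
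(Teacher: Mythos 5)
Your proposal follows the same overall strategy as the paper: for $0\le r\le n-1$, Fourier-expand along a small abelian quotient to obtain $Z_{r+1}(f)=Z_r(f)+F_r(f)$, verify absolute convergence of $F_r$ by the estimates in Lemma~\ref{lemma: F_r converge}, and show $F_r(f)=0$ by reducing to the vanishing of the $\Sp_{2r}$-period of $f_{R_r}$ and invoking Corollary~\ref{cor: constant term vanish} via a non-evenness check. This is exactly what the paper does. However, the argument you give for the non-evenness check contains a concrete error.

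The assertion ``the standard parabolic $R_r$ is contained in $Q_n$'' is false for $r\ge 1$: $M_{R_r}$ has block sizes $(n-r,\,2r,\,n+m-r)$, and the middle $G_{2r}$ block occupies positions $\{n-r+1,\dots,n+r\}$, which straddles the boundary at $n$ defining $M_{Q_n}=G_n\times G_{n+m}$. Consequently it is also not automatic that ``any element of $\chi^{M_{R_r}}$ refines to an element of $\chi^{M_{Q_n}}$''; a partition $I_1\sqcup I_2\sqcup I_3$ of $\{1,\dots,k\}$ with $\sum_{I_2}n_i=2r$ need not admit a sub-partition of $I_2$ of weight $r$. What saves the argument is precisely the \emph{evenness} you are trying to rule out: if the $G_{2r}$-component of $\chi'$ were even, the index set $I_2$ carries a perfect matching into pairs $(a,b)$ with $n_a=n_b$ and $\pi_a\cong\pi_b$; choosing $I_2^L$ to be one member of each pair gives $\sum_{I_2^L}n_i=r$, hence $J:=I_1\cup I_2^L$ satisfies $\sum_J n_i=n$, and any pair $(a,b)$ then has $\pi_a\cong\pi_b$ with $a\in J$, $b\in J^c$, contradicting $\Delta$-regularity of $\chi$. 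So the deduction should start from the pairing structure, not from a containment of parabolics. You should also revisit your description of the terminal step $Z_n(f)=\cP(f)$: the remainder there is controlled by the constant term along the parabolic $R_n$ (essentially $P_n$, with Levi $G_{2n}\times G_1^m$) and the $\Sp_{2n}$-period, not by the parabolic with Levi $G_{n-1}\times G_2\times G_{n+m-1}$, which belongs to the $r=1$ step.
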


\begin{proof}
    It suffices to prove $Z_n(f) = \cP(f)$ and $Z_r(f) = Z_{r+1}(f)$ for any $0 \le r \le n-1$. We prove the latter, and the former one follows from a similar argument.

    Let $r \ge 1$, we denote by $U_r$ the unipotent radical of the parabolic subgroup of $\GL_{2r}$ with Levi component $G_1 \times G_{2r-2} \times G_1$, which we regard $U_r$ as the subgroup $\begin{pmatrix}
        1_{n-r} & & \\ & u & \\ & & 1_{n+m-r}
    \end{pmatrix}, u \in U_r$ of $G$. Let $U_r^H := U_r \cap H$. By an abuse of notation, we write $\psi$ for the character $u \mapsto \psi(u_{12}+u_{2r-1,2r})$ of $U_r(\bA)$.

    Using Fourier analysis on the compact abelian group  $U_{r+1}(\bA)/U_{r+1}^H(\bA)U_{r+1}(F)$, we can write
    \begin{equation*}
        \int_{[U_{r+1}^H]} f_{N_{r+1},\psi}(uh) \rd h = (f_{N_{r+1},\psi})_{U_{r+1}} + \sum_{\gamma \in \cP^H_{2r}(F) \backslash H_{2r}(F)} (f_{N_{r+1},\psi})_{U_{r+1},\psi}.
    \end{equation*}
    where
    \begin{equation*}
    \begin{split}
    	&(f_{N_{r+1},\psi})_{U_r}(g) = \int_{[U_r]} f_{N_{r+1},\psi}(ug) \rd u, \\
    	&(f_{N_{r+1},\psi})_{U_r,\psi}(g) = \int_{[U_r]} f_{N_{r+1},\psi}(ug) \psi(u) \rd u = f_{N_r,\psi}(g).
    \end{split}
    \end{equation*}
    
    Therefore, we formally have
    \begin{equation} \label{eq:n_m_unfolding_1}
    	Z_{r+1}(f) = Z_r(f) + F_r(f),
    \end{equation} 
    where
    \begin{equation*}
    	F_r(f) = \int_{\Sp_{2r}(F)N_r(\bA) \backslash H(\bA)} (f_{N_{r+1},\psi})_{U_r}(h) \rd h.
    \end{equation*}
    
    To verify \eqref{eq:n_m_unfolding_1}, we need to show that the integral defining $F_r(f)$ is absolutely convergent. The proof follows the same line of the proof of Lemma \ref{lemma: F_r converge}, and we omit the proof. Therefore, we are reduced to show that for $0 \le r \le n-1$, we have $F_r(f) = 0$.
    
    Let $R_r$ denote the parabolic subgroup of $G$ with Levi component $G_{n-r} \times G_{2r} \times G_{n+m-r}$. $V_k$ denote the upper triangular unipotent subgroup of $G_k$. Write $R_r^H := R_r \cap H$. Using Iwasawa decomposition $H(\bA)  = R_r^H(\bA) K_H$, we can write the integral defining $F_r(f)$ as
    \begin{equation*}
        \begin{split}
    	F_r(f) =  &\int_{V_{n-r}(\bA) \backslash G_{n-r}(\bA)}  \int_{[\Sp_{2r}]} \int_K  \int_{[V_{n-r}]} \int_{[V_{n+m-r}]} f_{R_r} \left(.\begin{pmatrix} u_1g . & & \\ & h &  \\ & & u_2 \begin{pmatrix} J_{n-r} {}^t g^{-1} J_{n-r} & \\ & 1_{m}  \end{pmatrix}   \end{pmatrix}    k \right) \\
        &\delta_{R_r^H}^{-1} \begin{pmatrix}. g & & \\ & h & \\ & & J_{n-r} {}^tg^{-1}J_{n-r}  \end{pmatrix}^{-1} \rd u_1 \rd u_2 \rd k \rd h \rd g.
        \end{split}
    \end{equation*}
    
    Therefore, the vanishing of $F_r(f)$ is implied by
    \begin{equation} \label{eq:n_m_unfolding_2}
    	\int_{[\Sp_{2r}]} f_{R_r} \begin{pmatrix} 1_{n-r} & & \\ & h & \\ & & 1_{n+m-r}  \end{pmatrix} \rd h
    \end{equation}
    vanishes for any $f \in \cS_{\Delta}([G])$. This follows from Corollary \ref{cor: constant term vanish}.
\end{proof}

\subsection{Proof of Theorem \ref{thm:n_m}}
\label{ssec:n_m_proof}

Let $\fX_{\Delta}^{M_{Q_n}}$ denote the preimage of $\fX_{\Delta}$ in $\fX(M_{Q_n})$, then we have $\fX_{\Delta}^{M_{Q_n}} \subset \fX_{\widetilde{\mathrm{RS}}}(M_{Q_n})$ 

Therefore, it follows from Lemma \ref{lemma: constant term}, for any $k \in K_H$ and $f \in \cT_{\Delta}([G])$, we have $R(k)f|_{[M_{Q_n}]} \in \cT_{\widetilde{\mathrm{RS}}}([M^{Q_n}])$.

Using Iwasawa decomposition as in \eqref{eq:n_n_proof_1}, we see that for any $f \in \cT_{\Delta}([G])$, the equality
\begin{equation} \label{eq:n_m_proof_1}
    Z(\lambda,f) = \int_{K_H} \widetilde{Z}^{\mathrm{RS}}(s_\lambda+n+1,((R(k)f)_{Q_n})|_{[M_{Q_n}]}) \rd k
\end{equation}
holds when $\opn{Re}(s_\lambda) \gg 1$.

By Corollary \ref{cor:twisted_higher_corank}, for $f' \in \cT_{\widetilde{\mathrm{RS}}}([G])$, $\widetilde{Z}^{\mathrm{RS}}(s,f')$ has holomorphic continuation to $s \in \C$ and is continuous in $f'$. Therefore, $\widetilde{Z}^{\mathrm{RS}}(s_{\lambda}+n+1,(R(k)f)_{Q_n})|_{[M_{Q_n}]}$ is defined for any $\lambda \in \fa_{Q_n,\C}^*$. We argue as in \S \ref{ssec:n_n_proof} that the right hand side of \eqref{eq:n_m_proof_1} is holomorphic in $\lambda$, and for any $\lambda \in \fa_{Q_n,\C}^*$, $f \mapsto Z(\lambda,f)$ is continuous in $f \in \cT_{\Delta}([G])$. Therefore (2) is proved.

By Proposition \ref{prop:n_m_unfolding}, the functional $Z(0,\cdot)$ on $\cT_{\Delta}([G])$ coincides with $\cP$ on the dense subspace $\cS_{\Delta}([G])$. Therefore $f \mapsto Z(0,f)$ provides an extension of $\cP$ to $\cT_{\Delta}([G])$, (1) and (3) then follow.

\subsection{Periods of $\Delta$-regular Eisenstein series}
\label{ssec:n_m_Eisenstein}

\subsubsection{Local zeta integral}

Fix a place $v$ of $F$, let $\Pi_M = \Pi_n \boxtimes \Pi_{n+m}$ be an irreducible generic representation of $M_{Q_n}(F_v)$. For $W^M \in \opn{Ind}_{Q_n(F_v)}^{G(F_v)} \cW(\Pi_{M},\psi_v)$ and $\lambda \in \fa_{Q_n,\C}^*$, we define
\[
    Z_v(\lambda, W^M)= \int_{N_H(F_v)\backslash H(F_v)} W^M(h) e^{\langle \lambda, H_{Q_n}(h) \rangle} \rd h,
\]
provided by the integral is absolutely convergent.

\begin{lemma} \label{lem:n_m_zeta_integarl}
     For any $W^M \in \opn{Ind}_{Q_n(F_v)}^{G(F_v)}$, the integral defining $Z_v(\lambda,W^M)$ is absolutely convergent when $\opn{Res}(s_{\lambda}) \gg 0$ and has a meromorphic continuation to $\fa_{Q_n,\C}^*$.
\end{lemma}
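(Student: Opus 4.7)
The proof will closely follow the pattern of Lemma \ref{lem:n_n_zeta_integarl}, which handles the corresponding assertion in the $(n,n)$ case. First I would apply the Iwasawa decomposition $H(F_v) = Q_n^H(F_v) K_{H,v}$ to the defining integral and unfold the integration over $Q_n^H(F_v)$ against the section $W^M$. Since $W^M$ transforms on the left under $Q_n(F_v)$ according to $\delta_{Q_n}^{1/2}$ times the Whittaker model of $\Pi_M$, this formally yields the identity
\begin{equation*}
Z_v(\lambda, W^M) = \int_{K_{H,v}} \int_{N_n(F_v) \backslash G_n(F_v)} (R(k)W^M)\begin{pmatrix} J{}^tg^{-1}J & \\ & \begin{pmatrix} g & \\ & 1_m \end{pmatrix} \end{pmatrix} \lvert \det g \rvert^{s_\lambda + n + 1} \, \rd g \, \rd k,
\end{equation*}
which, with the notation of \S \ref{sssec:twisted_higher_corank} applied to $R(k)W^M|_{M_{Q_n}(F_v)} \in \cW(\lvert\cdot\rvert^{n/2}\Pi_n \boxtimes \lvert\cdot\rvert^{-n/2}\Pi_{n+m}, \psi_v)$, becomes
\begin{equation} \label{eq:local_unfold}
Z_v(\lambda, W^M) = \int_{K_{H,v}} \widetilde{Z}^{\mathrm{RS}}_v\bigl(s_\lambda + n + 1,\, R(k)W^M|_{M_{Q_n}(F_v)}\bigr) \, \rd k.
\end{equation}

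Next I would establish absolute convergence for $\opn{Re}(s_\lambda) \gg 0$. By the convergence results of Jacquet--Piatetski-Shapiro--Shalika \cite{JPSS} in the non-Archimedean case and of Jacquet \cite{Jacquet09} in the Archimedean case, the inner integral $\widetilde{Z}^{\mathrm{RS}}_v(s+n+1, W)$ converges absolutely for $\opn{Re}(s) \gg 0$ with a bound that is continuous in $W$. Compactness of $K_{H,v}$ and continuity of $k \mapsto R(k)W^M|_{M_{Q_n}(F_v)}$ in the smooth topology of the induced Whittaker model then yield a uniform bound in $k$, justifying absolute convergence of the original integral and of \eqref{eq:local_unfold}.

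For the meromorphic continuation I would separate the two cases. When $v$ is non-Archimedean, $W^M$ is right invariant under some open compact subgroup of $K_{H,v}$, so \eqref{eq:local_unfold} reduces to a finite sum of local twisted Rankin-Selberg integrals, each of which has meromorphic continuation to $\C$ by \cite{JPSS}; hence $Z_v(\lambda, W^M)$ does as well. When $v$ is Archimedean, I would invoke Jacquet's theorem \cite[Theorem 2.3]{Jacquet09}, which asserts that
\begin{equation*}
W \in \cW\bigl(\lvert\cdot\rvert^{n/2}\Pi_n \boxtimes \lvert\cdot\rvert^{-n/2}\Pi_{n+m},\psi_v\bigr) \;\longmapsto\; \left( s \mapsto \frac{\widetilde{Z}^{\mathrm{RS}}_v(s+n+1, W)}{L_v(s-n+1, \Pi_n^\vee \times \Pi_{n+m})} \right) \in \cO(\C)
\end{equation*}
is continuous. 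Composing with the continuous map $k \mapsto R(k)W^M|_{M_{Q_n}(F_v)}$ gives a continuous map $K_{H,v} \to \cO(\C)$, and Lemma \ref{lemma: integrate compact} then guarantees that the integral over the compact set $K_{H,v}$ is itself entire in $s$. Multiplying back by $L_v(s-n+1, \Pi_n^\vee \times \Pi_{n+m})$ yields the desired meromorphic continuation of $Z_v(\lambda, W^M)$ to $\fa_{Q_n,\C}^*$.

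The one genuinely nontrivial input is the Archimedean continuity of the normalized Rankin-Selberg integral, but this is precisely the result of Jacquet that was already used in the proof of Lemma \ref{lem:n_n_zeta_integarl}, so there is no additional obstacle beyond careful bookkeeping of the shift $s \mapsto s_\lambda + n + 1$ and the unramified twist $\lvert\cdot\rvert^{\pm n/2}$ that appears in the identification of $W^M|_{M_{Q_n}(F_v)}$ with an element of the appropriate Whittaker model.
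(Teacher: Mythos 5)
Your proposal follows precisely the strategy the paper intends (the paper omits this proof, explicitly pointing to the parallel proof of Lemma~\ref{lem:n_n_zeta_integarl}): Iwasawa decomposition along $Q_n^H(F_v)K_{H,v}$ to arrive at the identity \eqref{eq:n_m_zeta_integral_1}, convergence from Jacquet--Piatetski-Shapiro--Shalika/Jacquet, a finite-sum argument at non-Archimedean places, and at Archimedean places the continuity of the normalized local Rankin--Selberg integral combined with Lemma~\ref{lemma: integrate compact}. There is one bookkeeping slip worth flagging: since $\delta_{Q_n}^{1/2}$ for the $(n,n+m)$ parabolic of $G_{2n+m}$ is $\lvert\det\rvert^{(n+m)/2}\boxtimes\lvert\det\rvert^{-n/2}$ (not the symmetric $\lvert\cdot\rvert^{\pm n/2}$ of the $(n,n)$ case), the restriction $R(k)W^M|_{M_{Q_n}(F_v)}$ lies in $\cW(\lvert\cdot\rvert^{(n+m)/2}\Pi_n\boxtimes\lvert\cdot\rvert^{-n/2}\Pi_{n+m},\psi_v)$, and carrying the shift $s\mapsto s+n+1$ through the entire quotient of \S\ref{sssec:twisted_higher_corank} gives the normalizer $L_v(s+1,\Pi_n^\vee\times\Pi_{n+m})$ rather than $L_v(s-n+1,\Pi_n^\vee\times\Pi_{n+m})$. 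This numerical correction does not affect the argument, since only the existence of \emph{some} $L$-factor making the quotient entire (and the continuity of the quotient map) is used.
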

We omit the proof which is parallel to the proof of Lemma \ref{lem:n_n_zeta_integarl}. By Iwasawa decomposition, we can write
\begin{equation} \label{eq:n_m_zeta_integral_1}
    Z_v(\lambda,W^M) = \int_{K_H} \widetilde{Z}_v^{\mathrm{RS}}(s_{\lambda}+n+1,(R(k)W^M)|_{M_{Q_n}(F_v)})
\end{equation}

\subsubsection{Fixed points}

Let $P=M_PN_P$ be a standard parabolic subgroup, we write $M_P$ as  $G_{n_1}\times \cdots \times G_{n_k}$. Let $\pi= \pi_1 \boxtimes \cdots \boxtimes \pi_k$ be a cuspidal unitary automorphic representation of $M_P$ (central character not necessarily trivial on $A_M^{\infty}$) such that the cuspidal datum $\chi$ represented by $(M_P, \pi_0)$ is $\Delta$-regular.

Recall the set $\opn{Fix}(\pi)$ defined in \ref{sssec:intro_precise}. 

\begin{enumerate}
    \item $P_{\sigma}$ the standard parabolic subgroup of $G_{2n+m}$ with $M_{P_{\sigma}} = G_{n_{\sigma(1)}} \times \cdots \times G_{n_{\sigma(k)}}$,
    \item $P_{\sigma,n}$ (resp. $P_{\sigma, n+m}$) the standard parabolic subgroup of $G_n$ (resp. $G_{n+m}$) with Levi subgroup $G_{n_{\sigma(1)}} \times \cdots \times G_{n_{\sigma(t)}}$ (resp. $G_{n_{\sigma(t+1)}} \times \cdots \times G_{n_{\sigma(k)}}$),
    \item $\pi_{\sigma} = \pi_{\sigma(1)} \boxtimes \cdots \boxtimes \pi_{\sigma(k)}$, which is a cuspidal automorphic representation of $M_{P_\sigma}$,
    \item $\pi_{\sigma,n} = \pi_{\sigma(1)} \boxtimes \cdots \boxtimes \pi_{\sigma(t)}$ and $\pi_{\sigma,n+m} = \pi_{\sigma(t+1)} \boxtimes \cdots \boxtimes \pi_{\sigma(k)}$.
    \item $\Pi_{\sigma,n} = \opn{Ind}_{P_{\sigma,n}(\bA)}^{G_n(\bA)} \pi_{\sigma,n}$ and $\Pi_{\sigma,n+m} = \opn{Ind}_{P_{\sigma,n+m}(\bA)}^{G_{n+m}(\bA)} \pi_{\sigma,n+m}$.
\end{enumerate}

For $\sigma \in \opn{Fix}(\pi)$, we put
\begin{equation*}
    L(s,T_{\sigma} \check{X}) := L(s,\Pi_{\sigma,n}^{\vee} \times \Pi_{\sigma,n+m}) L(s,\Pi_{\sigma,n} \times \Pi^{\vee}_{\sigma,n+m}).
\end{equation*}

\subsubsection{Periods of Eisenstein series}

Let $\varphi \in \Pi= \opn{Ind}_{P(\bA)}^{G_{2n}(\bA)} \pi= \cA_{P, \pi}$ and write $E(\varphi)(g)=E(g, \varphi, 0)$ for the Eisenstein series of $\varphi$. Note that $E(\varphi) \in \cT_{\Delta}([G])$. 

\begin{theorem} \label{thm:n_m_Eisenstein}
    Let $\tS$ be a sufficiently large finite set of places of $F$, that contains Archimedean places and the places where $\Pi$ or $\psi$ is ramified. We also assume that $\varphi$ is fixed by $K^{\tS}$, and we decompose $W^{M_P}_{\varphi}$ as $W^{M_P}_{\varphi}=W^{M_P}_{\varphi,\tS} W^{M_P,\tS}_{\varphi} $. Then period $\cP^{*}(E(\varphi))$ is equal to
    \begin{equation} \label{eq:n_m_Eisenstein}
       (\Delta_H^{\tS,*})^{-1} L(1,\pi,\widehat{\fn}_P^-)^{-1} \sum_{\sigma \in \opn{Fix}(\pi)} L^{\tS}(1,\Pi,T_\sigma \check{X}) L_{\tS}(1,\pi_{\sigma},\widehat{\fn}_{P_{\sigma}}^-) Z_{\tS}(0,\Omega^{M_{Q_n}}_{\tS}(N_{\pi,\tS}(\sigma)W_{\varphi,\tS}^{M_P}).
    \end{equation}
\end{theorem}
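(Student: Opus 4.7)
The plan is to mimic the proof of Theorem \ref{thm:n_n_Eisenstein} from Section \ref{sec:n_n}, with the (twisted, corank $0$) Rankin--Selberg results of Section \ref{ssec:Rankin_Selberg_equal_rank} replaced by their corank--$m$ counterparts from Corollary \ref{cor:twisted_higher_corank} and the Euler decomposition \eqref{eq:Rankin_Selberg_Euler}. First I would apply Theorem \ref{thm:n_m}(2) to rewrite
\[
    \cP^{*}(E(\varphi)) = Z(0,E(\varphi)) = \int_{K_H} \widetilde{Z}^{\mathrm{RS}}\!\left(n+1,\, (R(k)E(\varphi))_{Q_n}\right) \rd k,
\]
where the inner argument is restricted to $[M_{Q_n}] = [G_n \times G_{n+m}]$, and it lies in $\cT_{\widetilde{\mathrm{RS}}}([M_{Q_n}])$ by the $\Delta$-regularity assumption together with Lemma \ref{lemma: constant term}.

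Next, I would apply the Langlands constant term formula
\[
    (R(k)E(\varphi))_{Q_n} = \sum_{w \in W(P;Q_n)} E^{Q_n}(M(w) R(k) \varphi),
\]
and note that $W(P;Q_n)$ is canonically in bijection with $\opn{Fix}(\pi)$: to $\sigma$ we associate the unique $w \in W(P;Q_n)$ with $w M_P w^{-1} = M_{P_\sigma}$. Since each summand $E^{Q_n}(M(w)R(k)\varphi)$ restricted to $[M_{Q_n}]$ is a cuspidal Eisenstein section of $\Pi_{\sigma,n} \boxtimes \Pi_{\sigma,n+m}$ (after the appropriate unramified twist), I can interchange sum and integral and treat each $\sigma$ separately. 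If moreover $\pi_i^{\vee} = \pi_j$ for some $i \neq j$, the computation of the Whittaker coefficient of $E^{Q_n}(M(w)\varphi)$ \`a la Shahidi forces vanishing of every term, so I may assume all ratios $L(s,\pi_i^{\vee}\times\pi_j)$ are regular at $s=1$, and the formula is then vacuous on that side.

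Fix $\sigma \in \opn{Fix}(\pi)$. I would now combine three inputs in turn:
\begin{enumerate}
    \item The Euler factorisation \eqref{eq:Rankin_Selberg_Euler} at a sufficiently large $\tS$ gives
    \[
        \widetilde{Z}^{\mathrm{RS}}\!\left(n+1,\, E^{Q_n}(M(w)R(k)\varphi)\right)
        = (\Delta_{G_n}^{\tS,*})^{-1}\, L^{\tS}(1,\Pi_{\sigma,n}^{\vee}\times\Pi_{\sigma,n+m})\, \widetilde{Z}^{\mathrm{RS}}_{\tS}(n+1,W^{M_{Q_n}}_{E^{Q_n}(M(w)R(k)\varphi),\tS}).
    \]
    \item The induction identity \eqref{eq: Induction Jacquet functional} expresses the left-hand Whittaker function as
    \[
        W^{M_{Q_n}}_{E^{Q_n}(M(w)\varphi),\tS} = L^{\tS}(1,\pi_{\sigma},\widehat{\fn}_{P_{\sigma}}^{Q_n,-})^{-1}\, \Omega^{M_{Q_n}}_{\tS}\!\left(W^{M_{P_{\sigma}}}_{M(w)\varphi,\tS}\right).
    \]
    \item The normalised-intertwining identity \eqref{eq: intertwine Whittaker} combined with \eqref{eq: normalized intertwining operator} converts $W^{M_{P_{\sigma}}}_{M(w)\varphi,\tS}$ to
    \[
         n_{\pi}(w,0)\, N_{\pi,\tS}(w)\, W^{M_P}_{\varphi,\tS}
         \;=\;\frac{L(1,\pi_{\sigma},\widehat{\fn}_{P_{\sigma}}^-)}{L(1,\pi,\widehat{\fn}_P^-)}\, N_{\pi,\tS}(w)\, W^{M_P}_{\varphi,\tS}.
    \]
\end{enumerate}
Integrating $\widetilde{Z}^{\mathrm{RS}}_{\tS}$ against the Iwasawa decomposition over $K_{H,\tS}$ as in \eqref{eq:n_m_zeta_integral_1} rebuilds the local zeta integral $Z_{\tS}(0,\Omega^{M_{Q_n}}_{\tS}(N_{\pi,\tS}(\sigma)W^{M_P}_{\varphi,\tS}))$. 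Using the identity $L(\cdot)/L^{\tS}(\cdot)=L_{\tS}(\cdot)$ and the factorisation $L(1,\pi_{\sigma},\widehat{\fn}_{P_{\sigma}}^-) = L(1,\pi_{\sigma},\widehat{\fn}_{P_{\sigma}}^{Q_n,-})\cdot L(1,\Pi_{\sigma,n}\times \Pi_{\sigma,n+m}^{\vee})$, the unramified $L$-factor $L^{\tS}(1,\Pi_{\sigma,n}^{\vee}\times\Pi_{\sigma,n+m})$ from (1) combines with the second factor of this splitting to produce exactly $L^{\tS}(1,T_{\sigma}\check X)$, as required.

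The one step that requires a genuine verification rather than bookkeeping is the $L$-factor accounting: showing that the various $L$-factors produced in (1)--(3), together with the global-vs.-partial decomposition of $n_\pi(w,0)$, combine cleanly to give the stated expression \eqref{eq:n_m_Eisenstein} with the prefactor $(\Delta_H^{\tS,*})^{-1}L(1,\pi,\widehat{\fn}_P^{-})^{-1}$ and the archimedean/ramified factor $L_{\tS}(1,\pi_\sigma,\widehat{\fn}_{P_\sigma}^-)$. Granting that $(\Delta_{G_n}^{\tS,*})^{-1}$ coming from \eqref{eq:Rankin_Selberg_Euler} matches $(\Delta_H^{\tS,*})^{-1}$ (as is the case for $H=\Sp_{2n}$ via the standard identification of Tamagawa constants), this is a routine, if tedious, verification. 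The Iwasawa-decomposition step also requires continuity of the map sending $k$ to the local zeta integral to justify switching integral and sum; this follows from Lemma \ref{lem:n_m_zeta_integarl} exactly as in the corank-$0$ argument in Section \ref{ssec:n_n_proof}.
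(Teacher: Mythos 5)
Your proposal is correct and is essentially identical to the paper's proof: you apply Theorem~\ref{thm:n_m}(2), expand via the constant term formula over $W(P;Q_n)\cong\opn{Fix}(\pi)$, handle the degenerate case $\pi_i^\vee=\pi_j$ by vanishing of the Whittaker coefficient, and then combine the Euler factorization \eqref{eq:Rankin_Selberg_Euler}, the induction identity \eqref{eq: Induction Jacquet functional}, and \eqref{eq: intertwine Whittaker} together with $n_\pi(w,0)$ to produce the stated $L$-factors. The $L$-function bookkeeping you sketch matches what the paper records in \eqref{eq:n_m_Eisenstein_1} and \eqref{eq:n_m_Eisenstein_3}.

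The one place your explanation is imprecise is the passage from $(\Delta_{G_n}^{\tS,*})^{-1}$ (appearing in \eqref{eq:Rankin_Selberg_Euler}) to $(\Delta_H^{\tS,*})^{-1}$ (appearing in \eqref{eq:n_m_Eisenstein_1} and in the theorem). You attribute this to ``the standard identification of Tamagawa constants,'' but the Artin--Tate $L$-functions of $\GL_n$ and $\Sp_{2n}$ are genuinely different (roughly $\prod_{i=1}^{n}\zeta_F(s+i)$ versus $\prod_{i=1}^{n}\zeta_F(s+2i)$), so $\Delta_{G_n}^{\tS,*}\ne\Delta_H^{\tS,*}$. The compensating factor comes from the measure comparison implicit in the Iwasawa decomposition $H(\bA)=Q_n^H(\bA)K_H$ used in \eqref{eq:n_m_proof_1}: the Tamagawa measure on $H$ is not simply the product of the Tamagawa measure on the Levi $M_{Q_n^H}\cong G_n$ with the probability measure on $K_H$, and the resulting constant converts $\Delta_{G_n}$ to $\Delta_H$. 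So the constant is accounted for, just not by the heuristic you gave.
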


\begin{proof}
    The proof is parallel to the proof of Theorem \ref{thm:n_n_Eisenstein}, so we will be brief.

    By the constant term formula for Eisenstein series andTheorem \ref{thm:n_m}, we can write
\begin{equation} \label{eq:n_m_Eisenstein_2}
    \begin{split}
    \cP^*(E(\varphi)) = \sum_{w \in W(P; Q_n)}\int_{K_{H}}\widetilde{Z}^{\mathrm{RS}}(n+1, E^{Q_n}(M(w)R(k)\varphi) ) \rd k.
    \end{split}
\end{equation}

If there exists $1 \le i < j \le k$ such that $\pi_i^{\vee} = \pi_j$, then both side of \eqref{eq:n_m_Eisenstein} is 0, therefore from now on we assume that $\pi_i \ne \pi_j$ for $i \ne j$.

Let $\tS$ be a sufficiently large finite set of places of $F$, which we assume to contain Archimedean places as well as the places where $\Pi$ or $\psi$ is ramified. We also assume $\varphi$ is fixed by $K^\tS$. Note that there is a bijection between $W(P; Q_n)$ and $\opn{Fix} (\pi)$, where each $w$ corresponds to the $\sigma$ such that $w M_P w^{-1}= M_{P, \sigma}$. In the following, we fix an arbitrary $w \in W(P; Q_n)$, and corresponding $\sigma \in \opn{Fix}(\pi)$.  

Note that the restriction of $E^{Q_n}(M(w)R(k)\varphi)$ to $[M_{Q_n}]$ belongs to $\lvert \cdot \rvert^{\frac{n+m}{2}}\Pi_{\sigma,n} \boxtimes \lvert \cdot \rvert^{-\frac{n}{2}}\Pi_{\sigma,n+m}$, then it follows from \eqref{eq:Rankin_Selberg_Euler} and \eqref{eq:n_m_zeta_integral_1} that
\begin{equation} \label{eq:n_m_Eisenstein_1}
    \int_{K_H} \widetilde{Z}^{\mathrm{RS}}(n+1,  E^{Q_n}(M(w)R(k)\varphi) ) \rd k =(\Delta_{H}^{\tS, *})^{-1}L^\tS(1,\Pi_{\sigma,n}^{\vee} \times \Pi_{\sigma,n+m})   Z_{\tS}(0,W^{M_{Q_n}}_{M(w)E(\varphi),\tS}).
\end{equation}

By \eqref{eq: Induction Jacquet functional} and \eqref{eq: intertwine Whittaker}, we have
\begin{equation} \label{eq:n_m_Eisenstein_3}
    \begin{split}
     W^{M_{Q_n}}_{E^{Q_n}(M(w)\varphi),\tS}
     = \frac{L^{\tS}(1,\Pi_{\sigma,n} \times \Pi^{\vee}_{\sigma,n+m})}{L(1,\pi,\widehat{\fn}_P^-)} L_{\tS}(1,\pi_{\sigma},\widehat{\fn}_{P_{\sigma}}^-) \Omega^{Q_n}_\tS(N_{\pi,\tS}(w)W^{M_P}_{\varphi,\tS})
     \end{split}
\end{equation}
The theorem then follows from \eqref{eq:n_m_Eisenstein_2}, \eqref{eq:n_m_Eisenstein_1}, and \eqref{eq:n_m_Eisenstein_3}.
\end{proof}

\section{Truncation operator and the regularized period}
\label{sec:truncation}

\subsection{Notations}

Let $H=\Sp_{2n}$. We fix an upper triangular Borel subgroup $P_0'$ of $H$, let $\fa_{P_0'} := \fa_0'$, and $\Delta_0' = \Delta_{P_0'}$

Let $G=G_{2n+1}$. For a semi-standard parabolic subgroup $P \subset G$, let $\fa_P^+$ be the subset of $X \in \fa_P$ such that $\langle X,\alpha \rangle > 0$ for any $\alpha \in \Delta_P$.

For any semi-standard parabolic subgroups $P \subset Q$, let $\widehat{\tau}_P^Q$ be the usual characteristic function of a cone on $\fa_P$ defined in \cite[\S 5]{Arthur78}.

\subsection{The case $m=1$}

The case $m=1$ is taking the $\Sp_{2n}$ period of an automorphic form on $\GL_{2n+1}$. In the work \cite{Zydor19} of Zydor, he defined a regularized period of an automorphic form on a reductive group over any reductive subgroup.

Let $G = G_{2n+1}$ and $H = \Sp_{2n}$. Zydor's regularization was written down explicitly in \cite[\S 3.2]{LX} in this case, which we also briefly review here.

Let $\cF'$ be the set of standard parabolic subgroups of $H$. For each $P' \in \cF'$, there is a unique semi-standard parabolic subgroup of $G$ such that $\fa_P^+ \cap \fa_{P'}^+ \ne \varnothing$. If we write $P' = P(\lambda)$ via the dynamical method, where $\lambda$ is a cocharacter of $H$. Then $P$ can also be characterized as $P = P(\lambda^G)$, where $\lambda^G$ denotes the corresponding cocharacter of $G$. In the following, we will also denote by a standard parabolic subgroup of $H$ with a letter with a ${}^{\prime}$, and the corresponding parabolic subgroup of $G$ will be denoted by the same letter without ${}^\prime$.

Let $f \in \cT([G])$, we define
\begin{equation*}
    \Lambda^T f(h) = \sum_{P' \in \cF'} \varepsilon_{P'} \sum_{\gamma \in P'(F) \backslash H(F)} \widehat{\tau}_{P'}(H_{P'}(\gamma h)-T_{P'}) f_P(\gamma h).
\end{equation*}

By \cite[Theorem 3.9]{Zydor19} (see also \cite[Theorem 3.2.2]{LX}), when $T$ is sufficiently positive, $\Lambda^T f \in \cS^0([H])$, moreover, the map $f \in \cT([G]) \mapsto \Lambda^T f \in \cS^0([H])$ is continuous. For such $T$, we define
\begin{equation*}
    \cP^T(f) := \int_{[H]} \Lambda^T f(h) \rd h.
\end{equation*}

More generally, for $Q' \in \cF'$ and $f \in \cT(Q(F) \backslash G(\bA))$ (see \cite[\S 4.3]{BLX} for a definition), we define $\Lambda^{T,Q'} f$ by
\begin{equation*}
    \Lambda^{T,Q'}f(h)= \sum_{\substack{P' \in \cF' \\ P' \subset Q'}} \varepsilon_{P'}^{Q'} \sum_{\gamma \in P'(F) \backslash Q'(F)} \widehat{\tau}_{P'}^{Q'}(H_{P'}(\gamma h)-T_{P'}) f_P(\gamma h).
\end{equation*}
We can similarly show that $\Lambda^{T,Q'} \in \cS^0([H]_{Q'}^1)$ and the map $f \in \cT(Q(F) \backslash G(\bA)) \to \Lambda^{T,Q'}f \in \cS^0([H]_{Q'}^1)$ is continuous. 

There is also a variant of truncation operator for Levi subgroup. Let $Q' \in \cF'$ and $f \in \cT([M_{Q'}])$ and $T \in \fa_0'$, we define
\begin{equation*}
    \Lambda^{T,M_{Q'}} f(h) =  \sum_{\substack{P' \in \cF' \\ P' \subset Q'}} \varepsilon_{P'}^{Q'} \sum_{\gamma \in (M_{Q'} \cap P'(F)) \backslash M_{Q'}(F)} \widehat{\tau}_{P'}^{Q'}(H_{P'}(\gamma h)-T_{P'}) f_{P \cap M_Q}(\gamma h).
\end{equation*}
Since $\delta_{P}^{Q,-1}$ is bounded on $\{ h \in M_{Q'}(\bA) \mid \widehat{\tau}_{P'}^{Q'}(H_{P'}(h) -T) = 1 \}$. By Lemma \ref{lemma: estimate constant term}, for $f \in \cS([M_{Q}])$, the integral
\begin{equation*}
    \int_{[M_{Q'}]_{P \cap M_{Q'}}} \widehat{\tau}_{P'}^{Q'}(H_{P'}(h)-T_{P'}) f_{P \cap M_Q}(h)
\end{equation*}
is absolutely convergent. As a consequence,
\begin{num}
    \item \label{eq:Schwartz_function_truncated_period} For $f \in \cS([M_{Q'}])$ we have
    \begin{equation*}
        \int_{[M_{Q'}]} \Lambda^{T,M_{Q'}} f(h) \rd h = \sum_{\substack{P' \in \cF' \\ P' \subset Q'}} \varepsilon_{P'}^{Q'} \int_{[M_{Q'}]_{P' \cap M_{Q'}}} \widehat{\tau}_{P'}^{Q'}(H_{P'}(h)-T_{P'}) f_{P \cap M_Q}(h) \rd h.
    \end{equation*}
\end{num}
Similarly,
\begin{num}
    \item \label{eq:Schwartz_function_truncated_period_1} For $f \in \cS([M_{Q'}])$ we have
    \begin{equation*}
        \int_{[M_{Q'}]^1} \Lambda^{T,M_{Q'}} f(h) \rd h = \sum_{\substack{P' \in \cF' \\ P' \subset Q'}} \varepsilon_{P'}^{Q'} \int_{[M_{Q'}]^1_{P' \cap M_{Q'}}} \widehat{\tau}_{P'}^{Q'}(H_{P'}(h)-T_{P'}) f_{P \cap M_Q}(h) \rd h.
    \end{equation*}
\end{num}

We say that $T \in \fa_0' \to \infty$ if $\langle T, \alpha \rangle \to \infty$ for any $\alpha \in \Delta_0'$. Therefore, when $T \to \infty$, $\tau_{P'}(H_{P}(h)-T_{P'}) \to 0$ for any $h \in H(\bA)$. Therefore, by the dominated convergence theorem, we see that
\begin{num}
    \item \label{eq:Schwartz_function_truncated_period_limit}    For any $f \in \cS([G])$, we have
    \begin{equation*}
        \lim_{T \to \infty} \cP^T(f) = \cP(f).
    \end{equation*}
\end{num}

Let $M$ be a Levi subgroup of $G$. We write $\fX_{\Delta}^M$ for the preimage of $\fX_{\Delta}$ in $\fX(M)$. Let $\cS_{\Delta}([M]) := \cS_{\fX_{\Delta}^M}([G])$.
\begin{lemma} \label{lem:Levi_Delta_period_vanish}
    Let $Q'$ be a proper parabolic subgroup of $H$. Then for any $f \in \cS_{\Delta}([M_Q])$, we have
    \begin{equation*}
        \int_{[M_{Q'}]^1} f(h) \rd h = 0.
    \end{equation*}
\end{lemma}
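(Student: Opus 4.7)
The plan is to reduce Lemma \ref{lem:Levi_Delta_period_vanish} to Theorem \ref{thm:symplectic_period} (Offen's vanishing) via an unfolding argument for the middle $\GL_{2b+1}$-factor of $M_Q$, in the spirit of Lemma \ref{lem:n_m_unfolding}.

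First, by Theorem \ref{thm:decomposition_cuspidal_support} applied to $[M_Q]$, I decompose $f = \sum_{\chi' \in \fX_\Delta^{M_Q}} f_{\chi'}$ as a sum absolutely summable in $\cS([M_Q])$, reducing to the case $f \in \cS_{\chi'}([M_Q])$ for a fixed $\chi' \in \fX_\Delta^{M_Q}$. Since $Q'$ is a proper parabolic of $H = \Sp_{2n}$, the Levi factors as $M_{Q'} = L_1 \times \Sp_{2b}$ with $L_1 = \GL_{a_1} \times \cdots \times \GL_{a_r}$ and $a_1 + \cdots + a_r + b = n$, and the corresponding $M_Q = L_1 \times \GL_{2b+1} \times L_3$ with $L_3$ the mirror of $L_1$. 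The embedding $M_{Q'} \hookrightarrow M_Q$ sends $(m,g) \in L_1 \times \Sp_{2b}$ to $(m, \opn{emb}(g), \tilde m)$, where $\opn{emb}$ is the upper-left-block embedding $\Sp_{2b} \hookrightarrow \GL_{2b+1}$ and $\tilde m_i := J_{a_i}\,{}^t m_i^{-1} J_{a_i}$. Since the split center of $M_{Q'}$ equals that of $L_1$, we have $[M_{Q'}]^1 \cong [L_1]^1 \times [\Sp_{2b}]$, and Fubini gives
\begin{equation*}
    \int_{[M_{Q'}]^1} f(h) \rd h = \int_{[L_1]^1} \int_{[\Sp_{2b}]} f(m, \opn{emb}(g), \tilde m) \rd g \rd m.
\end{equation*}

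Next, I apply the Fourier-expansion/unfolding strategy of \S \ref{ssec:n_m_unfolding} to the inner $\Sp_{2b}$-period on the middle $\GL_{2b+1}$-factor. Successive Fourier expansions along the relevant abelian unipotent subgroups (mimicking the $U_r$-expansions in the proof of Lemma \ref{lem:n_m_unfolding}) reduce the vanishing of the inner period to the vanishing of residual integrals of the form $\int_{[\Sp_{2c}]} f_R(\cdots)\rd h$ for standard parabolic subgroups $R$ of $M_Q$ whose Levi component contains a $\GL_{2c}$-factor (with $0 \le c \le b$). Each such integral vanishes by Corollary \ref{cor: constant term vanish}, provided the cuspidal datum on every such $\GL_{2c}$-factor is non-even. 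This non-evenness is forced by the $\Delta$-regularity of $\chi$ on $G$: were it to fail, one could form a subset $I \subset \{1,\ldots,k\}$ of the components of the cuspidal datum $(M_P, \pi)$ representing $\chi$ satisfying $\sum_{i \in I} n_i = n$ with $\pi_i = \pi_j$ for some $i \in I$, $j \in I^c$ (taking $I$ to consist of all indices appearing on the left of $M_Q$ together with one element of each matched pair in the hypothetical even $\GL_{2c}$-datum), contradicting \eqref{eq:intro_3}.

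The main obstacle is executing the unfolding step cleanly in the present setting, where $f$ lives on $[M_Q]$ rather than on $[G]$ as in Lemma \ref{lem:n_m_unfolding}. One must carefully track how the $\Delta$-regularity condition on $\chi \in \fX_\Delta(G)$ descends through the projection to cuspidal supports on Levi subgroups of $M_Q$ (using Lemma \ref{lemma: restriction to levi} and Lemma \ref{lemma: restriction to component}), and verify the absolute convergence of each intermediate unfolded integral via the Fourier-coefficient and constant-term estimates of Lemma \ref{lemma: estimate Fourier coeff} and Lemma \ref{lemma: estimate constant term}. The simultaneous presence of the outer $L_1^{\opn{diag}}$-integration requires some care, though it does not obstruct the argument since it factors through independently after Fubini.
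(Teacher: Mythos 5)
Your proposal takes a genuinely different and more elaborate route than the paper's proof, and unfortunately it has concrete gaps.

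The paper's argument is much shorter: after reducing to a single $\chi' \in \fX_\Delta^{M_Q}$, it observes directly that $\int_{[M_{Q'}]^1} f$ is the composition of a ``twisted diagonal period'' $g \mapsto \int_{[G_{a_i}]^1} g(m, J\,{}^tm^{-1}J)\,\rd m$ on each $\GL_{a_i} \times \GL_{a_i}$ pair across the two halves of $M_Q$, and a symplectic period on the middle block, then invokes Theorem \ref{thm:symplectic_period} plus the $\Delta$-regularity condition to conclude that \emph{at least one} of these functionals vanishes. Which factor vanishes depends on how $\chi'$ distributes the cuspidal components among the blocks of $M_Q$; the case analysis is essential. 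Your plan instead is to show the inner $\Sp_{2b}$-period vanishes in isolation, which is both a stronger claim than needed and, as written, false.

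The most immediate problem: when $M_{Q'}$ has no symplectic factor (e.g.\ $Q'$ the Borel of $H$, or any parabolic whose Levi is a product of $\GL$'s, so $b=0$), the ``middle block'' of $M_Q$ is just a $\GL_1$ and the ``inner $\Sp_{0}$-period'' is evaluation at the identity, which obviously does not vanish. In this case the entire vanishing comes from the twisted diagonal periods, which you dismiss as ``factoring through independently after Fubini.'' So your strategy simply does not cover the $b=0$ case at all.

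Even for $b>0$, there is a conceptual mismatch in the unfolding step: the unfolding of \S\ref{ssec:n_m_unfolding} (Lemma \ref{lem:n_m_unfolding}) concerns the period $\cP(g) = \int_{[H]} g_{N_{n+1},\psi}(h)\,\rd h$, which already carries a nontrivial unipotent integration, and what that unfolding \emph{proves} is the equality $\cP(g) = Z(0,g)$ under $\Delta$-regularity --- not the vanishing of $\cP(g)$. The pure period $\int_{[\Sp_{2b}]} g(h)\,\rd h$ on the middle $\GL_{2b+1}$-block has no unipotent average, so a genuine Fourier expansion of it (say along the abelian unipotent on which $\Sp_{2b}$ acts by the standard representation) leaves a degenerate Whittaker-type remainder that requires its own argument, not supplied here. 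Finally, the subset $I$ you propose --- indices from the left half of $M_Q$ together with one element of each matched pair in a hypothetical even $\GL_{2c}$-datum --- satisfies $\sum_{i \in I} n_i = (n-b)+c$, which equals $n$ only when $c=b$; for the residual terms with $c<b$ you have omitted the cuspidal components landing in the $\GL_{b-c}$-block of the finer Levi, so $I$ does not contradict \eqref{eq:intro_3} as claimed. The paper's direct factorization avoids all of this.
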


\begin{proof}
    Let $\chi \in \fX_{\Delta}^{M_Q}$, and $f \in \cS_{\chi}([M_Q])$, it suffices to show $\int_{[M_{Q'}]^1} f = 0$.

    Assume that $M_{Q'} = G_{n_1} \times \cdots \times G_{n_k} \times \Sp_{2r}$, then $\int_{[M_Q']^1}$ is the product of
    \begin{itemize}
        \item The ``twisted diagnal period" on $\cS([G_{n_i} \times G_{n_i}])$
            \begin{equation*}
               f \mapsto \int_{[G_{n_i}]^1} f(g,w_{\ell} {}^t g^{-1}w_{\ell}) \rd g,
            \end{equation*}
        where $w_{\ell}$ denotes the longest Weyl element as usual.
        \item The symplectic period on $\cS([G_{2r}])$:
        \begin{equation*}
            f \mapsto \int_{[\Sp_{2r}]} f(h) \rd h.
        \end{equation*}
    \end{itemize}
    Then from Theorem \ref{thm:symplectic_period} and the definition of $\fX_{\Delta}^{M_Q}$, it is easy to see that at least one of the integral above is vanishing.
\end{proof}

Combining Lemma \ref{lem:Levi_Delta_period_vanish} and \eqref{eq:Schwartz_function_truncated_period_1}, we see that
\begin{num}
   \item \label{eq:Levi_truncated_period_vanish} Let $f \in \cS_{\Delta}([M_{Q'}])$ and $T \in \fa_0'$ be sufficiently positive. Then
    \begin{equation*}
        \int_{[M_{Q'}]^1} \Lambda^{T,M_{Q'}} f(h) \rd h = 0.
    \end{equation*}
\end{num}

\begin{proposition}
    Let $f \in \cT_{\Delta}([G])$, then $\cP^T(f)$ is a constant of $T$, and this constant is equal to $\cP^*(f)$ in Theorem \ref{thm:n_m}.
\end{proposition}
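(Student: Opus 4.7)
The plan is to reduce to the Schwartz case by density, then show $\cP^T(f)$ is constant in $T$ by unfolding the truncation and invoking the vanishing already established in Lemma \ref{lem:Levi_Delta_period_vanish}. Once constancy is established, identifying the constant with $\cP^*(f)$ is immediate from \eqref{eq:Schwartz_function_truncated_period_limit}.

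\textbf{Step 1: Continuity and density reduction.} The map $f \mapsto \Lambda^T f$ is continuous $\cT([G]) \to \cS^0([H])$ (Zydor's theorem, see \cite[Theorem 3.2.2]{LX}) and integration against $1$ is a continuous functional on $\cS^0([H])$, so $f \mapsto \cP^T(f)$ is continuous on $\cT([G])$, hence on $\cT_\Delta([G])$. The functional $\cP^*$ is continuous on $\cT_\Delta([G])$ by Theorem \ref{thm:n_m}. Since $\cS_\Delta([G])$ is dense in $\cT_\Delta([G])$, it suffices to prove both statements for $f \in \cS_\Delta([G])$, namely that $\cP^T(f)$ is constant in $T$ (for $T$ sufficiently positive) and equals $\cP^*(f)$.

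\textbf{Step 2: Unfolding the truncation.} For $f \in \cS_\Delta([G])$, by Lemma \ref{lemma: estimate constant term} each constant term $f_P$ is controlled by $\delta_P^{-cN} \|\cdot\|^{-N}$ on $[G]_P$, so for sufficiently positive $T$ we may unfold the inner sum in $\Lambda^T f$ and integrate term by term to get
\[
\cP^T(f) = \sum_{P' \in \cF'} \varepsilon_{P'} \int_{P'(F)\backslash H(\bA)} \widehat\tau_{P'}(H_{P'}(h)-T_{P'})\, f_P(h)\, dh.
\]
Applying the Iwasawa decomposition $H(\bA) = N_{P'}(\bA)M_{P'}(\bA)K_H$, the term for a proper $P' \subsetneq H$ can be rewritten as
\[
\int_{K_H}\int_{[M_{P'}]} \widehat\tau_{P'}(H_{P'}(m)-T_{P'}) \cdot \bigl( (f_P)_{N_{P'}}\bigr)(mk)\, \delta_{P'}(m)^{-1}\, dm\, dk,
\]
where $(f_P)_{N_{P'}}$ denotes the further constant term along $N_{P'} \subset H$. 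Since $N_{P'}$ normalizes $N_P$, this partial constant term is in fact $f_R$ for a uniquely determined standard parabolic $R$ of $G$ with $R \cap H = P'$ and $M_{P'} \subset M_R$.

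\textbf{Step 3: Vanishing for proper $P'$.} By Lemma \ref{lemma: constant term}, $f_R \in \cT_\chi([G]_R)$, and by \eqref{eq: truncation of constant term} combined with Lemma \ref{lemma: stable under truncation}, truncating by a bump function in $H_R$ yields an element of $\cS_\chi([G]_R)$. By Lemma \ref{lemma: restriction to levi} and Lemma \ref{lemma: restriction to component}, the restriction to $[M_{P'}]$ lies in $\cS_{\chi'}([M_{P'}])$ for some $\chi' \in \fX_\Delta^{M_{P'}}$, where the $\Sp_{2r}$-factor (if $r \geq 1$) has a non-even cuspidal component and the $\GL$-factors are arranged so that the "twisted diagonal'' period will vanish. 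Using \eqref{eq:Schwartz_function_truncated_period_1}, the $dm$-integral after inserting $\widehat\tau_{P'}$ reduces to the period on $[M_{P'}]^1$ treated by Lemma \ref{lem:Levi_Delta_period_vanish} (and by Theorem \ref{thm:symplectic_period} through Corollary \ref{cor: constant term vanish}); this gives $0$.

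\textbf{Step 4: Conclusion.} Only the $P' = H$ term survives, and it contributes exactly $\cP(f) = \int_{[H]} f(h)\, dh$, which is manifestly independent of $T$. Consequently $\cP^T(f) = \cP(f)$ for all sufficiently positive $T$ and all $f \in \cS_\Delta([G])$. Since $\cP(f) = \cP^*(f)$ on $\cS_\Delta([G])$ by Theorem \ref{thm:n_m}, we conclude $\cP^T(f) = \cP^*(f)$, and the density argument of Step 1 upgrades both assertions to all of $\cT_\Delta([G])$.

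The main obstacle is Step 3: correctly matching the unipotent radicals of $P$ (inside $G$) and $P'$ (inside $H$), recognizing the iterated constant term as a single $f_R$, and then verifying that the cuspidal support of the restriction $f_R|_{[M_{P'}]}$ inherits the non-evenness / Rankin--Selberg-regularity required to invoke Lemma \ref{lem:Levi_Delta_period_vanish}. Secondarily, one must justify the term-by-term integration in Step 2 (convergence of each summand separately after inserting $\widehat\tau_{P'}$), which again uses the rapid decay of $f_P$ from Lemma \ref{lemma: estimate constant term}.
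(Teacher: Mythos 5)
Your proof takes a genuinely different route than the paper's. The paper proves constancy by first computing the \emph{difference} $\cP^{T+T'}(f)-\cP^T(f)$ using Arthur's $\Gamma'_{Q'}$ identity, which writes $\Lambda^{T+T'}f$ in terms of the partial truncations $\Lambda^{T,Q'}f$ weighted by the compactly supported functions $\Gamma'_{Q'}$. This reduces the problem to showing $\int_{[H]_{Q'}^1}\Lambda^{T,Q'}f(h)\,\rd h=0$ for proper $Q'$, which has the advantage that $\Lambda^{T,Q'}f$ is already a Schwartz function on $[H]_{Q'}^1$, so convergence is built in and the density argument applies cleanly. The paper then identifies the constant by sending $T\to\infty$ and invoking \eqref{eq:Schwartz_function_truncated_period_limit}. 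You instead fix a Schwartz $f$ at the outset and unfold $\cP^T(f)$ directly into the alternating sum over $P'\in\cF'$ of $\widehat\tau_{P'}$-weighted integrals of $f_P$, then kill the proper-$P'$ terms. This is more elementary --- it avoids the $\Gamma'$ machinery and directly exhibits the surviving $P'=H$ term as a $T$-independent quantity --- but you pay for it in convergence bookkeeping: the $\widehat\tau_{P'}$-cones are unbounded, so each term's absolute convergence rests on the precise decay from Lemma \ref{lemma: estimate constant term} rather than on compact support. Both routes ultimately come down to the same vanishing phenomenon for Levi-level periods (Lemma \ref{lem:Levi_Delta_period_vanish}, via Offen's theorem).

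Two points deserve care. First, in Step 3 the mechanics are a bit scrambled: after Iwasawa you have a $\widehat\tau_{P'}$-weighted integral over $[M_{P'}]$, but $\widehat\tau_{P'}$ is the characteristic function of an unbounded cone, not a bump function, so the argument you sketch (inserting $\kappa\circ H_R$ to land in $\cS_\chi$, then invoking \eqref{eq:Schwartz_function_truncated_period_1}) does not apply verbatim; what you actually need is the factorization $[M_{P'}]\cong[M_{P'}]^1\times A_{P'}^\infty$, the observation that $\widehat\tau_{P'}$ depends only on the $A_{P'}^\infty$-coordinate, and then the vanishing of $\int_{[M_{P'}]^1}f_P(m a k)\,\rd m$ for each $a,k$. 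For that inner integral to vanish one has to check, as in the proof of Corollary \ref{cor: constant term vanish}, that $\delta_P$ restricted to $[M_{P'}]^1$ is controlled so that $f_P|_{[M_{P'}]^1}$ is effectively Schwartz; this is the genuinely nontrivial step and it deserves to be spelled out rather than folded into citations of Lemmas \ref{lemma: stable under truncation} and \ref{lem:Levi_Delta_period_vanish}. Second, the identification of the $P'=H$ term in Step 4 as $\cP(f)$ silently uses the identity $\int_{[H]}f(h)\,\rd h=\int_{[H]}f_{N_{n+1},\psi}(h)\,\rd h$ for $f\in\cS_\Delta([G])$; the paper also relies on this implicitly through \eqref{eq:Schwartz_function_truncated_period_limit}, but since it is not the displayed definition of $\cP$ in \S\ref{sec:higher_corank}, you should flag where this unfolding identity comes from.
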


\begin{proof}
    For $P' \in \cF'$, let $\Gamma'_{P'}$ be the function on $\fa_{P'} \times \fa_{P'}$ defined in \cite[\S 2]{Arthur81}. The function $\Gamma'_{P'}$ is compactly supported in the first variable when the second variable stays in a compact subset and
    \begin{equation*}
        \widetilde{\tau}_{P'}(H-X) = \sum_{\substack{P' \in \cF' \\ P' \subset Q'}} \varepsilon_{Q'} \widehat{\tau}_{P'}^{Q'}(H) \Gamma'_{Q'}(H,X).
    \end{equation*}
    From this, for $f \in \cT([G])$, $T,T' \in \fa_0'$ sufficiently positive, we can write 
    \begin{equation*}
        \Lambda^{T+T'}f(h) = \sum_{Q' \in \cF'} \sum_{\delta \in Q'(F) \backslash H(F)} \Gamma_{Q'}'(H_{Q'}(\delta h)-T_{Q'},T'_{Q'}) \Lambda^{T,Q'}f(\delta h). 
    \end{equation*}
    Therefore
    \begin{equation*}
        \cP^{T+T'}(f) - \cP^T(f) = \sum_{\substack{Q' \in \cF' \\ Q' \ne H}} \int_{[H]_{Q'}} \Gamma'_{Q'}(H_{Q'}(h)-T_{Q'},T'_{Q'}) \Lambda^{T,Q'}f(h) \rd h.
    \end{equation*}
    It remains to show that for any $f \in \cT_{\Delta}([G])$ and any $Q' \ne H \in \cF'$, the integral
    \begin{equation*}
        \int_{[H]_{Q'}^1} \Lambda^{T,Q'}f(h) \rd h
    \end{equation*}
    vanishes. As $\cS_{\Delta}([G])$ is dense in $\cT_{\Delta}([G])$ and the integral above is continuous in $f$. Therefore it suffices to show the vanishing for $f \in \cS_{\Delta}([G])$. However, this directly follows from \eqref{eq:Schwartz_function_truncated_period}. The final statement then follows from \eqref{eq:Schwartz_function_truncated_period_limit}
\end{proof}

\appendix

\section{Computation of the fixed point and the tangent space}
\label{appendix:fixed_point}

In the Appendix, we do an exercise in linear algebra. We show that, under the hypothetical Langlands correspondence, the fixed points of the $L$-parameter and the $L$-function $L(T_x \check{X})$ coincide with the concrete description in \S \ref{sssec:intro_precise} and \S \ref{sssec:n_n_fixed_points}. In particular, the analogue of Theorem \ref{thm:intro} for function field matches with the Conjecture \ref{conj:BZSV}.

\subsection{The global Langlands correspondence}

We will assume the following properties of the hypothetical global Langlands correspondence:

\begin{enumerate}
    \item There exists a locally compact topological group $\cL_F$, such that there is a bijection of isomorphism classes:
    \begin{equation*}
        \{ \text{$n$-dim continuous irreducible rep. of $\cL_F$} \} \longleftrightarrow \{ \text{cuspidal automorphic rep. of $G_n(\bA)$} \}.
    \end{equation*}
    For a cuspidal automorphic representation $\pi$ of $G_n$, we write $\phi_{\pi}$ the corresponding representation $\cL_F \to \GL_n(\C)$, and called it the \emph{L-parameter} of $\pi$.

    \item Let $P=MN$ be a standard parabolic subgroup of $G_n$. Let $\pi$ be a unitary cuspidal automorphic representation of $M$. By the correspondence above, we have an $L$-parameter $\cL_F \to \check{M}$ of $\pi$.

    Let $\Pi = \opn{Ind}_{P(\bA)}^{G_n(\bA)} \pi$, realized as Eisenstein series on $G_n(\bA)$. Then the $L$-parameter of $\Pi$ is given by (or defined to be) $\cL_F \to \check{M} \to \GL_n(\C)$.
\end{enumerate}

\subsection{The fixed points}

Let $\Gamma$ be a group. Let $n>0, m \ge 0$ be integers. Assume that we have a $2n+m$-dimensional semisimple complex representation $\phi$ of $\Gamma$. $\Gamma$ acts on $\check{X} := \GL_{2n+m}(\C)/\GL_n(\C) \times \GL_{n+m}(\C)$ via the representation $\Gamma \to \GL_{2n+m}(\C)$ composes with the natural action of $\GL_{2n+m}(\C)$ on $\check{X}$. We write $\opn{Fix}(\phi)$ for the set of fixed point of $\Gamma$ on $\check{X}$.

Assume that $\phi$ decomposes as
\begin{equation*}
    \phi = \bigoplus_{i=1}^k \phi_i,
\end{equation*}
where $\phi_i$ is an $n_i$-dimensional irreducible representation of $\Gamma$. We remark that $\phi_i$ and $\phi_j$ may be isomorphic for $i \ne j$. 

We may identify $\check{X}$ with the set of pairs $(V,W)$ where $V,W$ are subspaces of $\C^{2n+m}$ with $\dim V = n, \dim W = n+m$ and $\C^{2n+m} = V \oplus W$, where the action of $\GL_{2n+m}(\C)$ is given by $g \cdot (V,W) = (g V,g W)$. The set $\opn{Fix}(\phi)$ then corresponds to decompose the representation into a direct sum of an $n$-dimensional invariant subspace and an $n+m$-dimensional invariant subspace.

Considering the following condition:
   \begin{num}
        \item \label{eq:appendix_1} For any subset $I \subset \{1,2,\cdots,k\}$ such that $\sum_{i \in I} n_i=n$, we have $\phi_i \not \cong \phi_j$ for any $i \in I$ and $j \not \in I$.
    \end{num}

    If the condition \eqref{eq:appendix_1} does not hold. Take a subset $I$ such that $\sum_{i \in I} n_i = n$ and $\phi_i \cong \phi_j$ for some $i \in I$ and $j \not \in I$. Then the subrepresentation $\phi_i \oplus \phi_j$ has infinitely many decomposition into irreducible representation. Take any such decomposition $\phi_i \oplus \phi_j = \rho \oplus \rho'$, then the pair
    \begin{equation*}
        \left( \sum_{\substack{ s \in I \\ s \ne i }} \phi_s + \rho, \sum_{\substack{ t \not \in I \\ t \ne j }} \phi_t + \rho' \right)
    \end{equation*}
    is a fixed point. Therefore there are infinitely many fixed points.

    Conversely, if the condition \eqref{eq:appendix_1} holds. Let $\C^{2n+m} = V \oplus W$ be a decomposition of $\Gamma$-representation. Then \eqref{eq:appendix_1} implies that each isotypic part of $\phi$ must completely lie inside $V$ or $W$. Since isotypic part is canonical, then $\opn{Fix}(\phi)$ is finite. To conclude, we have shown the following lemma
    
\begin{lemma}
    The set $\opn{Fix}(\phi)$ is discrete (in the Zariski topology, so equivalent to finite) if and only if the condition \eqref{eq:appendix_1} holds.
\end{lemma}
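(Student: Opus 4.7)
The plan is to prove the equivalence by addressing both directions, essentially by exploiting the way the action of $\Gamma$ on $\check{X}$ encodes direct-sum decompositions of the representation $\phi$ into a codimension-$n$ and dimension-$n$ piece. The identification $\check{X} \cong \{(V,W) : V \oplus W = \C^{2n+m},\ \dim V = n\}$ with $\Gamma$ acting through $\phi$ means that $\opn{Fix}(\phi)$ is precisely the set of $\Gamma$-stable decompositions $\C^{2n+m} = V \oplus W$ with $\dim V = n$. So the whole lemma reduces to a representation-theoretic counting problem about such decompositions.

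For the ``if'' direction, I would argue that under \eqref{eq:appendix_1}, any $\Gamma$-stable decomposition $V \oplus W$ must respect the isotypic decomposition of $\phi$: each isotypic block is forced to lie entirely in $V$ or entirely in $W$. This is because if some isotypic block were split nontrivially, then at least one of its irreducible summands would appear on both sides, giving a pair $(i,j)$ with $\phi_i \cong \phi_j$, $i \in I$, $j \notin I$ where $I$ records which $\phi_i$'s sit inside $V$ — contradicting \eqref{eq:appendix_1}. So $\opn{Fix}(\phi)$ is in bijection with subsets $I \subset \{1,\dots,k\}$ with $\sum_{i\in I} n_i = n$ (satisfying \eqref{eq:appendix_1} trivially), a finite set.

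For the ``only if'' direction, I would explicitly construct a positive-dimensional family of fixed points whenever \eqref{eq:appendix_1} fails. Given such $I$, $i \in I$, $j \notin I$ with $\phi_i \cong \phi_j$, the sum $\phi_i \oplus \phi_j$ is isomorphic to $\phi_i^{\oplus 2}$ and thus admits a $\opn{Gr}(n_i, 2n_i)$-family of decompositions into two irreducible $\Gamma$-subrepresentations of dimension $n_i$. Each such decomposition $\phi_i \oplus \phi_j = \rho \oplus \rho'$ produces a distinct fixed point by putting $\rho$ on the $V$ side and $\rho'$ on the $W$ side, keeping the other summands in place, giving infinitely many (indeed a positive-dimensional subvariety of) elements in $\opn{Fix}(\phi)$.

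The argument is essentially elementary linear algebra once the translation to invariant decompositions is made, and I do not anticipate any serious obstacle; the only mild subtlety is making sure the ``only if'' direction is phrased so as to produce genuinely distinct fixed points in $\check{X}$, which is handled by noting that varying the decomposition $\rho \oplus \rho'$ within the isotypic block $\phi_i^{\oplus 2}$ changes the pair $(V,W)$ as subspaces of $\C^{2n+m}$.
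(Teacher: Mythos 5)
Your proposal is correct and follows essentially the same route as the paper: both directions rest on the identification of $\opn{Fix}(\phi)$ with $\Gamma$-stable decompositions $\C^{2n+m} = V \oplus W$, the ``if'' direction uses canonicity of isotypic components, and the ``only if'' direction produces a positive-dimensional family by splitting $\phi_i \oplus \phi_j \cong \phi_i^{\oplus 2}$. The only minor point worth tightening is that in the ``if'' direction the subset $I$ ``recording which $\phi_i$ sit in $V$'' should be defined via a relabeling compatible with the isotypic decomposition, since the fixed decomposition $V \oplus W$ need not be aligned with the given $\bigoplus_i \phi_i$, but this is easily fixed and does not affect the argument.
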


From the discussion above, it is easy to see the following
\begin{lemma}
    When $\opn{Fix}(\phi)$ is finite, the set $\opn{Fix}(\phi)$ is in bijection with the set
    \begin{equation*}
       \left \{ I \subset \{1,2,\cdots,k\} \mid  \sum_{i \in I} n_i = n \right \}
    \end{equation*}
\end{lemma}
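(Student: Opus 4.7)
The plan is to construct an explicit bijection by sending a subset $I$ with $\sum_{i \in I} n_i = n$ to the $\Gamma$-invariant decomposition $\Psi(I) := (V_I, W_I)$ where $V_I := \bigoplus_{i \in I} \phi_i$ and $W_I := \bigoplus_{i \notin I} \phi_i$, using the fixed decomposition $\C^{2n+m} = \bigoplus_{i=1}^k \phi_i$. That $\Psi$ is well-defined is immediate: $V_I$ and $W_I$ are $\Gamma$-stable, complementary, and of the correct dimensions.

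The key technical input is the isotypic decomposition. Group $\{1,\ldots,k\}$ into isotypic classes $J_1,\ldots,J_r$ (so $i,j \in J_a$ iff $\phi_i \cong \phi_j$), and write $\phi^{J_a} := \bigoplus_{i \in J_a} \phi_i$ for the canonical isotypic components. First I would observe that under \eqref{eq:appendix_1}, every valid subset $I$ (with $\sum_{i \in I} n_i = n$) must be a union of isotypic classes: if some $J_a$ satisfied both $J_a \cap I \ne \varnothing$ and $J_a \setminus I \ne \varnothing$, then picking $i \in J_a \cap I$ and $j \in J_a \setminus I$ would give $\phi_i \cong \phi_j$ with $i \in I$, $j \in I^c$, contradicting \eqref{eq:appendix_1}.

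For injectivity, if $I \ne I'$ are both valid, then by the above their symmetric difference is a nonempty union of full isotypic classes. Fix a class $J_a \subset I \setminus I'$: then $V_I \supset \phi^{J_a}$, while $V_{I'}$ contains no sub-representation isomorphic to the irreducible underlying $J_a$ (again by \eqref{eq:appendix_1} applied to $I'$), so $V_{I'} \cap \phi^{J_a} = 0$ and hence $V_I \ne V_{I'}$. For surjectivity, let $(V,W) \in \opn{Fix}(\phi)$. Since each $\phi^{J_a}$ is canonical, invariance forces $V = \bigoplus_a (V \cap \phi^{J_a})$; writing $\phi^{J_a} \cong \sigma_a^{\oplus |J_a|}$ for the common irreducible $\sigma_a$, we have $V \cap \phi^{J_a} \cong \sigma_a^{\oplus v_a}$ for some $0 \le v_a \le |J_a|$, and $\sum_a v_a \dim \sigma_a = n$.

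The main step is to show $v_a \in \{0, |J_a|\}$ for every $a$. Suppose $0 < v_{a_0} < |J_{a_0}|$; form $I$ by choosing, in each class $J_b$, any $v_b$ indices. Then $\sum_{i \in I} n_i = \sum_b v_b \dim \sigma_b = n$, so $I$ is valid; but then $I \cap J_{a_0}$ and $J_{a_0} \setminus I$ are both nonempty, again contradicting \eqref{eq:appendix_1}. Therefore $V$ is the sum of a subcollection of complete isotypic components, so $V = V_I$ for the (unique) union-of-classes $I$ with $\sum_{i \in I} n_i = n$, giving surjectivity. The main obstacle is really just keeping the two roles of \eqref{eq:appendix_1} straight — it is used both to constrain the combinatorial side (valid $I$'s are class unions) and the geometric side (each isotypic component sits entirely in $V$ or $W$) — but no deep input is required beyond the canonicity of isotypic decomposition of semisimple representations.
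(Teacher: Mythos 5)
Your proposal is correct and follows essentially the same route as the paper, which proves the preceding lemma by observing that under \eqref{eq:appendix_1} each isotypic component must lie entirely in $V$ or in $W$, and then dismisses the bijection as ``easy to see.'' Your write-up fills in what the paper leaves implicit, in particular the key step that a fractional split $0 < v_{a_0} < |J_{a_0}|$ of some isotypic class can be converted into a valid $I$ violating \eqref{eq:appendix_1}, which is exactly the content behind the paper's shorthand. One small simplification is available for injectivity: since $V_I = \bigoplus_{i \in I}\phi_i$ is a sub-sum of the fixed direct sum $\C^{2n+m}=\bigoplus_i \phi_i$, the equality $V_I=V_{I'}$ already forces $I=I'$ by counting dimensions of $V_I \cap V_{I'} = \bigoplus_{i\in I\cap I'}\phi_i$, with no need to invoke isotypic classes at that stage. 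Also worth stating explicitly in the surjectivity step: once every $v_a$ is $0$ or $|J_a|$, the complement $W$ is automatically $\bigoplus_{a: v_a=0}\phi^{J_a}=W_I$, since $\phi^{J_a}=V_a\oplus W_a$ forces $W_a=\phi^{J_a}$ whenever $V_a=0$; you assert only $V=V_I$, and while $W=W_I$ does follow, the reader has to supply the one-line reason.
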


Finally, the following lemma describe the representation given by the tangent space of fixed point.
\begin{lemma}
    When $x=(V,W) \in \opn{Fix}(\phi)$. Then the representation of $\Gamma$ at $T_x \check{X}$ is isomorphic to $V^{\vee} \otimes W \oplus V \otimes W^{\vee}$.
\end{lemma}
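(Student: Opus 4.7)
The plan is to identify $T_x\check{X}$ with an off-diagonal block of the Lie algebra of $\GL_{2n+m}(\C)$ relative to the decomposition $\C^{2n+m} = V \oplus W$ defining $x$, and then to observe that, because $x$ is $\Gamma$-fixed, the representation $\phi$ factors through $\GL(V)\times \GL(W)$ and acts on those blocks by the standard formulas.

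First I would write $\check{X} = \GL_{2n+m}(\C)/\opn{Stab}(x_0)$ for the base point $x_0$ corresponding to the standard decomposition $\C^{2n+m}=\C^n\oplus\C^{n+m}$, whose stabilizer is the block-diagonal subgroup $\GL_n(\C)\times\GL_{n+m}(\C)$. The tangent space at $x_0$ then satisfies
\[
T_{x_0}\check{X} \;\cong\; \mathfrak{gl}_{2n+m}(\C)\big/\bigl(\mathfrak{gl}_n(\C)\oplus\mathfrak{gl}_{n+m}(\C)\bigr),
\]
and the off-diagonal blocks of $\mathfrak{gl}_{2n+m}(\C)$ provide a canonical $\opn{Stab}(x_0)$-equivariant splitting
\[
T_{x_0}\check{X} \;\cong\; \opn{Hom}(\C^n,\C^{n+m})\;\oplus\;\opn{Hom}(\C^{n+m},\C^n),
\]
on which $\opn{Stab}(x_0)$ acts by the two natural actions. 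For a general fixed point $x=(V,W)$ I would choose $g\in\GL_{2n+m}(\C)$ carrying $x_0$ to $x$ and transport the above description by left translation, obtaining an identification $T_x\check{X}\cong \opn{Hom}(V,W)\oplus \opn{Hom}(W,V)$ equivariant for $\opn{Stab}(x)=\GL(V)\times\GL(W)$. The main point that needs care is checking that this identification is independent of the choice of $g$; this is routine because two such choices differ by an element of $\opn{Stab}(x_0)$, whose adjoint action on the off-diagonal blocks is already part of the identification.

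Finally, because $x$ is $\Gamma$-fixed, the image of $\phi\colon \Gamma\to\GL_{2n+m}(\C)$ is contained in $\opn{Stab}(x)=\GL(V)\times\GL(W)$, so $\phi$ decomposes as $\phi_V\oplus\phi_W$, where $\phi_V$ and $\phi_W$ are the representations on the invariant subspaces $V$ and $W$. The $\Gamma$-action on $T_x\check{X}$ is thus the restriction of the natural $\GL(V)\times\GL(W)$-action on each summand: on $\opn{Hom}(V,W)\cong V^\vee\otimes W$ it is $\phi_V^\vee\otimes \phi_W$, and on $\opn{Hom}(W,V)\cong W^\vee\otimes V$ it is $\phi_W^\vee\otimes \phi_V$. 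Assembling these yields the claimed isomorphism
\[
T_x\check{X}\;\cong\; V^\vee\otimes W \;\oplus\; V\otimes W^\vee
\]
of $\Gamma$-representations.
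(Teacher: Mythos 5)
Your argument is correct, but it proceeds by a genuinely different route than the paper's. The paper parametrizes $T_x\check{X}$ directly using the functor-of-points description over the dual numbers: a tangent vector at $x=(V,W)$ is a pair of free $\C[\varepsilon]$-submodules $(\bV,\bW)$ of $\C[\varepsilon]^{2n+m}$ splitting it and reducing mod $\varepsilon$ to $(V,W)$, and one checks that such pairs are exactly the graphs $\bV=\{v+\varepsilon(Sv+v')\}$, $\bW=\{w+\varepsilon(Tw+w')\}$ with $(S,T)\in\Hom(V,W)\times\Hom(W,V)$; the $\Gamma$-equivariance of this parametrization is then immediate because the whole description is canonical in $(V,W)$. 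Your approach instead realizes $\check{X}$ as the homogeneous space $\GL_{2n+m}/(\GL_n\times\GL_{n+m})$, identifies $T_{x_0}\check{X}$ with the off-diagonal blocks of $\mathfrak{gl}_{2n+m}$ at the base point, and transports by left translation; this is the standard Lie-theoretic computation, and the extra step you flag (well-definedness of the transported identification, independent of the choice of $g\in\GL_{2n+m}(\C)$ with $gx_0=x$) is indeed the only non-vacuous point, and is handled correctly, since two choices of $g$ differ by $s\in\opn{Stab}(x_0)$ acting on $T_{x_0}\check{X}$ by the adjoint action, which is exactly compensated by conjugation by $s$ in the identification $\opn{Hom}(\C^n,\C^{n+m})\cong\opn{Hom}(V,W)$. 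The dual-number approach avoids the equivariance bookkeeping entirely (the parametrization is manifestly natural), while the Lie-algebraic approach is arguably more familiar and extends without change to any partial flag variety; for this particular lemma the two routes are roughly equally economical.
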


\begin{proof}
    It suffices to show that for $x=(V,W) \in \check{X}$, then as a $\GL(V) \times \GL(W)$ representation, $T_x \check{X} \cong V^\vee \otimes W \oplus V \otimes W^{\vee} = \Hom(V,W) \oplus \Hom(W,V)$.

    Let $\C[\varepsilon]$ be the ring of dual number. Then $T_x \check{X}$ can be identified with the pair of free $\C[\varepsilon]$-submodule $(\bV,\bW)$ of $\C[\varepsilon]^{2n+m}$ such that $\C[\varepsilon]^{2n+m} = \bV \oplus \bW$ and $\bV \otimes_{\C[\varepsilon]} \C = V$, $\bW \otimes_{\C[\varepsilon]} \C= W$. Then it is direct to check that all such $(\bV,\bW)$ is of the form $(V + \varepsilon V + \varepsilon SV, W + \varepsilon W + \varepsilon TW)$ for $(S,T) \in \Hom(V,W) \times \Hom(W,V)$. This finishes the proof.
\end{proof}

\printbibliography

@article{Arthur81,
    AUTHOR = {Arthur, James},
     TITLE = {The trace formula in invariant form},
   JOURNAL = {Ann. of Math. (2)},
  FJOURNAL = {Annals of Mathematics. Second Series},
    VOLUME = {114},
      YEAR = {1981},
    NUMBER = {1},
     PAGES = {1--74},
      ISSN = {0003-486X},
   MRCLASS = {10D40 (22E55)},
  MRNUMBER = {625344},
MRREVIEWER = {Freydoon\ Shahidi},
       DOI = {10.2307/1971376},
       URL = {https://doi.org/10.2307/1971376},
}

@article{BLX,
      title={The global Gan-Gross-Prasad conjecture for Fourier-Jacobi periods on unitary groups}, 
      author={Boisseau, Paul and Lu, Weixiao and Xue,Hang},
      year={2024},
      eprint={2404.07342},
      archivePrefix={arXiv},
      primaryClass={math.RT}
}

@article{BP21,
    AUTHOR = {Beuzart-Plessis, Rapha\"{e}l},
     TITLE = {Comparison of local relative characters and the
              {I}chino-{I}keda conjecture for unitary groups},
   JOURNAL = {J. Inst. Math. Jussieu},
  FJOURNAL = {Journal of the Institute of Mathematics of Jussieu. JIMJ.
              Journal de l'Institut de Math\'{e}matiques de Jussieu},
    VOLUME = {20},
      YEAR = {2021},
    NUMBER = {6},
     PAGES = {1803--1854},
      ISSN = {1474-7480,1475-3030},
   MRCLASS = {22E50 (11F70 22E55)},
  MRNUMBER = {4332778},
MRREVIEWER = {Baiying\ Liu},
       DOI = {10.1017/S1474748019000707},
       URL = {https://doi.org/10.1017/S1474748019000707},
}

@article{BPCZ,
    AUTHOR = {Beuzart-Plessis, Rapha\"{e}l and Chaudouard, Pierre-Henri and Zydor, Micha\l},
     TITLE = {The global {G}an-{G}ross-{P}rasad conjecture for unitary
              groups: the endoscopic case},
   JOURNAL = {Publ. Math. Inst. Hautes \'{E}tudes Sci.},
  FJOURNAL = {Publications Math\'{e}matiques. Institut de Hautes \'{E}tudes
              Scientifiques},
    VOLUME = {135},
      YEAR = {2022},
     PAGES = {183--336},
      ISSN = {0073-8301,1618-1913},
   MRCLASS = {22E50 (11F70 11R39 22E55)},
  MRNUMBER = {4426741},
MRREVIEWER = {Dongwen\ Liu},
       DOI = {10.1007/s10240-021-00129-1},
       URL = {https://doi.org/10.1007/s10240-021-00129-1},
}

@misc{Zydor19,
      title={Periods of automorphic forms over reductive subgroups}, 
      author={Zydor, Micha\l },
      year={2019},
      eprint={1903.01697},
      archivePrefix={arXiv},
      primaryClass={math.NT},
      url={https://arxiv.org/abs/1903.01697}, 
}

@book{MW95,
    place={Cambridge},
    series={Cambridge Tracts in Mathematics},
    title={Spectral Decomposition and Eisenstein Series: A Paraphrase of the Scriptures},
    DOI={10.1017/CBO9780511470905},
    publisher={Cambridge University Press}, 
    author={Moeglin, C and Waldspurger, J. L.},
    year={1995}
}

@misc{MWZ1,
      title={BZSV Duality for Some Strongly Tempered Spherical Varieties}, 
      author={Zhengyu Mao and Chen Wan and Lei Zhang},
      year={2024},
      eprint={2310.17837},
      archivePrefix={arXiv},
      primaryClass={math.NT},
      url={https://arxiv.org/abs/2310.17837}, 
}

@misc{BZSV,
      title={Relative Langlands Duality}, 
      author={David Ben-Zvi and Yiannis Sakellaridis and Akshay Venkatesh},
      year={2024},
      eprint={2409.04677},
      archivePrefix={arXiv},
      primaryClass={math.RT},
      url={https://arxiv.org/abs/2409.04677}, 
}

@article {FJ,
    AUTHOR = {Friedberg, Solomon and Jacquet, Herv\'e},
     TITLE = {Linear periods},
   JOURNAL = {J. Reine Angew. Math.},
  FJOURNAL = {Journal f\"ur die Reine und Angewandte Mathematik. [Crelle's
              Journal]},
    VOLUME = {443},
      YEAR = {1993},
     PAGES = {91--139},
      ISSN = {0075-4102,1435-5345},
   MRCLASS = {11F70 (22E55)},
  MRNUMBER = {1241129},
MRREVIEWER = {Stephen\ Gelbart},
       DOI = {10.1515/crll.1993.443.91},
       URL = {https://doi.org/10.1515/crll.1993.443.91},
}

@article{Arthur78,
  title={A trace formula for reductive groups I terms associated to classes in G (Q)},
  author={Arthur, James G},
  year={1978}
}

@article{BL24,
  title={On the meromorphic continuation of Eisenstein series},
  author={Bernstein, Joseph and Lapid, Erez},
  journal={Journal of the American Mathematical Society},
  volume={37},
  number={1},
  pages={187--234},
  year={2024}
}

@article{Gross97,
  title={On the motive of a reductive group},
  author={Gross, Benedict H},
  journal={Inventiones mathematicae},
  volume={130},
  number={2},
  pages={287--314},
  year={1997},
  publisher={Berlin, Springer-Verlag.}
}

@article{Offen06,
  title={Residual spectrum of $\mathrm{GL}_{2n}$ distinguished by the symplectic group},
  author={Offen, Omer},
  year={2006}
}

@article{JPSS,
  title={Rankin-selberg convolutions},
  author={Jacquet, Herv{\'e} and Piatetskii-Shapiro, Ilya I and Shalika, Joseph A},
  journal={American journal of mathematics},
  volume={105},
  number={2},
  pages={367--464},
  year={1983},
  publisher={JSTOR}
}

@article{MS11,
  title={A general Voronoi summation formula for $\operatorname{GL}(n,\mathbb{Z})$, Geometry and Analysis. No. 2},
  author={Miller, SD and Schmid, W},
  journal={Adv. Lect. Math.(ALM)},
  volume={18},
  pages={173},
  year={2011},
  publisher={Int. Press}
}

@article{IT13,
  title={On the Voronoi formula for GL (n)},
  author={Ichino, Atsushi and Templier, Nicolas},
  journal={American Journal of Mathematics},
  volume={135},
  number={1},
  pages={65--101},
  year={2013},
  publisher={Johns Hopkins University Press}
}

@incollection{Jacquet09,
    AUTHOR = {Jacquet, Herv\'e},
     TITLE = {Archimedean {R}ankin-{S}elberg integrals},
 BOOKTITLE = {Automorphic forms and {$L$}-functions {II}. {L}ocal aspects},
    SERIES = {Contemp. Math.},
    VOLUME = {489},
     PAGES = {57--172},
 PUBLISHER = {Amer. Math. Soc., Providence, RI},
      YEAR = {2009},
      ISBN = {978-0-8218-4708-4},
   MRCLASS = {11F70 (11F66 22E46)},
  MRNUMBER = {2533003},
       DOI = {10.1090/conm/489/09547},
       URL = {https://doi.org/10.1090/conm/489/09547},
}

@article{LO18,
author = {Erez Moshe Lapid and Omer Offen},
title = {{On the distinguished spectrum of $\operatorname{Sp}_{2n}$ with respect to $\operatorname{Sp}_{n}\times\operatorname{Sp}_{n}$}},
volume = {58},
journal = {Kyoto Journal of Mathematics},
number = {1},
publisher = {Duke University Press},
pages = {101 -- 171},
keywords = {automorphic forms, distinguished representations, period integrals, symplectic group},
year = {2018},
doi = {10.1215/21562261-2017-0019},
URL = {https://doi.org/10.1215/21562261-2017-0019}
}

@article{Sakellaridis,
  title={Spherical functions on spherical varieties},
  author={Sakellaridis, Yiannis},
  journal={American Journal of Mathematics},
  volume={135},
  number={5},
  pages={1291--1381},
  year={2013},
  publisher={Johns Hopkins University Press}
}

@article{Shahidi,
 ISSN = {00029327, 10806377},
 URL = {http://www.jstor.org/stable/2374219},
 author = {Freydoon Shahidi},
 journal = {American Journal of Mathematics},
 number = {2},
 pages = {297--355},
 publisher = {Johns Hopkins University Press},
 title = {On Certain L-Functions},
 urldate = {2025-09-12},
 volume = {103},
 year = {1981}
}

@misc{CV,
      title={Some Singular Examples of Relative Langlands Duality}, 
      author={Eric Y. Chen and Akshay Venkatesh},
      year={2024},
      eprint={2405.18212},
      archivePrefix={arXiv},
      primaryClass={math.NT},
      url={https://arxiv.org/abs/2405.18212}, 
}

@misc{LX,
      title={On the relative Langlands duality for $\operatorname{Sp}_{2n} \backslash \operatorname{GL}_{2n+1}$ (with an appendix by Zeyu Wang)}, 
      author={Weixiao Lu and Zeyu Wang and Guodong Xi},
      year={2025},
      eprint={2504.18774},
      archivePrefix={arXiv},
      primaryClass={math.RT},
      url={https://arxiv.org/abs/2504.18774}, 
}

@misc{Wan,
      title={The Sakellaridis-Venkatesh Conjecture for $ U(2)\backslash SO_{2,3}$}, 
      author={Xiaolei Wan},
      year={2024},
      eprint={1910.09026},
      archivePrefix={arXiv},
      primaryClass={math.RT},
      url={https://arxiv.org/abs/1910.09026}, 
}

@article{Bernstein,
title = {On the support of Plancherel measure},
journal = {Journal of Geometry and Physics},
volume = {5},
number = {4},
pages = {663-710},
year = {1988},
issn = {0393-0440},
doi = {https://doi.org/10.1016/0393-0440(88)90024-1},
url = {https://www.sciencedirect.com/science/article/pii/0393044088900241},
author = {Joseph N. Bernstein},
keywords = {Tempered representations, reductive groups, Plancherel measure, polynomial growth}
}

@incollection{Lapid08,
  title={A remark on Eisenstein series},
  author={Lapid, Erez M},
  booktitle={Eisenstein series and applications},
  pages={239--249},
  year={2008},
  publisher={Springer}
}

@inproceedings{MW89,
  title={Le spectre r{\'e}siduel de $\operatorname{GL}(n)$},
  author={M{\oe}glin, Colette and Waldspurger, J-L},
  booktitle={Annales scientifiques de l'{\'E}cole normale sup{\'e}rieure},
  volume={22},
  number={4},
  pages={605--674},
  year={1989}
}

@misc{Boisseau25,
      title={On some spectral aspects of relative trace formulae and the
Gan–Gross–Prasad conjectures, PhD thesis}, 
      author={Paul Boisseau},
      year={2025},
}

@article{LS87,
author={Langlands, R. P.
and Shelstad, D.},
title={On the definition of transfer factors},
journal={Mathematische Annalen},
year={1987},
month={Mar},
day={01},
volume={278},
number={1},
pages={219-271},
issn={1432-1807},
doi={10.1007/BF01458070},
url={https://doi.org/10.1007/BF01458070}
}

\end{document}